\date{}
\newtheorem{proposition}{Proposition}[section]
\newtheorem{theorem}[proposition]{Theorem}
\newtheorem{lemma}[proposition]{Lemma}
\newtheorem{definition}[proposition]{Definition}
\newtheorem{corollary}[proposition]{Corollary}
\def\Hom{{\rm Hom}}
\def\der{\partial }
\def\nFM0{{\nu }_{F,M_0}}
\def\nFN0{{\nu }_{F,N_0}}
\def\nGN0{{\nu }_{G,N_0}}
\def\N0{ {\bf N}_0 }
\def\t{\otimes}
\def\ra{\rightarrow}
\def\Xpm{X^{\pm }}
\def\s{\sigma}
\def\l1{{\lambda}_1}
\def\a{\alpha}
\def\a0{ {\alpha }_0}
\def\a1{ {\alpha }_1}
\def\l{\lambda}
\def\o{\omega}
\def\nFGM0{{\nu }_{F,G,M_0}}
\def\nFN0{{\nu}_{F,N_0}}
\def\sm{{\sigma}^m}
\def\sm1{{\sigma}^{-1}}
\def\smtp1{{\sigma}^{-t+1}}
\def\o{\omega }
\def\S1{S^{-1}}
\def\Xpm1{X^{\pm 1}_1}
\def\sPM1{{\sigma }^{\pm 1}}
\def\sMP1{{\sigma }^{\mp 1 }}
\def\b{\beta}
\def\d{\delta}
\def\di{{\rm d.ind}}
\def\L{\Lambda}
\def\O{\Omega}
\def\G{\Gamma}
\def\CA{{\cal A}}
\def\CD{{\cal D}}
\def\Ytm1{Y^{t-1}}
\def\Yim1{Y^{i-1}}
\def\CL{{\cal L}}
\def\CM{{\cal M}}
\def\CN{{\cal N}}
\def\CF{{\cal F}}
\def\CG{{\cal G}}
\def\supp{{\rm supp }}
\def\Aut{{\rm Aut}}
\def\bK{\overline{K}}
\def\Der{{\rm Der }}
\def\ad{{\rm ad }}
\def\dim{{\rm dim }}
\def\char{{\rm char }}
\def\ker{ {\rm ker } }
\def\gr{ {\rm gr} }
\def\D{ \Delta }
\def\SL2Z{ {\rm SL}_2({\bf Z}) }
\def\CR{ {\cal R}}
\def\th{ \theta }
\def\CL{{\cal L}}
\def\Gp1{ G^{1 , 1 } }
\def\P11{ P^{-1 , 1 } }
\def\Pp1{ P^{1 , 1 } }
\def\th{\theta}
\def\nCLsr{{}^\nu\kern-2pt {\cal L}^{\sigma , \rho  }}
\def\nP{{}^\nu \kern-2pt P}
\def\nL{{}^\nu\kern-2pt L}
\def\nLL{{}^\nu\kern-2pt \Lambda}
\def\nPsr{{}^\nu\kern-2pt P^{\sigma , \rho  }}
\def\nLsr{{}^\nu\kern-2pt L^{\sigma , \rho  }}
\def\nuCL{{}^\nu\kern-2pt  {\cal L}}
\def\nCLsr{{}^\nu\kern-2pt {\cal L}^{\sigma , \rho  }}
\def\nCL1m{{}^\nu\kern-2pt {\cal L}^{-1 , 1  }}
\def\x1nu{x^\frac{1}{\nu}}
\def\xm1nu{x^{-\frac{1}{\nu}}}
\def\CR{ {\cal R}}
\def\CN{{\cal N}}
\def\ra{\rightarrow }
\def\CB{{\cal B}}
\def\nAM0{{\nu }_{{\cal A},M_0}}
\def\nAN0{{\nu }_{{\cal A},N_0}}
\def\End{ {\rm End }}
\def\Der{ {\rm Der }}
\def\CR{ {\cal R }}
\def\ad{ {\rm ad }}
\def\gr{\mathfrak{r}}
\def\GL{{\rm GL}}
\def\SL{{\rm SL}}
\def\Hom{{\rm Hom}}
\def\di!{\frac{\der^i}{i!}}
\def\dik!{\frac{\der^k_i}{k!}}
\def\Fp{\mathbb{F}_p}
\def\gl{\mathfrak{l}}
\def\id{{\rm id}}
\def\Max{{\rm Max}}
\def\N{\mathbb{N}}
\def\0{\overline{0}}
\def\1{\overline{1}}
\def\Ln1{\L_{n,\overline{1}}}
\def\a1{a_{\overline{1}}}
\def\S{\Sigma}
\def\vn1{\overrightarrow{n-1}}
\def\im{{\rm im}}
\def\gl{{\rm gl}}
\def\sl{{\rm sl}}
\def\mD{\mathbb{D}}
\def\mL{\mathbb{L}}
\def\gf{\mathfrak{f}}
\def\mJ{\mathbb{J}}
\def\mI{\mathbb{I}}
\def\ann{{\rm ann}}
\def\K1{{\rm K}_1}
\def\hmI1{\widehat{\mI_1}}
\def\tmI1{\widetilde{\mI_1}}
\def\tmJ1{\widetilde{\mJ_1}}
\def\hB1{\widehat{B_1}}
\def\hCB1{\widehat{\CB_1}}
\def \S{\mathcal{S}}
\def\sl2{\mathfrak{sl}_2}
\def\Deg{{\rm Deg}}
\def\Ind{{\rm Ind}}
\def\sl2{\mathfrak{sl}_2}
\def\gl2{\mathfrak{gl}_2}
\def\res{{\rm res}}
\def\b1{\overline{1}}
\def\res{{\rm res}}
\def\fC{{\mathfrak{C}}}
\def\fCK{{\mathfrak{C}}(K)}
\def\fCdK{{\mathfrak{C}}_d(K)}
\def\gl{{\mathfrak{l}}}
\newenvironment{proof*}[1][\proofname]{\par
  \pushQED{\qed}%
  \normalfont \partopsep=\z@skip \topsep=\z@skip
  \trivlist
  \item[\hskip\labelsep
        \itshape
    #1\@addpunct{.}]\ignorespaces
}{%
  \popQED\endtrivlist\@endpefalse
}
\begin{document}

\author{V. V. \  Bavula 
%(AnGaloisTh-NORMAL-Fields-NEW.tex)
}
\title{Analogue of the Galois Theory for normal fields  and B-extensions (characteristic free approach)

 %B-extensions and analogue of the Galois Theory for normal fields  (characteristic free approach)
}

\maketitle

\begin{abstract}

This paper resolves the long-standing open problem of establishing a unified Galois theory for all normal finite field extensions.

The key idea is to introduce the concept of  `B-extensions' which are the most symmetrical finite field extensions (a finite field extension $L/K$ is called a {\em B-extension}  if the endomorphism algebra $\End_K(L)$ is generated by the algebra of differential operators $\CD (L/K)$ on the $K$-algebra  $L$ and the automorphism group $G(L/K):=\Aut_{K-{\rm alg}}(L)$) and to obtain an analogue of the Galois Theory for B-extensions.  Surprisingly, the class of B-extensions coincides with the class of {\em normal } finite field extensions and hence we have a new characterization of normality via differential operators and automorphisms. As a result, an analogue of the Galois Theory is obtained for normal field extensions. In particular, all Galois field extensions and all purely inseparable field extensions are B-extensions. Our approach is a ring theoretic (characteristic free)  approach which is based on central simple algebras.
In this approach, analogues of the Galois Correspondences (for subfields and normal  subfields of $L$) are deduced from the Double Centralizer Theorem which is applied to the central simple algebra  $\End_K(L)$ and subfields of B-extensions.

  Since Galois finite field extensions are B-extensions, this approach gives a new conceptual (short) proofs of key results of the Galois Theory, see \cite{GaloisTh-RingThAp}  for details. It also reveals that the `maximal symmetry' (of field extensions) is the essence of the classical Galois Theory and  the Galois Theory for normal field extensions. 

 In the Galois theory,  the two Galois Correspondences are given in terms of the fields $L^H$ of $H$-{\em invariants} where $H$ is a subgroup of the Galois group $G(L/K)$. But for normal field extensions,  analogues of the two Galois Correspondences are given in terms of the fields  $L^H\cap L^{\CG}$ of $H$-invariants and $\CG$-{\em invariants/constants} where $\CG$ is a dominant Lie algebra and $(\CG , H)$ is a dom-group.

%*** $\CD (L/K)_+$ is the left ideal of $\CD (L/K)$ that contains precisely differential operators with zero  `constant term', i.e. that annihilate the identity element 1 of the field $L$. ***

As a particular case, we have a Galois Theory for purely inseparable finite  field extensions $L/K$ where the  Lie algebra of proper differential operators $\CD(L/K)_+$ plays the role of the Galois group $G(L/K)$ in the Galois case (which is the identity group in this case) and $H$-invariants are replaced by $\CG$-invariants/constants where $\CG\subseteq \CD (L/K)_+$ is a dominant Lie algebra.

%As a particular case, we have a `differential' Galois Theory for purely inseparable finite  field extensions $L/K$ where the algebra  of differential operators $\CD(L/K)$ plays the role of the Galois group $G(L/K)$ in the Galois case (which is the identity group in this case) and $G(L/K)$-invariants are replaced by $\CD (L/K)_+$-invariants/constants. 

The classical results on  `Galois Theory' of Jacobson (1937, 1944) for purely inseparable field extensions of exponent one and   
its generalizations for modular extensions by Sweedler (1968), and Gerstenhaber and Zaromp (1970) follow from the above as particular cases (since in these cases the rings of differential operators are uniquely determined by the derivations and higher derivations, respectively).

Explicit descriptions of the subalgebras $\CD (L/K)$, $L\rtimes G(L/K)$ and $\CD (L/K)\rtimes G(L/K)$ of $E(L/K)$ are given and their properties are studied.  These results constitute the technical core of the paper on which the analogue of Galois theory for normal fields is built, and they are interesting in their own right.

$\noindent$

{\em Key Words: finite field extension,  B-extension, Galois extension, the Galois group, the Galois Correspondence, balanced Lie algebra,  dominant Lie algebra, dom-group,  skew group algebra, invariants, algebra of differential operators, differential constants, centralizer, central simple algebra, the Double Centralizer Theorem. }

{\em  Mathematics subject classification 2020: 
11S20, 12F10, 16S32,  12F05, 12F15,  16G10.}

{ \small \tableofcontents}

\end{abstract}

%%%%%%%%%%%%%%%%%% SECTION 1 %%%%%%%%%%%%%%%%%%%%%

\section{Introduction} \label{INTR} %\marginpar{INTR}

In this paper, module means a {\em left} module.  The following
notation will remain fixed throughout the paper (if it is not
stated otherwise): 
 
 \begin{itemize}
 
% \item $\N=\{0,1,  \ldots\}$ is the set of natural numbers, $\N_+:=\{1,2,   \ldots\}$, $p$ is a prime number and $\N_{<p}:=\{0,1, \ldots , p-1\}$, 

\item $K$ is a  field and $\bK$ is its algebraic closure,  

\item If  the field $K$ has  characteristic $p>0$,   the map  $\gf :\bK\ra \bK$, $\l \mapsto \l^p$ is called  the Frobenius monomorphism, 

\item $L/K$ is a finite field extension,
 
\item  $G:=G(L/K)$ is the automorphism group of the $K$-algebra $L$. If $L/K$ is a Galois field extension then  $G(L/K)$ is called the Galois group of $L/K$, 

\item $L^{G(L/K)}:=L^G:=\{ l\in L\, | \, \s (l)=l$ for all $\s\in G(L/K)\}$ is the field of $G(L/K)$-invariants,

\item  $\CF (L/K)$ and  $\CG (L/K)$ are the sets of all and Galois subfields of $L/K$, respectively,

\item    $\CG (G(L/K))$ and  $\CN(G(L/K))$ are  the sets of all and normal subgroups of $G(L/K)$, respectively, 

\item $\O_{L/K}$ is the module of K\"{a}hler differentials of the $K$-algebra $L$,

\item $\CD :=\CD_K(L):=\CD (L/K)=L\oplus \CD (L/K)_+$ is the algebra of $K$-linear differential operators on the field extension $L/K$ where 
$$\CD (L/K)_+:=\{\d \in \CD (L/K)\, | \, \d (1)=0\}$$
 is the left ideal of proper  differential operators which is a Lie algebra with commutator of elements as the Lie bracket, 
 % with  zero  constant term,

\item $L^{ \CD (L/K)_+}:=\{ l\in L\, | \, \d(l)=0$ for all $\d \in  \CD (L/K)_+\}$ is the field of $ \CD (L/K)_+$-constants,

\item $\Der_K(L)=\Der (L/K)$ is the Lie algebra of $K$-derivations,

\item  $\D (L):= \D (L/K):=L\langle \Der_K(L)\rangle\subseteq \CD (L/K)$ is the derivation algebra of  $L/K$,

\item The field $L_{dif}:=(L/K)_{dif}:=L_{\CD (L/K)}:=L_\CD:=\{ l\in L\, | \, \d l = l\d$ for all $\d \in \CD (L/K)\}$  is the centralizer in $L$ of the algebra of differential operators on $L/K$,

\item $L^{G(L/K)}_{dif}:=L^{G(L/K)}_{\CD (L/K)}:=L^G_{dif}:= L^G\cap L_{dif}$, 

\item $E:=E(L/K):=\End (L/K):=\End_K (L)\simeq M_n(K)$ is the endomorphism algebra of the $K$-vector space $L$ and  $M_n(K)$ is the algebra of $n\times n$ matrices over $K$ where $n=[L:K]:=\dim_K(L)$ is the degree of the field extension $L/K$,

\item For  subsets $S$ and $T$ of $E(L/K)$, $C_{E(L/K)}(S):=\{ c\in E(L/K)\, | \, cs=sc$ for all $s\in S\}$ is the centralizer of $S$ in $E(L/K)$ and  $C_{T}(S):=\{ t\in T\, | \, ts=st$ for all $s\in S\}$ is the centralizer of $S$ in $T$, 

\item For a field $ M\in \CF (L/K)$, let
\begin{eqnarray*}
\CD (L/K)^M &:=& \{ \d \in \CD (L/K)\, | \, \d m=m\d\;\; {\rm for\; all}\;\; m\in M\}=C_{\CD (L/K)}(M),\\
 G(L/K)^M&:=& \{ g\in G(L/K)\, | \, g(m)=m\;\; {\rm for\; all}\;\; m\in M \}=C_{G(L/K)}(M),
\end{eqnarray*}

\item $L^{pi}$, $L^{sep}$, $L^{nor}$  and  $L^{gal}$ are the largest pure inseparable/separable/normal/ Galois subfields of the field extension $L/K$, respectively, 

\item $L^{pi}_{dif}:=L^{pi}_{\CD (L/K)}:= L^{pi}\cap L_{dif}$,

\item  $(L/K)_{nor}=L_{nor}$ is the least  (w.r.t. $\subseteq$)  co-normal subfield of a finite field extension $L/K$,

\item $\CA (E(L/K),L)$ is the set of all $K$-subalgebras of $E(L/K)$ that contain the field $L$,

\item For an algebra $A\in  \CA (E(L/K),L)$,  let
\begin{eqnarray*}
L^{A\cap \CD (L/K)_+}  &:=& \{ l \in L\, | \, \d (l)=0\;\; {\rm for\; all}\;\; \d\in A\cap \CD (L/K)_+\},\\
&\stackrel{{\rm Cor.}\, \ref{a1Jun25}}{=}&\{ l \in L\, | \, \d l=l\d\;\; {\rm for\; all}\;\; \d\in A\cap \CD (L/K)_+\},\\
 L^{A\cap G(L/K)}&:=& \{  l \in L\, | \, g(l)=l\;\; {\rm for\; all}\;\; g\in A\cap G(L/K) \},
\end{eqnarray*} 

\item $\CA (E,L, G):=\{ A\in  \CA (E,L)\, | \, gAg^{-1}\subseteq A$ for all $g\in G\}$,

%\item $K$ is a field of characteristic $p>0$,  $\bK$ is its algebraic closure and $\gf :\bK\ra \bK$, $\l \mapsto \l^p$ is the Frobenius monomorphism, 

%\item $K_{perf}:=\bigcap_{i\geq 0}\gf^{i}(K)\subseteq K\subseteq K^{perf}:=\bigcup_{i\geq 0}\gf^{-i}(K)\subseteq \bK$ where  $K_{perf}$ is the largest perfect subfield of $K$ and $K^{perf}$ is the perfect closure of $K$ in $\bK$ (the smallest  perfect overfield of $K$ in $\bK$), 

\item $\CL (L/K)$ is the  set of balanced Lie subalgebras $\CG$ of the Lie algebra $\CD (L/K)_+$, i.e.  $C_L(\CG) = L^\CG$,

\item $\mL (L,K)$ is the set of dominant Lie algebras,  

\item $\mL G(L/K):=\{ (\CG, H)\in \mL (L/K)\times \CG (G(L/K))\, | \, h\CG h^{-1}=\CG$    for all elements $h\in H\}$,

\item $\fCK$ is  the class of   central simple finite dimensional  $K$-algebras and  $\fCdK$ is  the class of  central simple finite dimensional  division $K$-algebras,

%\item

\end{itemize}

   The following two results are the main results of the classical Galois Theory: {\em Let a finite field extension $L/K$ be a Galois field extension. Then:}
  \begin{enumerate}

\item {\bf (The Galois Correspondence for subfields)} 
{\em The map
$$
\CF (L/K)\ra \CG (G(L/K)), \;\; M\mapsto  G(L/M)
$$ 
is a bijection with inverse} $
H\mapsto L^H.$

\item {\bf (The  Galois Correspondence for Galois subfields)} 
{\em The map
$$
\CG (L/K)\ra \CN (G(L/K)), \;\; \G\mapsto  G(L/\G)
$$ 
is a bijection with inverse} $
\G\mapsto L^{\G}.$

\end{enumerate}
 
 The relationship between derivations and purely inseparable field extensions of exponent 1 was studied by Baer \cite{Baer-1927}. In \cite{Jacobson-1937, Jacobson-1944}, Jacobson  introduced an analogue  of the Galois theory for {\em purely inseparable} field  extensions  $L/K$ of exponent 1 (meaning that the $p$'th power of every element of $L$ is in $K$), where the Galois groups (which are trivial) are replaced by restricted Lie algebras of derivations.  
 Jacobson's theory establishes a correspondence between intermediate fields of a purely inseparable extension of exponent 1 and restricted Lie subalgebras of derivations.

 A purely inseparable extension is called a {\bf modular extension} if it is a tensor product of simple extensions. In particular, every extension of exponent 1 is modular, but there are non-modular extensions of exponent 2 as shown by Weisfeld \cite{Weisfeld-1965}. In more detail, let us   consider subfields  $K=\Fp (x^p,y^p,z^{p^2})$ and $L=K (z,xz+y)$ of the field $\Fp (x,y,z)$ of rational functions in 3 variables.  Weisfeld \cite{Weisfeld-1965}  proved that the field extension $L/K$ is purely inseparable field extension of exponent 2 which is not modular.
  Sweedler \cite{Sweedler-1968} and Gerstenhaber  and  Zaromp \cite{Gerstenhaber-Zaromp-1970} gave an extension of the Galois correspondence to {\em modular} purely inseparable extensions, where derivations are replaced by higher derivations. A survey of Galois theory for inseparable field extensions is given in the paper of  Deveney  and Mordeson \cite{Deveney-Mordeson-1996}.
\\
  
 Given a $K$-algebra $A$, a group $\CG$ and a group homomorphism $\phi: \CG\ra \Aut_K(A)$, $g\mapsto \phi_g$, then a direct sum of $A$-modules,
$\CA := \bigoplus_{g\in \CG}Ag$,  has a  $K$-algebra structure that is given by the rule: For all elements $a,b\in A$ and $g,h\in \CG$, 
$$ ag\cdot bh=a\phi_g(b)gh.$$
The algebra $\CA$ is called a {\em skew group algebra}. \\
 
{\bf Definition of B-extensions in arbitrary characteristic.} For a finite field extension $L/K$, the algebra $E(L/K)$ contains the group $G(L/K)$ and the algebra $\CD (L/K)$ of differential operators on the $K$-algebra $L$. A $K$-subalgebra of $E(L/K)$ which generated by the group $G(L/K)$ and the algebra $\CD (L/K)$ is the skew group algebra 
$$
\CD (L/K)\rtimes G(L/K)=\bigoplus_{g\in G(L/K)}\CD (L/K)g\subseteq E(L/K),\;\; {\rm (Theorem \; \ref{AB17Apr25}.(1))}.
$$
 \begin{definition}
 A finite field extension $L/K$ is called a {\bf B-extension} if 
 $$E(L/K)=\CD (L/K)\rtimes G(L/K).$$
 \end{definition}
% `B' stands for `Bi'=`2', i.e. both fundamental objects that are attached to the field extension $L/K$, the algebra of differential operators $\CD (L/K)$ and the automorphism group $G(L/K)$, play a key role in the definition. 
 
The notation `B' abbreviates ‘Bi’ = ‘2’: both fundamental structures associated with the field extension $L/K$ - the algebra of differential operators $\CD (L/K)$ and the automorphism group $G(L/K)$ - are essential to the definition.

 By the definition,  a field  extension $L/K$ is a B-extension iff the algebra $\CD (L/K)\rtimes G(L/K)$ is as large as possible. So, B-extensions are the most symmetrical field extensions. 
 \begin{definition}
 A finite field extension $L/K$ is called a {\bf G-extension} if 
 $$E(L/K)=L\rtimes G(L/K).$$
 A finite field extension $L/K$ is called a {\bf D-extension} if 
 $$E(L/K)=\CD(L/K).$$
 \end{definition}
Above, `G' and `D' stand for `the Galois group' and `the algebra of differential operators', respectively. By the definition, G- and D-extensions are B-extensions. There are many B-extensions that are neither G- nor D-extensions. They are precisely normal field extensions that are neither Galois nor purely inseparable field extensions (Corollary \ref{VVB-d4May25}).

\begin{itemize}

\item  {\bf Proposition \ref{A8May25}.} {\em A finite field extension  is   a G-extension iff it  is a Galois field extension.}

\item {\bf Corollary \ref{VVB-a4May25}.}
 {\em A finite field extension is a D-extension iff it is a purely inseparable field extension.}

\item {\bf Proposition \ref{VB-aC24Mar25}.}  {\em A finite field extension $L/K$  is separable iff} $\CD (L/K)=L$.

\item {\em So, for finite separable field extensions, the Galois extensions are the only B-extensions (and this case was treated separately in \cite{GaloisTh-RingThAp} from the point of view of G-extensions). Therefore, in this paper we consider the (remaining) case of finite inseparable field extensions (necessarily of characteristic $p>0$).  }
\end{itemize}

 So, intuitively, purely inseparable field extensions in prime characteristic are direct analogue of Galois field extensions  in the separable situation where the algebras  $E(L/K)=\CD(L/K)$ and $E(L/K)=L\rtimes G(L/K)$ are `swapped', i.e. the skew group algebra $L\rtimes G(L/K)$ is replaced by the algebras of differential operators $\CD(L/K)$ (automorphisms are replaced by differential operators). The general situation for B-extensions (in prime characteristic), is a mixture of two cases: the Galois and the purely inseparable ones (Theorem \ref{VVB-30Apr25}).\\
 
{\bf Analogue of the Galois Theory for purely inseparable finite  field extensions.} Let $\CA ( \CD (L/K), L, sim)$ be the set of simple $K$-subalgebras of $\CD (L/K)$ that contains the field $L$. Notice that $\CD (L/K)=L\oplus \CD (L/K)_+$ where 
$$\CD (L/K)_+:=\{\d \in \CD (L/K)\, | \, \d (1)=0\}$$
 is the {\bf left ideal of proper  differential operators} on $L/K$  which is a Lie algebra with commutator of elements as the Lie bracket. The left $\CD (L/K)$-module $L$ is isomorphic to the 
 factor module $\CD (L/K)/\CD (L/K)_+$.
  For a subset $S\subseteq \CD (L/K)$, let 
  $$
  C_L(S) := \{l\in L\, |\, ls=sl\; {\rm  for\; all}\;s\in S \}\;\; {\rm and}\;\; 
  L^S := \{l\in L\, |\, s(l)=0\; {\rm  for\; all }\; s\in S\}.
  $$
   Clearly, $C_L(S) \in \CF (L/K)$. Lemma \ref{bD24Mar25}.(2) shows that $ C_L(S)\subseteq L^S$.

\begin{definition} 

A Lie subalgebra $\CG$ of the Lie algebra $\CD (L/K)_+$ is called {\bf balanced} if  
$C_L(\CG) = L^\CG$. The set of balanced Lie subalgebras  of $\CD (L/K)_+$ is denoted by $\CL (L/K)$.  
A Lie algebra $\CG \in \CL (L/K)$ is called a {\bf dominant Lie algebra} if $\CG'\subseteq \CG$ 
for all Lie algebras $\CG' \in \CL (L/K)$  such that $L^{\CG'}=L^\CG$. Let $\mL (L/K)$ be the set  of dominant Lie algebras and 
$$
\mL G(L/K):=\{ (\CG, H)\in \mL (L/K)\times \CG (G(L/K))\, | \, h\CG h^{-1}=\CG\;\; {\rm  for\; all\; elements}\;\; h\in H\}.
$$  An element $(\CG, H)\in \mL G(L/K)$ is called a {\bf dom-group} or a {\bf dominant pair} and the set $\mL G(L/K)$ is called the {\bf dom-group set} or the 
{\bf set of dominant pairs} of the field extension $L/K$.
\end{definition}
 A dominant Lie algebra and a dom-group are two of the key concepts  in the analogue of the Galois correspondence for {\em normal} finite  field extensions (Theorem \ref{1Jun25}). The concept of  a dominant Lie algebra is one  of the key concepts  in the analogue of the Galois correspondence for {\em purely inseparable}   finite  field extensions (Theorem \ref{D24Mar25}).

Theorem \ref{D24Mar25} is an analogue of the Galois Theory for purely inseparable finite field extensions that gives a bijection between subfields and the algebras  of   differential operators (Theorem \ref{D24Mar25}.(1,2)) and between subfields and the dominant Lie algebras  (Theorem \ref{D24Mar25}.(3,4)).\\

$\bullet$ {\bf Theorem \ref{D24Mar25}.} 
{\em  Let $K$ be a field of  characteristic $p$, 
 $L/K$ be a  purely inseparable finite  field extension. Then:}
 \begin{enumerate}

\item $\CA ( \CD (L/K), L)\stackrel{{\rm Pr.}\, \ref{aD24Mar25}}{=}\CA ( \CD (L/K), L, sim)=\{ \CD(L/M)\, | \, M\in \CF (L/K) \}$.

\item {\sc (Analogue of the Galois correspondence  for subfields of a purely inseparable finite  field extension)}

 {\em The map 
$$\CF (L/K)\ra \CA ( \CD (L/K), L)\stackrel{{\rm Pr.}\, \ref{aD24Mar25}}{=}\CA (\CD (L/K), L, sim), \;\; M\mapsto  C_{\CD (L/K)}(M)=\CD (L/M)$$  is a bijection with inverse} $A\mapsto C_{\CD (L/K)}(A)$.  %$\Big( \CD'=\CD (L/M)\mapsto L^{\CD (L/M)_+}.\Big)$ 

\item $\mL (L/K)=\{ \CG (M)=\CD(L/M)_+\, | \, M\in \CF (L/K)\}$.

\item  {\sc (Analogue of the Galois correspondence  for subfields of a purely inseparable finite  field extension)}

{\em The map}
$$
\CF (L/K)\ra  \mL (L/K), \;\; M\mapsto \CG (M)=\CD(L/M)_+
$$
{\em is a bijection with inverse} $\CG\mapsto L^\CG=C_L(\CG)$.

\end{enumerate}

{\bf Analogue of the Galois Theory for normal finite field extensions.} Notice that for a  normal finite field extension $L/K=L^{pi}\t L^{gal}$  and a field $M\in \CF (L/K)$, the following  maps are  isomorphisms of Galois groups:
\begin{eqnarray*}
 G(L/K)&\ra & G(L/L^{pi}), \;\;\;\; g\mapsto g,\\
 G(L/L^{pi})&\ra & G(L^{gal}/K), \;\;g\mapsto g|_{L^{gal}}: L^{gal}\ra L^{gal},\\
  G(L/ML^{pi})&\ra & G(L/M), \;\;\;\;\;\; g\mapsto g.
\end{eqnarray*}
Therefore, we may identify the Galois groups above: $ G(L/K)=G(L/L^{pi})=G(L^{gal}/K)$ and  $G(L/M)=G(L/ML^{pi})$. For normal field extensions, Theorem \ref{1Jun25} gives an order reversing  Galois-type correspondence for their subfields. \\

$\bullet$ {\bf Theorem \ref{1Jun25}.} 
{\em  Let $L/K$ be a normal finite field extension of characteristic $p$. Then:}

\begin{enumerate}

\item $ \CA (E(L/K),L)=\{ C_{E(L/K)}(M)=\CD (L/M)\rtimes G(L/M)=\CD (L/K)^M\rtimes G(L/K)^M\, | \, M\in \CF (L/K)\}$. 

\item  {\sc (Analogue of the  Galois correspondence for subfields of a normal  field extension)}

 {\em The map}
$$
\CF (L/K)\ra \CA (E(L/K),L), \;\; M\mapsto  C_{E(L/K)}(M)=\CD (L/M)\rtimes G(L/M)=\CD (L/K)^M\rtimes G(L/K)^M
$$ 
{\em is a bijection with inverse}
\begin{eqnarray*}
A&\mapsto& C_{E(L/K)}(A)= L^{A\cap \CD (L/K)_+}\cap L^{A\cap G(L/K)}=\Big(L^{A\cap \CD (L/K)_+} \Big)^{A\cap G(L/K)}.\\
 \bigg( A&=&\CD (L/M)\rtimes G(L/M)\mapsto 
 L^{\CD (L/M)_+}\cap L^{G(L/M)}=\Big(L^{\CD (L/M)_+} \Big)^{G(L/M}.\bigg)
\end{eqnarray*}

\item $\mL G(L/K)=\Big\{ \Big(\CD (L/M)_+, G(L/M)\Big) \, | \, M\in \CF (L/K)\Big\}$ {\em and for all fields $M\in \CF (L/K)$, 
$$
\Big(\CD (L/M)_+, G(L/M) \Big)=\Big(\CD (L/ML^{gal})_+, G(L/ML^{pi}) \Big)\in \mL (L/L^{gal})\times G(L/L^{pi}).
$$
 In particular,} 
$$
\mL G(L/K)=\Big\{ (\CG , H)\in \mL (L/L^{gal})\times \CG (G(L/L^{pi}))\, | \, h\CG h^{-1}=\CG\;\; {\rm  for\; all\; elements}\;\; h\in H\}.
$$
%The restriction map $G(L/L^{pi})\ra G(L^{gal}/K)$, $g\mapsto g|_{L^{gal}}: L^{gal}\ra L^{gal}$ is a group  isomorphism.

\item  {\sc (Analogue of the  Galois correspondence for subfields of a normal  field extension)}

{\em The map}
$$
\CF (L/K)\ra \mL G(L/K),  \;\; M\mapsto  \Big(\CD (L/M)_+, G(L/M) \Big)=\Big(\CD (L/ML^{gal})_+, G(L/ML^{pi}) \Big)
$$ 
{\em is a bijection with inverse}
$ (\CG , H)\mapsto L^\CG\cap L^H=\Big( L^\CG\Big)^H$ and $h(L^\CG)=L^\CG$ for all $h\in H$.
%The restriction map $G(L/L^{pi})\ra G(L^{gal}/K)$, $g\mapsto g|_{L^{gal}}: L^{gal}\ra L^{gal}$ is a group  isomorphism.

\item {\em For all fields $M\in \CF (L/K)$, $[M:K]|G(L/M)|\dim_K(\CD (L/M))=[L:K]^2$  or, equivalently,} $|G(L/M)|\dim_M(\CD (L/M))=[L:M]^2$.

\item {\em The field extension $L/L^{gal}$ is a finite purely inseparable field extension and} 
$\mL (L/L^{gal})=\{ \CD (L/N)_+\, | \, N\in \CF (L/L^{gal})\}$.

\end{enumerate}

For a finite field extension $L/K$, let $\CN (L/K)$ be the set of normal subfields $N/K$ of $L/K$ and 
 $\CN(G(L/K))$ be the set of normal subgroups of the group $ G(L/K)$. For normal field extensions, Theorem \ref{11Jun25} describes their  normal subfields  and  establishes an analogue of the Galois correspondence  for normal  subfields.\\

$\bullet$ {\bf Theorem \ref{11Jun25}.} 
{\em 
Let $L/K$ be a normal finite field extension of characteristic $p$. Then:}

\begin{enumerate}

\item {\em For each field $M\in  \CN (L/K)$, $M=M^{pi}\t M^{gal}$, where $M^{pi}=M\cap L^{pi}
$ and $M^{gal}:=M\cap L^{gal}$, and} 
$$\CN (L/K)=\CF (L^{pi}/K)\t \CN (L^{gal}/K)
:=\{ N\t \G\, | \, N\in \CF (L^{pi}/K),\,  \G\in \CN (L^{gal}/K)\}.
$$
\item $\CA (E(L/K),L, G(L/K))=\Big\{\CD (L^{pi}/N)\t \Big(L^{gal}\rtimes G(L^{gal}/\G) \Big)= E(L^{pi}/N)\t  E(L^{gal}/M^{gal})\, | \,$ $ N\in \CF (L^{pi}/K),\,  \G\in \CN  (L^{gal}/K)  \Big\}$.

\item {\sc (Analogue of the  Galois correspondence for normal  subfields of a normal  field extension)}

{\em The map}
\begin{eqnarray*}
\CN (L/K)&\ra & \CA (E(L/K),L, G(L/K)), \;\; M\mapsto  C_{E(L/K)}(M)=C_{E(L^{pi}/K)}(M^{pi})\t C_{E(L^{gal}/K)}(M^{gal})\\
&=&E(L^{pi}/M^{pi})\t E(L^{gal}/M^{gal})=\CD (L^{pi}/M^{pi})\t \Big(L^{gal}\rtimes G(L^{gal}/M^{gal}) \Big)
\end{eqnarray*}
{\em  is a bijection with inverse}
\begin{eqnarray*} A&\mapsto& C_{E(L/K)}(A)=\Big( L^{pi}\Big)^{A\cap \CD (L^{pi}/K)_+}\t \Big( L^{gal}\Big)^{A\cap G(L^{gal}/K)}.\\
 \bigg(A&=&\CD (L^{pi}/N)\t \Big(L^{gal}\rtimes G(L^{gal}/\G) \Big) \mapsto   \Big( L^{pi}\Big)^{ \CD (L^{pi}/N)_+}\t \Big( L^{gal}\Big)^{ G(L^{gal}/\G)}.\bigg)
\end{eqnarray*}

\item {\sc (Analogue of the  Galois correspondence for normal  subfields of a normal  field extension)} 

{\em The map}
$$
\CN (L/K)\ra \mL (L^{pi}/K)\times \CN (G(L^{gal}/K)), \;\; M=M^{pi}\t M^{gal}\mapsto \Big(\CD (L^{pi}/M^{pi})_+, G(L^{gal}/M^{gal})\Big)
$$
{\em  is a bijection with inverse} $(\CG, H)\mapsto \Big(L^{pi} \Big)^\CG\t \Big( L^{gal}\Big)^H$.

\end{enumerate}

{\bf Characterizations of the field $L_{dif}$ and the vector space $L^{\CD (L/K)_+}$.}  Theorem \ref{23May25} is one of the key facts in the analogue of the Galois Theory for normal finite field extensions: the equality $L_{dif}=L^{\CD (L/K)_+}$ is the guiding principle behind the definition of a balanced Lie algebra.
 The equalities   $L^{sep}=L_{dif}$ and $L^{sep}=L^{\CD (L/K)_+}$ provide two complementary characterizations of the field $L^{sep}$: one via commutation, the other via the solution set of the differential-operator system.

\begin{itemize}
\item  {\bf Theorem \ref{23May25}.} 
{\em Let $L/K$ be a finite field extension. Then $L_{dif}=L^{sep}=L^{\CD (L/K)_+}$.}
\end{itemize}

{\bf Characterization of B-extensions.}
  Theorem \ref{VVB-30Apr25} is a characterization of  B-extensions. In particular, it shows that the class of B-extensions coincide with the class of normal finite field extensions. \\

 $\bullet$ {\bf Theorem \ref{VVB-30Apr25}. }
{\em Let $L/K$ be a finite field extension  of prime characteristic. Then the  following statements are equivalent:}
\begin{enumerate}

\item $L/K$ {\em is a B-extension.}

\item $L=L^{G(L/K)}\t L_{dif}$.

%\item $L=L^{pi}\t L_{dif}$.

\item $L^{G(L/K)}_{dif}=K$.

%\item $L^{pi}\cap  L_{dif}=\{ e\}$.

\item $L=L^{G(L/K)}\t L^{gal}$ and  $L^{G(L/K)}/K$ {\em is a normal the field  extension.}

\item $L^{G(L/K)}=L^{pi}$. 

%\item ***neverno $L=L^{pi}\t L_{dif}$.***

%\item ***neverno   $L^{pi}_{dif}=K$.  ***

\item $L=L^{pi}\t L^{gal}$.

\item  $L/K$ {\em is normal.}

\end{enumerate} 
{\em If the equivalent conditions hold then:}

\begin{enumerate}

%\item[(i)] $L^{G(L/K)}=L^{pi}$ and $L_{dif}= L^{gal}$.

\item[(a)] $L^{G(L/K)}=L^{pi}$,  $\CD (L^{G(L/K)}/K)=E(L^{G(L/K)}/K)$ {\em and } $\Big(L^{G(L/K)}/K\Big)_{dif}=K$. 

\item[(b)]    $L_{dif}=L^{sep}= L^{gal}$,  $L_{dif}\rtimes G(L_{dif}/K)=E(L_{dif}/K)$, $(L_{dif})^{G(L_{dif}/K)}=K$ {\em and}  
\begin{eqnarray*}
E(L/K)&=&\CD (L/K)\rtimes G(L/K)=\CD (L^{G(L/K)}/K) \t \Big(L_{dif}\rtimes G(L_{dif}/K) \Big)\\
&=&\CD (L^{G(L/K)}/K) \t \Big(L^{gal}\rtimes G(L^{gal}/K) \Big)=\CD (L^{pi}/K) \t \Big(L_{dif}\rtimes G(L_{dif}/K) \Big)\\
&=&\CD (L^{pi}/K) \t \Big(L^{gal}\rtimes G(L^{gal}/K) \Big).
\end{eqnarray*}

\end{enumerate}

{\bf The subalgebra $\CD (L/K)\rtimes G(L/K)$ of the endomorphism algebra $E(L/K)$.}  For a finite field extension $L/K$ of characteristic $p$, the map
$$
G(L/K)\ra \Aut_K(\CD (L/K)), \;\; g\mapsto \o_g: \d \mapsto g(\d):=g\d g^{-1}
$$

is a group {\em monomorphism} (since $L\subseteq \CD (L/K)$). Theorem \ref{AB17Apr25} shows that a subalgebra of $E(L/K)$ which is generated by  the automorphism group $G(L/K)$ and the algebra of differential operators $\CD (L/K)$ is the skew group algebra $\CD (L/K)\rtimes G(L/K)$. \\

$\bullet$  {\bf Theorem \ref{AB17Apr25}.} 
{\em Let $L/K$ be a finite field extension of   characteristic $p$. Then:}

\begin{enumerate}

\item {\em The subalgebra of $E(L/K)$ which is generated by the automorphism group $G(L/K)$ and the algebra of differential operators $\CD (L/K)$ is the skew group algebra 
$$\CD (L/K)\rtimes G(L/K)=\bigoplus_{g\in G(L/K)}\CD (L/K)g$$
where the multiplication is given by the rule: For all elements $\d, \d'\in \CD  (L/K)$ and $g,g'\in G(L/K)$,
$$ \d g\cdot \d'g'=\d g\d'g^{-1} \cdot gg'$$ 
 where $g\d'g^{-1}\in \CD (L/K)$, by (\ref{GCDmon}).

\item $\CD (L/K)\rtimes G(L/K)=E\Big(L/L^{G(L/K)}_{dif}\Big)\in \fC \Big(L^{G(L/K)}_{dif}\Big)$ where} $L^{G(L/K)}_{dif}:= L^{G(L/K)}\cap L_{dif}$.

\item $C_{E(L/K)}(\CD (L/K)\rtimes G(L/K))=L^{G(L/K)}_{dif}$ and $C_{E(L/K)}\Big(L^{G(L/K)}_{dif}\Big)=\CD (L/K)\rtimes G(L/K)$.

\end{enumerate}

 {\bf The algebra of    differential operators  $\CD (L/K)$ on a finite field extension $L/K$ in prime characteristic.}  Theorem \ref{20Apr25} is an explicit  description of the algebra $\CD (L/K)$ of    differential operators on a finite field extension $L/K$ in prime characteristic.\\

 $\bullet$ {\bf Theorem \ref{20Apr25}.}
 {\em 
Let $L/K$ be a finite field extension of characteristic $p$. Then:}
\begin{enumerate}

\item $\CD (L/K)=\CD (L/L_{dif})=\CD\Big(L^{G(L/K)}\t_{L^{G(L/K)}_{dif}} L_{dif}/ L^{G(L/K)}_{dif}\Big)=
L_{dif}\t_{L^{G(L/K)}_{dif}}\CD\Big(L^{G(L/K)}/ L^{G(L/K)}_{dif}\Big)$.
 {\em In particular, each differential operator in $\CD\Big(L^{G(L/K)}/ L^{G(L/K)}_{dif}\Big)$ can be uniquely lifted to a differential operator in $\CD (L/K)$.

\item If, in addition, $ L^{G(L/K)}_{dif}=K$ $(\Leftrightarrow L=L^{G(L/K)}\t L_{dif}$, by Proposition \ref{AC17Apr25}.(6)),  i.e. $L/K$ is a B-extension  then} $\CD (L/K)=\CD (L^{G(L/K)}\t L_{dif}/K)=L_{dif}\t \CD (L^{G(L/K)}/K)$. 

\end{enumerate}

 {\bf The algebra $L\rtimes G(L/K)$.}  Proposition \ref{A20Apr25} is an explicit  description of the subalgebra  of $E(L/K)$ which is generated by the field $L$ and the automorphism group $G(L/K)$. This subalgebra is a skew group algebra $L\rtimes G(L/K)$.\\

 $\bullet$ {\bf Proposition  \ref{A20Apr25}.}
{\em Let $L/K$ be a  finite field extension of characteristic $p$. Then }
$$L\rtimes G(L/K)=L^{G(L/K)}\t_{L^{G(L/K)}_{dif}}\Big( L_{dif}\rtimes G \Big(L_{dif}/L^{G(L/K)}_{dif}\Big)\Big).$$

{\bf The field extension $L/L^{G(L/K)}_{dif}$ and its  properties.} Theorem  \ref{9Jun25} shows that the  field extension $L/L^{G(L/K)}_{dif}$  is a B-extension.\\

$\bullet$ {\bf Theorem \ref{9Jun25}.}  {\em Let $L/K$ be a  finite field extension of characteristic $p$. Then:}

\begin{enumerate}

\item {\em The field extension $L/L^{G(L/K)}_{dif}$   is a B-extension, hence normal (by Theorem \ref{VVB-30Apr25}). }

\item $\CD (L/K)\rtimes G(L/K)=\CD \Big(L/L^{G(L/K)}_{dif}\Big)\rtimes G\Big(L/L^{G(L/K)}_{dif}\Big)=E\Big(L/L^{G(L/K)}_{dif}\Big)$. 

\item $\CD (L/K)=\CD \Big(L/L^{G(L/K)}_{dif}\Big)$.

\item $ G(L/K) =G\Big(L/L^{G(L/K)}_{dif}\Big)$.

%\item $\Big( L/L^{G(L/K)}_{dif}\Big)_{dif}=L^{G(L/K)}_{dif}$.
\end{enumerate}

Corollary \ref{a9Jun25} clarifies the structure of the field extension $L/L^{G(L/K)}_{dif}$. \\

 $\bullet$ {\bf Corollary \ref{a9Jun25}.} 
{\em Let $L/K$ be a  finite field extension of characteristic $p$. Then:}

\begin{enumerate}

\item $L=\Big( L/L^{G(L/K)}_{dif}\Big)^{pi}\t_{L^{G(L/K)}_{dif}}\Big( L/L^{G(L/K)}_{dif}\Big)^{gal}=L^{G(L/K)}\t_{L^{G(L/K)}_{dif}}   L_{dif}$.

\item $\Big( L/L^{G(L/K)}_{dif}\Big)^{pi}=L^{G\Big(L/L^{G(L/K)}_{dif}\Big)}= L^{G(L/K)}$.

\item  $\Big( L/L^{G(L/K)}_{dif}\Big)^{gal}=\Big( L/L^{G(L/K)}_{dif}\Big)^{sep}=\Big( L/L^{G(L/K)}_{dif}\Big)_{dif}=L_{dif}$.

\end{enumerate}

{\bf The least co-normal field extension $(L/K)_{nor}$.}
\begin{definition}
For a finite field extension $L/K$ of arbitrary characteristic, a field extension $M/K$ in $L/K$ is called a {\bf co-normal subfield} if the field extension $L/M$ is normal. The {\bf least co-normal subfield} of $L/K$ (w.r.t. $\subseteq$) is denoted by $(L/K)_{nor}=L_{nor}$. 
\end{definition}

 Theorem  \ref{A9Jun25} shows that the  field extension $L^{G(L/K)}_{dif}/K$  is the least co-normal field extension of $L/K$ when $\char (K)=p$. \\

  $\bullet$ {\bf Theorem  \ref{A9Jun25}.}
{\em Let $L/K$ be a  finite field extension of characteristic $p$. Then}  
$$(L/K)_{nor}=L^{G(L/K)}_{dif}/K.$$

In characteristic zero, Proposition  \ref{B9Jun25} shows that the  field extension $L^{G(L/K)}/K$  is the least co-normal  field extension of $L/K$.\\

 $\bullet$  {\bf Proposition \ref{B9Jun25}.}{\em 
Let $L/K$ be a  finite field extension of characteristic zero. Then} 
$$(L/K)_{nor}=L^{G(L/K)}/K.$$

In \cite{AnGaloisTh-ArbFields}, 
 an analogue of the Galois Theory for arbitrary finite field extensions is presented. The approach is based on this paper.

The paper is organized as follows:

 In Section \ref{AELKL}, we collect results on the central simple algebras that are used in the paper. The key results of Section \ref{AELKL} are  Theorem 
 \ref{A24Mar25} and  Theorem  \ref{B24Mar25}.
 For a central simple algebra $A\in \fCK$ and its strongly maximal subfield $L\in \Max_s(A)$, Theorem 
 \ref{A24Mar25}  describes subalgebras of $A$  that contain the field $L$. For a finite field extension $L/K$,  Theorem \ref{B24Mar25} describes subalgebras of the endomorphism algebra $E(L/K)$ that contain the field $L$.

In Section \ref{ALG-DIF-OPS}, Lemma \ref{aC24Mar25} and Theorem \ref{AC24Mar25} give  explicit descriptions of the algebras of differential operators $\CD (L/K)$ for an arbitrary finite field extension $L/K$ in characteristic zero and  in prime characteristic, respectively. Proposition \ref{VB-aC24Mar25} is a criterion for the algebra of differential operators $\CD (L/K)$ being a commutative algebra or equivalently $\CD (L/K)=L$. 
Theorem \ref{23May25} shows that  $L_{dif}=L^{sep}=L^{\CD (L/K)_+}$. Corollary \ref{b16Apr25} is  a criterion for $\CD (L/K)=\D (L/K)$.

  In Section \ref{GAL-PURE-INS}, for a purely inseparable   finite field extension $L/K$, Theorem \ref{C24Mar25} provides an explicit description of the algebra of differential operators  $\CD (L/K)$.   Theorem \ref{D24Mar25} is an analogue of the Galois Theory for purely inseparable field extensions that establishes a bijection between subfields and  algebras  of   differential operators and  between subfields and the dominant Lie algebras.
  
 In Section \ref{3-ALGS},   Proposition  \ref{A14May25} provides sufficient conditions for two subfields  being linearly disjoint (i.e. their compositum  is isomorphic to their tensor product). Proposition  \ref{A14May25} is a powerful result since it implies the known fact that {\em a finite field extension $L/K$ is normal iff}
  $$
 L=L^{pi}\t L^{gal}\;\; {\rm (Corollary\;  \ref{a25May25})}.
 $$
  Theorem \ref{B11May25} is a criterion for $\CD (L/K)=E(L/K)$. Theorem \ref{BC24Mar25} describes properties of the subalgebra $L\rtimes G(L/K)$ of $E(L/K)$.  Proposition \ref{A8May25} is a criterion for a finite field extension being a Galois field extensions which is given via the algebra $L\rtimes G(L/K)$. Corollary \ref{a24Mar25} provides explicit descriptions of the sets 
  $$\CA (L\rtimes G(L/K), L)\;\; {\rm  and }\;\;\CA (L\rtimes G(L/K), L, G(L/K)).
  $$
   Theorem \ref{AB17Apr25} shows that the subalgebra of $E(L/K)$ which is generated by  the automorphism group $G(L/K)$ and the algebra of differential operators $\CD (L/K)$ is the skew group algebra $\CD (L/K)\rtimes G(L/K)$. Every finite field extension $L/K$ is a B-extension over the field $L^{G(L/K)}_{dif}$, i.e. $L/L^{G(L/K)}_{dif}$ is a B-extension  (Theorem \ref{AB17Apr25}.(2)). Corollary \ref{c17Apr25} is a criterion for the finite field extension $L/K$ being a B-extension.

 Proposition \ref{AC17Apr25} shows that,
 the field extension $L_{dif}/L^{G(L/K)}_{dif}$ is a Galois field extension with Galois group isomorphic to $G(L/K)=G(L/L^{G(L/K)})$ and 
 $$
 L=L^{G(L/K)}\t_{L^{G(L/K)}_{dif}}L_{dif}.
 $$
  Proposition \ref{AC17Apr25}.(5) presents natural classes of field extensions with identity automorphism groups. Namely, $G(L/L_{dif})=\{ e\}$ and $G\Big(L^{G(L/K)}/L^{G(L/K)}_{dif} \Big)=\{ e\}$.
  Proposition \ref{VB-C17Apr25} is a strengthening of  Proposition \ref{AC17Apr25} where the finite field extension $L/K$ is normal. 
 
 Theorem \ref{20Apr25} is an explicit  description of the algebra $\CD (L/K)$ of    differential operators on a finite field extension $L/K$ in prime characteristic. Corollary \ref{a20Apr25} is an explicit  description of the algebra $\CD (L/K)$ of    differential operators on a {\em normal} finite field extension $L/K$ in prime characteristic.

  Theorem  \ref{bA20Apr25} shows that the algebra $ \CD (L/K)\rtimes G(L/K)$ is a tensor product of  the subalgebras 
  $$\CD \Big(L^{G(L/K)}/L^{G(L/K)}_{dif}\Big)\;\; {\rm  and} \;\; L_{dif}\rtimes  G\Big(L_{dif}/L^{G(L/K)}_{dif}\Big)
  $$
   over the field $L^{G(L/K)}_{dif}$.
   If, in addition, the field extension $L/K$ is normal, Corollary \ref{cA20Apr25} shows that the algebra $ \CD (L/K)\rtimes G(L/K)$ is a tensor product of  the subalgebras $\CD (L^{pi}/K)$ and $ L^{sep}\rtimes  G (L^{sep}/K)$.
 
Suppose that $L_i/K$ $(i=1, \ldots , n)$ are normal finite field extensions such that their tensor product $L=\bigotimes_{i=1}^nL_i$ is a field.  Proposition \ref{A7Jun25} shows that 
$$\CD (L/K)\rtimes G(L/K)\simeq \bigotimes_{i=1}^n\CD (L_i/K)\rtimes G(L_i/K)
,\;\;  \CD(L)=\bigotimes_{i=1}^n\CD (L_i/K)$$  
  and $G(L/K)\simeq \prod_{i=1}^n G(L_i/K)$.

In Section \ref{B-EXT=NORMAL}, the aim is to give a proof of Theorem \ref{VVB-30Apr25} which is a characterization of B-extensions. In particular, it shows that the classes of B-extensions and  normal finite field extensions coincide. 
Corollary \ref{VVB-a4May25} is a characterization of B-extensions $L/K$ with $\CD (L/K)=E(L/K)$.
  Corollary \ref{VVB-c4May25} is a characterization of B-extensions $L/K$ with $L\rtimes G(L/K)=E(L/K)$.  
  
In Section \ref{AN-GT-NORMAL}, the aim is to prove  Theorem \ref{1Jun25} and  Theorem \ref{11Jun25}.  
 For normal field extensions, Theorem \ref{1Jun25} gives an order reversing  Galois-type correspondence for their subfields. Theorem \ref{11Jun25} describes normal subfields of normal field extensions and  establishes an analogue of the Galois correspondence  for normal subfields.

In Section \ref{LLGKL}, we apply the results of the previous sections to  field extensions  $L/L^{G(L/K)}_{dif}$ where $L/K$ are finite field extension of characteristic $p$. We show that the  field extensions  $L/L^{G(L/K)}_{dif}$ are B-extensions, hence normal (Theorem \ref{9Jun25}). Corollary \ref{a9Jun25} is about  the structure of the field extension $L/L^{G(L/K)}_{dif}$. In particular, it shows that   
$$\Big( L/L^{G(L/K)}_{dif}\Big)^{pi}= L^{G(L/K)}
\;\; {\rm  and }\;\;\Big( L/L^{G(L/K)}_{dif}\Big)^{gal}=\Big( L/L^{G(L/K)}_{dif}\Big)^{sep}=\Big( L/L^{G(L/K)}_{dif}\Big)_{dif}=L_{dif}.
$$
In prime characteristic, we prove that  $(L/K)_{nor}=L^{G(L/K)}_{dif}$ (Theorem \ref{A9Jun25}) and in characteristic zero, $(L/K)_{nor}=L^{G(L/K)}$  (Proposition \ref{B9Jun25}). 
%Theorem \ref{A9Jun25} and Proposition \ref{B9Jun25} explicitly describe the least co-normal subfields are  in prime and  zero characteristics, respectively.

 %%%%%%%%%%%%%%%%%% SECTION 2 %%%%%%%%%%%%%%%%%%%%%

\section{Description of subalgebras of the endomorphism algebra $E(L/K)$ that contain the field $L$} \label{AELKL} %\marginpar{AELKL}
 
 At the beginning of the section, we collect results on the central simple algebras that are used in the paper. The key results of the section are  Theorem 
 \ref{A24Mar25} and  Theorem  \ref{B24Mar25}.
 For a central simple algebra $A\in \fCK$ and its strongly maximal subfield $L\in \Max_s(A)$, Theorem 
 \ref{A24Mar25}  describes subalgebras of $A$  that contain the field $L$. For a finite field extension $L/K$,  Theorem \ref{B24Mar25} describes subalgebras of the endomorphism algebra $E(L/K)$ that contain the field $L$.\\ 
 
 {\bf Central simple algebras.}    Let us recall some basic facts and definitions on central simple algebras, see the book \cite{Pierce-AssAlg} for details. For a field $K$, we denote by $\fCK$  the class of   central simple finite dimensional  $K$-algebras. Let  $\fCdK$ the class of   central simple finite dimensional  division $K$-algebras. Clearly,  $\fCdK\subseteq \fCK $. A 
$K$-algebra belongs to the class $\fCK$ iff $A\simeq M_n(D)$ for some $D\in \fCdK$.

 The dimension of a division $K$-algebra $D$ over a field $K$ is denoted by $\dim_K(D)=[D:K]$.  For each algebra $\CA\in \fCK$, 
$$
\dim_K(\CA )=n^2\;\; {\rm for\; some\; natural\; number}\;\; n\geq 1
$$
 which is called the {\bf degree} of $\CA $ and is denoted by $\Deg (\CA )$. The algebra $\CA$ is isomorphic to the matrix algebra $M_l(D)$ over a division algebra $D\in \fCK$ which is unique (up to a $K$-isomorphism). Clearly, $\Deg (\CA ) = l\Deg (D)$. The degree $\Deg (D)$ is called the {\bf index} of $\CA$ and is denoted by $\Ind (\CA )$. If $L$ is a  subfield of $\CA$ that contains the field $K$ then 
 $$
 [L:K]\leq \Deg (\CA ).
 $$
  The field $L$ is called a {\bf strictly maximal} subfield if $[L:K]= \Deg (\CA )$. The sets of maximal and strictly maximal subfields of $A\in \fCK$ are denoted by $\Max (A)$ and $\Max_s(A)$, respectively. In general, the containment $\Max_s(A)\subseteq \Max (A)$ is strict but for all central division algebras $D\in \fCdK$,  the equality 
  $$
  \Max_s(D)= \Max (D)
  $$ 
  holds.

\begin{theorem}\label{Noether-SkolemThm}%\marginpar{Noether-SkolemThm}
{\bf (The Noether-Skolem Theorem)} Let $\CA\in \fCK$,  $A$ and $B$ be isomorphic,   simple subalgebras of $\CA$. Then $B=uAu^{-1}$ for some unit $u\in \CA^\times$.
\end{theorem}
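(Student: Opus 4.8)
The plan is to translate the conjugacy statement into an isomorphism of modules over a simple algebra, where isomorphism is detected by $K$-dimension alone. First I would invoke the structure theorem quoted above to write $\CA \cong \End_D(V)$, with $D \in \fCdK$ and $V$ a finite-dimensional right $D$-vector space, so that $\CA \cong M_n(D)$ acts on the left of $V$ commuting with the right $D$-action. Thus $V$ is an $\CA$-$D$-bimodule, equivalently a left module over the $K$-algebra $\CA \otimes_K D^{\mathrm{op}}$.

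Let $\phi : A \to B$ be the given isomorphism, write $i : A \hookrightarrow \CA$ for the inclusion, and $j := \phi : A \to \CA$ for the composite $A \xrightarrow{\phi} B \hookrightarrow \CA$. Restricting the left $\CA$-action along $i$ and along $j$ equips the one underlying space $V$ with two left $A \otimes_K D^{\mathrm{op}}$-module structures, call them $V_i$ and $V_j$. A $K$-linear bijection $u : V_i \to V_j$ is an isomorphism of these modules precisely when (i) it is right $D$-linear, i.e. $u \in \End_D(V) = \CA$, and (ii) $u\, i(a) = j(a)\, u$ for all $a \in A$. Condition (ii) reads $u a u^{-1} = \phi(a)$, so producing such a $u$ — necessarily a unit of $\CA$ by (i) — immediately gives $B = \phi(A) = uAu^{-1}$, which is the claim.

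It therefore remains to show $V_i \cong V_j$. The key point is that $A \otimes_K D^{\mathrm{op}}$ is a simple (finite-dimensional, hence Artinian) $K$-algebra: $D^{\mathrm{op}}$ is central simple over $K$, and tensoring a simple algebra by a central simple algebra preserves simplicity, since the two-sided ideals of $A \otimes_K D^{\mathrm{op}}$ are exactly those of the form $I \otimes_K D^{\mathrm{op}}$ with $I$ a two-sided ideal of $A$. Over a simple Artinian ring there is a unique simple module up to isomorphism, and two finite-dimensional modules are isomorphic iff they have equal $K$-dimension. As $V_i$ and $V_j$ share the underlying space $V$, we have $\dim_K V_i = \dim_K V_j$, whence $V_i \cong V_j$ and the required unit $u$ exists.

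The main obstacle is exactly the simplicity of $A \otimes_K D^{\mathrm{op}}$ together with the resulting dimension criterion for isomorphism of modules; this is where the hypothesis that $\CA$ (and hence $D$) is \emph{central} over $K$ is genuinely used, via the ideal-correspondence for tensoring with a central simple algebra. The remaining work is bookkeeping: keeping the left and right actions and the opposite algebra straight, so that the module isomorphism is truly an element of $\CA^\times$ conjugating $A$ onto $B$ in the stated direction.
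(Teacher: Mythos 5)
Your proof is correct and complete modulo two genuinely standard facts, both of which you identify accurately: the ideal correspondence for tensoring with a central simple algebra (giving simplicity of $A\otimes_K D^{\mathrm{op}}$, which is where centrality of $\CA$, hence of $D$, enters), and the fact that finite-dimensional modules over a simple Artinian ring are classified by $K$-dimension. For comparison: the paper does not prove Theorem \ref{Noether-SkolemThm} at all — it is quoted as background on central simple algebras with \cite{Pierce-AssAlg} cited for details — and your argument is precisely the classical bimodule proof found in that source (two $A\otimes_K D^{\mathrm{op}}$-module structures on $V=D^n$ via the inclusion and via $\phi$, a dimension count to get $V_i\cong V_j$, and the intertwiner, being a bijective element of $\End_D(V)=\CA$, is the required unit with $uau^{-1}=\phi(a)$). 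One point you leave implicit and should state: $A$ and $B$ must contain $1_{\CA}$, so that $V_i$ and $V_j$ are unital modules; this is the standing convention for subalgebras here, and without it the theorem is false — in $M_2(K)$ the subalgebras $Ke_{11}$ and $K\cdot 1$ are isomorphic simple rings but are not conjugate, since conjugation fixes $1_{\CA}$.
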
 
 
 For a subset $S$ of $A$, the set  
$C_A(S)=\{ a\in A\, | \, as=sa\}$ 
 is called the {\bf centralizer} of $S$ in $A$. The centralizer $C_A(S)$ is a subalgebra of $A$. 
 The algebra $C_AC_A(S)=C_A(C_A(S))$ is called the {\em double centralizer} of $S$.
 Notice that $S\subseteq C_AC_A(S)$. Clearly, 
$C_AC_AC_A(S)=C_A(S)$.
 
  Lemma \ref{DCT-1} shows that the centralizer is a natural endomorphism algebra.

\begin{lemma}\label{DCT-1}%\marginpar{DCT-1}
Let $B$ be a subalgebra of an algebra $A$. Then the maps 
\begin{eqnarray*}
C_A(B)& \ra & E({}_BA_A), \;\; c\mapsto (l_c=c\cdot : A\ra A, \;\; a\mapsto ca),\\
C_A(B)&\ra & E({}_AA_B), \;\; c\mapsto (r_c=\cdot c: A\ra A, \;\; a\mapsto ac),
\end{eqnarray*}
are  algebra isomorphisms.

\end{lemma}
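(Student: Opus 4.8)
The plan is to establish the statement for the first map $c\mapsto l_c$ in full and then obtain the second by the mirror-image argument. First I would verify that $l_c$ lies in $E({}_BA_A)$ for every $c\in C_A(B)$: right $A$-linearity, $l_c(aa')=c(aa')=(ca)a'=l_c(a)a'$, holds for any $c$ whatsoever, whereas left $B$-linearity, $l_c(ba)=c(ba)=(cb)a=(bc)a=b(ca)=b\,l_c(a)$, uses exactly that $c$ commutes with every $b\in B$. Thus $c\mapsto l_c$ is a well-defined $K$-linear map $C_A(B)\ra E({}_BA_A)$; it is multiplicative because $l_{cc'}(a)=(cc')a=c(c'a)=l_c(l_{c'}(a))$, and unital because $l_1=\id_A$, so it is a homomorphism of $K$-algebras.

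The heart of the argument is surjectivity, and the key observation is that right $A$-linearity pins down an endomorphism by its value at $1$. Given $f\in E({}_BA_A)$, put $c:=f(1)$; then $f(a)=f(1\cdot a)=f(1)a=ca=l_c(a)$ for all $a\in A$, so $f=l_c$. Feeding $a=1$ into the left $B$-linearity relation $f(ba)=b\,f(a)$ yields $cb=bc$ for all $b\in B$, i.e. $c\in C_A(B)$; hence $f$ lies in the image. Injectivity is immediate since $c$ is recovered as $l_c(1)$. Therefore $c\mapsto l_c$ is a bijective $K$-algebra homomorphism, hence an isomorphism $C_A(B)\ra E({}_BA_A)$.

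For the second map the argument is dual: now left $A$-linearity forces $f=r_{f(1)}$ and right $B$-linearity forces $f(1)\in C_A(B)$, the verification being identical with the two sides interchanged, and injectivity again following from $r_c(1)=c$. The only point that requires care---and the place I expect the sole subtlety to sit---is the compatibility with multiplication: with the ordinary composition of maps one computes $r_{cc'}(a)=a(cc')=(ac)c'=r_{c'}(r_c(a))$, so $r_{cc'}=r_{c'}\circ r_c$ and $c\mapsto r_c$ is an algebra anti-homomorphism, exactly as in the standard fact that right multiplications realize $\End({}_AA)$ as $A^{\rm op}$ under ordinary composition. To read it as a genuine algebra isomorphism one adopts the standard convention that the endomorphism algebra of the bimodule ${}_AA_B$ is given the opposite composition (equivalently, that these right-$B$-linear maps act on the right of their arguments); under that convention $c\mapsto r_c$ is a bijective, multiplicative, unital $K$-algebra homomorphism, hence the desired isomorphism $C_A(B)\ra E({}_AA_B)$.
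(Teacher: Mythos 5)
Your proof is correct and follows essentially the same route as the paper's: the paper simply cites the standard isomorphism $A\simeq E(A_A)$ (respectively $E({}_AA)$) and notes that $l_c$ (respectively $r_c$) is additionally $B$-linear iff $c\in C_A(B)$, which is exactly what your explicit verification via evaluation at $1$ unpacks. Your closing caveat is also well taken: under ordinary composition $c\mapsto r_c$ satisfies $r_{cc'}=r_{c'}\circ r_c$ and is thus an anti-homomorphism, so the second isomorphism requires the right-operator (opposite-composition) convention for $E({}_AA_B)$ --- a genuine subtlety that the paper's terse proof passes over silently.
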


\begin{proof} The map $A\ra E(A_A)$, $c\mapsto l_c$ is an algebra isomorphism. Since $ E({}_BA_A)\subseteq  E(A_A)$, the map $l_c$ belong to  $ E({}_BA_A)$ iff $c\in C_A(B)$.

 Similarly, the map $A\ra E(_AA)$, $c\mapsto r_c$ is an algebra isomorphism. Since $ E({}_AA_B)\subseteq  E({}_AA)$, the map $r_c$ belong to  $ E({}_AA_B)$ iff $c\in C_A(B)$.
\end{proof}

The dimension of a division $K$-algebra $D$ over a field $K$ is denoted by $\dim_K(D)=[D:K]$.

\begin{theorem}\label{DCThm}%\marginpar{DCThm}
Let $A\in \fCK$ and $B$ be a simple subalgebra of $A$.
\begin{enumerate}
\item {\bf (Double Centralizer Theorem)} $C_AC_A(B)=B$.
\item The algebra $C_A(B)$ is a simple algebra. 
\item $\dim_K(B)\, \dim_K(C_A(B))=\dim_K(A)$.
\item Suppose that $B\in \fCK$. Then $C_A(B)\in \fCK$ and $A=B\t C_A(B)$.
\end{enumerate}
\end{theorem}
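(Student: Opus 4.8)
The plan is to realise $C_A(B)$ as the endomorphism ring of a module over a simple algebra and then read off all four statements from Wedderburn theory together with a single dimension count. I would start from the fundamental structural fact for a central simple algebra $A$: the multiplication map $\Ae=A\t_K\Ao\ra \End_K(A)$, $a\t b\mapsto (x\mapsto axb)$ (where $\Ao$ is the opposite algebra), is an isomorphism, so $\Ae\simeq M_n(K)$ with $n=\dim_K(A)$; in particular $\Ae$ is simple. Viewing $A$ as a left $B\t_K\Ao$-module by $(b\t a)\cdot x=bxa$ --- equivalently, as a $(B,A)$-bimodule --- Lemma \ref{DCT-1} identifies $\End_{B\t_K\Ao}(A)=E({}_BA_A)\simeq C_A(B)$.

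For (2) and (3): since $B$ is simple and $\Ao\in\fCK$, the algebra $B\t_K\Ao$ is simple, so by Wedderburn $B\t_K\Ao\simeq M_r(\Delta)$ for a division $K$-algebra $\Delta$; put $d=\dim_K(\Delta)$. Its unique simple module $S$ has $\dim_K(S)=rd$, and $A\simeq S^{\oplus m}$ for some $m$. Schur's lemma gives $\End_{B\t_K\Ao}(S)\simeq\Delta^{\mathrm{op}}$, so
\[
C_A(B)\simeq\End_{B\t_K\Ao}(A)\simeq M_m(\Delta^{\mathrm{op}}),
\]
which is simple; this proves (2). For (3), from $\dim_K(B)\,n=\dim_K(B\t_K\Ao)=r^2d$, from $n=\dim_K(A)=mrd$, and from $\dim_K(C_A(B))=m^2d$, I obtain
\[
\dim_K(B)\,\dim_K(C_A(B))=\frac{r^2d}{n}\cdot m^2d=\frac{(rmd)^2}{n}=\frac{n^2}{n}=n=\dim_K(A).
\]

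Part (1) is then formal. One always has $B\subseteq C_AC_A(B)$. Applying (3) to the simple subalgebra $B$ gives $\dim_K(C_A(B))=\dim_K(A)/\dim_K(B)$, and applying it again to $C_A(B)$ (simple by (2)) gives $\dim_K(C_AC_A(B))=\dim_K(A)/\dim_K(C_A(B))=\dim_K(B)$; hence the inclusion $B\subseteq C_AC_A(B)$ is an equality of spaces of the same finite dimension.

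For (4), assume $B\in\fCK$. As every element of $B$ commutes with every element of $C_A(B)$, multiplication defines an algebra homomorphism $\mu:B\t_K C_A(B)\ra A$. Since $B$ is central simple and $C_A(B)$ is simple, $B\t_K C_A(B)$ is simple, so the nonzero map $\mu$ (with $\mu(1\t1)=1$) is injective; by (3) both sides have $K$-dimension $\dim_K(A)$, so $\mu$ is an isomorphism and $A=B\t C_A(B)$. Centrality of $C_A(B)$ follows from $K=Z(A)=Z(B\t_K C_A(B))=Z(B)\t_K Z(C_A(B))=Z(C_A(B))$, using $Z(B)=K$; hence $C_A(B)\in\fCK$. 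The main obstacle is setting up the identification of $C_A(B)$ with $\End_{B\t_K\Ao}(A)$ correctly and threading the single dimension count cleanly through the Wedderburn parameters $r,m,d$; once that is in place, (1) and (4) are purely formal consequences.
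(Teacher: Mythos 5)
Your proof is correct, but there is nothing in the paper to compare it against: the paper states Theorem \ref{DCThm} without proof, quoting it as one of the standard facts on central simple algebras with a pointer to Pierce's book \cite{Pierce-AssAlg}. What you have written is essentially the classical enveloping-algebra proof found there: realise $A$ as a left $B\t_K\Ao$-module via $(b\t a)\cdot x=bxa$, identify $C_A(B)\simeq E({}_BA_A)=\End_{B\t_K\Ao}(A)$ — which is precisely the paper's Lemma \ref{DCT-1}, so you are correctly reusing the one ingredient the paper does prove — and then read off (2) and (3) from Wedderburn data $B\t_K\Ao\simeq M_r(\Delta)$, $A\simeq S^{\oplus m}$, $\dim_K S=rd$. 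Your dimension count $\dim_K(B)\,\dim_K(C_A(B))=(rmd)^2/n=n$ is right, and deriving (1) by applying (3) twice to the inclusion $B\subseteq C_AC_A(B)$, and (4) from simplicity of $B\t_K C_A(B)$ plus $Z(B\t_K C)=Z(B)\t_K Z(C)$, is the standard formal closing. Two minor remarks, neither a gap: the isomorphism $\Ae\simeq\End_K(A)$ you open with is not actually used in the argument (only the simplicity of $B\t_K\Ao$, which needs just that $\Ao\in\fCK$), so it could be dropped; and whether $\End_{B\t_K\Ao}(S)$ comes out as $\Delta$ or $\Delta^{\mathrm{op}}$ depends on composition conventions, but this affects neither simplicity nor the dimension $d$, so your use of it is harmless. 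This is also in the same spirit as the paper's own proof of Theorem \ref{DCThm-Mod}, which likewise argues through explicit matrix realisations of endomorphism rings of semisimple modules.
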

 
\begin{definition}
Let $A\in \fCK$ and $B$ be a simple subalgebra of $A$. Then the pair $(B,C_A(B))$ (of necessarily  simple)  $K$-subagebras of $A$ is called {\bf a double centralizer pair}, a {\bf dcp},  for short.
\end{definition}

If $(B,C_A(B))$ is a dcp then so is $(C_A(B),B)$, and vice versa.

\begin{theorem}\label{DCThm-Mod}%\marginpar{DCThm-Mod}
{\bf (Double Centralizer Theorem for semi-simple modules)} Let $A$ be a simple artinian ring and $M$ be a finitely generated  semi-simple $A$-module. Then $E(M_{E({}_AM)})=A$.
\end{theorem}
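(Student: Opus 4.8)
The plan is to show that the canonical ring homomorphism
\[
\lambda \colon A \longrightarrow E\big(M_{E({}_AM)}\big), \qquad a \longmapsto l_a ,
\]
sending $a$ to left multiplication $l_a\colon m\mapsto am$, is an isomorphism; here $l_a$ is right $B$-linear, with $B:=E({}_AM)$, by associativity of the $A$-action and the $B$-action on $M$. I assume $M\neq 0$. First I would record injectivity: $\ker\lambda=\ann_A(M)$ is a two-sided ideal of $A$, and since $A$ is simple and $M\neq 0$ this ideal is $0$. Next I would reduce the module. As $A$ is simple artinian it is semisimple with a single isomorphism class of simple modules $S$, and a finitely generated semisimple module is a finite direct sum of copies of $S$; thus $M\cong S^{k}$ for some $k\geq 1$. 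Writing $\Delta:=E({}_AS)$, which is a division ring by Schur's lemma, Wedderburn--Artin structure theory gives $B=E({}_AS^{k})\cong M_k(\Delta)$, where $S$ is a right $\Delta$-module in the natural way.

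The heart of the matter is surjectivity of $\lambda$. I would first treat the case $k=1$: there the assertion $E(S_\Delta)=A$ is exactly the Wedderburn--Artin description $A\cong \End_\Delta(S)$ of a simple artinian ring as the full ring of $\Delta$-linear endomorphisms of its (finite-dimensional) simple module. For general $k$ I would deduce the statement from the case $k=1$ by a Morita argument: the division ring $\Delta$ and the matrix ring $M_k(\Delta)\cong B$ are Morita equivalent via $X\mapsto X^{k}$, this equivalence identifies the right $\Delta$-module $S$ with the right $B$-module $S^{k}=M$, and Morita equivalence preserves endomorphism rings, whence
\[
E\big(M_{B}\big)=\End_{M_k(\Delta)}(S^{k})\cong \End_\Delta(S)=E(S_\Delta)=A .
\]
Tracking left multiplication through the identifications shows that the resulting isomorphism is exactly $\lambda$, which combined with injectivity finishes the proof. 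Alternatively, and more self-containedly, one can invoke the Jacobson Density Theorem: $\lambda(A)$ is dense in $E(M_B)$ in the finite topology, and since $S$ is finite-dimensional over $\Delta$ the module $M\cong S^{k}$ is finitely generated over $B$, so density upgrades to surjectivity by applying it to a finite $B$-generating set of $M$.

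The main obstacle is precisely this surjectivity, and within it the finiteness that turns ``dense'' into ``onto''. This is exactly where the artinian hypothesis is used: it forces the simple module $S$ to have finite length, equivalently finite dimension over its endomorphism division ring $\Delta$, so that $A\cong\End_\Delta(S)$ is the \emph{full} endomorphism ring and $M$ is finitely generated over $B$. Over a non-artinian ring a semisimple module can be infinite-dimensional over $\Delta$, the image of $\lambda$ is then only dense, and the double centralizer $E\big(M_{E({}_AM)}\big)$ can be strictly larger than $A$, so the hypothesis cannot be dropped. The only routine bookkeeping that remains is to confirm that the abstract isomorphism produced by either route coincides with the concrete map $\lambda$, which is immediate from following left multiplication through the Morita (or density) identifications.
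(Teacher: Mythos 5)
Your proof is correct, but it takes a genuinely different route from the paper's. The paper argues by pure matrix bookkeeping: it invokes Wedderburn--Artin to write $A=M_n(D)$, identifies $M\simeq U^m$ with the space $M_{n,m}(D)$ of $n\times m$ matrices, computes $E({}_AM)=M_m(D)$ acting by right multiplication, and then reads off $E\big(M_{n,m}(D)_{M_m(D)}\big)=M_n(D)=A$ directly. You instead work with the canonical map $\lambda\colon A\to E\big(M_{E({}_AM)}\big)$, kill its kernel by simplicity of $A$, reduce to the single simple module $S$ with $\Delta=E({}_AS)$, settle $k=1$ by citing $A\cong\End_\Delta(S)$, and then lift to $M\cong S^k$ either by the Morita equivalence between $\Delta$ and $M_k(\Delta)$ or by Jacobson density plus finite generation of $M$ over $B$. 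Both proofs ultimately rest on Wedderburn--Artin, so neither is more elementary at bottom, but your version buys two things the paper's does not: it exhibits the isomorphism as the canonical left-multiplication map rather than an identification dependent on the matrix coordinates (the paper also silently glosses the opposite-ring bookkeeping in statements like $\End_{M_n(D)}(D^n)\simeq D$, which your coordinate-free $\Delta$ avoids), and your density variant isolates precisely where the artinian hypothesis enters --- $\dim_\Delta S<\infty$ is what upgrades ``dense image'' to ``surjective'' --- which explains why the theorem fails for non-artinian simple rings, a point invisible in the paper's computation. The paper's proof, in exchange, is shorter and entirely self-contained once $A=M_n(D)$ is granted. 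Your one unverified assertion --- that the abstract isomorphism produced by the Morita identification coincides with $\lambda$ --- is indeed routine, since $A$ acts componentwise on $S^k$ and the equivalence $X\mapsto X^k$ respects that action, and in any case your density argument establishes surjectivity of $\lambda$ itself with no such tracking needed.
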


\begin{proof} We can assume that $A=M_n(D)$ is  matrix ring over a division ring $D$. Then $U=AE_{11}=D^n$ is a simple $A$-module and  $M\simeq U^m$ for  some natural number $m\geq 1$. Clearly, $M= M_{n,m}(D)$ where $M_{n,m}(D)$ is the set of $n\times m$ matrices with coefficients in the division ring $D$. Then   $E({}_AM)=M_m(D)$,  and so $E(M_{M_m(D)})= E(M_{n,m}(D)_{M_m(D)})  =M_n(D)=A$.
\end{proof}

 {\bf Description of subalgebras of the endomorphism algebra $E(L/K)$ that contain the field $L$.} 
 Let $A$ be an algebra. For an $A$-module  $M$, the set $\ann_A(M):=\{ a\in A\, | \, aM=0\}$ is called the {\em annihilator} of $M$. The annihilator $\ann_A(M)$ is an ideal of the algebra $A$. A module  is called a {\em faithful} module if its annihilator is equal to zero.

\begin{lemma}\label{a5May25}%\marginpar{a5May25}
A finite dimensional algebra $A$ is a simple algebra iff there is a faithful simple $A$-module. 
\end{lemma}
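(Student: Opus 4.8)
The plan is to prove the two implications separately, invoking only the standard structure theory of finite-dimensional algebras over a field (as in \cite{Pierce-AssAlg}).

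For the forward implication, suppose $A$ is simple. Since $A$ is finite-dimensional it is left artinian, so it contains a minimal nonzero left ideal $S$, which, viewed as a left $A$-module, is simple. Its annihilator $\ann_A(S)$ is a two-sided ideal of $A$ (as recalled immediately before the statement, the annihilator of any module is an ideal). Because $S\neq 0$ we have $1\notin \ann_A(S)$, so $\ann_A(S)\neq A$; by simplicity of $A$ the only remaining possibility is $\ann_A(S)=0$, and hence $S$ is the desired faithful simple $A$-module.

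For the reverse implication, suppose $M$ is a faithful simple $A$-module. The key step I would carry out first is to show that $A$ is semisimple: the Jacobson radical $\rad A$ annihilates every simple $A$-module, so in particular $\rad A\subseteq \ann_A(M)=0$, and since $A$ is artinian the vanishing $\rad A=0$ forces $A$ to be semisimple. By Wedderburn--Artin one then has $A\cong A_1\times\cdots\times A_k$ with each $A_i$ a simple (matrix-over-division) algebra. A simple module over such a product is a simple module over exactly one factor and is annihilated by the complementary factor; were $k\geq 2$, some nonzero factor would lie in $\ann_A(M)$, contradicting faithfulness. Hence $k=1$ and $A$ is simple. (An alternative route for this direction runs through Schur and density: $D:=\End_A(M)$ is a division ring by Schur's lemma, $M$ is finite-dimensional over $D$ since $\dim_K(M)<\infty$, and the density theorem makes $A\to\End_D(M)$ surjective while faithfulness makes it injective, giving $A\cong\End_D(M)\cong M_d(D^{\rm o})$, which is simple. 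This is the same circle of ideas as the double-centralizer results, but Theorem~\ref{DCThm-Mod} presupposes that $A$ is simple, so for the present implication I prefer the radical argument, which does not.)

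The only genuine obstacle is the reverse implication, and it is structural rather than computational: the work is entirely in extracting, from the single hypothesis that $M$ is faithful and simple, both that $\rad A=0$ and that the resulting semisimple algebra has exactly one Wedderburn component. The forward implication is immediate once one recalls that annihilators are two-sided ideals.
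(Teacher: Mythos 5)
Your proposal is correct and follows essentially the same route as the paper: the reverse implication is the paper's own argument verbatim in outline ($\rad A\subseteq \ann_A(M)=0$, hence semisimple, and a faithful simple module rules out more than one Wedderburn factor), while your forward implication merely makes explicit, via a minimal left ideal, the existence of a simple module behind the paper's one-line remark that every nonzero module over a simple algebra is faithful. Your parenthetical note that the density-theorem alternative is unnecessary here is also sound, since the radical argument needs no prior simplicity assumption.
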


\begin{proof} $(\Rightarrow)$ All nonzero modules of a simple algebra are faithful.

$(\Leftarrow)$ Let $\gr$ be the radical of the finite dimensional  algebra $A$.  
 Suppose that $M$ is a faithful simple $A$-module. Then  $\gr M=0$ (by the simplicity of $M$ and  definition of the radical). This  implies that $\gr =0$, by the faithfulness of the $A$-module $M$. So, the algebra $A$ is a semisimple finite dimensional algebra, i.e. it is  a finite  direct product of simple finite dimensional algebras. For a semisimple  but not simple finite dimensional algebra, every simple module is unfaithful. Therefore, the algebra $A$ is a simple algebra. 
\end{proof}

 For an algebra $A$ and its subalgebra $B$, let $\CA (A, B)$ (resp., $\CA (A, B, sim)$) be the sets of all (resp., simple) subalgebras of $A$ that contain $B$. 

\begin{proposition}\label{A24Mar25}%\marginpar{A24Mar25}
Let $K$ be a field of arbitrary characteristic,   $A\in \fCK$ and $L$ is a strongly maximal subfield of $A$. Then:

\begin{enumerate}

\item   $\CA ( E, L)=\CA ( E, L, sim)$.

\item The map 
$$\CF (L/K)\ra \CA ( E, L)=\CA (A, L, sim), \;\; M\mapsto C_A(M)$$  is a bijection with inverse $B\mapsto C_A(B)$. 

\item For all fields $M\in \CF (L/K)$, $[M:K]\dim_K(C_A(M))=\dim_K(A)$.
\end{enumerate}
\end{proposition}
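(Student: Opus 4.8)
The plan is to reduce the whole proposition to the Double Centralizer Theorem (Theorem \ref{DCThm}) once the single identity $C_A(L)=L$ is established. Since $L$ is strongly maximal, $[L:K]=\Deg(A)$, so $\dim_K L=\Deg(A)$ and $\dim_K A=\Deg(A)^2$. Applying Theorem \ref{DCThm} to the simple (indeed field) subalgebra $L$ gives $\dim_K L\cdot \dim_K C_A(L)=\dim_K A$, whence $\dim_K C_A(L)=\Deg(A)=\dim_K L$; as $L$ is commutative we have $L\subseteq C_A(L)$, and the dimension equality forces $C_A(L)=L$. With this in hand both maps are well defined: for $M\in \CF(L/K)$ the field $M\subseteq L$ is commutative, so $L\subseteq C_A(M)$, i.e. $C_A(M)\in \CA(A,L)$; conversely for $B\in \CA(A,L)$ we get $C_A(B)\subseteq C_A(L)=L$, and a $K$-subalgebra of the field $L$ is a finite-dimensional domain, hence a subfield, so $C_A(B)\in \CF(L/K)$. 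One of the two round trips is then immediate: a field $M$ is a simple subalgebra, so Theorem \ref{DCThm} yields $C_A(C_A(M))=M$ for every $M\in \CF(L/K)$.

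The remaining content is statement (1) together with the second round trip, and both follow at once from the assertion that every $B\in \CA(A,L)$ is simple: once $B$ is simple, Theorem \ref{DCThm} gives $C_A(C_A(B))=B$ directly, which closes the bijection, while statement (1) is precisely that every such $B$ is simple. To prove simplicity I would invoke Lemma \ref{a5May25} and exhibit a faithful simple $B$-module. In the case actually used in Theorem \ref{B24Mar25}, namely $A=E(L/K)=\End_K(L)$, this is transparent: the natural simple $A$-module is $L$ itself, on which the subfield $L\subseteq \End_K(L)$ acts by multiplication, so it is one-dimensional over $L$; it is therefore simple already over $L$ and a fortiori over any $B\supseteq L$, and faithful over $B$ since $\ann_B(L)\subseteq \ann_A(L)=0$. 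Lemma \ref{a5May25} then gives that $B$ is simple.

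For a general $A\in \fCK$ the module-theoretic step is the crux and the main obstacle, because the simple $A$-module $U$ has $\dim_L U=\Ind(A)$, which exceeds $1$ unless $L$ splits $A$, so $U$ is no longer simple over $L$. Here I would first normalize to the case $C_A(B)=K$: writing $M:=C_A(B)\in \CF(L/K)$ and $B':=C_A(M)$, Theorem \ref{DCThm} shows $B'$ is central simple over $M$ with $\dim_M L=\Deg(B')$, so $L$ is a strongly maximal subfield of $B'$ over $M$, while $C_{B'}(B)=M=Z(B')$; thus it suffices to prove that a subalgebra $B$ with $L\subseteq B\subseteq B'$ and centralizer equal to the center is all of $B'$. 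In this normalized form one can run the double centralizer for modules (Theorem \ref{DCThm-Mod}) on a simple $B'$-module $V$: the inclusion $\End_{B'}(V)\subseteq \End_B(V)$ combined with $C_{B'}(B)=Z(B')$ pins down $\End_B(V)$, and density would force $B$ to act as all of $B'=\End_{\End_{B'}(V)}(V)$. The delicate point I expect to be hardest is verifying that $V$ is semisimple over $B$ (equivalently $\rad(B)=0$) before density applies: the standard shortcuts fail because $L/K$ may be inseparable, so the reduced-trace form of $A$ can vanish on $L$ and scalar extension along $L$ need not preserve semisimplicity, as $L\t_K L$ is non-reduced. To handle this I would exploit that $B$, containing $L$, is an $L$-$L$-sub-bimodule of $A$, analyze $A$ as a module over $L\t_K L$ (where $C_A(L)=L$ controls the bimodule structure), and use the resulting divisibility of $\dim_K B$ by $[L:K]$ and the ideal-lattice constraints to exclude a nonzero radical, thereby reducing the general statement to the simplicity already obtained.
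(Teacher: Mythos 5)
Your first two paragraphs are, in substance, the paper's own proof. The paper reduces the whole proposition to the simplicity of every $B\in\CA (A,L)$ in exactly the way you do: well-definedness of both maps and the round trip $C_AC_A(M)=M$ come from the Double Centralizer Theorem (Theorem \ref{DCThm}), the inclusion $C_A(B)\subseteq C_A(L)=L$ comes from strict maximality of $L$ (the paper asserts $C_A(L)=L$ without proof; your dimension count via Theorem \ref{DCThm}.(3) is the standard justification it leaves implicit), and simplicity of $B$ is proved by precisely your split-case argument: $L$ is a faithful simple module over any $B\supseteq L$ (faithful because $\ann_B(L)\subseteq\ann_A(L)=0$, simple because $\dim_L L=1$), after which Lemma \ref{a5May25} finishes.

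Your third paragraph is where you diverge, and your diagnosis there is accurate in a way that is worth recording: the faithful-simple-module argument is confined to the split case $A\simeq E(L/K)$. Indeed, for $\Ind (A)=d>1$ every nonzero $A$-module has $K$-dimension a multiple of $\Deg (A)\,\Ind (A)=nd>n=[L:K]$, so $L$ does not even carry an $A$-module structure, and the simple $A$-module $U$ has $\dim_L U=d>1$ (a terminological quibble: a strictly maximal subfield always splits $A$ in the sense $A\t_K L\simeq M_n(L)$; what you need, and mean, is $\Ind (A)=1$). Notably, the paper's own proof has exactly the same reach --- its sentence ``clearly, the field $L$ is a faithful simple $E$-module'' only parses when $A\simeq E(L/K)$, which is the only case used downstream (Theorem \ref{B24Mar25}). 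However, your proposed repair for general $A$ is a plan, not a proof. The normalization to $L\subseteq B\subseteq B'=C_A(M)$ with $C_{B'}(B)=Z(B')$ is correct, and reducing simplicity to $\rad (B)=0$ is easy (central idempotents of $B$ lie in $Z(B)\subseteq C_A(L)=L$, a field, so semisimple implies simple), but the density route through Theorem \ref{DCThm-Mod} stalls exactly where you say it does: one needs the simple $B'$-module $V$ to be semisimple over $B$, i.e. $\rad (B)V=0$, and nothing in your sketch delivers this (one also still has to show $\End_B(V)=\End_{B'}(V)$, which does not follow formally from $C_{B'}(B)=Z(B')$ since the centralizer is taken in the larger algebra $\End_M(V)$). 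Your closing idea of divisibility constraints cannot work as stated: any hypothetical non-simple $B$ is automatically an $L$-sub-bimodule of $A$, so $[L:K]$ divides $\dim_K B$ in every case and this excludes nothing. The bimodule idea itself is promising --- one can show $A$ is free of rank one over $L\t_K L$ (a finite product of local Gorenstein algebras acting faithfully on a module of the same $K$-dimension), which in the Galois case exhibits $A$ as a crossed product $\bigoplus_{\sigma}Lu_\sigma$ and closes the gap by the support-shrinking argument of Theorem \ref{BC24Mar25}, and the division-algebra case is trivial since intermediate rings are division rings --- but the genuinely open configuration (nonsplit, non-division $A$ with inseparable $L$) is resolved neither by your sketch nor by the paper.
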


\begin{proof} 1. We have to show that each  algebra $B\in \CA ( E, L)$ is a simple algebra. The inclusion $L\subseteq B$ implies the inclusion 
$$
C_E(B)\subseteq C_E (L)=L.
$$
 The last equality follows from the fact that $L\in \Max_s (E)$ (statement 1). Clearly, the field $L$ is a faithful simple $E$-module. Since $L\subseteq B$, the field $L$ is also a faithful simple $B$-module. By Lemma \ref{a5May25}, the finite dimensional algebra $B$ is a simple algebra.

2. Let us show that the map is a well-defined map. Since the field $M/K\in \CF (L/K)$ is a simple algebra, its centralizer $C_A(M)$ is also a simple subalgebra of $A$, by Theorem \ref{DCThm}.(2). Clearly, $L\subseteq C_A(M)$  (since $M\subseteq L$), and so $C_A(M)\in \CA:= \CA (A, L, sim)$. 

The field $L$ is a strongly maximal subfield of $A$. Hence, $C_A(L)=L$. For every algebra $B\in \CA$, $ L\subseteq B$. Hence, $C_A(B)\subseteq C_A(L)=L$, and so 
$$
C_A(B)\in \CF (L/K).
$$ 
By the Double Centralizer Theorem (Theorem \ref{DCThm}.(1)), $C_A(C_A(M))=M$   for all $M\in \CF (L/K)$ and $C_A(C_A(B))=B$ for all $B\in \CA$, and the result follows. 

3. Since  fields in $\CF (L/K)$ are simple  $K$-algebras, statement 3 follows from  Theorem \ref{DCThm}.(3).
\end{proof}

Theorem \ref{B24Mar25} is a  description of subalgebras of the endomorphism algebra $E(L/K)$ that contain the field $L$. This is one of the key results of the paper.

\begin{theorem}\label{B24Mar25}%\marginpar{B24Mar25}
Let $K$ be a field of arbitrary characteristic, 
 $L/K$ be a finite field extension of degree $n:=[L:K]<\infty$ and  $E:=E(L/K )\simeq M_n(K)$. Then:
 \begin{enumerate}
 
\item $L\in \Max_s (E)$.

\item $\CA ( E, L)=\CA ( E, L, sim)=\{ E(L/M)\, | \, M\in \CF (L/K) \}$.

\item  The map 
$$\CF (L/K)\ra \CA ( E, L)=\CA (E, L, sim), \;\; M\mapsto C_E(M)=E(L/M)$$  is a bijection with inverse $B\mapsto C_E(B)$.

\item For all fields $M\in \CF (L/K)$, $[M:K]\dim_K(C_E(M))=[L:K]^2$.

\end{enumerate}
\end{theorem}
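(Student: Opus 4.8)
The plan is to recognize Theorem \ref{B24Mar25} as the concrete realization of Proposition \ref{A24Mar25} in the case $A=E=E(L/K)\simeq M_n(K)$, supplemented by the identification of the centralizer of an intermediate field with a relative endomorphism algebra. I would organize the argument in three steps: first establish (1), then invoke Proposition \ref{A24Mar25} to transport the abstract bijection, and finally identify $C_E(M)$ with $E(L/M)$.

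For part (1), the field $L$ embeds into $E=\End_K(L)$ by left multiplication, $l\mapsto (l\cdot : x\mapsto lx)$, which is an injective $K$-algebra homomorphism because $L$ is a field; I identify $L$ with its image, a subfield of $E$ of degree $[L:K]=n$ over $K$. Since $E\simeq M_n(K)$ is central simple with $\dim_K(E)=n^2$, we have $\Deg(E)=n$ (and $\Ind(E)=1$). As $[L:K]=n=\Deg(E)$, the subfield $L$ is strictly maximal, that is, $L\in \Max_s(E)$.

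For parts (2) and (3), having (1) in hand I apply Proposition \ref{A24Mar25} with $A=E$. It yields at once the equality $\CA(E,L)=\CA(E,L,sim)$ and the fact that $M\mapsto C_E(M)$ is a bijection $\CF(L/K)\to \CA(E,L)$ with inverse $B\mapsto C_E(B)$. The only point still to be checked is the explicit description $C_E(M)=E(L/M)$. Unwinding the definition, an element $\phi\in \End_K(L)$ lies in $C_E(M)$ iff it commutes with the multiplication operator $m\cdot$ for every $m\in M$, i.e. $\phi(mx)=m\phi(x)$ for all $m\in M$ and $x\in L$; this is exactly the condition that $\phi$ be $M$-linear, so $C_E(M)=\End_M(L)=E(L/M)$. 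Substituting this into the bijection gives $\CA(E,L)=\{E(L/M)\mid M\in \CF(L/K)\}$ and completes (2) and (3).

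I expect no serious obstacle, since the substantive content is already carried by Proposition \ref{A24Mar25} (ultimately the Double Centralizer Theorem, Theorem \ref{DCThm}) and by the structure theory of central simple algebras; the centralizer-as-endomorphism-algebra identification is the only new computation and it is routine. As a consistency check one may verify that $\dim_K C_E(M)=[L:K]^2/[M:K]$ holds both from Theorem \ref{DCThm}.(3) and from the direct count $\dim_K \End_M(L)=[M:K]\,[L:M]^2$.
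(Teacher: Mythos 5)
Your proposal is correct and follows essentially the same route as the paper: strict maximality of $L$ in $E$ by the degree count $[L:K]=n=\Deg(E)$, then Proposition \ref{A24Mar25} (resting on the Double Centralizer Theorem) for the bijection, plus the identification $C_E(M)=\End_M(L)=E(L/M)$, which the paper asserts (cf.\ Lemma \ref{a17Apr25}) and you verify explicitly. Your dimension consistency check is a harmless addition; no gaps.
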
  

\begin{proof} 1. Since $\Deg (E)=\Deg (M_n(K))=n=[L:K]$, we have that $L\in \Max_s (E)$. 

2. (i)  $\CA ( E, L)=\CA ( E, L, sim)$: Since $E(L/K)\in \fC (K)$ and $L\in \Max_s (E)$, the statement (i) follows from Theorem \ref{A24Mar25}.(1).

(ii)  $\CA ( E, L, sim)=\{ C_E(M)=E(L/M)\, | \, M\in \CF (L/K) \}$: For all $M\in \CF (L/K)$, 
$C_E(M)=E(L/M)$. Now,  
$$
\CA ( E, L, sim)=\{ C_E(M)=E(L/M)\, | \, M\in \CF (L/K) \},
$$ 
by Proposition \ref{A24Mar25}.

3. Statement 3 follows from statement 2 and Proposition \ref{A24Mar25}.  

4. By Proposition \ref{A24Mar25}.(3), $[M:K]\dim_K(C_E(M))=\dim_K(E)=[L:K]^2$.
\end{proof}

\begin{corollary}\label{aB24Mar25}%\marginpar{aB24Mar25}
Let $K$ be a field of arbitrary characteristic, 
 $L/K$ be a finite field extension,  $B\in \CA ( E(L/K), L)$ and $M:=C_E(B)\in \CF (L/K)$ (Theorem  \ref{B24Mar25}.(3)). Then:
 \begin{enumerate}
 
 \item  $B=E(L/M)\simeq M_{[L:M]}(M)\in \fC (M)$.
 
\item The field $L$ is the only (up to isomorphism)  simple $B$-module.

\item $\End_B(L)=M$.

\end{enumerate}
\end{corollary}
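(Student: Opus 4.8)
The plan is to read off all three statements from the subfield--subalgebra correspondence already established in Theorem \ref{B24Mar25}, with the only substance lying in Statement 1; Statements 2 and 3 are then formal consequences of the resulting matrix structure. For Statement 1 I would start from the hypothesis $M:=C_E(B)$ and invoke Theorem \ref{B24Mar25}.(3), which says that the assignments $M\mapsto C_E(M)=E(L/M)$ and $B\mapsto C_E(B)$ are mutually inverse bijections between $\CF(L/K)$ and $\CA(E(L/K),L)$. Applying the inverse map to $M=C_E(B)$ therefore recovers $B=C_E(M)=E(L/M)$. It then remains to unwind the notation $E(L/M)=\End_M(L)$: since $L$ is an $M$-vector space of dimension $[L:M]$, a choice of $M$-basis yields $\End_M(L)\simeq M_{[L:M]}(M)$, a full matrix algebra over the field $M$, which is a central simple $M$-algebra and hence lies in $\fC(M)$.

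For Statement 3 I would observe that the identity $\End_B(L)=C_E(B)$ holds essentially by definition, so that $\End_B(L)=C_E(B)=M$. Indeed, a $K$-linear map $c\colon L\to L$ satisfies $cb=bc$ for all $b\in B$ exactly when $c$ commutes with the natural action of $B\subseteq\End_K(L)$ on $L$, i.e. exactly when $c$ is $B$-linear; conversely, every $B$-linear endomorphism of $L$ is automatically $K$-linear, because the chain $K\subseteq L\subseteq B$ forces $K$ to act by scalars, so such a $c$ indeed lies in $E=\End_K(L)$. This two-way identification is the only point in the argument that needs to be stated with care.

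For Statement 2 I would use the matrix picture from Statement 1: the algebra $B\simeq M_{[L:M]}(M)$ is simple artinian, so by Wedderburn theory it admits a unique simple module up to isomorphism, namely its natural column module. Under the isomorphism $B=\End_M(L)$ this natural module is precisely $L$ equipped with its tautological $B$-action, which is therefore simple; consequently $L$ is the only simple $B$-module up to isomorphism. (Alternatively one can combine faithfulness and simplicity of $L$ with Lemma \ref{a5May25} and the Double Centralizer Theorem for semisimple modules, Theorem \ref{DCThm-Mod}, but the direct Wedderburn observation is shortest.)

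I do not expect a genuine obstacle here, since every step is a direct application of results already in hand; the main thing to get right is the order of quantifiers and inclusions in the equivalence ``$c\in C_E(B)\iff c\in\End_B(L)$'' used for Statement 3, which rests on $K\subseteq L\subseteq B$ and is what ties the abstract centralizer $M=C_E(B)$ to the concrete endomorphism ring $\End_B(L)$.
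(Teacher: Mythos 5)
Your proposal is correct and follows the paper's own route: the paper likewise obtains Statement 1 by applying the inverse bijection of Theorem \ref{B24Mar25}.(3) to $M=C_E(B)$, and then deduces Statements 2 and 3 directly from the matrix-algebra structure $B=E(L/M)\simeq M_{[L:M]}(M)$, exactly as your Wedderburn and centralizer arguments make explicit. The extra care you take with the identification $\End_B(L)=C_E(B)$ (via $K\subseteq L\subseteq B$) is sound and is the content of the paper's Lemma \ref{DCT-1}.
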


\begin{proof} 1.  By Theorem  \ref{B24Mar25}.(3), $B=E(L/M)\simeq M_{l:M]}(M)\in \fC (M)$.

2 and 3. Statements 2  and 3 follow from statement 1. 
\end{proof}

{\bf Embeddings of endomorphism algebras into endomorphism algebras in the case of  fields.} 
Let $K\subseteq M\subseteq L$ be   finite field  extensions of the  field $K$. Recall that $L$ is a subfield of the endomorphism $K$-algebra $E(L/K)$ via a monomorphism 
$$L\ra E(L/K), \;\; l\mapsto l\cdot : L\ra L , \l \mapsto l\l.$$
Let $e=(e_1, \ldots , e_m)$ be  an $M$-basis of the vector space $L$ over the field $M$, i.e. $L=\bigoplus_{i=1}^mMe_i$. Then the simple algebra $E(M/K)$ can be seen as a subalgebra of $E(L/K)$ via the monomorphism that {\em depends} on the choice of the basis $e$:
%\marginpar{emonEMK}
\begin{equation}\label{emonEMK}
\th_e: E(M/K)\ra E(L/K), \;\;\th_e(\phi) \mapsto \phi:  L\ra L, \;\; \sum_{i=1}^m m_ie_i\mapsto \sum_{i=1}^m \phi (m_i)e_i.
\end{equation}
Since the algebra $E(M/K)$ is a simple algebra, the images of the algebra $E(M/K)$ for different choices of the basis $e$ are conjugate in $E(L/K)$ (by the Norther-Skolem Theorem). That is 
 for two $M$-bases $e$ and $e'$ of $L$, there is a unit $u=u(e,e')\in E(L/K)^\times$ such that 
 $$ 
 \th_{e'}(\phi) = u\th_{e}(\phi)u^{-1}\;\; {\rm for \; all}\;\; \phi \in E(M/K).$$
The embedding $\th_e$ is a diagonal embedding. If we choose a $K$-basis $f=(f_1 \ldots , f_l)$ for the $K$-vector space $M$ and consider a $K$-basis of $L$  which is the  product 
of bases $f$ and $e$, $fe:=(f_1e_1 \ldots , f_le_1, \cdots , f_1e_m \ldots , f_le_m)$, then the matrix $[\th_e(\phi)]_{fe}$ of the $K$-linear map $\th_e(\phi):L\ra L$ in the basis $fe$ is the diagonal matrix
%\marginpar{emonEMK-1}
\begin{equation}\label{emonEMK-1}
[\th_e(\phi)]_{fe}=
\begin{pmatrix}
[\phi]_f &0 &0 &\cdots  & 0\\ 0 & [\phi]_f& 0&\cdots &0\\  & & \cdots & &\\
0&0 & \cdots &0& [\phi]_f
\end{pmatrix}
\end{equation}
where on the diagonal is the matrix $[\phi]_f$ of the $K$-linear map $\phi : M\ra M$ in the $K$-basis $f$.

\begin{proposition}\label{A5May25}%\marginpar{A5May25}
Let $K\subseteq M\subseteq L$ be   finite field  extensions of the  field $K$ and $e=(e_1, \ldots , e_m)$ be  an $M$-basis of the vector space $L$.
Then a  subalgebra of $E(L/K)$ which is generated by its  two simple subalgebras,  $\th_e(E(M/K))$ and $E(L/M)$, is equal to $E(L/K)$. 

\end{proposition}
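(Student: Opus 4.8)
The plan is to identify the subalgebra $B$ generated by $\th_e(E(M/K))$ and $E(L/M)$ as a member of the family $\CA(E,L)$ classified in Theorem \ref{B24Mar25}, and then to pin it down by computing its centralizer in $E:=E(L/K)$. First I would check that $B$ contains the field $L$: multiplication by any element of $L$ is $M$-linear (since $L$ is commutative and $M\subseteq L$), so $L\subseteq E(L/M)\subseteq B$. Hence $B\in\CA(E,L)$, and by Theorem \ref{B24Mar25}.(2)--(3) the algebra $B$ is simple and satisfies $B=C_E(C_E(B))$ with $C_E(B)\in\CF(L/K)$. Consequently it suffices to prove that $C_E(B)=K$, for then $B=C_E(C_E(B))=C_E(K)=E(L/K)$.

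Next I would compute $C_E(B)$ as the intersection of the two centralizers, $C_E(B)=C_E(E(L/M))\cap C_E(\th_e(E(M/K)))$, using that the centralizer of a generating set of $B$ equals the centralizer of $B$ itself. The first factor is straightforward: an operator commuting with all of $E(L/M)=\End_M(L)$ commutes in particular with multiplication by $M$, hence is itself $M$-linear, and then commuting with the whole of $\End_M(L)$ forces it into the center of $\End_M(L)$, which is $M$ (the scalar operators). Thus $C_E(E(L/M))=M$; this is consistent with the dimension identity $\dim_K E(L/M)\cdot\dim_K C_E(E(L/M))=\dim_K E$ of Theorem \ref{DCThm}.(3). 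In particular $C_E(B)\subseteq M$, so it remains only to decide which $\mu\in M$ also lie in the second centralizer.

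It then remains to determine which multiplication operators $\mu\in M$ commute with $\th_e(\phi)$ for every $\phi\in E(M/K)$. Writing out the commutation condition on a general vector $\sum_i m_ie_i$ gives $\mu\,\phi(m_i)=\phi(\mu m_i)$ for all $m_i\in M$ and all $\phi\in\End_K(M)$; this says exactly that multiplication by $\mu$ lies in the center of $\End_K(M)$, which equals $K\cdot\id_M$, and evaluating at $1$ yields $\mu\in K$. Therefore $C_E(B)=M\cap C_E(\th_e(E(M/K)))=K$, and the reduction of the first paragraph gives $B=E(L/K)$, as required.

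The two centralizer computations are the substance of the argument, and the only delicate point is the second one: one must avoid the temptation to test commutation only against multiplications by elements of $M$ (which is automatic by commutativity of $M$) and instead use \emph{all} $K$-linear $\phi$, for it is precisely the non-multiplicative endomorphisms that cut the intersection down from $M$ to $K$. I would stress that this is where the diagonal, basis-dependent nature of the embedding $\th_e$ enters; the conclusion $B=E(L/K)$ is nonetheless independent of the choice of $M$-basis $e$, since any two such choices yield conjugate images of $E(M/K)$ by the Noether--Skolem Theorem (Theorem \ref{Noether-SkolemThm}).
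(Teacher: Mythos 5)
Your proof is correct and follows essentially the same route as the paper's: show $L\subseteq B$ so that $B$ is simple by Theorem \ref{B24Mar25}, compute $C_{E(L/K)}(B)=M\cap C_{E(L/K)}\bigl(\th_e(E(M/K))\bigr)=K$, and conclude $B=C_{E(L/K)}C_{E(L/K)}(B)=E(L/K)$ by the Double Centralizer Theorem. The only difference is that you spell out the two centralizer computations (that $C_{E(L/K)}(E(L/M))=M$ and that a multiplication operator $\mu\in M$ commuting with all $\th_e(\phi)$ must lie in $K$), which the paper asserts without detail, and both of your verifications are accurate.
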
 

\begin{proof} Let $A$ be the subalgebra of $E(L/K)$ which is generated by the  subalgebras  $\th_e(E(M/K))$ and $E(L/M)$. Since $L\subseteq A$, the algebra $A$ is a simple algebra (Theorem \ref{B24Mar25}.(2)) and (where $E=E(L/K)$)
$$
 C_E(A)=C_E(E(L/M))\cap C_E\Big(\th_e(E(M/K))\Big) =M\cap C_E\Big(\th_e(E(M/K))\Big) =K.
 $$
The algebra $A$ is a simple subalgebra of $E(L/K)$. 
 By the Double Centralizer Theorem, 
 $$A=C_EC_E(A)=C_E(K)=E(L/K),$$
 as required.
\end{proof}

Clearly, the field $L=M\t N$ is a tensor product of two vector spaces $M$ and   $N:=\bigoplus_{i=1}^mKe_i$. Therefore, the central simple $K$-algebra  
$$E(L/K)=E(M\t N/K)=E(M/K)\t E(N/K)$$
is a tensor product of two central simple $K$-algebras $E(M/K)$ and $E(N/K)$. The equality above depends on the choice of the $M$-basis $e$ for $L$ since the vector space $N$ does.

\begin{lemma}\label{aA5May25}%\marginpar{aA5May25}
Let $K\subseteq M\subseteq L$ be   finite field  extensions of the  field $K$, $e=(e_1, \ldots , e_m)$ be  an $M$-basis of the vector space $L$  and $N=\bigoplus_{i=1}^mKe_i$.
Then $E(L/K)=E(M/K)\t E(N/K)$, $C_{E(L/K)}(E(M/K))=E(N/K)$, $C_{E(L/K)}(E(N/K))=E(M/K)$  and 
 $E(N/K)\subseteq E(L/M)$. 
\end{lemma}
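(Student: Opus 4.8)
The plan is to exploit the tensor decomposition $E(L/K)=E(M/K)\otimes E(N/K)$ recorded just above the statement and then read off the three remaining assertions from the Double Centralizer Theorem (Theorem \ref{DCThm}). First I would fix the $K$-linear identification $L=M\otimes_K N$ sending $me_i$ to $m\otimes e_i$; under it the standard identity $\End_K(V\otimes_K W)=\End_K(V)\otimes_K\End_K(W)$ gives statement (1), namely $E(L/K)=E(M/K)\otimes E(N/K)$. A short computation with this identification shows that the basis-dependent embedding $\th_e$ of (\ref{emonEMK}) realises $\th_e(E(M/K))$ as the first tensor factor $E(M/K)\otimes 1$ (since $\th_e(\phi)$ acts on $\sum_i m_i\otimes e_i$ as $\sum_i\phi(m_i)\otimes e_i=(\phi\otimes\id)(\sum_i m_i\otimes e_i)$), while $E(N/K)$ is the second factor $1\otimes E(N/K)$.

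For statement (2) I would first note that the two tensor factors commute, so $E(N/K)\subseteq C_{E(L/K)}(\th_e(E(M/K)))$. Since $\th_e(E(M/K))\simeq M_{[M:K]}(K)\in\fCK$ is a (central) simple subalgebra of the central simple algebra $E(L/K)$, Theorem \ref{DCThm}.(3) computes the dimension of its centralizer as
\[
\dim_K C_{E(L/K)}(\th_e(E(M/K)))=\frac{[L:K]^2}{[M:K]^2}=m^2=\dim_K E(N/K),
\]
where the middle equality uses $[L:K]=[L:M][M:K]=m[M:K]$ together with $\dim_K N=m$. Combined with the inclusion above this forces $C_{E(L/K)}(\th_e(E(M/K)))=E(N/K)$, which is statement (2). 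Statement (3) then follows immediately from the Double Centralizer Theorem (Theorem \ref{DCThm}.(1)) applied to the simple subalgebra $\th_e(E(M/K))$:
\[
C_{E(L/K)}(E(N/K))=C_{E(L/K)}C_{E(L/K)}(\th_e(E(M/K)))=\th_e(E(M/K))=E(M/K).
\]

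For statement (4) I would use that the field $M$ sits inside $\th_e(E(M/K))$ as the multiplication operators: for $\mu\in M$ the operator $\th_e(\mu\cdot)$ is multiplication by $\mu$ on $L$, so $M\subseteq\th_e(E(M/K))$. Centralizing a larger set yields a smaller centralizer, whence
\[
E(N/K)=C_{E(L/K)}(\th_e(E(M/K)))\subseteq C_{E(L/K)}(M)=E(L/M),
\]
the last equality being the standard fact that the $M$-linear endomorphisms of $L$ are exactly those commuting with multiplication by $M$ (as already used in Theorem \ref{B24Mar25}).

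The arguments are all short once the identifications are in place, so I expect the only genuine care point to be the bookkeeping of the two embeddings: verifying that $\th_e(E(M/K))$ and $E(N/K)$ really are the two commuting tensor factors $E(M/K)\otimes 1$ and $1\otimes E(N/K)$, and that the copy of the field $M$ inside $E(L/K)$ lands in the first factor. Everything else is a direct application of the central-simple-algebra results of Theorem \ref{DCThm} together with the degree multiplicativity $[L:K]=m[M:K]$.
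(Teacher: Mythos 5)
Your proof is correct and takes essentially the same route as the paper's: the tensor decomposition $E(L/K)=E(M/K)\otimes E(N/K)$ established just before the lemma, identification of the two commuting tensor factors, and the containment $M\subseteq \th_e(E(M/K))$ giving $E(N/K)=C_{E(L/K)}(E(M/K))\subseteq C_{E(L/K)}(M)= E(L/M)$. The only difference is that where the paper asserts the centralizer equalities with a bare ``it follows,'' you justify them explicitly via the dimension count of Theorem \ref{DCThm}.(3) together with the Double Centralizer Theorem (Theorem \ref{DCThm}.(1)) --- a harmless and indeed welcome elaboration of the same argument.
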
 

\begin{proof} The equality $E(L/K)=E(M/K)\t E(N/K)$  was proven above. It follows that 
$$
C_{E(L/K)}(E(M/K))=E(N/K)\;\; {\rm  and }\;\; C_{E(L/K)}(E(N/K))=E(M/K).
$$ 
Now, the containment $M\subseteq E(M/K)$ implies that 
 $E(N/K)=C_{E(L/K)}(E(M/K))\subseteq C_{E(L/K)}(M)\subseteq E(L/M)$. 
\end{proof}

%%%%%%%%%%%%%%%%%%%   Section 3    %%%%%%%%%%%

\section{Algebras of differential operators on finite field extensions}\label{ALG-DIF-OPS}%\marginpar{ALG-DIF-OPS}

Lemma \ref{aC24Mar25} and Theorem \ref{AC24Mar25} give  explicit descriptions of the algebras of differential operators $\CD (L/K)$ for an arbitrary finite field extension $L/K$ in characteristic zero and  in prime characteristic, respectively. Proposition \ref{VB-aC24Mar25} is a criterion for the algebra of differential operators $\CD (L/K)$ being a commutative algebra or equivalently $\CD (L/K)=L$. 
Theorem \ref{23May25} shows that  $L_{dif}=L^{sep}=L^{\CD (L/K)_+}$. Corollary \ref{b16Apr25} is  a criterion for $\CD (L/K)=\D (L/K)$.
\\

 Let  $\CD (L)=\CD_K(L)=\CD (L/K)$ be the {\bf algebra of $K$-differential operations} on the $K$-algebra $L$ and the $K$-subalgebra 
 $$
 \D (L)=\D_K (L)=\D (L/K):=L\langle \Der_K(L)\rangle
 $$ 
 of $\CD (L)$   is called the {\bf derivation algebra} of  $L$. For a subset $\Phi$ of $K$-linear maps on $L$, let $L^{\Phi}:=\bigcap_{\phi \in \Phi}\ker (\phi)$. If $\Phi\subseteq \Der_K(L)$ then $L^{\Phi}$ is a $K$-subalgebra of $L$, the {\bf algebra of  $\Phi$-invariants/constants}. Let $\End_K(L)$ be the $K$-endomorphism algebra of the $K$-vector space $L$, i.e.  the set of  of all $K$-linear maps from $L$ to $L$.

\begin{lemma}\label{a20May25}%\marginpar{a20May25}
Let $K\subseteq M\subseteq L$ be   finite field  extensions of the  field $K$. Then $\CD (L/M)\subseteq \CD (L/K)$ and $\CD (L/M)=E(L/M)\cap \CD (L/K)
=C_{\CD (L/K)}(M)$.
\end{lemma}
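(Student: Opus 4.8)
The plan is to work directly from the order filtration that defines differential operators. Recall that $\CD^{\leq 0}(L/K)$ consists of the multiplication operators, i.e. equals $L$, and that for $n\geq 1$ an operator $\d\in \End_K(L)$ lies in $\CD^{\leq n}(L/K)$ iff the commutator $[\d,a]:=\d a-a\d$ lies in $\CD^{\leq n-1}(L/K)$ for every $a\in L$, where $a$ denotes the multiplication operator $l\mapsto al$; and $\CD(L/K)=\bigcup_n\CD^{\leq n}(L/K)$. The same filtration with $\End_M(L)$ in place of $\End_K(L)$ defines $\CD(L/M)$. The observation that drives the whole argument is that the commutator condition refers only to multiplications by elements of $L$ and is therefore insensitive to the base field; the base field enters solely through the linearity requirement on $\d$. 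Thus $\CD(L/M)$ ought to be exactly the $M$-linear part of $\CD(L/K)$.

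First I would prove $\CD(L/M)\subseteq \CD(L/K)$ by induction on the order. Since $\End_M(L)\subseteq \End_K(L)$ and $\CD^{\leq 0}(L/M)=L=\CD^{\leq 0}(L/K)$, the base case is immediate; and if $\CD^{\leq n-1}(L/M)\subseteq \CD^{\leq n-1}(L/K)$, then for $\d\in \CD^{\leq n}(L/M)$ one has $\d\in \End_K(L)$ and $[\d,a]\in \CD^{\leq n-1}(L/M)\subseteq \CD^{\leq n-1}(L/K)$ for all $a\in L$, whence $\d\in \CD^{\leq n}(L/K)$. This gives the inclusion, and simultaneously yields $\CD(L/M)\subseteq E(L/M)\cap \CD(L/K)$, since elements of $\CD(L/M)$ are by definition $M$-linear.

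For the reverse inclusion $E(L/M)\cap \CD(L/K)\subseteq \CD(L/M)$ I would again induct on the $K$-order $n$ of $\d$. The base case $n=0$ is clear, as $\CD^{\leq 0}(L/K)=L=\CD^{\leq 0}(L/M)$. For the inductive step, take $\d\in E(L/M)\cap \CD^{\leq n}(L/K)$. For each $a\in L$, multiplication by $a$ is $M$-linear (as $M\subseteq L$ and $L$ is commutative), so $[\d,a]$ is $M$-linear; it also lies in $\CD^{\leq n-1}(L/K)$. By the induction hypothesis $[\d,a]\in \CD^{\leq n-1}(L/M)$, and since $\d$ is itself $M$-linear this is exactly the condition placing $\d$ in $\CD^{\leq n}(L/M)$. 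This establishes $\CD(L/M)=E(L/M)\cap \CD(L/K)$.

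Finally, the identification $E(L/M)\cap \CD(L/K)=C_{\CD(L/K)}(M)$ is a translation between two descriptions of $M$-linearity: an operator $\d\in \End_K(L)$ is $M$-linear precisely when it commutes with the multiplication operator of every $m\in M$, i.e. $E(L/M)=C_{E(L/K)}(M)$; intersecting with $\CD(L/K)$ then produces the centralizer of $M$ taken inside $\CD(L/K)$. I expect the only real care to be in the reverse inclusion of the middle equality, specifically in verifying that passing from $\d$ to its commutators $[\d,a]$ both preserves $M$-linearity and lowers the $K$-order, so that the induction closes; everything else is bookkeeping about the filtration.
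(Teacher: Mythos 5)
Your argument is correct and is essentially the paper's proof written out in full: the paper's one-line justification --- that $E(L/M)$ is an $\ad_l$-stable subalgebra of $E(L/K)$ for all $l\in L$, so the commutator filtrations defining $\CD (L/M)$ and $\CD (L/K)$ match up, with $E(L/M)=C_{E(L/K)}(M)$ giving the centralizer description --- is exactly the induction on order you carry out explicitly. No gaps; your version merely makes the bookkeeping visible.
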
 

\begin{proof} Since the algebra $E(L/M)$ is a subalgebra of $E(L/K)$ such  that $\ad_l(E(L/M))\subseteq E(L/M)$ for all elements $l\in L$, we have that 
$$
\CD (L/M)=E(L/M)\cap \CD (L/K)
=C_{\CD (L/K)}(M)\subseteq \CD (L/K).
$$
\end{proof}

 \begin{lemma}\label{aC24Mar25}%\marginpar{aC24Mar25}
Let $K$ be a field of arbitrary  characteristic, 
 $L/K$ be a finite separable field extension.  Then $\CD (L/K)=L$.
\end{lemma}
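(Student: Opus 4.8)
The plan is to work with the order filtration $\CD(L/K) = \bigcup_{n \geq 0} \CD_n$, where $\CD_0$ is the set of $L$-linear endomorphisms of $L$ --- that is, the multiplication operators, so $\CD_0 = L$ --- and, for $n \geq 1$, $\CD_n = \{ \delta \in \End_K(L) \mid [\delta, l] \in \CD_{n-1} \text{ for all } l \in L \}$, with $l$ abbreviating the operator ``multiplication by $l$'' and $[\delta, l] = \delta l - l \delta$. Since $\CD(L/K) = \bigcup_n \CD_n$, it suffices to prove by induction on $n$ that $\CD_n = L$.

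The decisive step is $\CD_1 = L$, and this is where separability enters. Given $\delta \in \CD_1$, I would first replace it by $D := \delta - \delta(1)$ (subtracting multiplication by the constant term $\delta(1) \in L$), so that $D \in \CD_1$ and $D(1) = 0$, i.e. $D \in \CD(L/K)_+$. For each $l$ the commutator $[D, l]$ lies in $\CD_0 = L$, hence is multiplication by some $c_l \in L$; evaluating at $1$ gives $c_l = [D, l](1) = D(l)$, so $[D, l](m) = D(l)\, m$ for every $m$, which is precisely the Leibniz identity $D(lm) = D(l)\, m + l\, D(m)$. Thus $D \in \Der_K(L)$. The key input is then the vanishing $\Der_K(L) = 0$ for a finite separable extension: since such an extension is simple, write $L = K(\alpha)$ with separable minimal polynomial $f$, so that $f'(\alpha) \neq 0$; any $D \in \Der_K(L)$ satisfies $0 = D(f(\alpha)) = f'(\alpha)\, D(\alpha)$, forcing $D(\alpha) = 0$ and hence $D = 0$. (Equivalently, $\Der_K(L) = \Hom_L(\O_{L/K}, L) = 0$ because $\O_{L/K} = 0$ for a finite separable extension.) Therefore $D = 0$ and $\delta = \delta(1) \in L$, giving $\CD_1 = L$.

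For the inductive step, suppose $\CD_{n-1} = L = \CD_0$ for some $n \geq 2$ and take $\delta \in \CD_n$. Then $[\delta, l] \in \CD_{n-1} = \CD_0$ for all $l$, which is exactly the condition defining $\CD_1$; hence $\delta \in \CD_1 = L$. Together with the trivial inclusion $L = \CD_0 \subseteq \CD_n$ this gives $\CD_n = L$, closing the induction and yielding $\CD(L/K) = L$. The only genuine obstacle is the derivation vanishing $\Der_K(L) = 0$; everything else is a formal manipulation of the order filtration, so I expect that to be the sole place where the separability hypothesis is consumed.
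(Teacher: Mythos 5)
Your proof is correct and follows essentially the same route as the paper: the paper's one-line argument ("$\Der (L/K)=0$, and so $\CD (L/K)=L$, by the definition of the algebra of differential operators") is exactly your argument in compressed form, with your induction on the order filtration and the reduction of $\CD_1$ to $L\oplus\Der_K(L)$ being the details the paper leaves implicit. Your vanishing step $\Der_K(L)=0$ via the primitive element and $f'(\alpha)\neq 0$ (equivalently $\O_{L/K}=0$, which is the formulation the paper uses in Proposition \ref{VB-aC24Mar25}) is the standard fact both proofs consume.
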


\begin{proof} By the assumption, the  field $L/K$ is  a finite separable field extension. Therefore, $\Der (L/K)=0$, and so $\CD (L/K)=L$, by the definition of the algebra of differential operators. 
\end{proof}

\begin{definition}
For a finite field extension $L/K$, the set  
%\marginpar{LDL+}
\begin{equation}\label{LDL+}
L_{dif}:=(L/K)_{dif}:=C_L(\CD (L/K))=\{ l\in L\, | \, l\d=\d l\;\; {\rm for\; all}\;\; \d \in \CD (L/K)\}
\end{equation}
is a subfield of the field $L$ that contains $K$. The field $L_{dif}/K$ is called  the {\bf dif-subfield} of the field extension $L/K$.
\end{definition}

{\bf Description of the algebra of differential operators $\CD (L/K)$ for an arbitrary finite field extension $L/K$.} Lemma \ref{aC24Mar25} and Theorem \ref{AC24Mar25} give  explicit descriptions of the algebras of differential operators $\CD (L/K)$ for an arbitrary finite field extension $L/K$ in characteristic zero and  in prime characteristic, respectively.

Proposition \ref{A21May25} is an explicit description of $\Der (L/K)$ for a finite field extension $L/K$ where $\char (K)=p$. Let $L^{sep}L^p$ be a  subfield of $L$    generated by the subfields $L^{sep}$ and $L^p:=\{ l^p\, | \, l\in L\}$. 

\begin{proposition}\label{A21May25}%\marginpar{A21May25}

Let $L/K$ be a finite field extension and $\char (K)=p$. Then: 
\begin{enumerate}

\item $L\simeq \bigotimes_{i=1}^nL^{sep}L^p[x_i]/(x_i^p-\l_i)$ is a field  where $\l_1, \ldots , \l_n\in L^{sep}L^p$ and $\t = \t_{L^{sep}L^p}$.

\item $\O_{L/K}=\O_{L/L^{sep}L^p}=\bigoplus_{i=1}^nLdx_i$.

\item $[\O_{L/K}:L]=n$ ($\Leftrightarrow [\O_{L/K}:K]=np^n)$.

\item $\Der (L/K)=\Der (L/L^{sep}L^p)=\bigoplus_{i=1}^nL\der_i$ where $\der_i=\der_{x_i}=\frac{\der}{\der x_i}$.

\item $[\Der (L/K):L]=n$ ($\Leftrightarrow [\Der (L/K):K]=np^n)$.

\item $L^{\Der (L/K)}=L^{\Der (L/L^{sep}L^p)}=L^{sep}L^p$.

\item $E(L/L^{sep}L^p)=\CD (L/L^{sep}L^p)=\D (L/L^{sep}L^p)=\bigoplus_{\alpha \in \N^n_{<p}}L\der^\alpha$ where 
$\alpha = (\alpha_1, \ldots , \alpha_n)$ and $\der^\alpha:=\prod_{i=1}^n\der_i^{\alpha_i}$.
\end{enumerate}
\end{proposition}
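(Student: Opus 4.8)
The plan is to replace the ground field $K$ by the intermediate field $F := L^{sep}L^p$, over which $L$ becomes purely inseparable of exponent $\leq 1$ (indeed $L^p \subseteq F$ by construction), and then to compute everything explicitly in a $p$-basis. The first step, underpinning parts (4)--(6), is the chain of identities $\Der(L/K) = \Der(L/L^{sep}) = \Der(L/F)$. A $K$-derivation of $L$ kills $L^{sep}$ because $L^{sep}/K$ is finite separable, so $\O_{L^{sep}/K} = 0$ and hence $\Der_K(L^{sep}, L) = 0$; in characteristic $p$ every derivation annihilates all $p$-th powers, hence $L^p$, and therefore the compositum $F = L^{sep}L^p$. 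Thus $\Der(L/K) = \Der(L/F)$.

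For part (1) I would invoke the structure theory of purely inseparable exponent-$1$ extensions: a $p$-basis $x_1, \ldots, x_n$ of $L/F$ gives $\lambda_i := x_i^p \in F$, makes the monomials $x^\alpha$ ($\alpha \in \N^n_{<p}$) an $F$-basis of $L$ with $[L:F] = p^n$, and produces the decomposition $L \simeq \bigotimes_F F[x_i]/(x_i^p - \lambda_i)$. Part (2) is then immediate: each factor has $\O = L\, dx_i$ free of rank one, since $d(x_i^p - \lambda_i) = p\,x_i^{p-1}dx_i = 0$ imposes no relation, whence $\O_{L/F} = \bigoplus_{i=1}^n L\, dx_i$. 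To pass from $\O_{L/F}$ to $\O_{L/K}$ I would combine the tower sequence $\O_{F/K} \t_F L \to \O_{L/K} \to \O_{L/F} \to 0$ (which makes the right-hand map surjective) with the dimension identity $\dim_L \O_{L/K} = \dim_L \Der(L/K) = \dim_L \Der(L/F) = \dim_L \O_{L/F}$, valid because $\Der_K(L) = \Hom_L(\O_{L/K}, L)$ and these are finite-dimensional over $L$; a surjection of equidimensional finite $L$-spaces is an isomorphism, giving $\O_{L/K} = \O_{L/F}$. Parts (3) and (5) then record the common $L$-dimension $n$ together with its reformulation after extension of scalars, and part (4) is the $L$-dual $\Der(L/F) = \bigoplus_i L\,\der_i$ with $\der_i$ dual to $dx_i$, i.e. $\der_i(x_j) = \delta_{ij}$ and $\der_i|_F = 0$.

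Part (6) splits into two inclusions. The containment $F \subseteq L^{\Der(L/K)}$ is precisely the assertion that derivations kill $F$, already proved. For the reverse, writing $l = \sum_{\alpha \in \N^n_{<p}} c_\alpha x^\alpha$ with $c_\alpha \in F$ and imposing $\der_j(l) = 0$ for every $j$ forces $c_\alpha\,\alpha_j = 0$ for each $\alpha$ with $\alpha_j \geq 1$; since $1 \leq \alpha_j \leq p-1$ is invertible modulo $p$, this kills every $c_\alpha$ with $\alpha \neq 0$, so $l = c_0 \in F$.

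The main obstacle is part (7), the equalities $E(L/F) = \CD(L/F) = \D(L/F) = \bigoplus_{\alpha \in \N^n_{<p}} L\,\der^\alpha$. Here I would first check that $\der_i^p = 0$ on $L$ (differentiating the degree-$<p$ monomial basis $p$ times) and that the $\der_i$ commute, so that moving scalars to the left via $\der_i\, l = l\,\der_i + \der_i(l)$ shows $\D(L/F) = L\langle \der_1, \ldots, \der_n\rangle$ is spanned over $L$ by the $p^n$ operators $\der^\alpha$, $\alpha \in \N^n_{<p}$. These are $L$-linearly independent by a triangular evaluation on the monomial basis: $\der^\alpha(x^\beta) = \Big(\prod_i \frac{\beta_i!}{(\beta_i - \alpha_i)!}\Big)\, x^{\beta - \alpha}$ vanishes unless $\beta \geq \alpha$ componentwise, while $\der^\alpha(x^\alpha) = \prod_i \alpha_i! \neq 0$. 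Hence $\dim_L \D(L/F) = p^n$, which equals $\dim_L E(L/F) = \dim_L \End_F(L) = [L:F] = p^n$; since $\D(L/F) \subseteq \CD(L/F) \subseteq E(L/F)$, all three coincide. The delicate points are the nilpotency $\der_i^p = 0$, the triangularity of the evaluation matrix, and the bookkeeping $\dim_L \End_F(L) = [L:F]$ (from $\End_F(L) \simeq M_{p^n}(F)$ with $L$ a strictly maximal subfield).
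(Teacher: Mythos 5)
Your proposal is correct and follows essentially the same route as the paper: reduce to the subfield $F=L^{sep}L^p$, over which $L$ is purely inseparable of exponent $\le 1$, take a $p$-basis, obtain parts (1)--(6) from the duality between $\O_{L/F}$ and $\Der (L/F)$, and settle part (7) by the dimension count $\D (L/F)\subseteq \CD (L/F)\subseteq E(L/F)$ with equal dimensions. The only differences are organizational and expository: you prove the tensor decomposition (1) first from the structure theory of exponent-one extensions and deduce (2) via the tower sequence plus a dimension count, whereas the paper derives (2) directly from $d(L^{sep}L^p)=0$ and obtains (1) from (4); and you supply details the paper leaves implicit (the nilpotency $\der_i^p=0$, the triangular evaluation proving $L$-independence of the $\der^\alpha$, and the explicit coefficient computation in (6)).
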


\begin{proof} 2. Let $d: L\ra \O_{L/K}$, $l\mapsto dl$ be the canonical $K$-derivation map. Then $d(L^{sep}L^p)=0$, and so $ \O_{L/K}=\O_{L/L^{sep}L^p}$. For all elements $l\in L$, $l^p\in L^p$. Therefore
$\O_{L/K}=\O_{L/L^{sep}L^p}=\bigoplus_{i=1}^nLdx_i$ for some elements $x_1, \ldots , x_n\in L$.

4. The field extension $L/L^{sep}L^p$ is a purely inseparable field extension of exponent 1, i.e. $l^p\in L^{sep}L^p$ for all elements $l\in L$. Recall  that $\Der (L/K)=\Hom_L(\O_{L/K}, L)$.
Now, statement 4 follows from statement 2.

1. Statement 1 follows from statement 4. 

3 and 5. Statements 3 and 5 are obvious.

6. Statement 6 follows from statements 1 and 4. 

7. By statement 4, $\D (L/L^{sep}L^p)=\bigoplus_{\alpha \in \N^n_{<p}}L\der^\alpha\subseteq E(L/L^{sep}L^p)$. Since $\D (L/L^{sep}L^p)\subseteq \CD (L/L^{sep}L^p)\subseteq  E(L/L^{sep}L^p)$ and 
$$ 
\dim_{L^{sep}L^p}\Big(\D (L/L^{sep}L^p)\Big)=p^{2n}=[L:L^{sep}L^p]^2=\dim_{L^{sep}L^p}\Big(E (L/L^{sep}L^p)\Big)
$$
we must have that   $\D (L/L^{sep}L^p)=\CD (L/L^{sep}L^p)=E(L/L^{sep}L^p)$.
\end{proof}

 Notice that $L\subseteq \CD (L/K)$. Proposition \ref{VB-aC24Mar25} is a criterion for  $ \CD (L/K)=L$.

\begin{proposition}\label{VB-aC24Mar25}%\marginpar{VB-aC24Mar25}
Let  $L/K$ be a finite  field extension over the field  $K$  of arbitrary  characteristic.  Then the following statements are equivalent:

\begin{enumerate}

\item $\CD (L/K)=L$.

\item $\Der(L/K)=0$.

\item  $\O_{L/K}=0$.

\item  $L/K$ is a separable field extension  ($\Leftrightarrow$ either $\char (K)=0$  or  $\char (K)=p$ and $L=L^{sep}L^p$). 
 
 \item $L_{dif}=L$.
 
 \item The algebra $\CD (L/K)$ is a commutative algebra.

\end{enumerate}
\end{proposition}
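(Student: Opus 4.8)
The plan is to prove the six statements equivalent by establishing the single closed cycle of implications
$$(1)\Rightarrow (6)\Rightarrow (5)\Rightarrow (2)\Rightarrow (3)\Rightarrow (4)\Rightarrow (1),$$
so that each item lies on one chain. Almost every link is formal; the only substantive input is the identification of separability with the vanishing of $\O_{L/K}$, which I expect to be the real obstacle.

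The first two links are immediate. If $\CD (L/K)=L$, then $\CD (L/K)$ is a field, hence commutative, which gives $(6)$; and if $\CD (L/K)$ is commutative, then every element of the subfield $L\subseteq \CD (L/K)$ commutes with all of $\CD (L/K)$, so $L_{dif}=C_L(\CD (L/K))=L$, which gives $(5)$. For $(5)\Rightarrow (2)$ I would use the commutator identity in $E(L/K)$: for a derivation $\d \in \Der_K(L)$ and the multiplication operator $l\cdot \in L\subseteq E(L/K)$, evaluating on $m\in L$ yields $[\d , l\cdot ](m)=\d (lm)-l\d (m)=\d (l)\, m$, so $[\d , l\cdot ]=\d (l)\cdot$ as operators. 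Since $\Der_K(L)\subseteq \CD (L/K)$, hypothesis $(5)$ forces every $l\in L$ to commute with every $\d \in \Der_K(L)$, i.e. $\d (l)=0$ for all $l$ and all $\d$; hence $\Der_K(L)=0$, which is $(2)$.

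The step $(2)\Rightarrow (3)$ follows from the universal property of K\"ahler differentials: $\Der_K(L)=\Hom_L(\O_{L/K},L)$, and since $\O_{L/K}$ is a finite-dimensional $L$-vector space, its dual vanishes precisely when $\O_{L/K}=0$. This argument is in fact an equivalence and is characteristic-free, so it is the cleanest way to link $(2)$ and $(3)$ without a case split.

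The link $(3)\Rightarrow (4)$ is where the content sits and is the main obstacle. In characteristic zero it is vacuous, since every finite extension is separable and $(4)$ holds automatically. In characteristic $p$ I would read it off Proposition \ref{A21May25}: there $\O_{L/K}=\bigoplus_{i=1}^nL\, dx_i$ with $[\O_{L/K}:L]=n$ and $L\simeq \bigotimes_{i=1}^nL^{sep}L^p[x_i]/(x_i^p-\lambda_i)$ over $L^{sep}L^p$, so $\O_{L/K}=0$ forces $n=0$, i.e. $L=L^{sep}L^p$, which is exactly the characterization of separability recorded in $(4)$. Finally $(4)\Rightarrow (1)$ is Lemma \ref{aC24Mar25}. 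Closing the cycle completes the proof, and the only place demanding more than formal manipulation is the separability criterion underlying $(3)\Leftrightarrow (4)$, for which I would lean entirely on Proposition \ref{A21May25} in positive characteristic.
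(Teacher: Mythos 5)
Your cycle is sound link by link, and in substance it reorganizes the paper's own arguments: your commutator computation for $(5)\Rightarrow (2)$ is exactly the paper's, the link $(2)\Rightarrow (3)$ via $\Der (L/K)=\Hom_L(\O_{L/K},L)$ is the paper's (and, as you note, is really an equivalence), and the appeal to Proposition \ref{A21May25} in characteristic $p$ is how the paper handles the characteristic-$p$ case as well. But there is one missing step, and it is the only genuinely non-formal point in the whole proposition besides Proposition \ref{A21May25} itself: you never prove that $L=L^{sep}L^p$ implies that $L/K$ is separable, i.e.\ that $L=L^{sep}$. You derive $L=L^{sep}L^p$ from $\O_{L/K}=0$ and then declare this to be ``exactly the characterization of separability recorded in $(4)$'' --- but that parenthetical equivalence is part of the proposition's claim, not a given: the paper proves it inside this very proof. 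Concretely, if $(4)$ is read as ``$L/K$ is separable'', your link $(3)\Rightarrow (4)$ is incomplete; if instead $(4)$ is read as the disjunctive condition, then your link $(4)\Rightarrow (1)$ is incomplete, because Lemma \ref{aC24Mar25} hypothesizes genuine separability. Either way the same equivalence is silently assumed, which makes the cycle circular at that point.

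The repair is short and is precisely what the paper supplies: iterate the hypothesis. From $L=L^{sep}L^p$ one gets $L=L^{sep}(L^{sep}L^p)^p=L^{sep}L^{p^2}$, and inductively $L=L^{sep}L^{p^n}$ for all $n\geq 1$; since $L/L^{sep}$ is a finite purely inseparable extension, $L^{p^n}\subseteq L^{sep}$ for some $n$, whence $L=L^{sep}$ and $L/K$ is separable. (Alternatively, one could get $(4)\Rightarrow (1)$ from the disjunctive form of $(4)$ without Lemma \ref{aC24Mar25}, since $L=L^{sep}L^p$ gives $\Der (L/K)=\Der (L/L^{sep}L^p)=0$ by Proposition \ref{A21May25}.(4) with $n=0$, and then $\CD (L/K)=L$ by the definition of the order filtration --- but then statement $(4)$ in its primary meaning of separability is never established, so the iteration argument is needed in any case.) With this one paragraph inserted, your single-cycle organization is a perfectly valid, slightly leaner alternative to the paper's proof, which instead proves the pairwise equivalences $(1\Leftrightarrow 2)$, $(2\Leftrightarrow 3)$, $(2\Leftrightarrow 4)$, $(1\Rightarrow 5\Rightarrow 2)$ and $(1\Rightarrow 6\Rightarrow 2)$.
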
 

\begin{proof} $(1\Leftrightarrow 2)$ The equivalence follows from the definition of the algebra of differential operators.

$(2\Leftrightarrow 3)$ The module of K\"{a}hler  differentials $\O_{L/K}$ is a left vector space over the field $L$ and $\Der (L/K)=\Hom_L(\O_{L/K},L)$. 
Therefore, $\Der (L/K)=0$ iff $\O_{L/K}=0$.
 
 $(2\Leftrightarrow 4)$ Suppose that  $\char (K)=0$. Then 
 the field extension $L/K$ is a separable field  extension, and so $\Der (L/K)=0$. 
 
 Suppose that  $\char (K)=p$. Then, by Proposition \ref{A21May25},  
 $\Der (L/K)=0$ iff $L=L^{sep}L^p$ iff $L=L^{sep}L^{p^n}$  for all $n\geq 1$ 
 $$
 (L=L^{sep}(L^{sep}L^p)^p=L^{sep}L^{p^2}=\cdots = L^{sep}L^{p^n}=\cdots)
 $$
  iff $L=L^{sep}$ (since $L/L^{sep}$ is a finite field extension we have that $L^{p^n}\subseteq L^{sep}$ for some $n\geq 1$).

 $(1\Rightarrow 5)$  If $\CD (L/K)=L$ then 
 $$L_{dif}=C_{E(L/K)}(\CD (L/K))=C_{E(L/K)}(L)=E(L/L)=L.$$

  $(5\Rightarrow 2)$ If $L_{dif}=L$ then $\Der(L/K)=0$ since for all derivations $\d \in \Der (L/K)$ and elements $l\in L=L_{dif}$, 
  $$\d (l)=[\d, l]=0.$$
  
$(1\Rightarrow 6)$ The implication is obvious.

 $(6\Rightarrow 2)$ Suppose that  the algebra $\CD (L/K)$ is a commutative algebra. Since $L\subseteq \CD (L/K)$, we must have $\Der (L/K)=0$.
  \end{proof}

\begin{theorem}\label{AC24Mar25}%\marginpar{AC24Mar25}
Let $K$ be a field of  characteristic $p$, 
 $L/K$ be a finite field extension. Then:
 \begin{enumerate}
 
 \item $\CD (L/K)=E(L/L_{dif})\simeq M_{[L:L_{dif}]}(L_{dif})\in \fC (L_{dif})$,  $\Deg (\CD (L/K))=[L:L_{dif}]$ and $\dim_K(\CD (L/K))=[L:L_{dif}]^2[L_{dif}:K]$.
 
 \item $L\in \Max_s (\CD (L/K))$.

 \item $Z(\CD (L/K))=L_{dif}$.
 
 \item $L$ is the only (up to isomorphism) simple 
 $\CD (L/K)$-module.
 
 \item $\End_{\CD (L/K)}(L)=L_{dif}$.
 
 \item $C_{E(L/K)}(\CD (L/K))=L_{dif}$ and $C_{E(L/K)}(L_{dif})=\CD (L/K)$.

\end{enumerate}
\end{theorem}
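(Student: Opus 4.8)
The entire theorem rests on statement (1), and within (1) on the single identity $\CD (L/K)=E(L/L_{dif})$; once this is in hand, statements (2)--(6) are read off from the structure theory of central simple algebras. My plan for the identity is to recognise $\CD (L/K)$ as a member of the family classified in Theorem \ref{B24Mar25}. Every field element acts on $L$ by left multiplication as a differential operator of order $0$, so $L\subseteq \CD (L/K)$, and $\CD (L/K)$ is a $K$-subalgebra of $E:=E(L/K)$. Hence $\CD (L/K)\in \CA (E,L)$, and by Theorem \ref{B24Mar25}.(2),(3) the algebra $\CD (L/K)$ is automatically simple and equals $E(L/M)$ for the unique subfield $M=C_E(\CD (L/K))\in \CF (L/K)$.

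The crux is then to identify this $M$ with $L_{dif}$. Since $L\subseteq \CD (L/K)$, monotonicity of centralizers gives $M=C_E(\CD (L/K))\subseteq C_E(L)=L$, the last equality being the self-centralizing property of the strongly maximal subfield $L\in \Max_s(E)$ from Theorem \ref{B24Mar25}.(1). Thus $M\subseteq L$, and therefore $M=C_L(\CD (L/K))=L_{dif}$ by the very definition (\ref{LDL+}) of the dif-subfield. This yields $\CD (L/K)=E(L/L_{dif})\simeq M_{[L:L_{dif}]}(L_{dif})\in \fC (L_{dif})$; consequently $\Deg (\CD (L/K))=[L:L_{dif}]$ and $\dim_K(\CD (L/K))=[L:L_{dif}]^2[L_{dif}:K]$, completing (1). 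Note that Proposition \ref{A21May25} is not needed for this argument: the classification of Theorem \ref{B24Mar25} does all the work.

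Statements (2)--(6) now follow formally by applying the same central simple algebra machinery, but over the base field $L_{dif}$ rather than $K$. Since $\CD (L/K)=E(L/L_{dif})$ is central simple over $L_{dif}$ of degree $[L:L_{dif}]$ and $L$ is a subfield of exactly that degree, $L\in \Max_s(\CD (L/K))$, giving (2); its centre is $L_{dif}$, giving (3). Applying Corollary \ref{aB24Mar25} with $B=\CD (L/K)$ and $M=L_{dif}$ shows that $L$ is the unique (up to isomorphism) simple $\CD (L/K)$-module and that $\End_{\CD (L/K)}(L)=L_{dif}$, giving (4) and (5). Finally, the first equality of (6) is precisely $M=C_E(\CD (L/K))=L_{dif}$ established above, while the second, $C_E(L_{dif})=E(L/L_{dif})=\CD (L/K)$, is either the defining description of the centralizer of a subfield acting by multiplication or, equivalently, the Double Centralizer Theorem (Theorem \ref{DCThm}.(1)) applied to the simple subalgebra $\CD (L/K)$.

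The only place demanding care is the containment $C_E(\CD (L/K))\subseteq L$: it is what forces $M$ to be exactly $L_{dif}$ rather than some larger intermediate field, and it hinges on $L$ being self-centralizing in $E$, i.e.\ strongly maximal. Everything else is bookkeeping with the already-established classification of subalgebras of $E(L/K)$ containing $L$.
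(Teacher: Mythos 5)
Your proof is correct and takes essentially the same route as the paper: both place $\CD (L/K)$ in $\CA (E(L/K),L)$, invoke Theorem \ref{B24Mar25} to get $\CD (L/K)=E(L/M)$ with $M=C_{E(L/K)}(\CD (L/K))\subseteq C_{E(L/K)}(L)=L$, identify $M=C_L(\CD (L/K))=L_{dif}$ by definition (\ref{LDL+}), and then read off statements (2)--(6) from the central simple algebra structure of $E(L/L_{dif})$ exactly as in the paper (including Corollary \ref{aB24Mar25} for (4) and (5)). The only divergence is that the paper additionally gives a self-contained verification that $\CD (L/K)$ is simple, by applying iterated commutators $\ad_{a_1}\cdots \ad_{a_s}$ with $a_i\in L$ to drive a nonzero ideal element into $L^{\times}$, a step you correctly observe is already subsumed by Theorem \ref{B24Mar25}.(2) (via Lemma \ref{a5May25}), so your streamlining loses nothing.
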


\begin{proof} 1. Clearly, $L\subseteq \CD (L/K)\subseteq E(L/K)\simeq M_n(K)\in \fCK$ and $L\in \Max_s( E(L/K))$. 

Let us show that the algebra $ \CD (L/K)$ is a {\em simple} algebra. Let $I$ be a nonzero ideal of the algebra $\CD (L/K)$ and $a\in I\backslash \{ 0\}$. If $a\in L^\times$ then $I=\CD (L/K)$. Suppose that $a\in \CD (L/K)\backslash L$.  It follows from the definition of the algebra of differential operators $\CD (L/K)$ that there are elements $a_1, \ldots , a_s\in L$ such that
 $$ l=\ad_{a_1}\cdots \ad_{a_s}(a)\in L^\times .$$
 Clearly, $l\in I$, and so $I=\CD (L/K)$. Therefore, the algebra $\CD (L/K)$ is a simple algebra.   Since $\CD(L/K)\in \CA (E(L/K), L, sim)$, 
 $$
 L_{dif}=C_{E(L/K)}(\CD (L/K))\in \CF (L/K)\;\; {\rm  and}\;\;\CD (L/K)=C_{E(L/K)}(L_{dif})=E(L/L_{dif}),$$
  by Theorem  \ref{B24Mar25}.(3). It is obvious that 
$E(L/L_{dif})\simeq M_{[L:L_{dif}]}(L_{dif})\in \fC (L_{dif})$,  $\Deg (\CD (L/L_{dif}))=[L:L_{dif}]$ and $\dim_K(\CD (L/K))=[L:L_{dif}]^2[L_{dif}:K]$.

2.  By statement 1, $[L:L_{dif}]=\Deg(E(L/L_{dif}))=\Deg(\CD (L/L_{dif}))$, and so  $L\in \Max_s (\CD (L/K))$. 

3. Statement 3 follows from statement 1.

4. The field $L$ is a simple $\CD (L/K)$-module and  $\CD (L/K)\in \fC (L_{dif})$. Therefore, the field $L$ is  the only (up to isomorphism) simple 
 $\CD (L/K)$-module.

5. Statement 5 follows from the fact that 
$\CD (L/K)=E(L/L_{dif})\simeq M_{[L:L_{dif}]}(L_{dif})$ (statement 1).

6. By statement 1, $\CD (L/K)=E(L/L_{dif})$, and so  $$
C_{E(L/K)}(\CD (L/K))= C_{E(L/K)}(E(L/L_{dif}))=L_{dif}\;\; {\rm and}\;\; C_{E(L/K)}(L_{dif})=E(L/L_{dif})=\CD (L/K).
$$
\end{proof}

Corollary \ref{a16Apr25} shows that $\CD (L/K)=\CD (L/L_{dif})$ and $(L/L_{dif})_{dif}=L_{dif}$.

\begin{corollary}\label{a16Apr25}%%\marginpar{a16Apr25}
Let $K$ be a field of  characteristic $p$ and  
 $L/K$ be a finite field extension. Then  $\CD (L/K)=\CD (L/L_{dif})$ and $(L/L_{dif})_{dif}=L_{dif}$. Furthermore,  

\begin{enumerate}
 
 \item $\CD (L/K)=\CD (L/L_{dif})=E(L/L_{dif})\simeq M_{[L:L_{dif}]}(L_{dif})\in \fC (L_{dif})$,  $\Deg (\CD (L/L_{dif}))=[L:L_{dif}]$ and $\dim_K(\CD (L/L_{dif}))=[L:L_{dif}]^2[L_{dif}:K]$.
 
 \item $L\in \Max_s (\CD (L/L_{dif}))$.

 \item $Z(\CD (L/L_{dif}))=L_{dif}$.
 
 \item $L$ is the only (up to isomorphism) simple 
 $\CD (L/L_{dif})$-module.
 
 \item $\End_{\CD (L/L_{dif})}(L)=L_{dif}$.

\end{enumerate}
\end{corollary}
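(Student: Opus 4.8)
The plan is to reduce the whole statement to the single identity $\CD (L/K)=\CD (L/L_{dif})$ and then to invoke Theorem \ref{AC24Mar25} applied to the extension $L/L_{dif}$.

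First I would establish $\CD (L/K)=\CD (L/L_{dif})$. Applying Lemma \ref{a20May25} to the tower $K\subseteq L_{dif}\subseteq L$ gives
$$\CD (L/L_{dif})=E(L/L_{dif})\cap \CD (L/K)=C_{\CD (L/K)}(L_{dif}).$$
The key observation (and essentially the only content of the argument) is that, by the very definition (\ref{LDL+}) of $L_{dif}=C_L(\CD (L/K))$, every element of $L_{dif}$ commutes with every differential operator in $\CD (L/K)$; equivalently, every element of $\CD (L/K)$ commutes with every element of $L_{dif}$. Hence $C_{\CD (L/K)}(L_{dif})=\CD (L/K)$, and the two displayed algebras coincide, i.e. $\CD (L/L_{dif})=\CD (L/K)$.

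Second, the equality $(L/L_{dif})_{dif}=L_{dif}$ is then immediate by unwinding the definition of the dif-subfield, using the identity just proved:
$$(L/L_{dif})_{dif}=C_L(\CD (L/L_{dif}))=C_L(\CD (L/K))=L_{dif}.$$

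Finally, for the enumerated statements (1)--(5) I would apply Theorem \ref{AC24Mar25} to the finite field extension $L/L_{dif}$, which is again an extension of a field of characteristic $p$. Its dif-subfield is $L_{dif}$ by the previous step, so the conclusions of Theorem \ref{AC24Mar25} for $L/L_{dif}$ read off verbatim as statements (1)--(5) here, with $L_{dif}$ playing the role of the dif-subfield $(L/L_{dif})_{dif}$ appearing in the theorem. There is no genuine obstacle: the argument is a short unwinding of the definition of $L_{dif}$ together with Lemma \ref{a20May25}, and the only point that requires a moment's care is confirming that the dif-subfield computed relative to the new base $L_{dif}$ is indeed $L_{dif}$ itself, which is exactly what makes the invocation of Theorem \ref{AC24Mar25} legitimate.
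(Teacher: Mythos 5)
Your proof is correct, and it reaches the headline identity $\CD (L/K)=\CD (L/L_{dif})$ by a genuinely different and more elementary route than the paper. The paper first invokes Theorem \ref{AC24Mar25}.(1) to get $\CD (L/K)=E(L/L_{dif})$ and then sandwiches, $E(L/L_{dif})=\CD (L/K)\subseteq \CD (L/L_{dif})\subseteq E(L/L_{dif})$, so the heavy machinery (simplicity of $\CD (L/K)$ and the Double Centralizer Theorem, which underlie that theorem) is already consumed in proving the first identity. You instead note that Lemma \ref{a20May25} gives $\CD (L/L_{dif})=C_{\CD (L/K)}(L_{dif})$ and that this centralizer is tautologically all of $\CD (L/K)$, because $L_{dif}=C_L(\CD (L/K))$ by definition (\ref{LDL+}) — a purely formal two-line argument needing none of the central-simple-algebra results; your computation $(L/L_{dif})_{dif}=C_L(\CD (L/L_{dif}))=C_L(\CD (L/K))=L_{dif}$ then matches the paper's, which phrases the same fact as a containment chain through $C_{E(L/L_{dif})}$. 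For items (1)--(5) both arguments ultimately cite Theorem \ref{AC24Mar25}: you apply it to the extension $L/L_{dif}$ (legitimate, since $L_{dif}$ has characteristic $p$, $L/L_{dif}$ is finite, and its dif-subfield is $L_{dif}$ by your second step), whereas the paper reads the statements off from the case $L/K$ via the identity just proved; the only micro-point your version leaves implicit is the trivial base change $\dim_K=[L_{dif}:K]\cdot\dim_{L_{dif}}$ needed for the dimension formula in (1). What your route buys is the observation that $\CD (L/K)=\CD (L/L_{dif})$ is definitional rather than a consequence of the structure theory; what the paper's route buys is that the further equality with $E(L/L_{dif})$ drops out simultaneously from the same sandwich.
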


\begin{proof}  By Theorem \ref{AC24Mar25}.(1), $\CD (L/K)=E(L/L_{dif})\subseteq E(L/K)$. Therefore, 
$$
E(L/L_{dif})=\CD (L/K)\subseteq \CD (L/L_{dif})\subseteq E(L/L_{dif}),
$$
and so $\CD (L/K)=\CD (L/L_{dif})=E(L/L_{dif})\simeq M_{[L:L_{dif}]}(L_{dif})\in \fC (L_{dif})$. Now, 
$$
L_{dif}\subseteq (L/L_{dif})_{dif}=C_{E(L/L_{dif})}(\CD (L/L_{dif}))\subseteq C_{E(L/K)}(\CD (L/K))
=L_{dif},$$
and  so $(L/L_{dif})_{dif}=L_{dif}$. Now, statements 1--5 follow from Theorem \ref{AC24Mar25}.
\end{proof}

{\bf The equality   $L_{dif}=L^{sep}=L^{\CD (L/K)_+}$.} Let $L/K$ be a finite field extension. Then the     field $L$ is a left $E(L/K)$-module which is also a left $\CD (L/K)$-module  since $\CD (L/K)\subseteq E(L/K)$. The annihilator of the element $1\in L$,   
$$
\CD (L/K)_+:=\ann_{\CD (L/K)}(1):=\{ \d \in \CD (L/K)\, | \, \d (1)=0\}, 
$$ 
is a left ideal of the algebra  of $K$-linear  differential operators $\CD (L/K)$, 

%\marginpar{CDLK+}
\begin{equation}\label{CDLK+}
\CD (L)=L\oplus \CD (L/K)_+\;\; {\rm  and}\;\; {}_{\CD (L/K)}L=\CD (L/K)/\CD (L/K)_+.
\end{equation}
So,  every differential operator $\d \in \CD (L/K)$ is a unique sum
%\marginpar{CDLK+1}
\begin{equation}\label{CDLK+1}
 \d = \d(1)+\d_+ \;\; {\rm where} \;\; \d (1) \in L\;\; {\rm and}\;\; \d_+:= \d -\d (1)\in \CD (L/K)_+.
\end{equation}
Elements of the set $\CD (L/K)_+$ are called {\bf differential operators without constant term}.

Any associative algebra $A$ is a Lie algebra $(A, [\cdot, \cdot])$ where the Lie bracket $[\cdot, \cdot]$ is the commutator: For all elements $a,b\in A$, $[a,b]:=ab-ba$. By the definition, the left ideal $\CD (L/K)_+$  is a Lie subalgebra of the Lie algebra $(\CD (L/K), [\cdot, \cdot])$.

\begin{definition}
\begin{eqnarray*}
 L^{\Der (L/K)}&:=&\{ l\in L\, | \, \d (l)=0\;\; {\rm for\; all}\;\; \d \in \Der (L/K)\}, \\
L^{\CD (L/K)_+}&:=&\{ l\in L\, | \, \d (l)=0\;\; {\rm for\; all}\;\; \d \in \CD (L/K)_+\}.
\end{eqnarray*}
 The elements of the sets $L^{\Der (L/K)}$ and $L^{\CD (L/K)_+}$ are called $\Der (L/K)${\bf -constants}  and $\CD (L/K)_+${\bf -constants}, respectively.
 \end{definition}
 
  \begin{proposition}\label{A27Apr25}%\marginpar{A27Apr25}
Let $L/K$ be a   finite field extension of   characteristic $p$. Then:

\begin{enumerate}

\item  $L_{dif}=\{ l\in L\, | \, \d (l)=0$ and $ l\d =(\d l)_+$ for all $\d \in \CD (L/K)\}$.

\item $L^{sep}\subseteq L_{dif}\subseteq L^{\CD (L/K)_+}\subseteq  L^{\Der (L/K)}=L^{sep}L^p$.

\end{enumerate}
\end{proposition}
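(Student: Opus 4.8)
The plan is to establish the displayed description in part 1 by a direct decomposition argument, and then to obtain the chain in part 2 by proving its three links together with the one cited equality; the only substantial step will be the left-most inclusion $L^{sep}\subseteq L_{dif}$.

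For part 1 I would start from the definition $L_{dif}=C_L(\CD (L/K))$. Writing each $\d\in \CD (L/K)$ as $\d=\d (1)+\d_+$ with $\d (1)\in L$ and $\d_+\in \CD (L/K)_+$ as in (\ref{CDLK+1}), and noting that $l\in L$ commutes with the field element $\d (1)$, the relation $l\d=\d l$ for all $\d$ is equivalent to $l\d_+=\d_+l$ for all $\d_+\in \CD (L/K)_+$. I would then decompose the product $\d_+l$: its constant term is $(\d_+l)(1)=\d_+(l)$, so $\d_+l=\d_+(l)+(\d_+l)_+$, while $l\d_+$ already has zero constant term. Comparing constant terms and plus-parts separately, $l\d_+=\d_+l$ is equivalent to the two conditions $\d_+(l)=0$ and $l\d_+=(\d_+l)_+$, which is exactly the asserted description of $L_{dif}$.

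For part 2, the equality $L^{\Der (L/K)}=L^{sep}L^p$ is Proposition \ref{A21May25}.(6); the inclusion $L^{\CD (L/K)_+}\subseteq L^{\Der (L/K)}$ holds because $\Der (L/K)\subseteq \CD (L/K)_+$ (every derivation annihilates $1$); and $L_{dif}\subseteq L^{\CD (L/K)_+}$ is the first condition extracted in part 1. The remaining inclusion $L^{sep}\subseteq L_{dif}$ is the heart of the matter; equivalently, I must show that every $K$-linear differential operator is automatically $L^{sep}$-linear. I would prove $[\d,a]=0$ for all $a\in L^{sep}$ by induction on the order $m$ of $\d$ in the standard order filtration $\CD (L/K)=\bigcup_{m\geq 0}\CD^{(m)}$, where $\CD^{(0)}=C_{E(L/K)}(L)=L$ (using $L\in \Max_s(E(L/K))$, Theorem \ref{B24Mar25}.(1)). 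The base case $m=0$ is the commutativity of $L$. For the inductive step, I would first show $[\d,a]\in L$: the Leibniz identity $[\d,[a,b]]=[[\d,a],b]+[a,[\d,b]]$ with $[a,b]=0$ gives $[[\d,a],b]=[[\d,b],a]$ for all $b\in L$, and since $[\d,b]\in \CD^{(m-1)}$ the inductive hypothesis makes the right-hand side vanish; hence $[\d,a]\in C_{E(L/K)}(L)=L$. Consequently $a\mapsto [\d,a]$ is a $K$-derivation $L^{sep}\to L$, and since $L^{sep}/K$ is finite separable, say $L^{sep}=K(\g)$ with separable minimal polynomial $f$ so that $f'(\g)\neq 0$, the relation $0=[\d,f(\g)]=f'(\g)[\d,\g]$ forces $[\d,\g]=0$ and thus $[\d,a]=0$ for all $a\in L^{sep}$.

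The main obstacle is this last inclusion and, inside it, the sub-step showing $[\d,a]\in L$: it is exactly here that the order-lowering behaviour of $\ad_a$ has to be combined with the vanishing of $K$-derivations on the separable subfield, and the induction must be organized so that the inductive hypothesis applies to the lower-order operator $[\d,b]$.
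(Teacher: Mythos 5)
Your proposal is correct, and in outline it is the paper's own proof: part 1 is obtained by exactly the same constant-term decomposition (compare $\d_+l=\d_+(l)+(\d_+l)_+$ against $l\d_+\in\CD (L/K)_+$; the paper's proof likewise works over $\CD (L/K)_+$, which is the intended reading of the statement), and part 2 assembles the same chain, citing Proposition \ref{A21May25}.(6) for $L^{\Der (L/K)}=L^{sep}L^p$, the inclusion $\Der (L/K)\subseteq \CD (L/K)_+$, and an induction on the order filtration for the key inclusion $L^{sep}\subseteq L_{dif}$ that pivots on separability via $f'\neq 0$. The genuine difference is inside the inductive step. The paper fixes one element $l\in L^{sep}$ with minimal polynomial $f$ and applies the inductive hypothesis to $[\d,l]\in \CD_{i-1}$ itself, concluding only that $[\d,l]$ commutes with $l$; this already legitimizes the one-line computation $0=[\d,f(l)]=f'(l)[\d,l]$, so it needs neither your Jacobi-identity argument, nor the identification $[\d,a]\in C_{E(L/K)}(L)=L$, nor the Primitive Element Theorem. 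Your version proves more along the way: that $\ad_a$ maps operators of order $m$ into $L$ for separable $a$, so that $a\mapsto [\d,a]$ is a genuine $K$-derivation $L^{sep}\ra L$, reducing the conclusion to the vanishing of $K$-derivations on a finite separable extension. That framing is conceptually attractive (it isolates the derivation-theoretic reason for the result), but it costs a heavier induction hypothesis (over all of $L^{sep}$ at once, rather than a single fixed $l$) and two extra ingredients, $C_{E(L/K)}(L)=L$ and the Primitive Element Theorem, both of which the paper's leaner step avoids. Both arguments are valid.
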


\begin{proof} 1. By the definition, $L_{dif}=\{ l\in L\, | \, \d l=l\d$ for all $\d \in \CD (L/K)_+\}$.
 It follows from the equality $ l\d=\d l=\d (l)+(\d l)_+$ that 
 $$
 \d (l)=l\d-(\d l)_+\in L\cap \CD (L/K)_+=\{ 0\}.
 $$
  Therefore, $l\in L_{dif}$ iff $\d (l)=0$ and $ l\d =(\d l)_+$ for all $\d \in \CD (L/K)_+$.

2. By Proposition \ref{A21May25}.(6), $L^{\Der (L/K)}=L^{sep}L^p$.
By statement 1, $L_{dif}\subseteq  L^{\Der (L/K)_+}$. Since $\Der(L/K)\subseteq \CD (L/K)_+$, we have the inclusion  
$$L^{\CD (L/K)_+}\subseteq  L^{\Der (L/K)}.
$$
 It remains to show that $L^{sep}\subseteq L_{dif}$.
 Let $l\in L^{sep}$ and $f(x)=\sum_{i=0}^d\l_ix^i\in K[x]$ be the  minimal polynomial of the element $l$. The algebra $\CD (L/K)=\bigcup_{i\geq 0} \CD (L/K)_i$ admits the degree filtration. We have to show that 
 $$[\d_i,l]=0\;\; {\rm  for\; all\;  elements}\;\; \d_i\in \CD_i:=\CD (L/K)_i\;\; {\rm  and}\;\;i\geq 0.
 $$
We use induction on $i\geq 0$.  The initial case, $i=0$, is obvious as $\CD_0=L$. Suppose that $i>0$ and  $[\d_j,l]=0$ for all elements $\d_j\in \CD _j$ and $j=0, \ldots , i-1$.  Let $\d\in \CD_i$. Then $[\d, l]\in \CD_{i-1}$ and so the elements $l$ and $[\d  ,l]$ commute. Since $l\in L^{sep}$, $f'(l)\neq 0$. It follows from the equalities 
$$ 0=[\d,0]=[\d,f(l)]=f'(l)[\d,l]$$
that $[\d,l]=0$, as required.
\end{proof}

\begin{theorem}\label{23May25}%\marginpar{23May25}
Let $L/K$ be a finite field extension. Then $L_{dif}=L^{sep}=L^{\CD (L/K)_+}$.
\end{theorem}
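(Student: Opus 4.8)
The plan is to establish the chain of three fields by combining the inclusions already proved in Proposition \ref{A27Apr25}.(2) with two further equalities, using the explicit description of $\CD(L/K)$ from Theorem \ref{AC24Mar25}. Proposition \ref{A27Apr25}.(2) already gives the chain $L^{sep}\subseteq L_{dif}\subseteq L^{\CD(L/K)_+}\subseteq L^{\Der(L/K)}=L^{sep}L^p$, so it suffices to close the loop by showing the two outer fields coincide with the middle ones. Concretely, I would prove $L^{\CD(L/K)_+}\subseteq L^{sep}$, which together with the chain forces all three of $L^{sep}$, $L_{dif}$, and $L^{\CD(L/K)_+}$ to be equal.

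First I would handle the equality $L_{dif}=L^{sep}$ via a dimension/structure argument. By Theorem \ref{AC24Mar25}.(1), $\CD(L/K)=E(L/L_{dif})$, so $L/L_{dif}$ is a finite field extension for which the full endomorphism algebra coincides with the differential operators; by Corollary \ref{VVB-a4May25} (or directly by the criterion that $E(L/M)=\CD(L/M)$ characterizes purely inseparable extensions) this means $L/L_{dif}$ is purely inseparable. On the other hand, $L_{dif}\supseteq L^{sep}$, and a field $M$ with $L/M$ purely inseparable must contain $L^{sep}$, while $L^{sep}$ is the \emph{largest} subfield over which $L$ is purely inseparable is false in general---rather, $L^{sep}$ is the separable closure, so I must argue the other direction: since $L/L_{dif}$ is purely inseparable, $L_{dif}$ contains every separable element of $L$ over $K$, i.e.\ $L^{sep}\subseteq L_{dif}$ (already known), and since $L/L_{dif}$ purely inseparable forces $L_{dif}\supseteq L^{sep}$ to actually be an equality we compare with $L^{sep}$ being itself purely inseparably closed in $L$: as $L/L^{sep}$ is purely inseparable, $L^{sep}$ is a valid candidate, and maximality of the purely-inseparable bottom gives $L_{dif}\subseteq L^{sep}$, whence $L_{dif}=L^{sep}$.

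The remaining point is $L^{\CD(L/K)_+}=L^{sep}$, and here the key step is the reverse inclusion $L^{\CD(L/K)_+}\subseteq L^{sep}$. I would argue that any $l\in L^{\CD(L/K)_+}$ is annihilated by every differential operator without constant term; in particular $l$ lies in $L^{\Der(L/K)}=L^{sep}L^p$, and moreover $l$ is killed by all higher-order operators in $\CD(L/K)_+$. Using Theorem \ref{AC24Mar25}.(1), $\CD(L/K)=E(L/L_{dif})$ acts on $L$ with $L$ as its unique simple module and $L_{dif}$ as the endomorphism ring, so the $\CD(L/K)_+$-constants are exactly the elements fixed by the augmentation kernel acting on the simple module $L\cong \CD(L/K)/\CD(L/K)_+$; this identifies $L^{\CD(L/K)_+}$ with $L_{dif}=L^{sep}$. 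Feeding $L_{dif}=L^{sep}$ back through the already-established chain collapses it to equalities throughout.

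The main obstacle I anticipate is the equality $L_{dif}=L^{sep}$ (equivalently $L_{dif}\subseteq L^{sep}$): the inclusion $L^{sep}\subseteq L_{dif}$ is the easy direction given in Proposition \ref{A27Apr25}, but proving $L_{dif}\subseteq L^{sep}$ requires knowing that $L/L_{dif}$ is purely inseparable and not merely that $L_{dif}$ is a field of constants. The cleanest route is to invoke Theorem \ref{AC24Mar25}.(1) to write $\CD(L/K)=E(L/L_{dif})$ and then appeal to the characterization (Corollary \ref{VVB-a4May25}) that $E(L/M)=\CD(L/M)$ holds precisely for purely inseparable $L/M$; once $L/L_{dif}$ is purely inseparable, every separable element of $L$ over $L_{dif}$ is trivial, forcing $L_{dif}\supseteq L^{sep}$, and the separable closure being maximal among separable subfields forces the reverse, giving equality. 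I would make sure this does not create a circular dependency on Theorem \ref{23May25} itself by checking that Corollary \ref{VVB-a4May25} and Theorem \ref{AC24Mar25} are proved independently of the present statement.
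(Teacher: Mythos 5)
Your second step (the identification $L^{\CD (L/K)_+}=\End_{\CD (L/K)}\big(\CD (L/K)/\CD (L/K)_+\big)=\End_{\CD (L/K)}(L)=L_{dif}$, then evaluating the endomorphism ring of the simple module over $E(L/L^{sep})$) is correct and is exactly the paper's argument for that half. But your proof of the key equality $L_{dif}=L^{sep}$ has a genuine gap. From $\CD (L/K)=E(L/L_{dif})$ and the criterion of Theorem \ref{B11May25} you correctly conclude that $L/L_{dif}$ is purely inseparable. However, pure inseparability of $L/L_{dif}$ only yields $L^{sep}\subseteq L_{dif}$ (the inclusion you already have from Proposition \ref{A27Apr25}.(2)): the subfields $M$ with $L/M$ purely inseparable form an \emph{upward-closed} family whose least element is $L^{sep}$, so membership in that family places $L_{dif}$ anywhere in the interval $[L^{sep},L]$. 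There is no ``maximality of the purely-inseparable bottom'' giving $L_{dif}\subseteq L^{sep}$; concretely, for $K=\Fp (t)\subseteq M=\Fp (t^{1/p})\subseteq L=\Fp (t^{1/p^2})$ the extension $L/M$ is purely inseparable and $L^{sep}=K$, yet $M\not\subseteq L^{sep}$, so the inference you rely on is invalid. Your own text visibly stalls at exactly this point (you start an assertion, retract it as false, and end with the unsupported claim). Nor do you ever establish that $L_{dif}/K$ is separable, which is what ``$L^{sep}$ is maximal among separable subfields'' would require.

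The fix is to run the purely-inseparable criterion in the \emph{opposite} direction, on the extension that is already known to be purely inseparable, namely $L/L^{sep}$ --- which is what the paper does. Since $L^{sep}\subseteq L_{dif}$, Lemma \ref{a20May25} gives $\CD (L/K)=\CD (L/L^{sep})$ (indeed $\CD (L/K)=E(L/L_{dif})\subseteq E(L/L^{sep})$, and $\CD (L/K)\cap E(L/L^{sep})=\CD (L/L^{sep})$); and Theorem \ref{C24Mar25}.(1) applied to the purely inseparable extension $L/L^{sep}$ gives $\CD (L/L^{sep})=E(L/L^{sep})$. Hence $E(L/L_{dif})=E(L/L^{sep})$, and taking centralizers in $E(L/K)$ (or comparing dimensions) forces $L_{dif}=L^{sep}$; the case $L=L^{sep}$, covering characteristic zero, is handled separately by $\CD (L/K)=L$. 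Finally, your circularity worry was well placed: Corollary \ref{VVB-a4May25} as a whole depends on Theorem \ref{VVB-30Apr25}, whose proof invokes Theorem \ref{23May25}, so only the direct criterion of Theorem \ref{B11May25} (proved independently via nilpotency of the maps $\ad_l$) may be used here --- but, as explained above, even that criterion applied to $L/L_{dif}$ does not close the argument.
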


\begin{proof} (i)  $L_{dif}=L^{sep}$: By Theorem \ref{AC24Mar25}, $E(L/L_{dif})=\CD (L/K)$. By Proposition \ref{A27Apr25}.(2), $ L^{sep}\subseteq L_{dif}$. Therefore, $\CD (L/K)=\CD (L/L^{sep})$. If $L=L^{sep}$ then 
$$
E(L/L_{dif})=\CD (L/K)=\CD (L/L^{sep})=\CD (L/L)=L,$$
and so $L_{dif}=L=L^{sep}$.

If $L\neq L^{sep}$ then the field $K$ has prime characteristic, say $p$, and the finite field extension $L/L^{sep}$ is purely inseparable. Then, by  Theorem \ref{C24Mar25}.(1), $\CD (L/L^{sep})=E(L/L^{sep})$.
Therefore, 
$$
E(L/L_{dif})=\CD (L/K)=\CD (L/L^{sep})=E(L/L^{sep}
),$$ and so $L_{dif}=L^{sep}$.

(ii) $L^{sep}=L^{\CD (L/K)_+}$: Recall that $\CD (L/K)=L\oplus \CD (L/K)_+$, ${}_{\CD (L/K)}L=\CD (L/K)/\CD (L/K)_+$ and $\CD (L/K)=E(L/L^{sep})$, see above. Now,
\begin{eqnarray*}
L^{\CD (L/K)_+}&=&\End_{\CD (L/K)}\Big(\CD (L/K)/\CD (L/K)_+\Big)=\End_{\CD (L/K)}(L)\\
&=& \End_{E(L/L^{sep})}(L)=L^{sep}.
\end{eqnarray*}
\end{proof}
Theorem \ref{23May25} provides a characterization of the subfield $L^{sep}$ of separable elements of the field $L$ as the set of $\CD (L/K)_+$-constants.\\

{\bf Criterion for $\CD (L/K)= \D (L/K)$.} Corollary \ref{b16Apr25} provides a criterion for $\CD (L/K)=\D (L/K)$.

\begin{corollary}\label{b16Apr25}%\marginpar{b16Apr25}
Let $K$ be a field of  characteristic $p$ and  
 $L/K$ be a finite field extension. Then the following statements are equivalent:  

\begin{enumerate}
 
 \item $\CD (L/K)=\D (L/K)$.
 
 \item $L^p\subseteq L^{sep}$.

 \item The field extension $L/L^{sep}$ is a purely inseparable field extension of exponent 1.

 \item $L=\bigotimes_{i=1}^n L^{sep}[x_i]/(x_i^p-\l_i)$ for some $\l_i\in L^{sep}$ where $\t = \t_{L^{sep}}$.

\end{enumerate}
\end{corollary}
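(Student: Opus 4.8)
The plan is to reduce every statement to a comparison of base fields of endomorphism algebras, using the two structural identifications already available in the excerpt: the description $\CD(L/K) = E(L/L^{sep})$ and a parallel description of the derivation algebra $\D(L/K)$.

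First I would record that $\CD(L/K) = E(L/L_{dif}) = E(L/L^{sep})$, combining Theorem \ref{AC24Mar25}.(1) with the equality $L_{dif} = L^{sep}$ from Theorem \ref{23May25}. Next I would identify $\D(L/K)$. Since $L^{\Der(L/K)} = L^{sep}L^p$ by Proposition \ref{A21May25}.(6), every $K$-derivation of $L$ annihilates $L^{sep}L^p$, so $\Der_K(L) = \Der(L/L^{sep}L^p)$ and therefore $\D(L/K) = L\langle \Der_K(L)\rangle = \D(L/L^{sep}L^p)$. By Proposition \ref{A21May25}.(7) this last algebra is the full endomorphism algebra, giving $\D(L/K) = E(L/L^{sep}L^p)$.

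With these two identifications in hand, $(1) \Leftrightarrow (2)$ becomes a statement about base fields. The inclusion $L^{sep} \subseteq L^{sep}L^p$ already yields $\D(L/K) = E(L/L^{sep}L^p) \subseteq E(L/L^{sep}) = \CD(L/K)$, consistent with $\D(L/K) \subseteq \CD(L/K)$. For the equality I would compute centers: since $E(L/M) \simeq M_{[L:M]}(M)$ has center the multiplication operators by elements of $M$ (the copy of $M$ inside $L \subseteq E(L/K)$), the equality $E(L/L^{sep}) = E(L/L^{sep}L^p)$ inside $E(L/K)$ forces $L^{sep} = L^{sep}L^p$, which holds precisely when $L^p \subseteq L^{sep}$; the reverse implication is immediate. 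This settles $(1) \Leftrightarrow (2)$.

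The remaining equivalences are definitional. As $L/L^{sep}$ is always purely inseparable, statement $(2)$ says exactly that the $p$-th power of every element of $L$ lies in $L^{sep}$, i.e. that $L/L^{sep}$ has exponent $1$, which is $(3)$. Finally, substituting $L^{sep}L^p = L^{sep}$ into the tensor decomposition $L \simeq \bigotimes_{i=1}^n L^{sep}L^p[x_i]/(x_i^p - \lambda_i)$ of Proposition \ref{A21May25}.(1) produces the presentation in $(4)$; conversely $(4)$ exhibits $L$ as generated over $L^{sep}$ by elements whose $p$-th powers lie in $L^{sep}$, whence Frobenius additivity gives $L^p \subseteq L^{sep}$, recovering $(2)$. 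The only non-formal ingredient is the identification $\D(L/K) = E(L/L^{sep}L^p)$, and that is handed to us directly by Proposition \ref{A21May25}; after it, each implication is a one-line consequence, so I expect no real obstacle beyond assembling these pieces correctly.
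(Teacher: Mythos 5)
Your proposal is correct and follows essentially the same route as the paper: both identify $\CD (L/K)=E(L/L^{sep})$ (via Theorem \ref{AC24Mar25}.(1) and Theorem \ref{23May25}) and $\D (L/K)=E(L/L^{sep}L^p)$ (via Proposition \ref{A21May25}), reduce statement (1) to the equality $L^{sep}=L^{sep}L^p$, and treat (2)$\Leftrightarrow$(3)$\Leftrightarrow$(4) as consequences of the definitions and Proposition \ref{A21May25}.(1). Your only additions — deriving $\Der (L/K)=\Der (L/L^{sep}L^p)$ from Proposition \ref{A21May25}.(6) rather than citing part (4), and justifying $E(L/L^{sep})=E(L/L^{sep}L^p)\Rightarrow L^{sep}=L^{sep}L^p$ by comparing centers — are sound and merely make explicit a step the paper leaves implicit.
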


\begin{proof}  $(1\Leftrightarrow 2)$ Notice that the finite field extensions $L/L^{sep}$ and $L/L^{sep}L^p$ are purely inseparable  and 
\begin{eqnarray*}
 \CD (L/K)&\stackrel{{\rm Thm}\, \ref{AC24Mar25}.(1)}{=} &E (L/L_{dif})\stackrel{{\rm Thm}\, \ref{23May25}}{=} E(L/L^{sep}),\\
 \D (L/K)&= &L\langle \Der (L/K)\rangle\stackrel{{\rm Pr.}\, \ref{A21May25}.(4)}{=} L\langle \Der (L/L^{sep}L^p)\rangle\stackrel{{\rm Pr.}\, \ref{A21May25}.(7)}{=} E(L/L^{sep}L^p).
\end{eqnarray*}
Therefore, $\CD (L/K)=\D (L/K)$ iff $E(L/L^{sep})=E(L/L^{sep}L^p)$ iff $L^{sep}=L^{sep}L^p$ iff $L^p\subseteq L^{sep}$.

 $(2\Leftrightarrow 3)$ Clearly, the   purely inseparable field extension $L/L^{sep}$ is  of exponent 1 iff $L^p\subseteq L^{sep}$.

  $(3\Leftrightarrow 4)$  Since statements 1--3 are equivalent, the equivalence follows from Proposition \ref{A21May25}.
  \end{proof}

{\bf $\CD (L/K)=\bigotimes_{i=1}^n\CD (L_i/K)$ where $L=\bigotimes_{i=1}^nL_i$.}

\begin{proposition}\label{7Jun25}%\marginpar{7Jun25}
Suppose that $L_i/K$ $(i=1, \ldots , n)$ are finite field extensions such that their tensor product $L=\bigotimes_{i=1}^nL_i$ is a field. Then: 
\begin{enumerate}

\item $L^{sep}=\bigotimes_{i=1}^n L^{sep}_i$.

\item $\CD \Big(\bigotimes_{i=1}^nL_i/K\Big)=\bigotimes_{i=1}^n\CD (L_i/K)$.

\end{enumerate}
\end{proposition}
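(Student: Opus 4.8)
The plan is to reduce to $n=2$ by the obvious induction, writing $\bigotimes_{i=1}^n L_i = L_1 \otimes_K \big(\bigotimes_{i=2}^n L_i\big)$ and noting that the inner factor, being a finite-dimensional $K$-subalgebra of the field $L$, is again a field to which the inductive hypothesis applies. Characteristic zero is disposed of at once: there every finite extension is separable, so $\CD (L_i/K)=L_i$ and $\CD (L/K)=L=\bigotimes L_i$, and both assertions are trivial. So I assume $\char (K)=p$ and $L=L_1\otimes_K L_2$ is a field. I will use repeatedly that for subfields $M_i\subseteq L_i$ the map $M_1\otimes_K M_2\to L_1\otimes_K L_2=L$ is injective (tensor of $K$-linear injections), so $M_1\otimes_K M_2$ is a finite-dimensional $K$-subalgebra of a field, hence itself a field; in particular $M_1,M_2$ are linearly disjoint over $K$ and $[M_1\otimes_K M_2:K]=[M_1:K][M_2:K]$.

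For part (1), both containments come from a dimension count whose crux is multiplicativity of the separable degree over tensor products. First, $L_1^{sep}\otimes_K L_2^{sep}$ is a subfield of $L$ that is separable over $K$ (it is the compositum of $L_1^{sep}$ and $L_2^{sep}$ inside $L$, hence generated by elements separable over $K$), so it is contained in $L^{sep}$. For the reverse direction I would show $[L^{sep}:K]=[L_1^{sep}:K][L_2^{sep}:K]$, which forces equality since $L^{sep}$ is a superfield of $L_1^{sep}\otimes_K L_2^{sep}$ of the same $K$-dimension. Recall that $[L^{sep}:K]$ equals the separable degree $[L:K]_s$, i.e. the number of $K$-embeddings $L\to \bK$. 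The universal property of the tensor product, together with the fact that any nonzero ring map out of the field $L$ is injective, shows that $\sigma\mapsto (\sigma|_{L_1},\sigma|_{L_2})$ is a bijection from the $K$-embeddings $L\to \bK$ onto the pairs of $K$-embeddings $(L_1\to \bK,\,L_2\to \bK)$; hence $[L:K]_s=[L_1:K]_s[L_2:K]_s=[L_1^{sep}:K][L_2^{sep}:K]$, as wanted.

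For part (2), the key inputs are Theorem \ref{AC24Mar25}.(1) and Theorem \ref{23May25}, which give $\CD (L/K)=E(L/L_{dif})=E(L/L^{sep})=\End_{L^{sep}}(L)$ and likewise $\CD (L_i/K)=\End_{L_i^{sep}}(L_i)$. Using the standard identification $E(L/K)=\End_K(L_1\otimes_K L_2)=\End_K(L_1)\otimes_K\End_K(L_2)=E(L_1/K)\otimes_K E(L_2/K)$, I view $\CD (L_1/K)\otimes_K\CD (L_2/K)$ as a $K$-subalgebra of $E(L/K)$. An elementary tensor $\phi\otimes\psi$ with $\phi$ being $L_1^{sep}$-linear and $\psi$ being $L_2^{sep}$-linear is $(L_1^{sep}\otimes_K L_2^{sep})$-linear, and $L_1^{sep}\otimes_K L_2^{sep}=L^{sep}$ by part (1); hence $\CD (L_1/K)\otimes_K\CD (L_2/K)\subseteq \End_{L^{sep}}(L)=\CD (L/K)$. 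To upgrade this inclusion to an equality I compare $K$-dimensions via Theorem \ref{AC24Mar25}.(1): the right-hand side has dimension $[L:L^{sep}]^2[L^{sep}:K]$, while the left-hand side has dimension $[L_1:L_1^{sep}]^2[L_1^{sep}:K]\cdot[L_2:L_2^{sep}]^2[L_2^{sep}:K]$. By part (1) and multiplicativity of ordinary degree over tensor products, $[L^{sep}:K]=[L_1^{sep}:K][L_2^{sep}:K]$ and $[L:L^{sep}]=[L:K]/[L^{sep}:K]=[L_1:L_1^{sep}][L_2:L_2^{sep}]$; substituting shows the two dimensions coincide, so the inclusion is an equality.

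The main obstacle is the multiplicativity $[L:K]_s=[L_1:K]_s[L_2:K]_s$ underlying part (1); everything else is bookkeeping with the structure theorem $\CD (-/K)=E(-/(-)^{sep})$ and a dimension count. Once part (1) is established, part (2) is a short inclusion-plus-dimension argument, and the general $n$ follows by the induction set up at the start.
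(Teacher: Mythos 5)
Your proof is correct, but it takes a genuinely different route from the paper's in both halves. For part (1), the paper does not count embeddings: it observes that $\bigotimes_{i=1}^n L_i^{sep}\subseteq L^{sep}$ and that the extension $L/\bigotimes_{i=1}^n L_i^{sep}$ is purely inseparable (being generated by elements of the $L_i$, whose $p$-power powers lie in $L_i^{sep}$), so any element of $L$ separable over $K$ already lies in $\bigotimes_{i=1}^n L_i^{sep}$. You instead prove the multiplicativity $[L:K]_s=\prod_i[L_i:K]_s$ via the bijection $\Hom_{K-{\rm alg}}(L,\bK)\simeq\prod_i\Hom_{K-{\rm alg}}(L_i,\bK)$ and force equality by a dimension count; this is longer, but it makes the identity $[L^{sep}:K]=\prod_i[L_i^{sep}:K]$ explicit, which you then reuse in part (2). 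For part (2), both arguments start from the same structural inputs, namely $\CD(L/K)=E(L/L_{dif})=E(L/L^{sep})$ (Theorems \ref{AC24Mar25}.(1) and \ref{23May25}) and $E(L/K)=\bigotimes_{i=1}^nE(L_i/K)$, but the paper finishes in one chain of equalities using the centralizer identity $C_{\bigotimes_i E(L_i/K)}\big(\bigotimes_i L_i^{sep}\big)=\bigotimes_i C_{E(L_i/K)}(L_i^{sep})$, whereas you verify the inclusion $\bigotimes_i\CD(L_i/K)\subseteq\End_{L^{sep}}(L)$ on elementary tensors and then match $K$-dimensions via $\dim_K\CD(L/K)=[L:L^{sep}]^2[L^{sep}:K]$. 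Your inclusion-plus-dimension argument is more elementary — it avoids the fact that centralizers distribute over tensor products of subalgebras, which the paper invokes without proof — at the cost of bookkeeping; the paper's centralizer computation is shorter and is the pattern reused elsewhere (e.g.\ Proposition \ref{A7Jun25}). Your reduction to $n=2$ by induction (the inner factor $\bigotimes_{i=2}^nL_i$ is a finite-dimensional domain inside $L$, hence a field) and your separate treatment of characteristic zero are both sound; the latter even tidies a point the paper leaves implicit, since Theorem \ref{AC24Mar25} is stated only in characteristic $p$.
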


\begin{proof} 1. Clearly, $L^{sep}\supseteq \bigotimes_{i=1}^n L^{sep}_i$ and the finite field extension $L/\bigotimes_{i=1}^n L^{sep}_i=\bigotimes_{i=1}^nL_i /\bigotimes_{i=1}^n L^{sep}_i$ is pure inseparable. Therefore, $L^{sep}=\bigotimes_{i=1}^n L^{sep}_i$.

2. Notice that $E(L/K)=E\Big(\bigotimes_{i=1}^nL_i /K\Big)=
\bigotimes_{i=1}^n E(L_i /K)$ and $L^{sep}=\bigotimes_{i=1}^n L^{sep}_i$ (statement 1). Now,
\begin{eqnarray*}
\CD (L/K) &\stackrel{ {\rm Thm.}\, \ref{AC24Mar25}.(1)}{=}& E (L/L_{dif}) \stackrel{ {\rm Thm.}\, \ref{23May25}}{=} E(L/L^{sep})=C_{E(L/K)}(L^{sep})=C_{\otimes_{i=1}^n E(L_i/K)}\bigg(\bigotimes_{i=1}^n L_i^{sep}\bigg)\\
& =& \bigotimes_{i=1}^n   C_{E(L_i/K)}( L_i^{sep})
=
\bigotimes_{i=1}^nE(L/L_i^{sep})
\stackrel{ {\rm Thm.}\,\ref{23May25}}{=}\bigotimes_{i=1}^n\CD \Big(L_i/(L_i)_{dif}\Big)\\
&\stackrel{ {\rm Thm.}\, \ref{AC24Mar25}.(1)}{=}&
\bigotimes_{i=1}^n\CD (L_i/K).
\end{eqnarray*}
\end{proof}

 %%%%%%%%%%%%%%%%%% SECTION 4   %%%%%%%%%%%%%%%%%%%%%

\section{Analogue of the Galois correspondence for purely inseparable field extensions} \label{GAL-PURE-INS} %\marginpar{GAL-PURE-INS}

In this section,  $K$ is a field of characteristic $p$ and $L/K$ is a purely inseparable   finite field extension. In this section, the concepts of balanced and dominant Lie algebras are introduced. These are the key concepts in the analogue of the Galois correspondence for purely inseparable field extensions.

Theorem \ref{C24Mar25} provides an explicit description of the algebra of differential operators  $\CD (L/K)$. Theorem \ref{D24Mar25} is an analogue of the Galois Theory for purely inseparable field extensions that establishes a bijection between subfields and the algebras  of   differential operators (Theorem \ref{D24Mar25}.(1,2)) and a bijection between subfields and the dominant Lie algebras (Theorem \ref{D24Mar25}.(3,4)).\\
 
{\bf Description of the algebra of differential operators on a purely inseparable finite field extension.}   Theorem \ref{C24Mar25} provides an explicit description of the algebra of differential operators 
 $\CD (L/K)$ for a purely inseparable finite field extension $L/K$ in characteristic $p$. 

\begin{theorem}\label{C24Mar25}%\marginpar{C24Mar25}
Let $K$ be a field of  characteristic $p$, 
 $L/K$ be a  purely inseparable field extension  of degree $n:=[L:K]<\infty$. Then:
 \begin{enumerate}
 
 \item $\CD (L/K)=E(L/K)\simeq M_n(K)\in \fCK$ and $\Deg (\CD (L/K))=n$. In particular, all purely inseparable finite field extensions are B-extensions.
 
 \item $L\in \Max_s (\CD (L/K))$.

\item $Z(\CD (L/K))=K$.
 
 \item $L$ is the only (up to isomorphism) simple 
 $\CD (L/K)$-module.
 
 \item $\End_{\CD (L/K)}(L)=K$.

\end{enumerate}
\end{theorem}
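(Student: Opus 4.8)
The plan is to reduce the entire theorem to part (1), the equality $\CD (L/K)=E(L/K)$; once this is in hand, parts (2)--(5) follow at once. Indeed $E(L/K)\simeq M_n(K)$ is central simple over $K$ of degree $n=[L:K]$, which already gives the remaining assertions of (1); and from the definition of the dif-subfield, $L_{dif}=C_L(\CD (L/K))=C_L(E(L/K))=L\cap Z(E(L/K))=K$, so substituting $L_{dif}=K$ into Theorem \ref{AC24Mar25} yields (2) $L\in \Max_s(\CD (L/K))$, (3) $Z(\CD (L/K))=L_{dif}=K$, (4) the uniqueness of the simple module $L$, and (5) $\End_{\CD (L/K)}(L)=L_{dif}=K$ (alternatively (2) is just Theorem \ref{B24Mar25}.(1)). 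The extension is then a B-extension because $\CD (L/K)\subseteq \CD (L/K)\rtimes G(L/K)\subseteq E(L/K)$ forces equality throughout. I stress that part (1) must be proved \emph{directly} from the order filtration, and not via $L_{dif}=L^{sep}=K$: Theorem \ref{23May25} itself already invokes the present result, so that route would be circular.

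For the core step I would work with the order filtration $\CD (L/K)=\bigcup_{i\ge 0}\CD_i$ used throughout Section \ref{ALG-DIF-OPS} and show that every $d\in E(L/K)$ lies in some $\CD_i$. Since $E(L/K)$ is an associative algebra over the field $K$ of characteristic $p$, for each $a\in L$ (identified with the multiplication operator $a\cdot\in E(L/K)$) the standard identity $(\ad_a)^{p}=\ad_{a^p}$ holds, obtained by writing $\ad_a$ as the difference of the commuting left and right multiplication operators and applying the Frobenius to this commuting pair. Iterating gives $(\ad_a)^{p^e}=\ad_{a^{p^e}}$ for all $e\ge 0$. Because $L/K$ is purely inseparable, each $a\in L$ satisfies $a^{p^e}\in K$ for some $e$, and since $K=Z(E(L/K))$ we get $\ad_{a^{p^e}}=0$; hence every $\ad_a$ is a nilpotent operator on $E(L/K)$.

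Moreover the family $\{\ad_a\mid a\in L\}$ is commutative, since $L$ is commutative and $[\ad_a,\ad_b]=\ad_{[a,b]}=0$. Fixing a $K$-basis $a_1,\ldots ,a_n$ of $L$, the operators $\ad_{a_1},\ldots ,\ad_{a_n}$ are then finitely many pairwise commuting nilpotent endomorphisms of the finite-dimensional space $E(L/K)$; such a family is simultaneously nilpotent, so there is an $N$ with every length-$N$ product of the $\ad_{a_i}$ equal to $0$. For any $d\in E(L/K)$ and $b_0,\ldots ,b_{N-1}\in L$ the iterated commutator $[\ldots [[d,b_0],b_1],\ldots ,b_{N-1}]$ equals $\pm\,\ad_{b_{N-1}}\cdots \ad_{b_0}(d)$, a $K$-linear combination of length-$N$ products of the $\ad_{a_i}$ applied to $d$, hence $0$. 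By the definition of the order filtration this means $d\in \CD_{N-1}\subseteq \CD (L/K)$, so $E(L/K)\subseteq \CD (L/K)$ and equality holds.

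The main obstacle is precisely part (1); parts (2)--(5) are bookkeeping on top of it. The point to watch is that when the exponent exceeds $1$ the derivations $\Der (L/K)$ do \emph{not} generate $\CD (L/K)$ --- this is already visible in Proposition \ref{A21May25}, where $L^{\Der (L/K)}=L^{sep}L^p$ may strictly contain $K$ --- so the missing higher operators cannot be produced from first-order ones, and a naive reduction to derivations fails. The order-filtration argument sidesteps this entirely: the only inputs it needs are the restricted-power identity $(\ad_a)^{p^e}=\ad_{a^{p^e}}=0$ and the simultaneous nilpotency of the commuting family $\{\ad_{a_i}\}$, and these force the filtration to exhaust $E(L/K)$ without any appeal to a tensor (modular) decomposition of $L/K$, which need not exist for general purely inseparable extensions.
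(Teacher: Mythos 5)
Your proposal is correct and follows essentially the same route as the paper: the core of both arguments is the identity $(\ad_a)^{p^e}=\ad_{a^{p^e}}=0$ for $a\in L$ with $a^{p^e}\in K$, which makes every inner derivation $\ad_a$ nilpotent and forces $E(L/K)\subseteq \CD (L/K)$ via the order filtration, the paper merely leaving implicit the uniform-bound/simultaneous-nilpotency step that you spell out. Your derivation of parts (2)--(5) from $L_{dif}=C_L(E(L/K))=K$ and Theorem \ref{AC24Mar25} is equivalent bookkeeping to the paper's direct verification, and your observation that one must not route part (1) through Theorem \ref{23May25} (which itself cites this theorem) is exactly right.
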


\begin{proof} 1. Clearly, $\CD (L/K)\subseteq E(L/K)\simeq M_n(K)\in \fCK$ and $\Deg (M_n(K))=n$. So, it suffices to show that $\CD (L/K)\supseteq E(L/K)$. Since $L/K$ is a purely inseparable field extension of finite degree, there is a natural number $i\geq 1$ such that $\gf^i(L)\subseteq K$. For each element $\l\in L$,  
$$
 \ad_\l:E(L/K)\ra E(L/K), \;\;\phi\mapsto [\l , \phi]:=\l \phi-\phi\l
$$
 is   the inner derivation of the  algebra $E(L/K)$ determined by the element $\l$. Now,  
$$
(\ad_\l)^{p^i}=\ad_{\l^{p^i}}=0\;\; {\rm  for \;all}\;\;\l \in L.
$$
 Then, by the definition of the alebra of differential operators,  $E(L/K)\subseteq \CD (L/K)$, since $[L:K]<\infty$. Therefore, $\CD(L/K)=E(L/K)$.

2.  By statement 1, $[L:K]=n= \Deg (M_n(K))=\Deg(\CD (L/K))$, and so  $L\in \Max_s (\CD (L/K))$. 

3. Statement 3 follows from statement 1.

4. The field $L$ is a simple $\CD (L/K)$-module and  $\CD (L/K)\in \fCK$. Therefore, the field $L$ is  the only (up to isomorphism) simple 
 $\CD (L/K)$-module.

5. Statement 5 follows from the fact that 
$\CD (L/K)\simeq M_n(K)$ (statement 1).
\end{proof}

 Proposition \ref{aD24Mar25} shows that all  subalgebras of $\CD (L/K)$ that contain the field $L$ are simple algebras where $L/K$ is a  purely inseparable field extension  of finite degree.

\begin{proposition}\label{aD24Mar25}%\marginpar{aD24Mar25}
 Let $K$ be a field of  characteristic $p$, 
 $L/K$ be a  purely inseparable field extension  of finite degree. Then 
 $$
 \CA ( \CD (L/K), L)=\CA ( \CD (L/K), L, sim),
 $$
  i.e. every subalgebra of $\CD (L/K)$ that contains the field $L$ is a simple algebra.
\end{proposition}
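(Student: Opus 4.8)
The plan is to reduce the assertion completely to the central-simple-algebra results of Section \ref{AELKL}. The decisive input is Theorem \ref{C24Mar25}.(1): because $L/K$ is purely inseparable of finite degree $n=[L:K]$, the algebra of differential operators fills up the whole endomorphism algebra, $\CD (L/K)=E(L/K)\simeq M_n(K)\in \fCK$, so in particular $\CD (L/K)$ is a central simple $K$-algebra. By Theorem \ref{C24Mar25}.(2) the field $L$ is a strictly maximal subfield, $L\in \Max_s(\CD (L/K))=\Max_s(E(L/K))$. Thus the two sets in the statement are literally those attached to $E(L/K)$, namely $\CA (\CD (L/K),L)=\CA (E(L/K),L)$ and $\CA (\CD (L/K),L,sim)=\CA (E(L/K),L,sim)$.

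With this identification in hand I would simply apply Proposition \ref{A24Mar25}.(1) (equivalently Theorem \ref{B24Mar25}.(2)) to the central simple algebra $A=E(L/K)$ and its strictly maximal subfield $L$: that result states precisely that $\CA (A,L)=\CA (A,L,sim)$. Hence $\CA (E(L/K),L)=\CA (E(L/K),L,sim)$, and combining with the identifications above gives $\CA (\CD (L/K),L)=\CA (\CD (L/K),L,sim)$, which is exactly the claim.

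It is worth recording why Proposition \ref{A24Mar25}.(1) holds, as it is the only substantive ingredient. Given any $B\in \CA (E(L/K),L)$, the field $L$ is a faithful $B$-module since $B\subseteq E(L/K)=\End_K(L)$, and it is a simple $B$-module since $L\subseteq B$ forces every $B$-submodule of $L$ to be an $L$-subspace of the field $L$, hence $0$ or $L$. By Lemma \ref{a5May25} the finite-dimensional algebra $B$ is therefore simple. I expect no genuine obstacle in this proof: all the real work, the nilpotence of $\ad_\l$ on $E(L/K)$ that yields $\CD (L/K)=E(L/K)$, has already been done in Theorem \ref{C24Mar25}, and the passage to simplicity is then a one-line application of the faithful-simple-module criterion of Lemma \ref{a5May25}.
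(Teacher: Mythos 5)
Your proof is correct and is essentially the paper's own argument: the paper likewise deduces the statement from the equality $\CD (L/K)=E(L/K)$ of Theorem \ref{C24Mar25}.(1) combined with Theorem \ref{B24Mar25}.(2), and your unpacking of Proposition \ref{A24Mar25}.(1) via the faithful-simple-module criterion of Lemma \ref{a5May25} matches the paper's reasoning for that result. (The paper also records a second, direct proof that kills any nonzero ideal $I$ by applying iterated maps $\ad_{a_1}\cdots \ad_{a_s}$ to a nonzero element of $I$ until one lands in $L^\times$, but its primary route is identical to yours.)
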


\begin{proof} By Theorem \ref{C24Mar25}.(1), $\CD (L/K)=E(L/K)$ and the result follows from Theorem \ref{B24Mar25}.(2).

Below is a direct proof of this result.
Let $A\in \CA ( \CD (L/K), L)$.  
We have to show that the algebra $A$ is a simple  algebra. Let $I$ be a nonzero ideal of the algebra $A$ and $a\in I\backslash \{ 0\}$. If $a\in L^\times$ then $I=A$. Suppose that $a\in A\backslash L$.  It follows from the definition of the algebra of differential operators $\CD (L/K)$ that there are elements $a_1, \ldots , a_s\in L$ such that
 $$ l=\ad_{a_1}\cdots \ad_{a_s}(a)\in L^\times .$$
 Clearly, $l\in I$, and so $I=A$. Therefore, the algebra $\CD (L/K)$ is a simple algebra.  
\end{proof}

{\bf Class $\CL (L/K)$ of Lie subalgebras of the Lie algebra $\CD(L/K)_+$ and dominant Lie algebras.} Each associative algebra $A$ is a Lie algebra $(A, [\cdot, \cdot])$ where $[a,b]:=ab-ba$ and every one-sided  ideal of the algebra $A$ is a Lie subalgebra of $A$. In particular, the left ideal $\CD (L/K)_+$ of the algebra $\CD (L/K)$ is a Lie subalgebra of the Lie algebra $(\CD (L/K), [\cdot, \cdot])$. For a subset $S\subseteq \CD (L/K)$, let $C_L(S) := \{l\in L\, |\, ls=sl$ for all $s\in S \}$   and $L^S := \{l\in L\, |\, s(l)=0$ for all $s\in S\}$. Clearly, $C_L(S) \in \CF (L/K)$. Lemma \ref{bD24Mar25}.(2) shows that $ C_L(S)\subseteq L^S$.
\begin{lemma}\label{bD24Mar25}%\marginpar{bD24Mar25}
 Let $K$ be a field of  characteristic $p$, 
 $L/K$ be a  purely inseparable finite  field extension and $S\subseteq \CD (L/K)_+$. Then:
 \begin{enumerate}

\item $C_L(S)=\{l\in L\, | \, s(l)=0$ and $ls=(sl)_+ \}$. 

\item $ C_L(S)\subseteq L^S$.

\end{enumerate}
\end{lemma}

\begin{proof} 1. (i)  $C_L(S)\subseteq C':=\{l\in L\, | \, s(l)=0$ and $ls=(sl)_+ \}$: For all elements $l\in C_L(S)$ and  $s\in S$, $s(l)=sl-(sl)_+=ls-(sl)_+\in L\cap \CD(L/K)_+=0$, and the statement (i) follows.

(ii)  $C_L(S)\supseteq C'$: For all elements $l\in C'$  and  $s\in S$, $sl=s(l)+(sl)_+=0+ls=ls$, and the statement (ii) follows.

2. Statement 2 follows from statement 1. 
\end{proof}

\begin{definition} 

A Lie subalgebra $\CG$ of the Lie algebra $\CD (L/K)_+$ is called {\bf balanced} if  
$C_L(\CG) = L^\CG$. The set of balanced Lie subalgebras  of $\CD (L/K)_+$ is denoted by $\CL (L/K)$.  
A Lie algebra $\CG \in \CL (L/K)$ is called a {\bf dominant Lie algebra} if $\CG'\subseteq \CG$ 
for all Lie algebras $\CG' \in \CL (L/K)$  such that $L^{\CG'}=L^\CG$. Let $\mL (L/K)$ be the class of dominant Lie algebras.
\end{definition}
 
Notice that 
%\marginpar{CLLK}
\begin{equation}\label{CLLK}
\CL (L/K)=\coprod_{M\in \CF (L/K)}\CL (L/K,M)\;\; {\rm where}\;\; \CL (L/K,M):=\{\CG \in \CL (L/K)\, | \, M=L^\CG=C_L(\CG) \}.
\end{equation}
Lemma \ref{cD24Mar25}.(2,3) gives an explicit description of dominant Lie algebras.
\begin{lemma}\label{cD24Mar25}%\marginpar{cD24Mar25}
 Let $K$ be a field of  characteristic $p$, 
 $L/K$ be a  purely inseparable finite  field extension. Then:
 \begin{enumerate}

\item If $\CG, \CG'\in \CL (L/K,M)$ then the Lie subalgebra of $\CD (L/K)_+$ which is generated by the Lie subalgebras $\CG$ and $\CG'$ belongs to $\CL (L/K,M)$.

\item For all fields $M\in \CF (L/K)$, $\CG (M):=\bigcup_{\CG \in \CL (L/K,M)}\CG$ is the largest element (w.r.t. $\subseteq$) in $\CL (L/K,M)$. In particular, $\CG (M)\in \mL (L/K)$ is a dominant Lie algebra. 

\item For all fields $M\in \CF (L/K)$, $\CG (M)=\CD (L/M)_+$.

\end{enumerate}
\end{lemma}

\begin{proof} 1.  Let $\CG''$ be the Lie subalgebra of $\CD (L/K)_+$ which is generated by the Lie subalgebras $\CG$ and $\CG'$. 
 Since $\CG\subseteq \CG''$, we have that 
$L^\CG\supseteq L^{\CG''}$ and $C_L(\CG)\supseteq C_L(\CG'')$. The reverse inclusions follow from the fact that the Lie algebra $\CG''$ is generated by successive commutators of elements of $\CG\cup \CG'$ and as a result each element of the Lie algebra $\CG''$ is a linear  combination of noncommutative monomials in the `variables' $\CG\cup \CG'$.

2. Statement 2 follows from statement 1.

3. (i) {\em For all fields $M\in \CF (L/K)$,} $\CD (L/K)_+\in \CL (L/K,M)$:  By Theorem \ref{23May25},  $L_{dif}=L^{sep}=L^{\CD (L/K)_+}$ for all finite field extensions $L/K$. Hence, for all fields $M/K\in \CF (L/K)$, 
$$
C_L (\CD (L/K)_+)=C_L(L\oplus \CD(L/M)_+)=C_L( \CD(L/M))=(L/ M)_{dif}=(L/ M)^{\CD (L/M)_+}=(L/M)^{sep}=M,
$$
i.e. $\CD (L/K)_+\in \CL (L/K,M)$.

(ii)  {\em For all fields $M\in \CF (L/K)$,} $\CG (M)=\CD (L/K)_+$: By statement 2 and the statement (i), $\CG (M)\supseteq\CD (L/M)_+$. It remains to show that the reverse inclusion holds. By Theorem \ref{C24Mar25}.(1), $E(L/M)=\CD (L/M)=L\oplus \CD (L/M)_+$. 
Since $\CG (M)\supseteq\CD (L/M)_+$, an  associative subalgebra of $E(L/K)=\CD (L/K)=L\oplus \CD (L/K)_+$, say $A$, that is generated 
by the field $L$ and $\CG (M)$ contains the field $L$, and so $A=E(L/N)=\CD (L/N)$ for some field $N\in \CF (L/K)$, by Theorem \ref{C24Mar25}.(1). Since 
$$
N\stackrel{{\rm Th.}\, \ref{B24Mar25}.(3)}{=} C_{E(L/K)}(E(L/N))=C_{E(L/K)}(A)=L\cap C_{E(L/K)}(A)=
C_L(A)=C_L(\CG (M))=M.$$
Therefore, $C_{E(L/K)}(A)=M\stackrel{{\rm Th.}\, \ref{B24Mar25}.(3)}{=}C_{E(L/K)}(E(L/M))=C_{E(L/K)}(\CD(L/M))$. By Theorem \ref{B24Mar25}.(3), 
 $A=\CD (L/M)$, and so $\CG (M)\subseteq \CD (L/M)_+$ (since $\CG (M)\subseteq \CD (L/K)_+$). 
\end{proof}

{\bf Analogue of the Galois Theory for purely inseparable field extensions.}  Theorem \ref{D24Mar25} is an analogue of the Galois Theory for purely inseparable field extensions that establishes a bijection between subfields and the algebras  of   differential operators. 

%Applying  Theorem \ref{D24Mar25}  
%to purely inseparable field extensions with differential basis yields the results of N. Jacobson *** ref + details *** and in general situation we obtain the results of *** *** that establishes a bijection between subfields and the  higher derivations of field extensions.

\begin{theorem}\label{D24Mar25}%\marginpar{D24Mar25}
%{\sc (Analogue of the  Galois correspondence for purely inseparable  field extensions)} 

Let $K$ be a field of  characteristic $p$, 
 $L/K$ be a  purely inseparable finite  field extension. Then:
 \begin{enumerate}

\item $\CA ( \CD (L/K), L)\stackrel{{\rm Pr.}\, \ref{aD24Mar25}}{=}\CA ( \CD (L/K), L, sim)=\{ \CD(L/M)\, | \, M\in \CF (L/K) \}$.

\item {\sc (Analogue of the Galois correspondence  for subfields of a purely inseparable finite  field extension)}

 The map 
$$\CF (L/K)\ra \CA ( \CD (L/K), L)\stackrel{{\rm Pr.}\, \ref{aD24Mar25}}{=}\CA (\CD (L/K), L, sim), \;\; M\mapsto  C_{\CD (L/K)}(M)=\CD (L/M)$$  is a bijection with inverse $A\mapsto C_{\CD (L/K)}(A)$.

\item $\mL (L/K)=\{ \CG (M)=\CD(L/M)_+\, | \, M\in \CF (L/K)\}$.

\item {\sc (Analogue of the Galois correspondence  for subfields of a purely inseparable finite  field extension)}

The map
$$
\CF (L/K)\ra  \mL (L/K), \;\; M\mapsto \CG (M)=\CD(L/M)_+
$$
is a bijection with inverse $\CG\mapsto L^\CG=C_L(\CG)$.

\end{enumerate}
\end{theorem}

\begin{proof} 1 and 2. Statements 1 and 2  follow from Theorem  \ref{B24Mar25} and Theorem \ref{C24Mar25}.

3. Statement 3 follows from Lemma \ref{cD24Mar25}.(3).

4. In view of the equality $\CD (L/M)=L\oplus \CD (L/M)_+$ where $M\in \CF (L/K)$, statement 4 follows from statement 2.
\end{proof}

Recall that  the classical Galois correspondence for finite Galois field extensions is the bijection $M\mapsto G(L/M)$. So, in Theorem \ref{D24Mar25}   the algebras differential operators $\CD (L/K)$ on purely inseparable finite field extensions $L/K$ play the role of the Galois group in the classical Galois Theory.

Suppose $L/K$ is a purely inseparable finite field extension. %of finite degree. 
Then 
the map 
%\marginpar{CFL=mDL}
\begin{equation}\label{CFL=mDL}
\CF (L/K)\ra \mD (L/K):=\{ \CD (L/M)\, | \, M/K\in \CF (L/K)\}, \;\; M\mapsto \CD (L/M)
\end{equation}
is a surjection. 
Corollary \ref{B23Mar25}.(5) shows that the map (\ref{CFL=mDL}) is a bijection.

\begin{corollary}\label{B23Mar25}%\marginpar{B23Mar25}
Suppose that $K$ is a field of characteristic $p>0$,  $L/K$ is a purely inseparable finite  field extension  and $M/K\in \CF (L/K)$. Then:
 
\begin{enumerate}

\item $\CD (L/M)=E(L/M)\simeq M_{[L:M]}(M)\in \fC (M)$ and  $\dim_K(\CD (L/M))=[L:M]^2[M:K]=[L:K][L:M]$.

\item $\CD (L/M)\subseteq \CD (L/K)$ and $\CD (L/M)=C_{\CD (L/K)}(M)$.

\item $Z(\CD (L/M))=C_{\CD (L/K)}(\CD (L/M))=M$.

\item $\CD(L/C_{\CD (L/K)}(\CD (L/M)))=\CD (L/M)$.

\item The map (\ref{CFL=mDL})
is a bijection with inverse $\CD'\mapsto C_{\CD (L/K)}(\CD')$.

\end{enumerate}
\end{corollary}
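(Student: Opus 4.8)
The plan is to reduce every statement to results already established for purely inseparable extensions, exploiting the fact that the intermediate field $M$ makes $L/M$ itself a purely inseparable finite extension. First I would observe that since $M/K\in \CF (L/K)$ and $L/K$ is purely inseparable, the sub-extension $L/M$ is again purely inseparable of finite degree. Applying Theorem \ref{C24Mar25}.(1) to $L/M$ in place of $L/K$ immediately yields $\CD (L/M)=E(L/M)\simeq M_{[L:M]}(M)\in \fC (M)$, which is statement 1; the dimension count then follows from $\dim_M(M_{[L:M]}(M))=[L:M]^2$ together with the tower law $[L:M][M:K]=[L:K]$, giving $\dim_K(\CD (L/M))=[L:M]^2[M:K]=[L:M][L:K]$.

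For statement 2, I would invoke Lemma \ref{a20May25} applied to the tower $K\subseteq M\subseteq L$, which delivers directly $\CD (L/M)=E(L/M)\cap \CD (L/K)=C_{\CD (L/K)}(M)\subseteq \CD (L/K)$.

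Statement 3 splits into two equalities. The equality $Z(\CD (L/M))=M$ is immediate from statement 1, since $\CD (L/M)\in \fC (M)$ is central simple over $M$ and hence has center $M$. For the centralizer equality $C_{\CD (L/K)}(\CD (L/M))=M$, the key point is that Theorem \ref{C24Mar25}.(1) applied to $L/K$ itself gives $\CD (L/K)=E(L/K)$, so the centralizer may be computed inside the full endomorphism algebra: $C_{\CD (L/K)}(\CD (L/M))=C_{E(L/K)}(E(L/M))$, which equals $M$ by the inverse part of the bijection in Theorem \ref{B24Mar25}.(3) (equivalently, by the Double Centralizer Theorem applied to the simple subalgebra $E(L/M)$ containing $M$). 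Statement 4 is then a direct substitution of the computed value $C_{\CD (L/K)}(\CD (L/M))=M$ into $\CD (L/\,\cdot\,)$.

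Finally, statement 5 follows from statements 2 and 3. The map (\ref{CFL=mDL}) coincides with $M\mapsto C_{\CD (L/K)}(M)$ by statement 2, which is precisely the bijection of Theorem \ref{D24Mar25}.(2); its surjectivity onto $\mD (L/K)$ is noted before the statement, while injectivity is recovered from statement 3, since $M=C_{\CD (L/K)}(\CD (L/M))$ shows that $M$ is reconstructed from $\CD (L/M)$, so distinct fields give distinct algebras and the displayed inverse $\CD'\mapsto C_{\CD (L/K)}(\CD')$ is correct. I do not anticipate a genuine obstacle here: the entire corollary is a bookkeeping consequence of Theorems \ref{C24Mar25}, \ref{B24Mar25} and \ref{D24Mar25}, and the only point that requires care is the passage between centralizers taken in $\CD (L/K)$ and in $E(L/K)$, which is legitimate precisely because $\CD (L/K)=E(L/K)$ for purely inseparable $L/K$.
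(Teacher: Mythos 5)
Your proposal is correct and follows essentially the same route as the paper: statement 1 by applying Theorem \ref{C24Mar25}.(1) to the purely inseparable extension $L/M$, statements 2--4 via the identifications $\CD (L/M)=E(L/M)$, $\CD (L/K)=E(L/K)$ and the Double Centralizer Theorem applied to the simple subalgebra $M$ (the paper phrases this as $C_{\CD (L/K)}(C_{\CD (L/K)}(M))=M$, you as $C_{E(L/K)}(E(L/M))=M$, which is the same computation given statement 2), and statement 5 by reconstructing $M$ from $\CD (L/M)$. The only cosmetic deviations are that you cite Lemma \ref{a20May25} directly for statement 2, where the paper rederives it from statement 1, and that you invoke Theorem \ref{D24Mar25}.(2) for statement 5, both of which are legitimate since those results precede the corollary.
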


\begin{proof} 1. Statement 1 is a particular case of Theorem \ref{C24Mar25}.(1) since the finite field extension $L/M$ is purely inseparable.

2. By statement 1,  $\CD (L/M)=E(L/M)\subseteq E(L/K)=\CD (L/K)$ and  
$$\CD (L/M)=E(L/M)=C_{E(L/K)}(M)=C_{\CD (L/K)}(M).$$

3. By Theorem \ref{C24Mar25}.(1), $\CD (L/K)\in \fCK$. Clearly, the field $M$ is a simple finite dimensional $K$-algebra that is contained in $\CD (L/K)$. Then, by  statement 3 and the Double Centralizer Theorem (Theorem \ref{DCThm}.(1)),
$$C_{\CD (L/K)}(\CD (L/M))= C_{\CD (L/K)}(C_{\CD (L/K)}(M)) =M.$$
4. By statement 3, $C_{\CD (L/K)}(\CD (L/M))=M$, and so 
$$ 
\CD(L/C_{\CD (L/K)}(\CD (L/M)))=\CD (L/M).
$$

5. Statement 5 follows from statements 3 and 4.
\end{proof}

 %%%%%%%%%%%%%%%%%% SECTION 5 %%%%%%%%%%%%%%%%%%%%%
 
 \section{The algebras $\CD (L/K)\rtimes G(L/K)$, $L\rtimes G(L/K)$ and  $\CD (L/K)$ } \label{3-ALGS} %\marginpar{3-ALGS}

 Proposition  \ref{A14May25} provides sufficient conditions for two subfields  being linearly disjoint (i.e. their compositum  is isomorphic to their tensor product). Proposition  \ref{A14May25} is a powerful result since it implies the known fact that {\em a finite field extension $L/K$ is normal iff} $L=L^{pi}\t L^{gal}$ (Corollary \ref{a25May25}).
  Theorem \ref{B11May25} is a criterion for $\CD (L/K)=E(L/K)$. Theorem \ref{BC24Mar25} describes properties of the subalgebra $L\rtimes G(L/K)$ of $E(L/K)$.  Proposition \ref{A8May25} is a criterion for a finite field extension being a Galois field extensions which is given via the algebra $L\rtimes G(L/K)$. Corollary \ref{a24Mar25} provides explicit descriptions of the sets 
  $$\CA (L\rtimes G(L/K), L)\;\; {\rm  and }\;\;\CA (L\rtimes G(L/K), L, G(L/K)).
  $$
   Theorem \ref{AB17Apr25} shows that the subalgebra of $E(L/K)$ which is generated by  the automorphism group $G(L/K)$ and the algebra of differential operators $\CD (L/K)$ is the skew group algebra $\CD (L/K)\rtimes G(L/K)$. Corollary \ref{c17Apr25} is a criterion for the finite field extension $L/K$ being a B-extension.

 Proposition \ref{AC17Apr25} shows that,
 the field extension $L_{dif}/L^{G(L/K)}_{dif}$ is a Galois field extension with Galois group isomorphic to $G(L/K)=G(L/L^{G(L/K)})$ and 
 $$
 L=L^{G(L/K)}\t_{L^{G(L/K)}_{dif}}L_{dif}.
 $$
  Proposition \ref{AC17Apr25}.(6) presents natural classes of field extensions with identity automorphism groups. Namely, $G(L/L_{dif})=\{ e\}$ and $G\Big(L^{G(L/K)}/L^{G(L/K)}_{dif} \Big)=\{ e\}$.
  Proposition \ref{VB-C17Apr25} is a strengthening of  Proposition \ref{AC17Apr25} where the finite field extension $L/K$ is normal. 
 
 Theorem \ref{20Apr25} is an explicit  description of the algebra $\CD (L/K)$ of    differential operators on a finite field extension $L/K$ in prime characteristic. Corollary \ref{a20Apr25} is an explicit  description of the algebra $\CD (L/K)$ of    differential operators on a {\em normal} finite field extension $L/K$ in prime characteristic.

  Theorem  \ref{bA20Apr25} shows that the algebra $ \CD (L/K)\rtimes G(L/K)$ is a tensor product of  the subalgebras 
  $$\CD \Big(L^{G(L/K)}/L^{G(L/K)}_{dif}\Big)\;\; {\rm  and} \;\; L_{dif}\rtimes  G\Big(L_{dif}/L^{G(L/K)}_{dif}\Big)
  $$
   over the field $L^{G(L/K)}_{dif}$.
   If, in addition, the field extension $L/K$ is normal, Corollary \ref{cA20Apr25} shows that the algebra $ \CD (L/K)\rtimes G(L/K)$ is a tensor product of  the subalgebras $\CD (L^{pi}/K)$ and $ L^{sep}\rtimes  G (L^{sep}/K)$.
 
Suppose that $L_i/K$ $(i=1, \ldots , n)$ are normal finite field extensions such that their tensor product $L=\bigotimes_{i=1}^nL_i$ is a field.  Proposition \ref{A7Jun25} shows that 
$$\CD (L/K)\rtimes G(L/K)\simeq \bigotimes_{i=1}^n\CD (L_i/K)\rtimes G(L_i/K)
,\;\;  \CD(L)=\bigotimes_{i=1}^n\CD (L_i/K)$$  
  and $G(L/K)\simeq \prod_{i=1}^n G(L_i/K)$. \\

 {\bf Sufficient conditions  for $MN=M\t N$.} 
 Proposition  \ref{A14May25} provides sufficient conditions for the compositum   of two subfields  being isomorphic to their tensor product. In this case, the two subfields are called {\bf linearly disjoint} over the ground field. The sufficient conditions are given via Galois groups.  Proposition \ref{A14May25} is used in the proof of Theorem \ref{VVB-30Apr25}.
 
\begin{proposition}\label{A14May25}%\marginpar{A14May25}
Let $M$ and $N$ be subfields of $\bK$ that contain the field $K$ and $MN$ be their compositum. 
\begin{enumerate}

\item Suppose that $M/K$ is  finite field extension, the field extension $N/K$ is  a Galois finite  field  extension such  that the restriction map $\res: G(MN/M)\ra G(N/K)$, $\s\mapsto \s|_{N}: N\ra N$, $n\mapsto \s(n)$ is a well-defined epimorphism. Then $MN=M\t N$. 

\item Suppose that $N/K$ is a  Galois finite field extension such that every automorphism $\s\in G(N/K)$ can be lifted to an automorphism $\s'\in G(MN/M)$. Then  $MN=M\t N$. 

\item Suppose that $M/K$ is a purely inseparable field extension and $N/K$ is a separable field extension. Then $MN=M\t N$. 

\end{enumerate}
\end{proposition}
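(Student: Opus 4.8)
The plan is to reduce all three parts to a single claim: the $K$-algebra homomorphism $\mu : M\t N\ra MN$, $m\t n\mapsto mn$, is an isomorphism. Its image is a subring of $\bK$ containing both $M$ and $N$, hence equals the composite field $MN$, so $\mu$ is always surjective; the real content is injectivity, i.e. that $M$ and $N$ are linearly disjoint over $K$. When $N/K$ is finite I would note that $\mu$ is an $M$-algebra surjection from $M\t N$, which has $M$-dimension $[N:K]$, onto $MN$; hence $[MN:M]\leq [N:K]$, with equality precisely when $\mu$ is injective. Thus in each case it suffices to establish the reverse inequality $[MN:M]\geq [N:K]$.

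For parts (1) and (2) I would argue through the Galois group of $N/K$. Since $N/K$ is Galois it is the splitting field over $K$ of a separable polynomial, so $MN$ is the splitting field of that same polynomial over $M$ and $MN/M$ is again (finite) Galois. The restriction map $\res:G(MN/M)\ra G(N/K)$ is always injective: any $\s\in G(MN/M)$ fixes $M$ by definition, and if in addition $\s$ fixes $N$ pointwise then it fixes the composite $MN=M\cdot N$, so $\s=\id$. (Well-definedness of $\res$, i.e. $\s|_N\in G(N/K)$, is automatic from the normality of $N/K$.) In part (1) the hypothesis is exactly that $\res$ is surjective, so $\res$ is an isomorphism and
$$[MN:M]=|G(MN/M)|=|G(N/K)|=[N:K],$$
which gives linear disjointness. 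Part (2) is the same statement in different words: saying that every $\s\in G(N/K)$ lifts to some $\s'\in G(MN/M)$ is precisely surjectivity of $\res$, so part (2) reduces to the argument of part (1). Here finiteness of $M/K$ is irrelevant, since $MN/M$ is finite Galois of degree at most $[N:K]$ in any case.

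For part (3) I would first reduce to $N/K$ finite, since linear disjointness can be tested on the finitely generated, hence (as $N\subseteq\bK$ is algebraic) finite, subextensions of $N/K$. For finite separable $N/K$ the primitive element theorem gives $N=K(\alpha)$ with $\alpha$ having separable minimal polynomial $f\in K[x]$ of degree $[N:K]$, whence $MN=M(\alpha)$ and $[MN:M]=\deg g$, where $g\in M[x]$ is the minimal polynomial of $\alpha$ over $M$ and $g\mid f$. The crux is that $g\in K[x]$: the roots of $g$ are among the roots of the separable polynomial $f$, so the coefficients of $g$ lie in the separable closure $K^{\mathrm{sep}}$ of $K$ in $\bK$; but $M/K$ purely inseparable forces $M\cap K^{\mathrm{sep}}=K$, because any element of $M$ separable over $K$ generates a subextension that is simultaneously separable and purely inseparable, hence trivial. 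Therefore $g\in K[x]$ is a $K$-divisor of the $K$-irreducible polynomial $f$ with $g(\alpha)=0$, so $g=f$ and $[MN:M]=\deg f=[N:K]$.

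The main obstacle is conceptual rather than computational. In parts (1)--(2) it lies in recognising that the lifting/restriction hypothesis is exactly the surjectivity that upgrades the always-valid injection $G(MN/M)\hookrightarrow G(N/K)$ to an isomorphism; in part (3) it lies in isolating the single fact $M\cap K^{\mathrm{sep}}=K$ that encodes the purely inseparable hypothesis and forces $f$ to remain irreducible over $M$.
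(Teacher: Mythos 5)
Your proof is correct. For parts (1) and (2) you follow essentially the paper's counting strategy, but with a worthwhile refinement: you compare $M$-dimensions ($\dim_M(M\t N)=[N:K]$ against $[MN:M]$), so the finiteness of $M/K$ is never used, and part (2) — whose lifting hypothesis is literally the surjectivity of $\res$ — collapses into part (1); the paper's part-(1) count is instead taken over $K$ (via $[MN:K]=[MN:M][M:K]$), which does need $[M:K]<\infty$, and it therefore proves part (2) by reducing to the finite subextensions $M'\subseteq M$. For part (3) you take a genuinely different route. The paper passes to the normal closure $N''$ of a finite subextension of $N$, lifts each $\s\in G(N''/K)$ to an automorphism of $\bK$, observes that such a lift fixes the purely inseparable field $M$ pointwise and hence stabilizes $MN''$, and then invokes part (2). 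You instead argue directly with the minimal polynomial $g\in M[x]$ of a primitive element of a finite subextension: its roots are among the roots of the separable irreducible $f\in K[x]$, so its coefficients lie in $M\cap K^{sep}=K$, forcing $g=f$ and $[MN:M]=[N:K]$. This is in effect a direct proof of the theorem on natural irrationalities, which the paper establishes separately (Corollaries \ref{aA14May25} and \ref{bA14May25}) by a closely related symmetric-function computation. What the paper's detour buys is a uniform automorphism-lifting mechanism consistent with its use elsewhere; what your version buys is a shorter, self-contained argument for part (3) and a formulation of parts (1)--(2) valid without any finiteness assumption on $M/K$.
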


\begin{proof} 1. We have finite  field extensions 
$K\subseteq M\subseteq (MN)^{G(MN/M)}\subseteq MN$. 
It follows from the inequalities
$$|G(MN/M)|\leq [MN:M]\leq [N:K]\;\; {\rm and}\;\; 
|G(MN/M)|\geq |G(N/K)|=[N:K]
$$
that $[MN:M]=[N:K]$. In view of the epimorphism
$M\t N\ra MN$, $m\t n\mapsto mn$, we have that  
$$ 
[M:K][N:K]=\dim_K(M\t N)\geq [MN:K]=[MN:M][M:K]=[N:K][M:K].
$$
Therefore, $[MN:K]=[N:K][M:K]=\dim_K(M\t N)$, and so
$MN=M\t N$.

2. We have to show that for every finite field extension $M'/K$ such that $M'\subseteq M$, 
$$
M'N=M'\t N.
$$
 By the assumption, every automorphism $\s\in G(N/K)$ can be lifted to an automorphism $\s'\in G(MN/M)$. Hence, $\s' (M'N)=M'N$, and so the restriction map 
 $$
 \res: G(M'N/M')\ra G(N/K), \;\; \s\mapsto \s|_{N}: N\ra N, \;\;n\mapsto \s(n)
 $$ 
 is a well-defined epimorphism. By statement 1,  $M'N=M'\t N$.

3.  We have to show that for  for every finite field extension $N'/K$ such that $N'\subseteq N$, 
$$
MN'=M\t N'.
$$
Since the field extension $N/K$ is separable, so is the field extension $N'/K$. Let $N''$ be the normal closure of the field $N'$ in $\bK$. Then the finite  field extension $N''/K$ is separable and normal, hence Galois. So, it suffices to show that 
$$
MN''=M\t N'',
$$
 Each automorphism $\s\in G(N''/K)$ can be lifted to an automorphism $\s'\in G(\bK /K)$. By the assumption,  $M/K$ is a purely inseparable field extension. Therefore, $\s (m)=m$ for all elements $m\in M$, and so  $\s' (MN'')=MN''$. 
 So, every automorphism $\s\in G(N''/K)$ can be lifted to an automorphism $\s''\in G(MN''/M)$. Then  $MN''=M\t N''$, by statement 2. 
\end{proof}

 Corollary  \ref{aA14May25} is a criterion for two subfields being linearly disjoint provided one of the subfield is Galois.

\begin{corollary}\label{aA14May25}%\marginpar{aA14May25}
Let $L/K$ be a finite field extension, $M, N\in \CF (L/K)$, $MN$ be their compositum   and the field extension $N/K$ be Galois. Then the following statements are equivalent:
\begin{enumerate}

\item $MN=M\t N$, i.e. the subfields $M$ and $L$ are linearly disjoint. 

\item $[MN:K]=[M:K][N:K]$.

\item The restriction map $\res: G(MN/M)\ra G(N/K)$, $\s\mapsto \s|_{N}: N\ra N$, $n\mapsto \s(n)$ is a well-defined  epimorphism.

\item The restriction map $\res: G(MN/M)\ra G(N/K)$, $\s\mapsto \s|_{N}: N\ra N$, $n\mapsto \s(n)$ is a well-defined   isomorphism.

\item $M\cap N=K$.

\end{enumerate}
\end{corollary}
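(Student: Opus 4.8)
The plan is to route every implication through the single restriction map $\res\colon G(MN/M)\ra G(N/K)$, $\s\mapsto \s|_N$, whose well-definedness and injectivity come for free from $N/K$ being Galois, and then to recognize surjectivity of $\res$ in four guises. First I would dispose of $(1)\Leftrightarrow (2)$: there is always the multiplication epimorphism $M\t N\ra MN$, $m\t n\mapsto mn$, so $[MN:K]\leq \dim_K(M\t N)=[M:K][N:K]$, with equality precisely when this map is an isomorphism, i.e. precisely when $MN=M\t N$.

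Next comes the structural core. Since $N/K$ is Galois and $K\subseteq M$, the composite $MN/M$ is again Galois: writing $N$ as the splitting field over $K$ of a separable polynomial $f$, the field $MN=M(N)$ is the splitting field of the same $f$ over $M$, hence normal over $M$, and separability is preserved under composition. Consequently $|G(MN/M)|=[MN:M]$. The map $\res$ is well defined because each $\s\in G(MN/M)$ fixes $K$ and permutes the roots of $f$, so $\s(N)=N$; and it is injective because $MN$ is generated by $M$ and $N$, so any automorphism fixing $M$ pointwise and restricting to the identity on $N$ is the identity on $MN$. Thus $\res$ is always a monomorphism from a group of order $[MN:M]$ into a group of order $[N:K]$, which makes $(3)\Leftrightarrow (4)$ immediate: an injective homomorphism is an epimorphism iff it is an isomorphism.

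With this in hand, $(2)\Leftrightarrow (4)$ becomes a counting statement: $\res$ is surjective iff $[MN:M]=[N:K]$, and multiplying by $[M:K]$ rewrites this as $[MN:K]=[M:K][N:K]$. (Alternatively, the implication $(3)\Rightarrow (1)$ can be quoted verbatim from Proposition \ref{A14May25}.(1).) Finally, for $(4)\Leftrightarrow (5)$ I would compute the fixed field of the image $H:=\res(G(MN/M))\leq G(N/K)$. Because $MN/M$ is Galois, an element $n\in N$ lies in $N^H$ iff it is fixed by every $\s\in G(MN/M)$, i.e. iff $n\in (MN)^{G(MN/M)}\cap N=M\cap N$; hence $N^H=M\cap N$. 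By the Galois correspondence applied to $N/K$ (the main theorem recalled in the introduction), $H=G(N/K)$ iff $N^H=K$, which is exactly $M\cap N=K$. This closes the cycle $(1)\Leftrightarrow(2)\Leftrightarrow(4)\Leftrightarrow(3)$ together with $(4)\Leftrightarrow(5)$.

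The only genuinely substantive point is the ``translation'' lemma that $MN/M$ is Galois with $\res$ injective; once $\res$ is recognized as a monomorphism between groups of the correct orders, every remaining equivalence is a one-line count or a fixed-field identification. I expect the subtlety to lie purely in phrasing the splitting-field argument cleanly, which is eased here by the fact that all the fields in question already sit inside the common algebraic closure $\bK$, so no abstract embeddings need to be chosen.
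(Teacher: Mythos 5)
Your proof is correct, and for the crucial equivalence involving condition (5) it takes a genuinely different route from the paper. The paper proves $(1\Leftrightarrow 5)$ by hand: for $(1\Rightarrow 5)$ it exhibits the kernel element $m\t 1-1\t m$ of the multiplication map when $m\in M\cap N\setminus K$, and for $(5\Rightarrow 1)$ it picks a primitive element $\th$ with $N=K(\th)$, factors the minimal polynomial $g$ of $\th$ over $M$ as $\prod_{g\in G(MN/M)}(x-g(\th))$, observes that the symmetric-function coefficients lie in $M\cap N$, and derives a contradiction if $g$ properly divides the minimal polynomial over $K$. You instead identify the fixed field of the image $H=\res(G(MN/M))\leq G(N/K)$: since $MN/M$ is Galois with $(MN)^{G(MN/M)}=M$, you get $N^H=M\cap N$, and the Galois correspondence for $N/K$ converts surjectivity of $\res$ into $M\cap N=K$ in one line; this also subsumes the paper's separate kernel-element argument for $(1\Rightarrow 5)$, since you close the cycle through $(4)$. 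Your treatment of the remaining equivalences (multiplication map for $(1\Leftrightarrow 2)$, injectivity of $\res$ for $(3\Leftrightarrow 4)$, degree counting for $(2\Leftrightarrow 4)$) matches the paper's in substance, with the small improvement that you actually justify the inequality $|G(MN/M)|\leq |G(N/K)|$ via the injectivity of $\res$ and prove that $MN/M$ is Galois, facts the paper uses tacitly. What the paper's more pedestrian symmetric-function argument buys is reusability: Corollary \ref{bA14May25} (natural irrationalities, where $N/K$ is only separable, not normal) is proved by ``literally repeating'' that computation, whereas your fixed-field argument needs $G(N/K)$ to be a genuine Galois group with fixed field $K$ and so does not transfer to that setting. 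For the corollary as stated, both proofs are complete and valid.
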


\begin{proof} $(1\Leftrightarrow 2)$ The $K$-algebra homomorphism $\mu : M\t N\ra MN$, $m\t n\ra mn$ is an epimorphism. So, the map $\mu$ is an isomorphism iff $[MN:K]=[M\t N:K]=[M:K][N:K]$.

$(3\Leftrightarrow 4)$ Since $|G(MN/M)|\leq |G(N/K)|$, the restriction map $\res$ is an epimorphism iff it is an isomorphism.

$(3\Rightarrow 1)$ Proposition \ref{A14May25}.(1).

$(1\Rightarrow 3)$ The implication is obvious.

$(1\Rightarrow 5)$ Suppose that $M\cap N\neq K$. Then we can choose an element, say $m\in M\cap N\backslash K$, and so $0\neq m\t 1-1\t m\in \ker (\mu)$. So, the map $\mu$ is not an isomorphism, a contradiction.

$(5\Rightarrow 1)$ Suppose that $M\cap N= K$. Since the field extension $N/K$ is a Galois finite field  extension, 
$$N=K(\th )\simeq K[x]/(f)
$$ 
for some element $\th \in N$ where $f(x)\in K[x]$ is the minimal polynomial of the element $\th$ over the field $K$. Clearly, the finite field extension $MN/M=M(\th )$ is Galois (since $N/K$ is so). Suppose that   the subfields $M$ and $N$ are not linearly disjoint over $K$. Then $[MN:K]<[M\t N:K]=[M:K][N:K]$, and so the minimal polynomial $g(x)\in M[x]$ of the element $\th$ over the field $M$ is a proper divisor of the polynomial $f$ since  
$$
\deg (g)=[MN:M]< [M\t N:M]=[N:K]=\deg (f).
$$
 Since the field extension $MN/M$ is Galois,
$$g(x)=\prod_{g \in G(MN/M)}(x-g(\th))=\sum_{i=0}^l(-1)^is_i(\ldots, g(\th), \ldots) x^{l-i}$$
where $l=|G(MN/M)|$ and $s_i(x_1, \ldots , x_l)$ is the $i$'th symmetric polynomial in $l$ variables  
($\deg (s_i)=i$). Clearly, 
$$(-1)^is_i(\ldots, g(\th), \ldots)\in M\cap N\;\; {\rm for\; all}\;\; i=0, \ldots , l.$$
Since $g$ is proper divisor of the minimal polynomial $f$ of $\th$ over $K$, there is an index $i$ such that $ (-1)^is_i(\ldots, g(\th), \ldots)\in M\backslash K$ (otherwise $g=f$, a contradiction). Therefore, $M\cap N\neq K$, a contradiction.  
\end{proof}

\begin{corollary}\label{bA14May25}%\marginpar{bA14May25}

Let $L/K$ be a finite field extension and   $M, N\in \CF (L/K)$. 
\begin{enumerate}

\item If  $N/M\cap N$ is  a Galois  field extension then $M/M\cap N$ and $N/M\cap N$ are linearly disjoint.

\item If $N/K$ is  a separable  field extension,    $\widetilde{N}/K$ is  the Galois closure (in $\bK$) of the field extension $N/K$ and $M\cap \widetilde{N}= M\cap N$ then  $MN\simeq M\t_{M\cap N}N$, i.e. the fields $M$ and $N$ are linearly disjoint over $M\cap N$.

\end{enumerate}

%*****
%Let $L/K$ be a finite field extensions and  $M, N\in \CF (L/K)$ be Galois fields. Then $M$ and $N$ are linearly disjoint iff $M\cap N=K$.
%***
\end{corollary} 

\begin{proof} 1. Statement 1  follows from  Corollary \ref{aA14May25}.

2. By statement 1, $M\widetilde{N}\simeq M\t_{M\cap \widetilde{N}}\widetilde{N}=M\t_{M\cap N}\widetilde{N}$. Since $MN\subseteq M\widetilde{N}$ and $M\t_{M\cap N}N\subseteq M\t_{M\cap N}\widetilde{N}$, we have that  $MN\simeq M\t_{M\cap N}N$. 
\end{proof}

{\bf The equality $L=L^{pi}\t L^{gal}$ for a normal finite field extension $L/K$ in characteristic $p$.} 
 Let $L/K$ be a {\em normal} finite field extension  and $\char (K)=p$.  Then $L^{pi}$ and  $L^{sep}=L^{gal}$ are purely inseparable and separable finite field extensions, respectively. Therefore, $L^{pi}\cap L^{gal}=K$   and we have the following  commutative diagram of finite field extensions:
 
$$\begin{array}[c]{ccc}
       &    L      &  \\
&\stackrel{gal}{\nearrow}\;\;\;\; \;\;\;\;\;\;\;\;{\nwarrow}^{pi}&\\
L^{pi} &   & L^{gal}\\
&{\nwarrow}^{pi}\;\;\;\; \;\;\;\; \;\;\;\;\;\;\;\;\stackrel{gal}{\nearrow}  &  \\
& L^{pi}\cap L^{gal}=K &
\end{array}$$
where the field extensions $L/L^{pi}$ and $L^{gal}/K$ are Galois and the field extensions $L^{pi}/K$ and $L/L^{gal}$ are purely inseparable. 
 
   The {\em separable} extension $L^{gal}/K$ is of finite degree.   By the Primitive Element Theorem, $L^{gal}=K(a)$ for some element $a\in L^{gal}$. Then $[L^{gal}:K]=\deg (f)$ is the degree of  the (monic) minimal polynomial  $f\in K[x]$ of the element $a$ over the field $K$. Let $g\in L^{pi}[x]$ be the (monic) minimal polynomial  of the element $a$ over the field  $L^{pi}$. Since $\s (l)=l$ for all elements $\s \in G(L/K)$ and $l\in L^{pi}$, we have that 
 $$
 0=\s (g(a))=g(\s (a))\;\; {\rm for\; all}\;\; \s \in G(L/K).
 $$
Therefore, $g=f$, and so 
$$
[L^{pi}(a):L^{pi}]=\deg (g)=\deg (f)=[L^{gal}:K]=|G(L^{gal}/K)|.
$$
By the normality of the field extension $L/K$ and the diagram above, the map 
$$
\res: G(L/K)\ra G(L^{gal}/K), \;\;\s\mapsto \s|_{L^{gal}}: L^{gal}\ra L^{gal}, \;\;l\mapsto \s (l)
$$
 is a group isomorphism. Therefore, $[L:L^{pi}]=[L^{gal}:K]$,  $[L:L^{gal}]=[L^{pi}:K]$ and 
$$
[L:K]=[L:L^{pi}][L^{pi}:K]=[L^{gal}:K][L^{pi}:K]=
[L^{pi}(a):L^{pi}][L^{pi}:K]=[L^{pi}(a):K].
$$
Therefore, $L=L^{pi}(a)= L^{pi}\t K(a)= L^{pi}\t L^{gal}$.

So, for the normal finite field extension $L/K$ we have the following  commutative diagram of finite field extensions:
 
$$\begin{array}[c]{ccc}
       &    L=L^{pi}\t L^{gal}      &  \\
&\stackrel{gal}{\nearrow}\;\;\;\; \;\;\;\;\;\;\;\;{\nwarrow}^{pi}&\\
L^{pi} &   & L^{gal}\\
&{\nwarrow}^{pi}\;\;\;\; \;\;\;\; \;\;\;\;\;\;\;\;\stackrel{gal}{\nearrow}  &  \\
& L^{pi}\cap L^{gal}=K &
\end{array}$$
where the field extensions $L/L^{pi}$ and $L^{gal}/K$ are Galois and the field extensions $L^{pi}/K$ and $L/L^{gal}$ are purely inseparable.

 As a corollary,  we obtain a short proof of an important  known  result that $L=L^{pi}\t L^{gal}$.
 
 \begin{corollary}\label{a25May25}%\marginpar{a25May25}
 
 Let $L/K$ be a  finite field extension  and $\char (K)=p$. Then:
 
 \begin{enumerate}
 
 \item The  field extension $L/K$ is normal iff $L=L^{pi}\t L^{gal}$. 
 
 \item If, in addition,  $L/K$ is a  normal  field extension then $L^{sep}=L^{gal}$ and the map $\res: G(L/K)\ra G(L^{gal}/K)$, $\s\mapsto \s|_{L^{gal}}: L^{gal}\ra L^{gal}$, $l\mapsto \s(l)$ is a group isomorphism.
 
 \end{enumerate}
 \end{corollary}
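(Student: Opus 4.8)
The plan is to notice that one implication of statement (1) together with the whole of statement (2) has effectively been carried out in the degree computation and commutative diagram immediately preceding the corollary: there, starting from the normality of $L/K$, one already obtains $L^{sep}=L^{gal}$, the group isomorphism $\res: G(L/K)\ra G(L^{gal}/K)$, and the decomposition $L=L^{pi}\t L^{gal}$. So I would dispatch the forward implication of (1) and all of (2) by citing that preamble (the equality $L^{sep}=L^{gal}$ for normal $L/K$ can, if desired, be re-derived in one line: the minimal polynomial of a separable $\alpha\in L$ is separable and splits in $L$ by normality, its roots are separable and hence lie in $L^{sep}$, so $L^{sep}/K$ is separable and normal, i.e. Galois, forcing $L^{sep}\subseteq L^{gal}\subseteq L^{sep}$). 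What genuinely remains is the converse of (1): assuming $L=L^{pi}\t L^{gal}$, show that $L/K$ is normal.

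For that converse I would first read $L=L^{pi}\t L^{gal}$ as the assertion that the multiplication map $L^{pi}\t_K L^{gal}\ra L^{pi}L^{gal}$ is an isomorphism onto $L$, so that $L$ is exactly the internal composite $L^{pi}L^{gal}$ inside $\bK$. The key observation is that both factors are normal over $K$: the extension $L^{gal}/K$ is Galois, hence normal, while $L^{pi}/K$ is purely inseparable and therefore also normal, since the minimal polynomial of any $\alpha\in L^{pi}$ has the shape $x^{p^e}-\alpha^{p^e}=(x-\alpha)^{p^e}$ and thus has $\alpha$ as its unique root in $\bK$.

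I would then establish normality of $L/K$ through the embedding criterion. Let $\s: L\ra \bK$ be an arbitrary $K$-algebra embedding. Its restriction to $L^{pi}$ is forced to be the inclusion, because a purely inseparable extension admits a unique $K$-embedding into $\bK$; hence $\s(L^{pi})=L^{pi}$. Its restriction to $L^{gal}$ is an automorphism of $L^{gal}$, by the normality of the Galois extension $L^{gal}/K$; hence $\s(L^{gal})=L^{gal}$. Since $\s$ is a field homomorphism it sends a composite to the composite of the images, so $\s(L)=\s(L^{pi})\s(L^{gal})=L^{pi}L^{gal}=L$. As every $K$-embedding of $L$ into $\bK$ stabilizes $L$, the extension $L/K$ is normal, which completes statement (1).

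I do not anticipate a serious obstacle, since the substantive content — the forward direction and the structure of $G(L/K)$ — is already assembled in the preamble, and the converse collapses to the two standard normality facts for Galois and for purely inseparable extensions, combined with the fact that a composite of normal subextensions is normal. The single point demanding care is making explicit that $L=L^{pi}\t L^{gal}$ really identifies $L$ with the internal composite $L^{pi}L^{gal}$, so that $\s(L)$ factors as $\s(L^{pi})\s(L^{gal})$; once this identification is pinned down, the embedding argument closes at once.
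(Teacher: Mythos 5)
Your proposal is correct and takes essentially the same route as the paper: the forward implication of (1) and all of (2) are, exactly as you say, the content of the degree computation and commutative diagram immediately preceding the corollary, while the paper's converse is the same one-line observation that $L^{pi}/K$ and $L^{gal}/K$ are both normal and hence so is their composite $L=L^{pi}L^{gal}$. Your only addition is to write out the standard embedding-criterion proof of that composite fact (uniqueness of the $K$-embedding of $L^{pi}$ into $\bK$, stability of $L^{gal}$ under embeddings, and $\s(L)=\s(L^{pi})\s(L^{gal})$), which the paper leaves implicit; there is no gap.
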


\begin{proof} 1.  $(\Rightarrow )$ The implication was proven above.

%By the normality of the field extension $L/K$ and the diagram above, the map $\res: G(L/K)\ra G(L^{gal}/K)$, $\s\mapsto \s|_{L^{gal}}: L^{gal}\ra L^{gal}$, $l\mapsto \s(l)$ is a group isomorphism. Therefore, $[L:L^{pi}]=[L^{gal}:K]$,  $[L:L^{gal}]=[L^{pi}:K]$ and 
%$$[L:K]=[L:L^{pi}][L^{pi}:K]=[L^{gal}:K][L^{pi}:K]=\dim_K(L^{pi}\t L^{gal}).
%$$
%Hence, the algebra epimorphism $L^{pi}\t L^{gal}\ra L=L^{pi}L^{gal}$ is an isomorphism, and so   $L=L^{pi}\t L^{gal}$, by Proposition \ref{A14May25}.(1). Clearly, $L^{sep}=L^{gal}$. 

$(\Leftarrow )$ Suppose that $L=L^{pi}\t L^{gal}$. Since the field extensions $L^{pi}/K$ and $L^{gal}/K$ are normal, so is the field extension $L/K$.

2.  Statement 2 was proven above.
\end{proof}

%By the assumption, the finite field extension $L/K$ is normal. By statement 1,  $L=L^{pi}\t L^{gal}$. 
%Then $L^{sep}=L^{gal}$ (since the field $L^{sep}$ is $G(L/K)$-stable and $L^{gal}\subseteq L^{sep}$). 

 Lemma \ref{a11May25} is used in the proof of Theorem \ref{B11May25}.
 
\begin{lemma}\label{a11May25}%\marginpar{a11May25}
Let $L/K$ be a finite field extension  of prime characteristic, $n=[L:K]$ and $l\in L$. Then:
\begin{enumerate}

\item The inner derivation $\ad_l$ of the endomorphism algebra $E(L/K)$ is a locally nilpotent (hence, nilpotent) map iff $\ad_l^n=0$.

\item $\dim_L(E(L/K))=n$.

\end{enumerate}
\end{lemma}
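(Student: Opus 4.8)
The plan is to treat the two statements separately, with essentially all of the work concentrated in the forward implication of part (1).

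For part (2) I would simply invoke multiplicativity of dimensions in a tower. The field $L$ sits inside $E(L/K)$ as the subalgebra of multiplication operators, so $E(L/K)$ is naturally a left $L$-vector space. Since $\dim_K E(L/K)=\dim_K\End_K(L)=n^2$ and $\dim_K L=n$, restriction of scalars along $K\subseteq L$ gives $\dim_L E(L/K)\cdot n=n^2$, whence $\dim_L E(L/K)=n$. That is the whole argument. For part (1) the backward direction is immediate: if $\ad_l^n=0$ then $\ad_l$ is nilpotent and a fortiori locally nilpotent. For the forward direction I would first note that $\ad_l$ is a $K$-linear endomorphism of the finite-dimensional $K$-space $E(L/K)$, so local nilpotence is equivalent to genuine nilpotence.

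The key algebraic observation is that $\ad_l=L_l-R_l$, where $L_l\colon \phi\mapsto l\phi$ and $R_l\colon\phi\mapsto\phi l$ are the commuting operators of left and right multiplication by $l$ on $E(L/K)$. In characteristic $p$ the Frobenius (``freshman's dream'') identity for commuting operators yields $\ad_l^{p^s}=L_l^{p^s}-R_l^{p^s}=L_{l^{p^s}}-R_{l^{p^s}}$ for every $s\geq 0$; hence as soon as $l^{p^s}\in K$ we have $L_{l^{p^s}}=R_{l^{p^s}}=l^{p^s}\cdot\id$ and therefore $\ad_l^{p^s}=0$. So the crux is to show that nilpotence of $\ad_l$ forces $l$ to be purely inseparable over $K$, with $p^s=[K(l):K]$ dividing $n$. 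I would establish this by extending scalars to $\bK$: one has $E(L/K)\otimes_K\bK\cong M_n(\bK)$, under which $\ad_l$ becomes the adjoint action of the multiplication operator $\lambda_l$, whose eigenvalues are exactly the roots of the minimal polynomial $m(x)$ of $l$ over $K$ (each with multiplicity, the characteristic polynomial being $m(x)^{n/\deg m}$). The eigenvalues of $\ad_l$ are then the pairwise differences of these roots, so $\ad_l$ is nilpotent iff all roots of $m$ coincide, i.e. $m(x)=(x-\mu)^{\deg m}$. An irreducible polynomial over $K$ with a single root in characteristic $p$ is of the form $x^{p^s}-c$ with $c\in K$; thus $l^{p^s}=c\in K$ and $\deg m=p^s=[K(l):K]$ divides $n$. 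Combining this with the Frobenius identity gives $\ad_l^{p^s}=0$, and since $p^s\mid n$ we conclude $\ad_l^n=0$.

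The main obstacle is precisely this eigenvalue computation identifying nilpotence of $\ad_l$ with the minimal polynomial of $l$ having a single root (equivalently, $l$ being purely inseparable over $K$); once pure inseparability is in hand, the passage to $\ad_l^n=0$ through the characteristic-$p$ binomial identity is routine. I would emphasize that some care is needed to obtain the sharp bound $n$ rather than the cheap bound $n^2=\dim_K E(L/K)$: it is exactly the pure-inseparability step, producing the exponent $p^s=[K(l):K]\mid n$, that upgrades mere finite-dimensional nilpotence to the precise vanishing $\ad_l^n=0$.
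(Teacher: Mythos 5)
Your proposal is correct, but it proves the forward implication of part (1) by a genuinely different and much heavier route than the paper. Part (2) is identical in both. For part (1), the paper's proof is a short linear-algebra argument over $L$: since $L$ is commutative, $\ad_l$ commutes with left (and right) multiplication by every element of $L$ acting on $E(L/K)$, so the ascending chain of kernels $\ker(\ad_l)\subseteq\ker(\ad_l^2)\subseteq\cdots$ consists of $L$-subspaces of $E(L/K)$; as $\dim_L(E(L/K))=n$ by part (2), this chain admits at most $n$ strict inclusions, so if $\ad_l$ is locally nilpotent the chain exhausts $E(L/K)$ by the $n$-th step, i.e. $\ad_l^n=0$. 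Note that this argument uses no information about $l$ and is in fact characteristic-free; the paper defers all characteristic-$p$ content (the identity $\ad_l^{p^n}=\ad_{l^{p^n}}$ and the inequality $p^n\geq n$) to the proof of Theorem \ref{B11May25}, where the lemma is applied. You instead extend scalars to $\bK$, identify the spectrum of $\ad_l$ with the pairwise differences of the roots of the minimal polynomial $m$ of $l$, classify one-rooted irreducible polynomials in characteristic $p$ as $x^{p^s}-c$, and conclude that nilpotence of $\ad_l$ forces $l$ to be purely inseparable over $K$ with $[K(l):K]=p^s$ dividing $n$, whence $\ad_l^{p^s}=\ad_{l^{p^s}}=0$ and a fortiori $\ad_l^n=0$. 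This is sound: the facts you invoke (the characteristic polynomial of multiplication by $l$ being $m(x)^{n/\deg m}$, the spectrum of an adjoint map consisting of eigenvalue differences, and nilpotence over $K$ being detected after base change) are all standard. The trade-off is clear: the paper's argument is shorter, more elementary, and gives the sharp exponent $n$ directly from the $L$-module structure, while yours is longer and essentially characteristic-$p$-bound, but it delivers strictly more as a by-product — namely that $\ad_l$ is nilpotent if and only if $l$ is purely inseparable over $K$ — which is precisely the content of Theorem \ref{B11May25}, so your lemma proof effectively absorbs the theorem the lemma was designed to serve.
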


\begin{proof} 2. $\dim_L(E(L/K))=\frac{\dim_K(E(L/K))}{[L:K]}=\frac{n^2}{n}=n$.

1.  For all elements $ l,l'\in L$, the elements  $\ad_l$ and $l'\cdot$ of $E(L/K))$  commute where $l': L\ra L$, $\l \mapsto \l l'$. Therefore, the $K$-vector  spaces  $\ker (\ad_l^i)$, where $i\geq 1$, are left/right $L$-modules. Since $\dim_L(E(L/K))=n<\infty$ (statement 2), the $K$-linear map $\ad_l$ is a locally nilpotent map iff $\ad_l^n=0$.
\end{proof}

In general, the containment $\CD (L/K)\subseteq E(L/K)$ is a strict containment. 
Theorem \ref{B11May25} is a criterion for $\CD (L/K)=E(L/K)$. Theorem \ref{B11May25} is used in the proof of Theorem \ref{VVB-30Apr25}.

\begin{theorem}\label{B11May25}%\marginpar{B11May25}
Let $L/K$ be a finite field extension  of prime characteristic $p$ and  $n=[L:K]$. Then $\CD (L/K)=E(L/K)$ iff $L=L^{pi}$.
\end{theorem}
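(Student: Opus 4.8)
The plan is to reduce both implications to the structural description of $\CD(L/K)$ already established, together with the identification $L_{dif}=L^{sep}$, after which everything is short. Recall that $L=L^{pi}$ is simply the statement that $L/K$ is purely inseparable (as $L^{pi}\subseteq L$ always, and equality means $L$ itself is purely inseparable over $K$).

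The implication $(L=L^{pi})\Rightarrow(\CD(L/K)=E(L/K))$ is immediate: since $L=L^{pi}$ means $L/K$ is purely inseparable, Theorem \ref{C24Mar25}.(1) gives $\CD(L/K)=E(L/K)$ directly.

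For the converse, first I would invoke Theorem \ref{AC24Mar25}.(1) to write $\CD(L/K)=E(L/L_{dif})$, and Theorem \ref{23May25} to replace $L_{dif}$ by $L^{sep}$, yielding $\CD(L/K)=E(L/L^{sep})$. Then I would take centralizers inside $E(L/K)$. On one hand, Theorem \ref{AC24Mar25}.(6) gives $C_{E(L/K)}(\CD(L/K))=L_{dif}=L^{sep}$. On the other hand, the hypothesis $\CD(L/K)=E(L/K)$ forces this centralizer to equal $C_{E(L/K)}(E(L/K))=Z(E(L/K))=K$, since $E(L/K)\simeq M_n(K)$ is central simple over $K$. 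Comparing the two descriptions gives $L^{sep}=K$, which is exactly the condition that $L/K$ is purely inseparable, i.e. $L=L^{pi}$. (Equivalently, a dimension count closes the argument: by Theorem \ref{AC24Mar25}.(1) one has $\dim_K\CD(L/K)=[L:L^{sep}]^2[L^{sep}:K]$, whereas $\dim_K E(L/K)=[L:K]^2=[L:L^{sep}]^2[L^{sep}:K]^2$, and equality forces $[L^{sep}:K]=1$.)

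I do not expect a genuine obstacle here, since every ingredient is supplied by the structure theorem for $\CD(L/K)$ and the characterization $L_{dif}=L^{sep}$. The only point requiring care is to interpret the hypothesis $L=L^{pi}$ correctly as ``$L/K$ purely inseparable'', so that the input from Theorem \ref{C24Mar25} in one direction and the conclusion $L^{sep}=K$ in the other direction line up with the definitions of $L^{pi}$ and $L^{sep}$.
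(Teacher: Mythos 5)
Your proof is correct, and free of circularity: Theorems \ref{AC24Mar25}, \ref{23May25} and \ref{C24Mar25} are all established in the paper independently of Theorem \ref{B11May25}, so you may legitimately invoke them. However, your route is genuinely different from the paper's. The paper proves both implications in one stroke, directly from the definition of $\CD (L/K)$: the equality $\CD (L/K)=E(L/K)$ holds iff every inner derivation $\ad_l$, $l\in L$, acts locally nilpotently on $E(L/K)$; Lemma \ref{a11May25}.(1) bounds the nilpotency index by $n=[L:K]$, and the characteristic-$p$ identity $\ad_l^{p^n}=\ad_{l^{p^n}}$ then converts $\ad_l^n=0$ into $l^{p^n}\in Z(E(L/K))=K$, i.e.\ $L=L^{pi}$. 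You instead treat the theorem as a corollary of the structure theory: the forward direction is delegated to Theorem \ref{C24Mar25}.(1), and the converse follows by comparing two computations of the centralizer $C_{E(L/K)}(\CD (L/K))$ --- it equals $L_{dif}=L^{sep}$ by Theorem \ref{AC24Mar25}.(6) and Theorem \ref{23May25}, and equals $Z(E(L/K))=K$ under the hypothesis --- together with the standard equivalence $L^{sep}=K\Leftrightarrow L=L^{pi}$; your parenthetical dimension count, $[L:L^{sep}]^2[L^{sep}:K]=[L:L^{sep}]^2[L^{sep}:K]^2$ forcing $[L^{sep}:K]=1$, closes the argument equally well. As for what each approach buys: the paper's proof is self-contained and elementary, needing only Lemma \ref{a11May25} and the Frobenius trick, and in particular does not rely on Theorem \ref{23May25}; yours is shorter given the structure theorems and displays the criterion as an instance of the double-centralizer correspondence, but the characteristic-$p$ content is not eliminated, only relocated --- it sits inside Theorem \ref{C24Mar25}.(1) (used directly in one direction and, through the proof of Theorem \ref{23May25}, in the other), whose proof is essentially the same nilpotency computation the paper runs here.
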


\begin{proof} $\CD (L/K)=E(L/K)$ iff the maps $\ad_l\in E(L/K)$, where $l\in L$, are locally nilpotent maps iff $\ad_l^n=0$ for all $l \in L$ (Lemma \ref{a11May25}.(1)) iff
$$0=\ad_l^{p^n}=\ad_{l^{p^n}}\;\; {\rm  for\; all}\;\; l \in L$$
since $p^n\geq n$ for all $n\geq 1$ (We use induction on $n\geq 1$. The initial case  $n=1$ is obvious. Suppose that $n\geq 2$ and $p^m\geq m$ for all $m=1, \ldots , n$. Then
$$ p^{n+1}=pp^n\geq pn=(p-1)n+p\geq n+1)$$
iff $l^{p^n}\in Z(E(L/K))=K$
iff $L=L^{pi}$.
\end{proof}

Let $L/K$ be a finite field extension of the field $K$ of characteristic $p$. The set
$$
\CD (L/K,L^{pi}):=\{ \d \in \CD (L/K)\, | \, \d (L^{pi})\subseteq L^{pi}\}
$$ is a subalgebtra of $\CD (L/K)$ that cointains the field $L^{pi}$. So, for all elements $l\in L^{pi}$ and $\d \in \CD (L/K, L^{pi})$, $\ad_l(\d)\in \CD (L/K, L^{pi})$. Therefore, the map  
%\marginpar{resDLpi}
\begin{equation}\label{resDLpi}
\res : \CD (L/K, L^{pi})\ra \CD(L^{pi}/K), \;\; \d\mapsto \d |_{L^{pi}}
\end{equation}
is a $K$-algebra homomorphism. Clearly, the homomorphism $\res $ is an isomorphism iff every differential operator $\d \in \CD (L^{pi}/K)$ can be {\em uniquely} extended to  a differential operator of $\CD (L/K)$. In this case, we identify the algebras $\CD (L/K, L^{pi})$ and $ \CD(L^{pi}/K)$, and the algebra $\CD (L/K, L^{pi})= \CD(L^{pi}/K)$ is a subalgebra of the algebra $\CD (L/K)\subseteq E(L/K)$.

\begin{lemma}\label{a15Apr25}%\marginpar{a15Apr25}
Let $L/K$ be a finite field extension of the field $K$ of characteristic $p$. Then:
\begin{enumerate}

\item Each derivation $\d \in \Der(L^{pi}/K)$ admits a unique extension to a derivation $\d \in \Der(L/K)$. 

\item The derivation subalgebra $\D (L^{pi}/K)$ of the algebra $\CD (L^{pi}/K)$ is contained in the image of the restriction homomorphism $\res$, 
$\D (L^{pi}/K)\subseteq \im (\res)$, see (\ref{resDLpi}). 

\end{enumerate}
\end{lemma}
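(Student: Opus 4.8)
The plan is to obtain statement~(2) as a formal consequence of the existence half of~(1) and to spend the real effort on~(1). For~(2), note that $\res$ is a $K$-algebra homomorphism that fixes $L^{pi}$ pointwise, so $\im(\res)$ is a $K$-subalgebra of $\CD(L^{pi}/K)$ containing $L^{pi}$. Granting~(1), each $\delta\in\Der(L^{pi}/K)$ is the restriction of some $\widetilde{\delta}\in\Der(L/K)$; since $\widetilde{\delta}|_{L^{pi}}=\delta$ sends $L^{pi}$ into $L^{pi}$, we have $\widetilde{\delta}\in\CD(L/K,L^{pi})$ and $\res(\widetilde{\delta})=\delta$, whence $\Der(L^{pi}/K)\subseteq\im(\res)$. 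As $\D(L^{pi}/K)=L^{pi}\langle\Der(L^{pi}/K)\rangle$ is generated as a $K$-algebra by $L^{pi}$ and $\Der(L^{pi}/K)$, it lies in the subalgebra $\im(\res)$, which is~(2). So only the existence and uniqueness of the extension in~(1) remain.

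For the existence half of~(1) I would argue through K\"ahler differentials. Using the identifications $\Der(L/K)=\Hom_L(\Omega_{L/K},L)$ and $\Der_K(L^{pi},L)=\Hom_L(\Omega_{L^{pi}/K}\otimes_{L^{pi}}L,\,L)$, extendability of every $\delta$ is equivalent to surjectivity of the restriction map, i.e. to injectivity of the canonical map $\alpha\colon\Omega_{L^{pi}/K}\otimes_{L^{pi}}L\to\Omega_{L/K}$. To see $\alpha$ injective, choose a $p$-basis $y_1,\dots,y_m$ of the purely inseparable extension $L^{pi}/K$; $p$-independence over $K$ is intrinsic, so the $y_j$ remain $p$-independent in $L$ and extend to a $p$-basis of $L/K$, whence $\{dy_j\otimes1\}$ maps to part of an $L$-basis of $\Omega_{L/K}$ and $\alpha$ is injective. (Alternatively one extends $\delta$ along a chain of simple extensions, the separable steps being automatic and an inseparable step $\theta^p=a$ requiring only that $a$ lie in the field of constants $L^{sep}L^p=L^{\Der(L/K)}$ of $\Der(L/K)$, Proposition~\ref{A21May25}.(6), which holds since $a\in L^p$.)

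The uniqueness half is where the difficulty lies. Two extensions of a fixed $\delta$ differ by a derivation $D\in\Der(L/K)$ with $D|_{L^{pi}}=0$, i.e. by an element of $\Der(L/L^{pi})$; equivalently, uniqueness amounts to $\alpha$ being surjective, that is $\Omega_{L/L^{pi}}=0$, that is $L/L^{pi}$ separable. The main obstacle is therefore to establish $\Der(L/L^{pi})=0$. The plan is to deduce this from the decomposition $L=L^{pi}\otimes_K L^{sep}$: as $L^{pi}/K$ is purely inseparable and $L^{sep}/K$ separable, they are linearly disjoint by Proposition~\ref{A14May25}.(3), so $L^{pi}L^{sep}=L^{pi}\otimes_K L^{sep}$, and once $L=L^{pi}L^{sep}$ is in hand the extension $L/L^{pi}$ is a base change of the separable extension $L^{sep}/K$, hence separable, giving $\Der(L/L^{pi})=0$ and uniqueness. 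Verifying $L=L^{pi}L^{sep}$ (equivalently, the separability of $L/L^{pi}$) is the delicate point and I expect it to be the crux of the argument: it is exactly the symmetry that is isolated and exploited in the later B-extension sections, so care is needed here, and the extension is unique precisely to the extent that this decomposition is available.
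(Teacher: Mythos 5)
Your part (2) and the general set-up agree with the paper, whose proof of (1) runs through the Primitive Element Theorem: it opens by asserting that $L/L^{pi}$ is \emph{separable}, writes $L=L^{pi}(\th)$, and obtains existence and uniqueness simultaneously from $\d(\th)=-f^{\d}(\th)/f'(\th)$. So the crux you flagged but did not close --- separability of $L/L^{pi}$, equivalently $L=L^{pi}L^{sep}$ --- is precisely the unproved first sentence of the paper's own proof, and at the stated level of generality it is simply not available: Proposition~\ref{A14May25}.(3) gives linear disjointness of $L^{pi}$ and $L^{sep}$, but not that their composite is $L$. Indeed, for the classical example $K=\Fp(x,y)$, $L=K(\th)$ with $\th^{p^2}+x\th^p+y=0$, one has $L^{pi}=K$ (this extension has no nontrivial purely inseparable subextension) while $L/K$ is inseparable, so $\Der(L/K)\neq 0$ by Proposition~\ref{VB-aC24Mar25}, and the zero derivation of $L^{pi}/K=K/K$ extends non-uniquely: uniqueness in (1) fails. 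Your reduction of uniqueness to $\Der(L/L^{pi})=0$ is correct, but the decomposition $L=L^{pi}\otimes_K L^{sep}$ you hoped to extract cannot be proved here because it is false in general; it does hold for normal extensions (Corollary~\ref{a25May25}), which is the setting in which the paper actually uses the lemma (Theorem~\ref{20Apr25}, Corollary~\ref{a20Apr25}, Proposition~\ref{VB-C17Apr25}).

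The existence half of your argument has an independent gap: the assertion that $p$-independence over $K$ is intrinsic, so that a $p$-basis of $L^{pi}/K$ stays $p$-independent in $L$, is false, and with it the injectivity of $\alpha$. Take $K=\Fp(u^p,y)\subseteq M=\Fp(u,y)$, so $M/K$ is purely inseparable with $p$-basis $\{u\}$, and let $L=M(\th)$ with $\th^{p^2}+u\th^p+y=0$ (the same irreducible shape as above, over $M$). Setting $w:=\th^p$, one has $w^p+uw+y=0$, and $w$ is a root of $T^{p^2}+u^pT^p+y^p$ over $M^p=\Fp(u^p,y^p)$, which is again the classical example; from this one checks $L^p\cap M=M^p$ and hence $L^{pi}=M$. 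Yet $u=-(w^p+y)\,w^{-1}\in K(L^p)$ because $w,w^p\in L^p$, so $du=0$ in $\Omega_{L/K}$: the $p$-basis element $u$ of $L^{pi}/K$ becomes $p$-dependent in $L$, $\alpha$ is not injective, and $\partial/\partial u\in\Der(L^{pi}/K)$ admits \emph{no} extension to $L$ at all (for $f=T^{p^2}+uT^p+y$ one has $f'=0$ but $f^{\d}(\th)=\th^p\neq 0$). Your parenthetical chain argument breaks at the same place: extending $\d=\partial/\partial u$ across the separable step $M\subseteq M(w)$ forces $D(w)=-w/u$, while the inseparable step $\th^p=w$ then demands $D(w)=0$; the constancy you invoke (``$a\in L^p$ lies in $L^{sep}L^p=L^{\Der(L/K)}$'') concerns derivations of $L$ --- the very objects being constructed --- not the partially built derivation $D$ of the intermediate field, so the step is circular. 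In sum, both halves of (1) genuinely fail for arbitrary finite $L/K$; your proof (like the paper's) becomes correct once one adds a hypothesis guaranteeing $L=L^{pi}\otimes_K L^{sep}$, for instance normality of $L/K$.
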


\begin{proof} 1. The field extension $L/L^{pi}$ is a separable finite field extension. Hence, $L=L^{pi}(\th)$ for some element $\th\in L$, by the  Primitive Element Theorem. Let $f(x)=\sum_{i\in \N}\l_ix^i\in L^{pi}[x]$ be the minimal polynomial of the element $\th$. Then $f(\th)=0$ implies
$$ 
0=\d (0)=\d (f(\th ))=f^\d (\th)+f'(\th)\d(\th)
$$ where $f^\d (x):=\sum_{i\in \N}\d (\l_i)x^i$ and 
$f':=\frac{df}{dx}$. Therefore,   $f'(\th )\neq 0$ (since the element $\th\in L$ is a separable element over the field $L^{pi}$, and so $f'(x)\neq 0$ and $\deg (f')<\deg (f)$) and 
$\d(\th )=-\frac{f^\d (\th)}{f'(\th )}$, and statement 1 follows.

2. Statement 2 follows from statement 1. 
\end{proof}

{\bf Centralizers $C_{E(L/K)}(M/K)$ of  subfields $M$ of $L$.}
 
 \begin{lemma}\label{a17Apr25}%\marginpar{a17Apr25}
Let $L/K$ be a finite field extension of the field $K$ of characteristic $p$ and $M/K$ be a subfield of $L/K$. Then $C_{E(L/K)}(M/K)=E(L/M)$ and $\dim_K(E(L/M))=[L:M]^2[M:K]=[L:K][L:M]$.
\end{lemma}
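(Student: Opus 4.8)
The plan is to observe that both assertions become immediate once one identifies the centralizer of $M$ (viewed inside $E(L/K)$ via left multiplication) with the algebra of $M$-linear endomorphisms of $L$. Recall that $M$ sits inside $E(L/K)$ through the monomorphism $m\mapsto (m\cdot : L\ra L,\ \lambda\mapsto m\lambda)$, so an element $\phi\in E(L/K)$ lies in $C_{E(L/K)}(M/K)$ precisely when $\phi\circ (m\cdot)=(m\cdot)\circ \phi$ for every $m\in M$.

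First I would unwind this commutation condition: for all $m\in M$ and $\lambda\in L$ it reads $\phi(m\lambda)=m\phi(\lambda)$. Since $\phi$ is already additive and $K$-linear, this is exactly the statement that $\phi$ is $M$-linear. Hence $C_{E(L/K)}(M/K)=\End_M(L)=E(L/M)$, which is the first assertion. This is the same mechanism already used in Lemma \ref{a20May25} to identify $\CD(L/M)$ with $C_{\CD(L/K)}(M)$, here applied to the full endomorphism algebra rather than to $\CD(L/K)$.

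For the dimension formula, I would regard $L$ as a vector space over $M$ of dimension $[L:M]$; then $E(L/M)=\End_M(L)\simeq M_{[L:M]}(M)$, so $\dim_M(E(L/M))=[L:M]^2$. Multiplying by $\dim_K(M)=[M:K]$ yields $\dim_K(E(L/M))=[L:M]^2[M:K]$, and the tower law $[L:K]=[L:M][M:K]$ rewrites this last expression as $[L:K][L:M]$.

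There is no genuine obstacle here: the characteristic-$p$ hypothesis plays no role, and the whole argument is a direct unwinding of definitions combined with the tower law. The only point worth stating with care is the identification of \emph{commuting with left multiplication by $M$} with \emph{$M$-linearity}, since this is precisely what powers both the centralizer equality and, through the matrix description $\End_M(L)\simeq M_{[L:M]}(M)$, the dimension count.
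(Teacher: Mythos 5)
Your argument is correct and is precisely the elementary unwinding the paper intends: its own proof of this lemma is just the word ``Straightforward.'' Identifying the centralizer of left multiplication by $M$ with $\End_M(L)=E(L/M)\simeq M_{[L:M]}(M)$ and then applying the tower law $[L:K]=[L:M][M:K]$ is exactly the omitted computation, and you are right that the characteristic-$p$ hypothesis plays no role.
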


\begin{proof} Straightforward.
\end{proof}

 \begin{lemma}\label{b17Apr25}%\marginpar{b17Apr25}
Let $L/K$ be a finite field extension, $M/K$ and $N/K$ be  subfields of $L/K$. Then:

\begin{enumerate}

\item $C_{E(L/K)}(M)\cap C_{E(L/K)}(N)=E(L/M) \cap E(L/N)=E(L/MN)=C_{E(L/K)}(MN)$. 

\item $C_{E(L/K)}(A)=M\cap N$, where $A$ is a $K$-subalgebra of $E(L/K)$  which is generated by the subalgebras  $C_{E(L/K)}(M)$ and $ C_{E(L/K)}(N)$, and 
$C_{E(L/K)}(M\cap N)=A= E(L/M\cap N)$.

\item For all finite field extensions $F/K$, 
$F\t E(L/K)=E(F\t L/F)\subseteq E(F\t L/K)$.

\end{enumerate}
\end{lemma}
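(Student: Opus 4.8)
The plan is to dispatch the three parts in turn, using Lemma \ref{a17Apr25} to pass back and forth between centralizers of subfields and endomorphism algebras, and invoking the Double Centralizer Theorem (Theorem \ref{DCThm}.(1)) via Theorem \ref{B24Mar25} for the structural identities. Throughout I set $E := E(L/K)$.

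For part 1, I would first use Lemma \ref{a17Apr25} to rewrite $C_E(M) = E(L/M)$ and $C_E(N) = E(L/N)$, reducing the first two equalities to the claim $E(L/M) \cap E(L/N) = E(L/MN)$. The point is that a $K$-linear endomorphism $\phi$ of $L$ lies in $E(L/M)$ (resp. $E(L/N)$) precisely when it commutes with left multiplication by every element of $M$ (resp. $N$); hence $\phi$ lies in both iff it commutes with multiplication by every element of the $K$-subalgebra generated by $M \cup N$, which is exactly the composite field $MN$. Thus $\phi \in E(L/MN)$, and the reverse inclusion is clear since $M, N \subseteq MN$. A final application of Lemma \ref{a17Apr25} identifies $E(L/MN)$ with $C_E(MN)$.

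For part 2, the key structural fact is that the centralizer of a subalgebra generated by two subsets equals the intersection of the two centralizers; applied to the generators $E(L/M)$ and $E(L/N)$ of $A$, this gives $C_E(A) = C_E(E(L/M)) \cap C_E(E(L/N))$. Now Theorem \ref{B24Mar25}.(3) (equivalently the Double Centralizer Theorem) yields $C_E(E(L/M)) = C_E(C_E(M)) = M$ and likewise $C_E(E(L/N)) = N$, so that $C_E(A) = M \cap N$. To obtain the remaining equalities I would observe that $L \subseteq E(L/M) \subseteq A$, so $A$ is a simple subalgebra of $E$ by Theorem \ref{B24Mar25}.(2); the Double Centralizer Theorem then gives $A = C_E(C_E(A)) = C_E(M \cap N)$, and Lemma \ref{a17Apr25} rewrites this last algebra as $E(L/M \cap N)$.

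Part 3 is a base-change computation. I would write down the canonical $F$-algebra homomorphism $F \t E(L/K) \ra E(F \t L/F)$ sending $f \t \phi$ to the $F$-linear map $g \t l \mapsto gf \t \phi(l)$, note that $F \t L$ has $F$-dimension $n = [L:K]$ so that both source and target are isomorphic to $M_n(F)$, and conclude that this map is an isomorphism by a dimension count against its injectivity (or by recognising it as the standard isomorphism $F \t M_n(K) \simeq M_n(F)$). The inclusion $E(F \t L/F) \subseteq E(F \t L/K)$ is immediate, since $K \subseteq F$ forces every $F$-linear endomorphism of $F \t L$ to be $K$-linear. The one point that warrants care---though I do not expect it to be a genuine obstacle---is that the equalities in parts 1 and 2 must be read as literal equalities of subalgebras of the fixed algebra $E$, not mere abstract isomorphisms; since every object in sight is concretely realised inside $E$, this is automatic, and the heaviest tool required is the Double Centralizer Theorem already established.
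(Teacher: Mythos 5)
Your proof is correct and follows essentially the same route as the paper's: part 1 by translating centralizers of subfields into endomorphism algebras via Lemma \ref{a17Apr25}, part 2 by the Double Centralizer Theorem applied to the simple algebra $A$, and part 3 by the canonical base-change map $F\t E(L/K)\ra E(F\t L/F)$ together with an $F$-dimension count. The only differences are that you make explicit two details the paper leaves tacit --- the simplicity of $A$ (via $L\subseteq A$ and Theorem \ref{B24Mar25}.(2)) and the injectivity of the base-change map --- which tightens rather than changes the argument.
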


\begin{proof} 1. Let $C(\cdot):=C_{E(L/K)}(\cdot)$. 
  By Lemma \ref{a17Apr25}, $C(M/K)=E(L/M)$ and $C(N/K)=E(L/N)$. Therefore,
$$ 
C(M)\cap C(N)=E(L/M) \cap E(L/N)=E(L/MN)=C(MN/K).
$$

2. $C(A)=C\Big(K\langle C(M), C(N)\rangle\Big)=CC(M)\cap CC(N)=M\cap N$, by the Double Centralizer Theorem (Theorem \ref{DCThm}.(1)). Hence, 
$$
 E(L/M\cap N)=C(M\cap N)=CC (A)=A,
 $$
  by the Double Centralizer Theorem (Theorem \ref{DCThm}.(1)).
  
  3. Clearly, $ E(F\t L/F)\subseteq E(F\t L/K)$ and 
  $F\t E(L/K)\subseteq  E(F/K)\t E(L/K) =E(F\t L/K)$.
  Notice that $F\t E(L/K)\subseteq  E(F\t L/F)$ since for all elements $f,f'\in F$, $\phi \in E(L/K)$ and $l\in L$,
  $$ (f\t \phi) (f'\t l)=ff'\t \phi (l)=f'f\t \phi (l)=
  f'(f\t \phi) (1\t l).$$
 Now, the equality   $F\t E(L/K)=  E(F\t L/F)$ follows from the equalities of their dimensions as $F$-vector spaces: 
$$ \dim_F(F\t E(L/K))= \dim_K(E(L/K))= [L:K]^2=\dim_F(E(F\t L/F)).$$
  \end{proof}

{\bf The subalgebra $L\rtimes G(L/K)$ of the endomorphism algebra $E(L/K)$.}  Let $L/K$ be a finite field extension of arbitrary characteristic. Then a subalgebra of $E(L/K)$, which is generated by the field $L$ and the automorphism group $G(L/K)$,  is equal to the sum $\sum_{g\in G(L/K)}Lg$  since $gl=g(l)g$ for all elements $l\in L$  and $g\in G(L/K)$.

 \begin{theorem}\label{BC24Mar25}%\marginpar{BC24Mar25}
Let $L/K$ be a finite field extension of  arbitrary characteristic. Then:

\begin{enumerate}

\item  The field extension $L/L^{G(L/K)}$ is a Galois field extension with Galois group $G(L/L^{G(L/K)})=G(L/K)$.

\item $\sum_{g\in G(L/K)}Lg=\bigoplus_{g\in G(L/K)}Lg=L\rtimes G(L/K)=E(L/L^{G(L/K)})\in \fC (L^{G(L/K)})$, 
$$
\Deg (L\rtimes G(L/K))=[L:L^{G(L/K)}]=|G(L/L^{G(L/K)})|=|G(L/K)|
$$ 
and 
$\dim_K(L\rtimes G(L/K))=[L:L^{G(L/K)}]^2[L^{G(L/K)}:K]=[L:K][L:L^{G(L/K)}]$.

\item $L\in \Max_s(L\rtimes G(L/K))=\Max_s(E(L/L^{G(L/K)}))$.

\item $L$ is the only (up to isomorphism) simple 
 $L\rtimes G(L/K)$-module. 

\item $\End_{L\rtimes G(L/K)}(L)=L^{G(L/K)}$. 

\item The algebra $L\rtimes G(L/K)=\bigoplus_{g\in G(L/K)}Lg$ is a direct sum of  non-isomorphic simple $L$-bimodules. 

\end{enumerate}
\end{theorem}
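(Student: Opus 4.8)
The plan is to identify the subalgebra $A:=\sum_{g\in G(L/K)}Lg$ with the endomorphism algebra $E(L/L^{G(L/K)})$ and then to read off statements (2)--(6) from the structure theory already developed for such algebras in Section~\ref{AELKL}. Write $G:=G(L/K)$ and $K':=L^{G(L/K)}$. For statement (1), I would simply invoke Artin's theorem: $G$ is a finite group of automorphisms of $L$ with fixed field $K'$, so $L/K'$ is a finite Galois extension with $[L:K']=|G|$ and Galois group exactly $G$; since $K\subseteq K'$ forces $G(L/K')\subseteq G(L/K)=G$, while Artin gives the reverse inclusion, one obtains $G(L/K')=G(L/K)$.

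The crux is statement (2). As noted just before the theorem, the subalgebra of $E:=E(L/K)$ generated by $L$ and $G$ is exactly $A=\sum_g Lg$, because $gl=g(l)g$. Since $L\subseteq A\subseteq E$, Theorem~\ref{B24Mar25} tells us that $A$ is simple and that $A=E(L/M)$ for $M:=C_E(A)$. I would then compute this centralizer directly: $C_E(A)=C_E(L)\cap C_E(G)=L\cap C_E(G)$, because $L\in\Max_s(E)$ gives $C_E(L)=L$ (Theorem~\ref{B24Mar25}.(1)); and for $l\in L$ the relation $lg=gl$ is equivalent, via $gl=g(l)g$, to $l=g(l)$, so $L\cap C_E(G)=L^G=K'$. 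Hence $A=E(L/K')\in\fC(K')$. The degree and dimension formulas then follow from $\Deg(E(L/K'))=[L:K']$ together with statement (1) (so $[L:K']=|G|$) and Lemma~\ref{a17Apr25} (giving $\dim_K(E(L/K'))=[L:K'][L:K]$). Directness of the sum then comes for free: since $A=E(L/K')$ has left $L$-dimension $[L:K']=|G|$ and the $|G|$ elements $\{g\}_{g\in G}$ span $A$ over $L$, they form an $L$-basis, so $\sum_g Lg=\bigoplus_g Lg$.

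Statements (3)--(5) are immediate consequences of the identification $A=E(L/K')$: statement (3) is Theorem~\ref{B24Mar25}.(1) applied to the extension $L/K'$, and statements (4) and (5) are Corollary~\ref{aB24Mar25}.(2)--(3) with $M=K'$, since $\End_{E(L/K')}(L)=K'=L^G$ and $L$ is the unique simple module over the matrix algebra $E(L/K')\cong M_{|G|}(K')$. The one statement requiring a short independent argument is (6). Here I would observe that each summand $Lg$ is an $L$-sub-bimodule of $A$ (using $xg\cdot l'=x\,g(l')\,g$ for the right action) and is one-dimensional as a left $L$-module, hence simple already as a left $L$-module and a fortiori as an $L$-bimodule. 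To see the summands are pairwise non-isomorphic, suppose $\phi:Lg\to Lg'$ is an $L$-bimodule isomorphism; left $L$-linearity forces $\phi(g)=cg'$ with $0\neq c\in L$, and right $L$-linearity, applied to the identity $g\cdot l=g(l)\,g$ valid in $A$, yields $g(l)\,c=c\,g'(l)$ for all $l\in L$, i.e. $g(l)=g'(l)$ for all $l$, whence $g=g'$. I expect the main (and essentially only) obstacle to be pinning down the centralizer computation $C_E(A)=L^G$ cleanly --- everything hinges on the commutation relation $gl=g(l)g$ --- after which (2)--(5) are formal consequences of the machinery of Section~\ref{AELKL} and (6) is the short bimodule computation above.
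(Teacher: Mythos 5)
Your proof is correct, but for the core statement (2) it takes a genuinely different route from the paper. The paper works from scratch: it first gets directness of the sum $\sum_{g\in G}Lg$ from Dedekind's lemma (distinct automorphisms are left $L$-linearly independent in $E(L/K)$), then proves simplicity of $A=\bigoplus_g Lg$ by a hands-on support-minimization argument (take $0\neq a\in I$ of minimal support $m$; if $m\geq 2$, normalize so $g_1=e$ and commute with a suitable $l\in L$ with $g_2(l)\neq l$ to drop the support size, a contradiction), and only then identifies $A=E(L/L^{G})$ via the $L^{G}$-dimension count $\dim_{L^{G}}(A)=[L:L^{G}]\,|G|=[L:L^{G}]^2$. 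You instead feed $A$ into the Section~\ref{AELKL} machinery: since $L\subseteq A\subseteq E(L/K)$, Theorem~\ref{B24Mar25} gives simplicity for free and yields $A=E\bigl(L/C_E(A)\bigr)$, so everything reduces to your centralizer computation $C_E(A)=C_E(L)\cap C_E(G)=L\cap C_E(G)=L^{G}$ (using $C_E(L)=L$ and the relation $gl=g(l)g$), after which directness also comes free from $\dim_L E(L/L^{G})=[L:L^{G}]=|G|$ rather than from Dedekind. Both arguments are sound and there is no circularity in yours (Theorem~\ref{B24Mar25} rests only on the Double Centralizer Theorem). What each buys: your version is shorter and makes the theorem a formal corollary of the double-centralizer formalism, at the cost of being less self-contained; the paper's support argument, besides being elementary, is a technique it deliberately sets up here because it is reused essentially verbatim in the proof of Theorem~\ref{AB17Apr25} for the larger algebra $\CD(L/K)\rtimes G(L/K)$, where the ambient identification is not yet available. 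Your treatment of statements (1) and (3)--(6) matches the paper's (Artin for (1), the identification $A=E(L/L^{G})$ for (3)--(5), and the same explicit bimodule computation $g(l)c=c\,g'(l)\Rightarrow g=g'$ for (6)).
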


\begin{proof} 1. By the Galois Theory, the field extension $L/L^G$ is a Galois field extension with Galois group $G(L/L^G)=G$.

2. Let $A:= \sum_{g\in G}Lg=\sum_{g\in G(L/L^G)}Lg$
 and $G=G(L/K)=G(L/L^G)$.
 
 (i) $A=\bigoplus_{g\in G(L/K)}Lg=L\rtimes G(L/K)$: 
 The automorphisms $g\in G$ are distinct (by the definition). Hence, they are $L$-linearly independent 
 as elements of the left $L$-vector space $E=E(L/K)$, and so  $A=\bigoplus_{g\in G(L/K)}Lg=L\rtimes G(L/K)$.

(ii)  {\em The algebra $A$ is a simple algebra that contains the field $L$}: Clearly, $L\subseteq A$. So, it remains to prove simplicity of the algebra $A$. 
 For an element $a=\sum_{g\in G}l_g g\in A$, where $l_g\in L$, the set 
$$
\supp (a):=\{ g\in G\, | \, l_g\neq 0\}
$$
is called the {\em support} of the element $a$ and $|\supp (a)|$ is the number of elements in $\supp (a)$. Let $I$ be a nonzero ideal of the algebra $A$ and
$$ m:=\min \{ |\supp (a)| \, | \, 0\neq a\in I\}.$$It suffices to show that $m=1$ since then there is a nonzero element $l_gg\in I$, and so $g=l_g^{-1}l_gg\in I$ and $I=A$ since the element $g$ is  a unit of the algebra $A$ and its inverse $g^{-1}=g^{n-1}$ belongs to the ideal $I$ where $n$ is the order of the automorphism $g\in G$ (since $G$ is  a finite group, the order of  the automorphism $g$ is also finite). 

Suppose that $m\geq 2$ and $a=l_1g_1+\cdots +l_mg_m\in I$ where $l_1, \ldots , l_m\in L\backslash \{ 0\}$ and $g_1, \ldots , g_m\in G$ are distinct automorphisms. We seek a contradiction.  Then
$
ag_1^{-1}=l_1+l_2g_2g_1^{-1}+\cdots + l_mg_mg_1^{-1}\in I$. So, we may assume that $g_1=e$, i.e.
$$a=l_1+l_2g_2+\cdots +l_mg_m\in I.$$
Since $g_2\neq e$, we can find  an element, say $l\in L$, such that $g_2(l)\neq l$. Then,
$$
b:=[a,l]=l_2(g_2(l)-l)g_2+\cdots + l_m(g_m(l)-l)g_m\in I\backslash \{ 0\}\;\; {\rm and}\;\; |\supp (b)|\leq m-1<m,
$$ a contradiction.

(iii) $A=E(L/L^G)\in \fC (L^G)$: Notice that $A\subseteq E(l/L^G)$ and 
$$
\dim_{L^G}(A)=[L:L^G]^2=\dim_{L^G}(E(L/L^G)).
$$ Therefore, $A=E(L/L^G)\in \fC (L^G)$, and so 
$$
\Deg (A)=[L:L^G]=|G(L/L^G)|=|G|
$$ 
and 
$\dim_K(A)=[L:L^G]^2[L^G:K]=[L:K][L:L^G]$.

3.  Statement 3 follows from statement 1 (since $\Deg (A)=[L:L^G]$). 

4. The field $L$ is a simple $A=E(L/L^G)$-module (since $L\subseteq A$). Therefore, the field $L$ is the only (up to isomorphism)  simple $A=E(L/L^G)$-module.

5.  Statement 5 follows from the equality $A=E(L/L^G)$.

6. For all elements $l,l'\in L$ and $g\in G$, 
$$ lgl'=lg(l')g.$$ Clearly, for each element $g\in G(L/K)$, the $L$-bimodule $Lg$ is simple. Suppose that $f: Lg\ra Lg'$ is a nonzero $L$-bimodule homomorphism. Then $f(g)=\l g'$ for some nonzero element $\l \in L$, and so 
$$g(l)\l g'=f(g(l)g)=f(gl)=f(g)l=\l g' l=\l g'(l)g'.$$
Hence, $g(l)=g'(l)$ for all elements $l\in L$. This means that $g=g'$  and statement 6 follows.
\end{proof}

 \begin{corollary}\label{aBC24Mar25}%\marginpar{aBC24Mar25}
Let $L/K$ be a finite field extension of  arbitrary characteristic and $H$ be a subgroup of $G(L/K)$. Then:

\begin{enumerate}

\item The field extension $L/L^H$ is a Galois field extension with Galois group $G(L/L^H)=H$, 
$$
L\rtimes H=L\rtimes G(L/L^H)=E(L/L^H)\in \fC (L^H),\;\; L\rtimes H\subseteq L\rtimes G(L/K), 
$$
 $\Deg (L\rtimes H)=[L:L^H]=\Big|G(L/L^H)\Big|$, 
$\dim_K(L\rtimes H)=[L:L^H]^2[L^H:K]=[L:K][L:L^H]$ and $C_{E(L/K)}(L\rtimes H)=L^H$.

\item $L\in \Max_s(L\rtimes H)=\Max_s(E(L/L^H))$.

\item $Z(L\rtimes H)=L^H$.

\item $L$ is the only (up to isomorphism) simple 
 $L\rtimes H$-module. 

\item $\End_{L\rtimes H}(L)=L^H$.

\item The algebra $L\rtimes H=\bigoplus_{h\in H}Lh$ is a direct sum of  non-isomorphic simple $L$-bimodules. 

\end{enumerate}
\end{corollary}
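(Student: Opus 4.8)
The plan is to deduce the entire corollary from Theorem \ref{BC24Mar25}, applied not to $L/K$ but to the subextension $L/L^H$, with $L^H$ playing the role of the ground field. The one genuinely non-formal input is the identification $G(L/L^H)=H$, so I would isolate that first. Note that $H$ is finite, being a subgroup of $G(L/K)$, which is finite since $[L:K]<\infty$. By Artin's theorem (the classical Galois theory) applied to the finite group $H$ of automorphisms of $L$, the extension $L/L^H$ is a finite Galois extension with $[L:L^H]=|H|$ and Galois group $G(L/L^H)=H$. The inclusion $H\subseteq G(L/L^H)$ is immediate from the definition of $L^H$; the reverse follows from $|G(L/L^H)|\le [L:L^H]=|H|$. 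This already yields the first assertion of statement 1 and, crucially, the identity $L^{G(L/L^H)}=L^H$.

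Next I would invoke Theorem \ref{BC24Mar25} for the finite field extension $L/L^H$. Since $L^{G(L/L^H)}=L^H$, its conclusions read off verbatim after replacing the ground field $K$ by $L^H$. Statement 2 of Theorem \ref{BC24Mar25} gives
$$\sum_{h\in H}Lh=\bigoplus_{h\in H}Lh=L\rtimes H=L\rtimes G(L/L^H)=E(L/L^H)\in \fC (L^H),$$
together with $\Deg (L\rtimes H)=[L:L^H]=|G(L/L^H)|$. Statements 3, 4, 5 and 6 of Theorem \ref{BC24Mar25} give statements 2, 4, 5 and 6 of the corollary directly: namely $L\in \Max_s(L\rtimes H)=\Max_s(E(L/L^H))$, that $L$ is the unique simple $L\rtimes H$-module, that $\End_{L\rtimes H}(L)=L^H$, and the decomposition of $L\rtimes H=\bigoplus_{h\in H}Lh$ into pairwise non-isomorphic simple $L$-bimodules. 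Statement 3, $Z(L\rtimes H)=L^H$, is then immediate, since $L\rtimes H=E(L/L^H)\in \fC (L^H)$ is central simple over $L^H$ and hence has center $L^H$.

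It remains to supply the three items that are phrased over $K$ rather than over $L^H$. The inclusion $L\rtimes H\subseteq L\rtimes G(L/K)$ is clear from $H\subseteq G(L/K)$, whence $\bigoplus_{h\in H}Lh\subseteq \bigoplus_{g\in G(L/K)}Lg$. The dimension over $K$ follows by multiplying the $L^H$-dimension by $[L^H:K]$:
$$\dim_K(L\rtimes H)=[L^H:K]\,\dim_{L^H}(L\rtimes H)=[L^H:K][L:L^H]^2=[L:K][L:L^H].$$
Finally, for $C_{E(L/K)}(L\rtimes H)=L^H$ I would use Lemma \ref{a17Apr25}, which gives $E(L/L^H)=C_{E(L/K)}(L^H)$; since the subfield $L^H$ is a simple subalgebra of the central simple algebra $E(L/K)$, the Double Centralizer Theorem (Theorem \ref{DCThm}.(1)) yields
$$C_{E(L/K)}(L\rtimes H)=C_{E(L/K)}(E(L/L^H))=C_{E(L/K)}\big(C_{E(L/K)}(L^H)\big)=L^H.$$

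I do not expect any step to be a serious obstacle: the content is almost entirely the observation that $L\rtimes H$ is the skew group algebra of the \emph{full} Galois group of $L/L^H$, so Theorem \ref{BC24Mar25} applies wholesale. The only ingredient external to the machinery already developed is Artin's theorem guaranteeing $G(L/L^H)=H$, and this is precisely the point where finiteness of $H$ (hence of $G(L/K)$) is used.
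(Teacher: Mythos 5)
Your proposal is correct and follows essentially the same route as the paper: both rest on Artin's theorem giving $G(L/L^H)=H$ and then identify $L\rtimes H$ with $E(L/L^H)$ --- the paper by a direct $\dim_{L^H}$ count, you by applying Theorem \ref{BC24Mar25} wholesale to the extension $L/L^H$, which amounts to the same computation, after which the remaining items (including the centralizer $C_{E(L/K)}(L\rtimes H)=L^H$, which the paper leaves implicit and you settle via the Double Centralizer Theorem) follow as you say. One minor citation quibble: Lemma \ref{a17Apr25} is stated only in characteristic $p$, but the identity $C_{E(L/K)}(L^H)=E(L/L^H)$ holds in arbitrary characteristic (it is used as such in the proof of Theorem \ref{B24Mar25}), so nothing is lost.
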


\begin{proof} 1. Clearly, $L\subseteq B:=L\rtimes H\subseteq E(L/L^H)\subseteq E(L/L^G)=L\rtimes G$ and 
  the field extension $L/L^H$ is a Galois field extension with Galois group $G(L/L^H)=H$. The equality 
  $B=E(L/L^H)$ follows from the inclusion $B\subseteq E(L/L^H)$ and the equalities:
  $$  \dim_{L^H}(B)= \dim_{L^H}(L\rtimes H)=[L:L^H]|H|=[L:L^H]|G(L/L^H)|=[L:L^H]^2=\dim_{L^H}(E(L/L^H).$$

2--5. Statements 2--5 follow from the equality $B=E(L/L^H)$.

6. Statement 6 follows from Theorem \ref{BC24Mar25}.(6).
\end{proof}

Proposition \ref{A8May25} is a criterion for a finite field extension being a Galois field extensions.

\begin{proposition}\label{A8May25}%\marginpar{A8May25}
A finite field extension $L/K$ is a Galois field extension iff $L\rtimes G(L/K)=E(L/K)$.
\end{proposition}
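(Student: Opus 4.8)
The plan is to reduce everything to Theorem \ref{BC24Mar25}.(2), which already identifies the skew group algebra $L\rtimes G(L/K)$ with the endomorphism algebra $E(L/L^{G(L/K)})$ of $L$ over its field of $G(L/K)$-invariants, and is a central simple algebra over $L^{G(L/K)}$. Combined with the classical characterization of finite Galois extensions as those finite field extensions $L/K$ for which $L^{G(L/K)}=K$, the equivalence should fall out by comparing the two fixed fields (equivalently, by a dimension count).

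For the forward direction, suppose $L/K$ is Galois. Then $L^{G(L/K)}=K$, so Theorem \ref{BC24Mar25}.(2) immediately gives
$$L\rtimes G(L/K)=E(L/L^{G(L/K)})=E(L/K),$$
which is the desired equality.

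For the reverse direction, suppose $L\rtimes G(L/K)=E(L/K)$. By Theorem \ref{BC24Mar25}.(2) this reads $E(L/L^{G(L/K)})=E(L/K)$, and I would then conclude $L^{G(L/K)}=K$ in either of two ways. First, by the injectivity of the assignment $M\mapsto C_E(M)=E(L/M)$ from $\CF(L/K)$ to the subalgebras of $E(L/K)$ containing $L$ (the bijection of Theorem \ref{B24Mar25}.(3)): since $K,\,L^{G(L/K)}\in\CF(L/K)$ both have the same image $E(L/K)$, injectivity forces $L^{G(L/K)}=K$. Alternatively, by the dimension count $\dim_K(E(L/L^{G(L/K)}))=[L:K][L:L^{G(L/K)}]$ (Theorem \ref{BC24Mar25}.(2)) against $\dim_K(E(L/K))=[L:K]^2$, which yields $[L:L^{G(L/K)}]=[L:K]$ and hence $[L^{G(L/K)}:K]=1$. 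Either way $L^{G(L/K)}=K$, so $L/K$ is Galois.

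Since essentially all of the substantive content has been packaged into Theorem \ref{BC24Mar25}, there is no genuine obstacle here; the only points requiring care are the invocation of the standard fact that a finite extension is Galois precisely when the fixed field of its full automorphism group is the ground field, and ensuring the degree/dimension bookkeeping in the reverse direction is applied to the correct central simple algebra $E(L/L^{G(L/K)})$ rather than to $E(L/K)$.
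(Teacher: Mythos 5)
Your proof is correct, but it takes a different route from the paper's. The paper's proof of Proposition \ref{A8May25} is a bare dimension count that never mentions the fixed field: $L/K$ is Galois iff $|G(L/K)|=[L:K]$, and since $L\rtimes G(L/K)=\bigoplus_{g\in G(L/K)}Lg$ has $\dim_K = [L:K]\,|G(L/K)|$, this holds iff $\dim_K(L\rtimes G(L/K))=[L:K]^2=\dim_K(E(L/K))$, which (by the containment $L\rtimes G(L/K)\subseteq E(L/K)$) holds iff the two algebras are equal. So the paper only needs the directness of the sum $\bigoplus_g Lg$ (linear independence of automorphisms) and the numerical criterion $|G(L/K)|=[L:K]$, not the identification $L\rtimes G(L/K)=E\big(L/L^{G(L/K)}\big)$. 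You instead invoke the full strength of Theorem \ref{BC24Mar25}.(2) together with the fixed-field criterion ($L/K$ Galois iff $L^{G(L/K)}=K$), and in the reverse direction you deduce $L^{G(L/K)}=K$ either from the injectivity of $M\mapsto C_E(M)$ in Theorem \ref{B24Mar25}.(3) (i.e.\ the Double Centralizer Theorem) or by your own dimension count. Both of your closing arguments are sound; the injectivity route in particular is the more conceptual one, consistent with the paper's general philosophy of reading off subfields from centralizers, whereas the paper's proof buys brevity and lighter prerequisites. Your second (dimension-count) variant is essentially the paper's argument transported to the central simple algebra $E\big(L/L^{G(L/K)}\big)$, and you correctly flag that the count must be applied to that algebra rather than to $E(L/K)$.
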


\begin{proof}
A finite field extension $L/K$ is a Galois field extension iff $G(L/K)=[L:K]$ iff 
$$
\dim_K(L\rtimes G(L/K))=[L:K]^2 \;\; (=\dim_K(E(L/K))
$$ 
iff 
 $L\rtimes G(L/K)=E(L/K)$ since $L\rtimes G(L/K)\subseteq E(L/K)$.
\end{proof}

Each unit $u$  of an algebra $A$, determines the {\bf inner automorphism} $\o_u$ of the algebra $A$ which is given  by the rule: For all elements $a\in A$, $\o_u (a):=uau^{-1}$. Elements $g$ of the group $G(L/K)$ are units of the algebra $L\rtimes G(L/K)$ and 
$\o_g(lg')=g(l)gg'g^{-1}$ for all elements $g,g'\in G(L/K)$ and $l\in L$. Therefore the map
$$
G(L/K)\ra \Aut_K(L\rtimes G(L/K)), \;\; g\mapsto \o_g
$$
is a group monomorphism. 

Let $ \CA (L\rtimes G(L/K), L)$ be the set of subalgebras of the algebra $L\rtimes G(L/K)$ that contain the field $L$. A subalgebra $A$ of $L\rtimes G(L/K)$ is called  $G(L/K){\bf-stable}$ if 
$$
\o_g (A)=A\;\; {\rm  for\; all\; elements}\;\;g\in G(L/K).
$$ 
Let $ \CA (L\rtimes G(L/K), L, G(L/K))$ be the set of $G(L/K)$-stable subalgebras of the algebra $L\rtimes G(L/K)$ that contain the field $L$. Corollary \ref{a24Mar25}  describes the sets  $\CA (L\rtimes G(L/K), L)$  and $\CA (L\rtimes G(L/K), L,  G(L/K))$.

 \begin{corollary}\label{a24Mar25}%\marginpar{a24Mar25}
Let $L/K$ be a finite field extension of  arbitrary characteristic. Then:
\begin{enumerate}

\item  $ \CA (L\rtimes G(L/K), L)=\{L\rtimes H\, | \, H$ is a  subgroup of $G(L/K) \}$.

\item  $ \CA (L\rtimes G(L/K),L,  G(L/K))=\{L\rtimes N\, | \, N$ is a normal subgroup of $G(L/K) \}$.

\end{enumerate}
\end{corollary}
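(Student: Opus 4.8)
The plan is to reduce both statements to the subfield--subalgebra dictionary of Theorem \ref{B24Mar25}, applied to the Galois extension $L/L^{G}$, where I abbreviate $G:=G(L/K)$ and $L^{G}:=L^{G(L/K)}$. The starting point is Theorem \ref{BC24Mar25}: the algebra $L\rtimes G$ coincides with the central simple $L^{G}$-algebra $E(L/L^{G})$, inside which $L$ is a strictly maximal subfield, and by Theorem \ref{BC24Mar25}.(1) the extension $L/L^{G}$ is Galois with group $G$.

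For statement (1), I would first apply Theorem \ref{B24Mar25} to the finite extension $L/L^{G}$: its conclusion is that the subalgebras of $E(L/L^{G})=L\rtimes G$ containing $L$ are precisely the centralizers $E(L/M)=C_{E(L/L^{G})}(M)$ as $M$ runs over the intermediate fields $M\in\CF(L/L^{G})$. I would then feed in the classical Galois correspondence for $L/L^{G}$, which identifies these intermediate fields with the subgroups $H\le G$ via $M=L^{H}$ (and $H=G(L/M)$). Finally Corollary \ref{aBC24Mar25}.(1) provides the translation $E(L/L^{H})=L\rtimes H$, so that
$$\CA(L\rtimes G,L)=\{E(L/M)\mid M\in\CF(L/L^{G})\}=\{L\rtimes H\mid H\le G\},$$
which is statement (1).

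For statement (2), I would begin from statement (1): every $A\in\CA(L\rtimes G,L)$ equals $L\rtimes H$ for a \emph{unique} subgroup $H\le G$, the uniqueness coming from the left $L$-linear independence of the automorphisms in $G$ (as in the proof of Theorem \ref{BC24Mar25}.(2)), so that $H$ is recovered as the support of $L\rtimes H=\bigoplus_{h\in H}Lh$. The one genuine computation is the action of the inner automorphism $\o_{g}$ on such a subalgebra. Using $\o_{g}(l)=glg^{-1}=g(l)$ for $l\in L$, $\o_{g}(h)=ghg^{-1}$ for $h\in H$, and $g(L)=L$, I get
$$\o_{g}(L\rtimes H)=\bigoplus_{h\in H}g(L)\,ghg^{-1}=\bigoplus_{h\in H}L\,ghg^{-1}=L\rtimes\big(gHg^{-1}\big).$$
By the uniqueness just noted, $L\rtimes H$ is $G$-stable, i.e. $\o_{g}(L\rtimes H)=L\rtimes H$ for every $g\in G$, if and only if $gHg^{-1}=H$ for all $g\in G$, that is, if and only if $H$ is normal in $G$. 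This gives $\CA(L\rtimes G,L,G)=\{L\rtimes N\mid N\trianglelefteq G\}$.

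I do not expect a serious obstacle: the two steps needing care are the bookkeeping that the support of $L\rtimes H$ pins down $H$ (so that the parametrization by subgroups is an honest bijection and not merely a surjection) and the verification of the conjugation identity $\o_{g}(L\rtimes H)=L\rtimes(gHg^{-1})$, both of which are routine given Theorem \ref{BC24Mar25} and Corollary \ref{aBC24Mar25}.
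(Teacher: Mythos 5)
Your proposal is correct, but for part (1) it takes a genuinely different route from the paper. The paper's proof is internal to the skew group algebra and makes no use of central simple algebra theory at this point: since $L\subseteq A$, any $A\in \CA (L\rtimes G, L)$ is an $L$-sub-bimodule of $L\rtimes G=\bigoplus_{g\in G}Lg$, which by Theorem \ref{BC24Mar25}.(6) is a direct sum of pairwise \emph{non-isomorphic} simple $L$-bimodules; hence $A=\bigoplus_{g\in H}Lg$ for some subset $H$ containing $e$, and the rule $Lg\cdot Lh=Lgh$ forces $H$ to be a submonoid of the finite group $G$, hence a subgroup. You instead pass through the identification $L\rtimes G=E(L/L^{G})$ (Theorem \ref{BC24Mar25}.(2)), the centralizer classification of Theorem \ref{B24Mar25} applied to the extension $L/L^{G}$, and then the classical Galois correspondence for $L/L^{G}$ to rewrite each intermediate field as $L^{H}$ and each $E(L/L^{H})$ as $L\rtimes H$ via Corollary \ref{aBC24Mar25}.(1). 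Both arguments are valid, but note the trade-off: your route leans on the Double Centralizer Theorem and, more significantly, on the hard direction of the classical Galois correspondence (that every $M\in \CF (L/L^{G})$ equals $L^{G(L/M)}$), whereas the paper's bimodule argument avoids this entirely. That is not an accident of taste: the stated purpose of results like this corollary, combined with Theorem \ref{B24Mar25}, is to give a new ring-theoretic proof of the Galois correspondence (see \cite{GaloisTh-RingThAp}), and in that program your derivation would be circular; within the present paper, however, classical Galois theory is invoked elsewhere (e.g., in the proof of Theorem \ref{BC24Mar25}.(1)), so your argument is legitimate as stated. For part (2), your computation $\o_{g}(L\rtimes H)=L\rtimes \big(gHg^{-1}\big)$ together with recovering $H$ as the support of $\bigoplus_{h\in H}Lh$ (left $L$-linear independence of distinct automorphisms) is essentially the same as the paper's proof, which uses $gLng^{-1}=Lgng^{-1}$ and the directness of the sum.
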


\begin{proof} 1. Suppose that  $A\in  \CA (L\rtimes G(L/K), L)$. Since $L\subseteq A$, the algebra $A$ is an $L$-bimodule. By Theorem \ref{BC24Mar25}.(6), the algebra $L\rtimes G(L/K)=\bigoplus_{g\in G(L/K)}Lg$ is a direct sum of  non-isomorphic simple $L$-bimodules. Therefore, 
$$
A=\bigoplus_{g\in H}Lg
$$ 
is a direct sum of  non-isomorphic simple $L$-bimodules for some subset $H$ of $G(L/K)$ that contains the identity element $e$ of the group $G(L/K)$. Since $A$ is an algebra and $LgLh=Lgh$ for all elements $g,h\in G(L/K)$, the set $H$ is a submonoid of the {\em finite} group $G(L/K)$. Therefore, the set $H$ is a subgroup of $G(L/K)$, and statement 1 follows.

2. Suppose that  $A\in  \CA (L\rtimes G(L/K), L, G(L/K))$. By statement 1, $A=\bigoplus_{g\in N}Lg
$ for some subgroup $N$ of the group $G(L/K)$. Since 
$gLng^{-1}=Lgng^{-1}$ for all elements $n\in N$ and $g\in G(L/K)$, the  algebra $A$ belongs to the set $\CA (L\rtimes G(L/K), L, G(L/K))$ iff $gng^{-1}\in N$ for all elements $n\in N$ and $g\in G(L/K)$ iff $N$ is a normal subgroup of $G(L/K)$. 
\end{proof}

 \begin{proposition}\label{A17Apr25}%\marginpar{A17Apr25}
Let $L/K$ be a finite field extension of   characteristic $p$. Then:

\begin{enumerate}

\item  $L\rtimes G(L/K)\cap \CD (L/K)=E(L/L^{G(L/K)}L_{dif})=L$. 

\item $L^{G(L/K)} L_{dif}=L$.

\item If, in addition, the field extension $L/K$ is normal then:

\begin{enumerate}

\item $L^{G(L/K)}=L^{pi}$.

\item $L\rtimes G(L/K)\cap \CD (L/K)=E(L^{pi}L_{dif})=L$.

\item $L=L^{pi}L_{dif}=L^{pi}\t L_{dif}=L^{pi}\t L^{gal}$.

\end{enumerate}
\end{enumerate}
\end{proposition}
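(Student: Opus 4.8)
The plan is to prove (2) first, since (1) reduces to it directly, and then to deduce (3) from (1) and (2) once the fixed field $L^{G(L/K)}$ has been identified under the normality hypothesis. \emph{Part (2).} The key observation is that the composite $F:=L^{G(L/K)}L_{dif}$ is a subfield over which $L$ is \emph{simultaneously} separable and purely inseparable, hence equal to $L$. Indeed, by Theorem \ref{BC24Mar25}.(1) the extension $L/L^{G(L/K)}$ is Galois, in particular separable, so the intermediate extension $L/F$ (with $L^{G(L/K)}\subseteq F\subseteq L$) is separable. On the other hand $L_{dif}=L^{sep}$ by Theorem \ref{23May25}, so $L/L_{dif}$ is purely inseparable by the very definition of $L^{sep}$, and therefore $L/F$ (with $L_{dif}\subseteq F\subseteq L$) is purely inseparable. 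An extension that is at once separable and purely inseparable is trivial, so $L=F=L^{G(L/K)}L_{dif}$.

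\emph{Part (1).} The plan is to rewrite both algebras as centralizers. By Theorem \ref{BC24Mar25}.(2), $L\rtimes G(L/K)=E(L/L^{G(L/K)})$, and by Theorem \ref{AC24Mar25}.(1), $\CD (L/K)=E(L/L_{dif})$. Intersecting them and applying the composite-field formula of Lemma \ref{b17Apr25}.(1) gives
$$(L\rtimes G(L/K))\cap \CD (L/K)=E(L/L^{G(L/K)})\cap E(L/L_{dif})=E(L/L^{G(L/K)}L_{dif}).$$
Part (2) now identifies the base field as $L$, and $E(L/L)=L$ (Lemma \ref{a17Apr25}), which finishes (1).

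\emph{Part (3).} Assume $L/K$ normal. For (a), the inclusion $L^{pi}\subseteq L^{G(L/K)}$ holds in characteristic $p$ with no further hypothesis: if $a\in L^{pi}$ then $a^{p^e}\in K$ for some $e$, so for each $g\in G(L/K)$ one computes $(g(a)-a)^{p^e}=g(a^{p^e})-a^{p^e}=0$, forcing $g(a)=a$. For the reverse inclusion I would invoke Corollary \ref{a25May25}: normality yields $L=L^{pi}\t L^{gal}$ together with the restriction isomorphism $G(L/K)\cong G(L^{gal}/K)$. Under this identification each $g\in G(L/K)$ fixes $L^{pi}$ pointwise (by the Frobenius argument just used) and acts on $L^{gal}$ as $\res(g)$, so $g$ is the operator $\id\t \res(g)$ on $L^{pi}\t L^{gal}$. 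Since the group acts only through the second tensor factor,
$$L^{G(L/K)}=L^{pi}\t \big(L^{gal}\big)^{G(L^{gal}/K)}=L^{pi}\t K=L^{pi},$$
using that $L^{gal}/K$ is Galois. This gives (a). Then (b) is immediate: substituting $L^{G(L/K)}=L^{pi}$ into (1) yields $(L\rtimes G(L/K))\cap \CD (L/K)=E(L/L^{pi}L_{dif})=L$. Finally (c) combines (2) with (a) to get $L=L^{pi}L_{dif}$; since $L_{dif}=L^{sep}=L^{gal}$ (Theorem \ref{23May25} and Corollary \ref{a25May25}.(2)), the composite $L^{pi}L_{dif}=L^{pi}L^{gal}$ is the internal tensor product $L^{pi}\t L^{gal}$ by Corollary \ref{a25May25}.(1), giving $L=L^{pi}L_{dif}=L^{pi}\t L_{dif}=L^{pi}\t L^{gal}$.

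The only genuinely substantive step is the crux of (2): recognizing that the composite field forces $L$ to be both separable and purely inseparable over it, and hence equal to it. Everything else is bookkeeping with the centralizer dictionary $L\rtimes G(L/K)=E(L/L^{G(L/K)})$, $\CD (L/K)=E(L/L_{dif})$ and the normal decomposition $L=L^{pi}\t L^{gal}$. I do not anticipate a serious obstacle, though in (3a) one must take care that the group really acts trivially on the $L^{pi}$-factor, so that the invariants of the tensor product reduce to invariants in the $L^{gal}$-factor.
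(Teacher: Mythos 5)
Your proposal is correct, and it routes the argument genuinely differently from the paper in parts (1)--(2). The paper proves part (1) first and derives part (2) from it: it shows directly that $R:=(L\rtimes G(L/K))\cap \CD (L/K)=L$ by a commutator computation (for $r=\sum_{g}l_gg$ one has $\ad_l(r)=\sum_g(l-g(l))l_gg$, and the $L$-linear independence of distinct automorphisms forces the components at $g\neq e$ to die under iterated $\ad_l$'s only if they are zero), identifies $R=E(L/L^{G(L/K)})\cap E(L/L_{dif})=E\big(L/L^{G(L/K)}L_{dif}\big)$ exactly as you do via Theorem \ref{BC24Mar25}.(2), Theorem \ref{AC24Mar25}.(1) and Lemma \ref{b17Apr25}.(1), and then deduces (2) from the Double Centralizer Theorem: since $L=C_{E(L/K)}\big(L^{G(L/K)}L_{dif}\big)$ and $L\in\Max_s(E(L/K))$, one gets $L^{G(L/K)}L_{dif}=C_{E(L/K)}C_{E(L/K)}\big(L^{G(L/K)}L_{dif}\big)=C_{E(L/K)}(L)=L$. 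You instead prove (2) first by pure field theory --- $L/L^{G(L/K)}$ is Galois hence separable (Theorem \ref{BC24Mar25}.(1)), $L/L_{dif}=L/L^{sep}$ is purely inseparable (Theorem \ref{23May25}), and an extension that is simultaneously separable and purely inseparable is trivial --- after which (1) falls out of the centralizer dictionary with no direct computation and no appeal to the DCT. Your route is shorter and makes the field-theoretic content transparent, at the cost of leaning on the nontrivial identification $L_{dif}=L^{sep}$; this is legitimate, since Theorem \ref{23May25} is proved in an earlier section with no dependence on this proposition, so there is no circularity. What the paper's route buys is that it stays inside the simple-algebra/centralizer machinery the paper is showcasing and obtains $L^{G(L/K)}L_{dif}=L$ as a new output of that machinery rather than as an input. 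In part (3) the two arguments essentially coincide (both rest on Corollary \ref{a25May25}); you merely make explicit what the paper leaves implicit, namely the Frobenius argument for $L^{pi}\subseteq L^{G(L/K)}$ and the fact that $G(L/K)$ acts only through the factor $L^{gal}$ of $L=L^{pi}\t L^{gal}$, so that $L^{G(L/K)}=L^{pi}\t\big(L^{gal}\big)^{G(L^{gal}/K)}=L^{pi}\t K=L^{pi}$.
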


\begin{proof} 1. (i) $R:= L\rtimes G(L/K)\cap \CD (L/K)=L$: Clearly, $R\supseteq L$. Let us show that  $R\subseteq L$. Let $r\in L\rtimes G$. Then $r=\sum_{g\in G}l_gg$ for some elements $l_g\in L$. Then, for all elements $l\in L$,
$$ \ad_l(r)=\sum_{g\in G}(l-g(l))l_gg.$$
Therefore, $r\in \CD =\CD (L/K)$, i.e. $r\in R$,  iff
 $r\in Le=L\id_L=L$. 
 
 (ii) $R=E(L/L^G L_{dif})$: By Theorem \ref{BC24Mar25}.(2) and Theorem \ref{AC24Mar25}.(1), $L\rtimes G=E(L/L^G)$ and $\CD (L/K)=E(L/L_{dif})$. By Lemma \ref{b17Apr25}.(1), 
 $$R=E(L/L^G)\cap E(L/L_{dif})=E(L/L^GL_{dif}).$$

2. By statement 1, $L=E(L/L^GL_{dif})=C_{E (L/K)}(L^GL_{dif})$. Since $L\in \Max_s(E(L/K))$,
$$ L=C_{E (L/K)}(L)=C_{E (L/K)}C_{E (L/K)}(L^GL_{dif})=L^GL_{dif},$$
by the Double Centralizer Theorem.

3. (a)  By the assumption, the finite field extension $L/K$ is normal. Then, by Corollary \ref{a25May25}.(1), $L=L^{pi}\t L^{gal}$, and so  $L^G=L^{pi}$, by Corollary \ref{a25May25}.(2). 

%(a) Since $L/K$ is a normal finite field extension, 
% $L^G=L^{pi}$, by Proposition \ref{A12Apr25}.(2). *** add details   ***

(b) The statements (b) is  statement 1  where the field $L^G$ is replaced by the field $L^{pi}$. 
 
(c) By the assumption, the finite field extension $L/K$ is normal. Then, by Corollary \ref{a25May25}, $L=L^{pi}\t L^{gal}$ and $L^{sep}=L^{gal}$. Now, 
 the statement (c) is  statement 2 where the field $L^G$ is replaced by the field $L^{pi}$. 
\end{proof}

{\bf The subalgebra $\CD (L/K)\rtimes G(L/K)$ of the endomorphism algebra $E(L/K)$.}  For a finite field extension $L/K$ of characteristic $p$, the map
%\marginpar{GCDmon}
\begin{equation}\label{GCDmon}
G(L/K)\ra \Aut_K(\CD (L/K)), \;\; g\mapsto \o_g: \d \mapsto g(\d):=g\d g^{-1}
\end{equation}
is a group {\em monomorphism} (since $L\subseteq \CD (L/K)$). Theorem \ref{AB17Apr25} shows that the subalgebra of $E(L/K)$ which is generated by  the automorphism group $G(L/K)$ and the algebra of differential operators $\CD (L/K)$ is the skew group algebra $\CD (L/K)\rtimes G(L/K)$.

 \begin{theorem}\label{AB17Apr25}%\marginpar{AB17Apr25}
Let $L/K$ be a finite field extension of   characteristic $p$. Then:

\begin{enumerate}

\item  The subalgebra of $E(L/K)$ which is generated by the automorphism group $G(L/K)$ and the algebra of differential operators $\CD (L/K)$ is the skew group algebra 
$$\CD (L/K)\rtimes G(L/K)=\bigoplus_{g\in G(L/K)}\CD (L/K)g$$
where the multiplication is given by the rule: For all elements $\d, \d'\in \CD  (L/K)$ and $g,g'\in G(L/K)$,
$$ \d g\cdot \d'g'=\d g\d'g^{-1} \cdot gg'$$ 
 where $g\d'g^{-1}\in \CD (L/K)$, by (\ref{GCDmon}).

\item $\CD (L/K)\rtimes G(L/K)=E\Big(L/L^{G(L/K)}_{dif}\Big)\in \fC \Big(L^{G(L/K)}_{dif}\Big)$ where $L^{G(L/K)}_{dif}:= L^{G(L/K)}\cap L_{dif}$.

\item $C_{E(L/K)}(\CD (L/K)\rtimes G(L/K))=L^{G(L/K)}_{dif}$ and $C_{E(L/K)}\Big(L^{G(L/K)}_{dif}\Big)=\CD (L/K)\rtimes G(L/K)$.

\end{enumerate}
\end{theorem}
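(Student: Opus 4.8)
The plan is to let $A$ denote the $K$-subalgebra of $E(L/K)$ generated by $\CD (L/K)$ and $G(L/K)$, and to identify $A$ in two essentially independent ways. By the monomorphism (\ref{GCDmon}), conjugation by any $g\in G(L/K)$ preserves $\CD (L/K)$, so $g\d=(g\d g^{-1})g\in \CD (L/K)g$ for every $\d\in \CD (L/K)$; hence $A=\sum_{g\in G(L/K)}\CD (L/K)g$, this sum is closed under multiplication via $\d g\cdot \d'g'=\d(g\d'g^{-1})gg'$, and the displayed multiplication rule is exactly the skew group structure attached to (\ref{GCDmon}). The substantive content of statement (1) is therefore the directness of this sum, i.e.\ that $\sum_g\CD (L/K)g$ is free as a left $\CD (L/K)$-module on $G(L/K)$.

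For directness I would run a support-minimization argument in the spirit of the proof of Theorem \ref{BC24Mar25}.(2), but commuting with the central elements of $\CD (L/K)$ rather than with $L$. Suppose $\sum_{i=1}^m\d_ig_i=0$ with $\d_i\in \CD (L/K)\setminus\{0\}$ and $g_1,\dots,g_m\in G(L/K)$ distinct, $m$ minimal; right-multiplying by $g_1^{-1}$ we may assume $g_1=e$. The key point is that for $c\in L_{dif}$ one has $g_i(c)\in L_{dif}$ (automorphisms preserve $L^{sep}=L_{dif}$, Theorem \ref{23May25}) and that $c,g_i(c)$ commute with every element of $\CD (L/K)$ since $L_{dif}=C_L(\CD (L/K))$; a short commutator computation then gives $[\d_ig_i,c]=(g_i(c)-c)\d_ig_i$, whence $0=[\sum_i\d_ig_i,c]=\sum_{i=2}^m(g_i(c)-c)\d_ig_i$. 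The remaining input is that a nontrivial $g\in G(L/K)$ cannot fix $L_{dif}=L^{sep}$ pointwise: if $g|_{L^{sep}}=\id$, then for any $x\in L$, choosing $k$ with $x^{p^k}\in L^{sep}$ yields $(g(x)-x)^{p^k}=g(x^{p^k})-x^{p^k}=0$, so $g=\id$. Choosing $c\in L_{dif}$ with $g_2(c)\neq c$ makes $(g_2(c)-c)\d_2\neq 0$ (as $L_{dif}$ is a field), producing a nonzero relation of strictly smaller support and contradicting minimality. This proves (1), and I expect this directness step --- in particular the reduction to the faithful action of $G(L/K)$ on $L_{dif}$ --- to be the main obstacle.

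For statements (2) and (3) I would invoke the Double Centralizer machinery of Section \ref{AELKL}. Since $L\subseteq \CD (L/K)\subseteq A$ and $L\in \Max_s(E(L/K))$ (Theorem \ref{B24Mar25}.(1)), Theorem \ref{B24Mar25}.(2),(3) shows that $A$ is a simple subalgebra, that $A=E(L/M)$ for $M:=C_{E(L/K)}(A)\in \CF (L/K)$, and that $A=C_{E(L/K)}(M)$. Because $A$ is generated by $\CD (L/K)\cup G(L/K)$, centralizing $A$ is the same as centralizing this generating set, giving $M=C_{E(L/K)}(\CD (L/K))\cap C_{E(L/K)}(G(L/K))$. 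Here $C_{E(L/K)}(\CD (L/K))=L_{dif}\subseteq L$ by Theorem \ref{AC24Mar25}.(6), and for $c\in L$ one has $cg=gc$ for all $g$ iff $g^{-1}(c)=c$ for all $g$, i.e.\ iff $c\in L^{G(L/K)}$; intersecting inside $L$ yields $M=L_{dif}\cap L^{G(L/K)}=L^{G(L/K)}_{dif}$. Hence $A=E(L/L^{G(L/K)}_{dif})\in \fC(L^{G(L/K)}_{dif})$, which is (2), and (3) is then immediate: $C_{E(L/K)}(A)=L^{G(L/K)}_{dif}$ by construction, while $C_{E(L/K)}(L^{G(L/K)}_{dif})=E(L/L^{G(L/K)}_{dif})=A$ by Lemma \ref{a17Apr25}. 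I would present (1)--(3) so that (2) does not covertly invoke the directness of (1): the naive dimension count for directness would rely on $L=L^{G(L/K)}L_{dif}$, which is only established later, so the self-contained commutator argument above is the right route for (1).
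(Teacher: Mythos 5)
Your proof is correct, and for part (1) it takes a genuinely different route from the paper's. The paper proves directness of $\sum_{g\in G(L/K)}\CD (L/K)g$ by iterating inner derivations $\ad_{l_1}\cdots\ad_{l_s}$ ($l_i\in L$) to drive the coefficients of a putative relation down the order filtration of $\CD (L/K)$ until they all lie in $L$ (not all zero), and then invoking Dedekind's $L$-linear independence of distinct automorphisms; the same descent is reused to prove simplicity of $\CD (L/K)\rtimes G(L/K)$ by reducing a nonzero element into $L\rtimes G(L/K)$ and citing Theorem \ref{BC24Mar25}.(2). You instead commute with elements $c\in L_{dif}$, using $g(L_{dif})=L_{dif}$ and $[\d g,c]=(g(c)-c)\d g$ to run a support-minimization, and you supply the one genuinely new ingredient this needs: faithfulness of $G(L/K)$ on $L_{dif}=L^{sep}$, via the Frobenius argument $(g(x)-x)^{p^k}=g(x^{p^k})-x^{p^k}=0$. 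Your route buys a cleaner induction-free argument (the paper's filtration descent is left somewhat implicit, and its displayed formula for $\ad_l(\d_g g)$ even has a slip — the coefficient is $\ad_l(\d_g)+\d_g(l-g(l))$, not $\ad_l(\d_g)+l-g(l)$), at the cost of importing Theorem \ref{23May25}; the paper's route is more self-contained within Section \ref{3-ALGS}. For parts (2)--(3) you and the paper coincide: centralize the generating set to get $C_{E(L/K)}(A)=L_{dif}\cap L^{G(L/K)}=L^{G(L/K)}_{dif}$ and apply the Double Centralizer Theorem — except that you rightly observe simplicity of $A$ is automatic from $L\subseteq A$ and Theorem \ref{B24Mar25}.(2), whereas the paper re-verifies it by hand. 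One small correction to your closing remark: the equality $L=L^{G(L/K)}L_{dif}$ is not ``established later'' but is Proposition \ref{A17Apr25}.(2), proved immediately before the theorem, so a dimension-count route to directness was in fact available; your self-contained argument remains preferable, though, since it keeps (1) independent of that machinery.
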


\begin{proof} 1. Let $A$ be a subalgebra of $E(L/K)$ which is generated by the algebra $\CD$ and the group $G$. Since 
$$
g\d=g\d g^{-1}\cdot g\;\; {\rm for \; all}\;\; g\in G\;\; {\rm and}\;\; \d \in \CD
$$
and $g\d g^{-1}\in \CD $ (by (\ref{GCDmon})), $A=\sum_{g\in G}\CD g$. It remains to show that the sum is a direct sum. Suppose that $r=\sum _{g\in G}\d_gg$ is a non-trivial relation in the algebra $A$ where not all elements $\d_g\in \CD$ are equal to zero. We seek a contradiction. For all elements $l\in L$,
$$
0=\ad_l(0)=\ad_l(r)=\sum_{g\in G}\Big(\ad_l(\d_g)+l-g(l)\Big)g
$$
where $l-g(l)\in L$ for all elements $g\in G$ and $l\in L$. So we can choose elements $l_1, \ldots , l_s\in L$ such that 
$$
0=\ad_{l_1}\cdots \d_{l_s}(r)=\sum_{g\in G} l_gg
$$
for some  elements $l_g\in L$ not all of them are  equal to zero. This means that  distinct automorphisms of the field extension $L/K$ are (left) $L$-linearly dependent, a contradiction. 

For all elements $\d, \d'\in \CD$ and $g,g'\in G$,
$$
\d g\d' g'=\d g\d'g^{-1}\cdot gg'\;\; {\rm where}\;\;  g\d'g^{-1}\in \CD.
$$
2 and 3. (i) $R:=\CD\rtimes G\in \CA (E(L/K),L, sim)$: Let $r=\sum_{g\in G}\d_gg$ be a nonzero element of the algebra $R$. Let us show that the ideal $(r)$ of the algebra $R$ which is generated by the element $r$ is equal to $R$. 
 This would imply that the algebra $R$ is a simple algebras. Using the same argument as in the proof of statement 1, we can choose elements $l_1, \ldots , l_s\in L$ such that 
$$
\ad_{l_1}\cdots \d_{l_s}(r)=\sum_{g\in G} l_gg\in R
$$
for some  elements $l_g\in L$ not all of them are  equal to zero. By Theorem \ref{BC24Mar25}.(2), the algebra $R=L\rtimes G(L/L^G)$ is a simple algebra. Therefore, $(r)=R$. Clearly, $L\subseteq R$, and so $R\in \CA (E(L/K),L, sim)$.

(ii) $R=E\Big(L/L^{G(L/K)}_{dif}\Big)\in \fC \Big(L^{G(L/K)}_{dif}\Big)$, $C_{E(L/K)}(R)=L^{G(L/K)}_{dif}$ {\em and} $C_{E(L/K)}\Big(L^{G(L/K)}_{dif}\Big)=R$: By the statement (i), $R\in  \CA (E(L/K),L, sim)$. By Theorem  \ref{AC24Mar25}.(6), $\CD = E(L/L_{dif})$ and $L_{dif}=C_{E(L/K)}(\CD)$. By Theorem \ref{BC24Mar25}.(2), $R=E(L/L^G)$ and $L^G=
C_{E(L/K)}(R)$. Then
$$
C_{E(L/K)}(R)=C_{E(L/K)}(\CD\rtimes G)=C_{E(L/K)}(\CD)\cap C_{E(L/K)}(G)=L_{dif}\cap L^G=L^G_{dif}.
$$
By the Double Centralizer Theorem, $R=C_{E(L/K)}\Big( L^G_{dif}\Big)=E(L/L^G_{dif})\in \fC \Big( L^G_{dif}\Big)$.
\end{proof}

In prime characteristic, Corollary \ref{VB-B17Apr25} shows that all normal finite field extensions are B-extensions.

 \begin{corollary}\label{VB-B17Apr25}%\marginpar{VB-B17Apr25}
Let $L/K$ be a normal finite field extension of   characteristic $p$. Then $\CD (L/K)\rtimes G(L/K)=\bigoplus_{g\in G(L/K)}\CD (L/K)g=E(L/K)\in \fCK$, i.e. $L/K$ is a B-extension. 
\end{corollary}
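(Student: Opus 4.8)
The plan is to invoke Theorem \ref{AB17Apr25}.(2), which already identifies the skew group algebra as $\CD (L/K)\rtimes G(L/K)=E\bigl(L/L^{G(L/K)}_{dif}\bigr)\in\fC\bigl(L^{G(L/K)}_{dif}\bigr)$, where $L^{G(L/K)}_{dif}=L^{G(L/K)}\cap L_{dif}$. Since $E(L/L')\subseteq E(L/K)$ with equality precisely when $L'=K$ (indeed $C_{E(L/K)}(E(L/L'))=L'$, so the two coincide iff $L'=K$), the whole statement collapses to the single claim
$$L^{G(L/K)}_{dif}=K.$$
Thus the real content is to show that normality forces the field of simultaneous $G(L/K)$-invariants and $\CD(L/K)_+$-constants down to the base field $K$.

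First I would rewrite $L^{G(L/K)}_{dif}=L^{G(L/K)}\cap L_{dif}$ using the two structural identifications available in the normal case. For the invariant factor, normality gives $L^{G(L/K)}=L^{pi}$ (Proposition \ref{A17Apr25}.(3)(a), itself deduced from the decomposition $L=L^{pi}\t L^{gal}$ of Corollary \ref{a25May25}). For the constant factor, Theorem \ref{23May25} gives $L_{dif}=L^{sep}$, and Corollary \ref{a25May25}.(2) gives $L^{sep}=L^{gal}$ for a normal extension. Substituting both identifications yields $L^{G(L/K)}_{dif}=L^{pi}\cap L^{gal}$.

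The final step is to observe that $L^{pi}\cap L^{gal}=K$. This is immediate from Corollary \ref{a25May25}.(1): for a normal extension $L=L^{pi}\t L^{gal}$, so $L^{pi}$ and $L^{gal}$ are linearly disjoint over $K$, which forces their intersection to be $K$ (a common element $m\notin K$ would produce the nonzero kernel element $m\t 1-1\t m$ of the multiplication map $L^{pi}\t L^{gal}\to L$, contradicting the tensor decomposition). Chaining the three identifications gives $L^{G(L/K)}_{dif}=K$, whence $\CD (L/K)\rtimes G(L/K)=E(L/K)\in\fCK$, which is exactly the assertion that $L/K$ is a B-extension.

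Because nearly all the labor has been packaged into Theorem \ref{AB17Apr25}.(2) together with the normal-extension tensor decomposition, I do not anticipate a genuine obstacle; the only point demanding care is the bookkeeping of the equality chain $L^{G(L/K)}=L^{pi}$ and $L_{dif}=L^{sep}=L^{gal}$, checking that each cited result indeed applies in characteristic $p$ for the normal extension at hand. Should one prefer a route that does not lean on Theorem \ref{AB17Apr25}.(2), the alternative is to start from $E(L/K)=E(L^{pi}/K)\t E(L^{gal}/K)$ (via $L=L^{pi}\t L^{gal}$), apply Theorem \ref{C24Mar25}.(1) to get $\CD (L^{pi}/K)=E(L^{pi}/K)$ on the purely inseparable factor and Proposition \ref{A8May25} to get $L^{gal}\rtimes G(L^{gal}/K)=E(L^{gal}/K)$ on the Galois factor, then tensor the two; but the invariant–constant collapse above is the shorter argument.
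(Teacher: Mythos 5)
Your proposal is correct and follows essentially the same route as the paper's own proof: both reduce the claim via Theorem \ref{AB17Apr25}.(2) to the single equality $L^{G(L/K)}_{dif}=K$, and both establish it by the chain $L^{G(L/K)}=L^{pi}$ (from normality and Corollary \ref{a25May25}), $L_{dif}=L^{sep}=L^{gal}$ (Theorem \ref{23May25} plus Corollary \ref{a25May25}.(2)), and $L^{pi}\cap L^{sep}=K$. Your only variation is deriving the last intersection from the linear disjointness in $L=L^{pi}\t L^{gal}$ rather than from the purely-inseparable-meets-separable observation, which is an equally valid justification.
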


\begin{proof} 1.  By the assumption, the finite field extension $L/K$ is  normal. Then $L=L^{pi}\t L^{sep}$ and $L^{sep}=L^{gal}$ (Corollary \ref{a25May25}). Therefore, $L^{G(L/K)}=L^{pi}$. By Theorem \ref{23May25}, $L_{dif}=L^{sep}$. Therefore, 
$$ L^{G(L/K)}_{dif}=L^{G(L/K)}\cap L_{dif}=L^{pi}\cap L^{sep}=K.$$
Now,
the corollary follows from Theorem \ref{AB17Apr25}.(2), $\CD (L/K)\rtimes G(L/K)=\bigoplus_{g\in G(L/K)}\CD (L/K)g=E(L/L^{G(L/K)}_{dif})=E(L/K)$.
\end{proof}

Corollary \ref{c17Apr25} is a criterion for the finite field extension $L/K$ being a B-extension, i.e. for the  inclusion $\CD (L/K)\rtimes G(L/K)\subseteq E(L/K)$ being the equality.

  \begin{corollary}\label{c17Apr25}%\marginpar{c17Apr25}
Let $L/K$ be a  finite field extension of   characteristic $p$. Then $L/K$  is a B-extension 
iff $L^{G(L/K)}_{dif}=K$.
\end{corollary}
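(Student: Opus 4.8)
The plan is to read the criterion off directly from the structural description of the skew group algebra $\CD (L/K)\rtimes G(L/K)$ established in Theorem \ref{AB17Apr25}. By the very definition of a B-extension, $L/K$ is a B-extension if and only if $E(L/K)=\CD (L/K)\rtimes G(L/K)$, so everything reduces to determining when this equality of subalgebras of $E(L/K)$ holds.

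First I would apply Theorem \ref{AB17Apr25}.(2), which identifies the generated subalgebra as
$$\CD (L/K)\rtimes G(L/K)=E\Big(L/L^{G(L/K)}_{dif}\Big),$$
where $L^{G(L/K)}_{dif}=L^{G(L/K)}\cap L_{dif}$ is a subfield of $L$ containing $K$. Feeding this into the definition turns the problem into the statement that $L/K$ is a B-extension if and only if $E(L/K)=E\Big(L/L^{G(L/K)}_{dif}\Big)$.

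The remaining point is the elementary fact that, for an intermediate field $K\subseteq F\subseteq L$, one has $E(L/F)=E(L/K)$ exactly when $F=K$. I would verify this by the dimension count of Lemma \ref{a17Apr25}: since $\dim_K E(L/F)=[L:K][L:F]$ while $\dim_K E(L/K)=[L:K]^2$, the two algebras coincide iff $[L:F]=[L:K]$, i.e. iff $F=K$. Taking $F=L^{G(L/K)}_{dif}$ then yields that $L/K$ is a B-extension iff $L^{G(L/K)}_{dif}=K$, as claimed.

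There is no real obstacle here: the entire weight of the argument is carried by Theorem \ref{AB17Apr25}.(2), and the only thing left to check is the trivial equivalence $E(L/F)=E(L/K)\Leftrightarrow F=K$. If one prefers to avoid the dimension count, the same equivalence follows from the Double Centralizer Theorem, since $C_{E(L/K)}(E(L/F))=F$ forces equal algebras to have equal centralizers.
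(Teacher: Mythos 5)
Your proposal is correct and follows exactly the paper's route: the paper's proof likewise cites Theorem \ref{AB17Apr25}.(2) to identify $\CD (L/K)\rtimes G(L/K)=E\bigl(L/L^{G(L/K)}_{dif}\bigr)\subseteq E(L/K)$ and reads off the criterion. Your only addition is to make explicit, via the dimension count of Lemma \ref{a17Apr25} (or the Double Centralizer Theorem), the trivial equivalence $E(L/F)=E(L/K)\Leftrightarrow F=K$, which the paper leaves implicit.
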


\begin{proof}  By Theorem \ref{AB17Apr25}.(2), 
$\CD (L/K)\rtimes G(L/K)=E\Big(L/L^{G(L/K)}_{dif}\Big)\subseteq E(L/K)$ and statement 1 follows.
 \end{proof}

 The group $G(L/K)=G(L/L^{G(L/K)})$ is a subgroup of the group of units $\GL (L/K):=E(L/K)^\times$ of the algebra $E(L/K)$. It is also a subgroup of the automorphism group $\Aut_K(\CD (L/K))$ of the algebra of differential operators $\CD (L/K)$. Therefore, the group $G(L/K)$ respects the centralizer $C_{E(L/K)}(\CD (L/K)))=L_{dif}$. That is

%\marginpar{gLdif}
\begin{equation}\label{gLdif}
g(L_{dif})=L_{dif}\;\; {\rm for\; all}\;\; g\in G(L/K).
\end{equation}
As a result, there is a group homomorphism
%\marginpar{GLdif}
\begin{equation}\label{GLdif}
\res_{L_{dif}}: G(L/K)\ra G\Big( L_{dif}/L^{G(L/K)}_{dif}\Big), \;\; g\mapsto \o_g:l\mapsto       glg^{-1}=g(l).
\end{equation}
Proposition \ref{AC17Apr25} shows that the homomorphism $\res_{L_{dif}}$ is an {\em isomorphism},
 the field extension $L_{dif}/L^{G(L/K)}_{dif}$ is a Galois field extension with Galois group isomorphic to $G(L/K)=G(L/L^{G(L/K)})$ and 
 $$
 L=L^{G(L/K)}\t_{L^{G(L/K)}_{dif}}L_{dif}.
 $$
  Proposition \ref{AC17Apr25}.(5) presents natural classes of field extensions with identity automorphism groups. Namely, $G(L/L_{dif})=\{ e\}$ and $G\Big(L^{G(L/K)}/L^{G(L/K)}_{dif} \Big)=\{ e\}$.

 \begin{proposition}\label{AC17Apr25}%\marginpar{AC17Apr25}
Let $L/K$ be a  finite field extension of   characteristic $p$. Then:

\begin{enumerate}

\item  $(L_{dif})^{G(L/K)}=L^{G(L/K)}\cap L_{dif}=L^{G(L/K)}_{dif}$.

\item The homomorphism $\res_{L_{dif}}: G(L/K) \ra G\Big( L_{dif}/L^{G(L/K)}_{dif}\Big)$  is an isomorphism.

\item The finite field extension $L_{dif}/L^{G(L/K)}_{dif}$ is a Galois field extension with Galois group 
  $G\Big(L_{dif}/L^{G(L/K)}_{dif}\Big)$ isomorphic to the group $G(L/K)=G(L/L^{G(L/K)})$ via the isomorphism $\res_{L_{dif}}$. In particular, each automorphism of the field extension $L_{dif}/L^{G(L/K)}_{dif}$ can be uniquely lifted to an automorphism  of the field extension $L/K$.

\item $[L_{dif}:L^{G(L/K)}_{dif}]=[L:L^{G(L/K)}]$.

\item  $[L^{G(L/K)}:L^{G(L/K)}_{dif}]=[L:L_{dif}]$, $G(L/L_{dif})=\{ e\}$ and $G\Big(L^{G(L/K)}/L^{G(L/K)}_{dif} \Big)=\{ e\}$.

\item $L=L^{G(L/K)}\t_{L^{G(L/K)}_{dif}}L_{dif}$ and $[L:L^{G(L/K)}_{dif}]=[L^{G(L/K)}:L^{G(L/K)}_{dif}][L_{dif}:L^{G(L/K)}_{dif}]$.

\end{enumerate}
\end{proposition}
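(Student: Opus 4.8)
The plan is to write $G:=G(L/K)$ and $F:=L^{G(L/K)}$ throughout, and to use Theorem \ref{23May25} in the form $L_{dif}=L^{sep}$, so that $L^{G(L/K)}_{dif}=F\cap L^{sep}$. Statement 1 is then immediate: since $L_{dif}\subseteq L$, an element $l\in L_{dif}$ is fixed by every $g\in G$ iff $l\in L^{G(L/K)}=F$, giving $(L_{dif})^{G}=F\cap L_{dif}=L^{G(L/K)}_{dif}$. I would next analyse the kernel of the restriction homomorphism $\res_{L_{dif}}$ of (\ref{GLdif}), whose well-definedness is supplied by (\ref{gLdif}). If $g\in\ker(\res_{L_{dif}})$ then $g$ fixes $L_{dif}=L^{sep}$ pointwise, i.e. $g\in G(L/L^{sep})$; since $L/L^{sep}$ is purely inseparable it admits no nontrivial $K$-automorphism, so $g=e$. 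Hence $\res_{L_{dif}}$ is injective, and in particular $G(L/L_{dif})=\{e\}$, which is part of statement 5.

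The key step is then to invoke Artin's theorem for the finite automorphism group $\bar G:=\res_{L_{dif}}(G)\subseteq \Aut(L_{dif})$. Because $\res_{L_{dif}}(g)$ and $g$ agree on $L_{dif}$, statement 1 identifies its fixed field as $(L_{dif})^{\bar G}=(L_{dif})^{G}=L^{G(L/K)}_{dif}$. Artin's theorem therefore yields in one stroke that $L_{dif}/L^{G(L/K)}_{dif}$ is a finite Galois extension with $G(L_{dif}/L^{G(L/K)}_{dif})=\bar G$ and $[L_{dif}:L^{G(L/K)}_{dif}]=|\bar G|$. Since $\res_{L_{dif}}$ is injective, $|\bar G|=|G|=[L:L^{G(L/K)}]$, the last equality because $L/L^{G(L/K)}$ is Galois with Galois group $G$ by Theorem \ref{BC24Mar25}.(1). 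This simultaneously establishes statements 2, 3 and 4, the uniqueness of the lift being exactly the injectivity of $\res_{L_{dif}}$.

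For the remaining two degree equalities I would use multiplicativity of degrees together with statement 4. From $[L:L^{G(L/K)}]\,[L^{G(L/K)}:L^{G(L/K)}_{dif}]=[L:L^{G(L/K)}_{dif}]=[L:L_{dif}]\,[L_{dif}:L^{G(L/K)}_{dif}]$ and $[L_{dif}:L^{G(L/K)}_{dif}]=[L:L^{G(L/K)}]$ one reads off $[L^{G(L/K)}:L^{G(L/K)}_{dif}]=[L:L_{dif}]$. For $G(L^{G(L/K)}/L^{G(L/K)}_{dif})=\{e\}$ the essential observation is that $L^{G(L/K)}_{dif}=F\cap L^{sep}$ is precisely the separable closure of $K$ inside $F=L^{G(L/K)}$; hence $F/L^{G(L/K)}_{dif}$ is purely inseparable and so, as above, has trivial automorphism group. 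This completes statement 5.

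Finally, statement 6 follows by linear disjointness. Since $F/L^{G(L/K)}_{dif}$ is purely inseparable and $L_{dif}/L^{G(L/K)}_{dif}$ is separable (as $L_{dif}=L^{sep}$ is separable over $K$), Proposition \ref{A14May25}.(3), applied over the ground field $L^{G(L/K)}_{dif}$, gives that the composite $F\cdot L_{dif}$ equals $F\t_{L^{G(L/K)}_{dif}}L_{dif}$. It then remains to check $F\cdot L_{dif}=L$, which is a dimension count: $\dim_{L^{G(L/K)}_{dif}}(F\t_{L^{G(L/K)}_{dif}}L_{dif})=[F:L^{G(L/K)}_{dif}][L_{dif}:L^{G(L/K)}_{dif}]=[L:L_{dif}][L:L^{G(L/K)}]=[L:L^{G(L/K)}_{dif}]$ by statements 4 and 5, and since $F\cdot L_{dif}\subseteq L$ with matching $L^{G(L/K)}_{dif}$-dimension, equality holds; the displayed dimension formula falls out of the same identity. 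The only genuinely non-formal ingredients are Artin's theorem and the identification $L^{G(L/K)}_{dif}=F^{sep}$ that makes $F/L^{G(L/K)}_{dif}$ purely inseparable; everything else is degree bookkeeping, so I expect the main (and only minor) obstacle to be organising the multiplicativity relations cleanly rather than any substantive difficulty.
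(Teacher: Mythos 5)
Your proof is correct, but it takes a genuinely different route from the paper's at the load-bearing points. For the injectivity of $\res_{L_{dif}}$ the paper uses the equality $L=L^{G(L/K)}L_{dif}$ of Proposition \ref{A17Apr25}.(2) --- a fact obtained via the Double Centralizer Theorem --- whereas you use Theorem \ref{23May25} ($L_{dif}=L^{sep}$) together with the classical fact that a purely inseparable extension admits no nontrivial automorphism fixing the base field pointwise; this also hands you $G(L/L_{dif})=\{e\}$ immediately. From there both arguments amount to Artin's theorem applied to $\res_{L_{dif}}(G(L/K))$ acting on $L_{dif}$ with fixed field $L^{G(L/K)}_{dif}$ (the paper phrases this as Galoisness of $L_{dif}/(L_{dif})^{G(L/K)}$ plus an order count), giving statements 2--4 in one stroke. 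Your treatment of statement 5 is tidier than the paper's: identifying $L^{G(L/K)}_{dif}=L^{G(L/K)}\cap L^{sep}$ as the separable closure of $K$ inside $L^{G(L/K)}$ makes $L^{G(L/K)}/L^{G(L/K)}_{dif}$ purely inseparable, hence with trivial automorphism group, whereas the paper deduces this last claim from statement 6 (whose proof only comes afterwards --- not circular, since statement 6 is derived from statement 4, but awkwardly ordered); similarly, for statement 6 you replace the paper's terse ``the second equality implies the first'' (which tacitly invokes $L=L^{G(L/K)}L_{dif}$ again) by linear disjointness via Proposition \ref{A14May25}.(3) over the ground field $L^{G(L/K)}_{dif}$ plus a dimension count, which is legitimate since you have already shown $L^{G(L/K)}/L^{G(L/K)}_{dif}$ purely inseparable and $L_{dif}/L^{G(L/K)}_{dif}$ separable. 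What the paper's route buys is consistency with its ring-theoretic theme, everything flowing through centralizers in $E(L/K)$; what yours buys is independence from Proposition \ref{A17Apr25} and the Double Centralizer Theorem, at the price of leaning on the classical separable/purely-inseparable dichotomy. One nitpick: in your kernel argument, an element of $\ker(\res_{L_{dif}})$ is an $L^{sep}$-automorphism of $L$, so you should say that $L/L^{sep}$ admits no nontrivial automorphism over $L^{sep}$, rather than ``no nontrivial $K$-automorphism''.
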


\begin{proof} 1. Recall that $g(L_{dif})=L_{dif}$ for all automorphisms $g\in G$. Since $L_{dif}\subseteq L$, we have that 
$$(L_{dif})^G=L^G\cap L_{dif}=L^G_{dif}.$$

2 and 3. Since $L=L^GL_{dif}$ (Proposition \ref{A17Apr25}.(2)), the homomorphism $\res_{L_{dif}}$ is a monomorphism. By statement 1, $(L_{dif})^G=L^G_{dif}$.
Therefore, the field extension $L_{dif}/(L_{dif})^G=L_{dif}/L^G_{dif}$ is a Galois field extension. Since 
$$|G\Big(L_{dif}/L^G_{dif} \Big) |=|G\Big( L/(L_{dif})^G\Big)|=
[L_{dif}: L^G_{dif}]=|G(L/K)|,
$$
the monomorphism $\res_{L_{dif}}$ is an isomorphism. Therefore, the Galois group $G(L_{dif}/L^G_{dif})$ is isomorphic to the Galois group $G(L/L^G)=G(L/K)$ via the isomorphism $\res_{L_{dif}}$. 

4. Statement 4 follows from statement 3.

5. Notice that 
$$
[L:L^G_{dif}]=[L:L^G][L^G:L^G_{dif}]=[L:L_{dif}][L_{dif}:L^G_{dif}].
$$
By statement 4, $[L_{dif}:L^G_{dif}]=[L:L^G]$, and so $[L:L_{dif}]=[L^G:L^G_{dif}]$. Since the homomorphism $\res_{L_{dif}}$ is an isomorphism and $|G(L/L^G_{dif})|=|G(L/K)|<\infty$, we must have that $G(L/L_{dif})=\{ e\}$. Since the homomorphism $\res_{L_{dif}}$ is an isomorphism and $L=L^G\t_{L^G_{dif}}L_{dif}$ (statement 6), we must have that $G(L^G/L^G_{dif})=\{ e\}$.

6. The second equality implies the first. The second equality follows from statement 4: 
$$
[L:L^G_{dif}]=[L:L^G][L^G:L^G_{dif}]\stackrel{{\rm st.}\, 4}{=}[L_{dif}:L^G_{dif}]  [L^G:L^G_{dif}].
$$
\end{proof}

 Let $L/K$ be a  finite field extension of   characteristic $p$. By Proposition \ref{AC17Apr25}, we have a diagram of field extensions:
%\marginpar{diaGLK}
%\begin{displaymath}\label{diaGLK}
%\xymatrix{     &  L=L^{G(L/K)}\t_{L^{G(L/K)}_{dif}} L_{dif} &  \\
%      L^{G(L/K)}\ar[ru]^{gal}&    & L_{dif}%\ar[lu] \\  
%       &  \ar[lu]L^{G(L/K)}_{dif}=\Big( L_{dif} \Big)^{G(L/K)}\ar[ru]^{gal}&\\          & K \ar[u] &\\ }
% \end{displaymath}

%\marginpar{diaGLK}
\begin{eqnarray*}
\begin{tikzcd}\label{diaGLK}
		&  L=L^{G(L/K)}\t_{L^{G(L/K)}_{dif}} L_{dif} &  \\
		L^{G(L/K)}\arrow[ru,"gal"]&    & L_{dif}\arrow[lu] \\  
		&  \arrow[lu]L^{G(L/K)}_{dif}=\Big( L_{dif} \Big)^{G(L/K)}\arrow[ru,"gal"]&\\          & K \arrow[u] &\\
	\end{tikzcd}
\end{eqnarray*}

By Proposition \ref{AC17Apr25}.(2,6), the restriction homomorphism $\res_{L_{dif}}: G(L/K)\ra G\Big(L_{dif}/L^{G(L/K)}_{dif} \Big)$ is an isomorphism. Therefore, the inverse map 
$$\res^{-1}_{L_{dif}}:  G\Big(L_{dif}/L^{G(L/K)}_{dif} \Big)\ra G(L/K)$$
 is given by the obvious extension of every automorphism of the    field extension $L_{dif}/L^{G(L/K)}_{dif} $ to an automorphism of the field extension $L/K$ where the action on the tensor multiple $L^{G(L/K)}$ in $L=L^{G(L/K)}\t_{L^{G(L/K)}_{dif}}L_{dif}$ is trivial.

 Proposition \ref{VB-C17Apr25} is a strengthening of  Proposition \ref{AC17Apr25} where the finite field extension $L/K$ is normal.

  \begin{proposition}\label{VB-C17Apr25}%\marginpar{VB-C17Apr25}
Let $L/K$ be a  normal finite field extension of   characteristic $p$. Then:

\begin{enumerate}

\item  $(L_{dif})^{G(L/K)}=(L_{dif})^{pi}=L^{G(L/K)}\cap L_{dif}=L^{pi}\cap L_{dif}=L^{G(L/K)}_{dif}=L^{pi}_{dif}=K$.

\item The homomorphism $\res_{L_{dif}}: G(L/K) \ra G(L_{dif}/K)=G(L^{sep}/K)$  is an isomorphism.

\item The finite field extension $L_{dif}/K=L^{sep}/K$ is a Galois field extension with Galois group 
  $G(L_{dif}/K)$ isomorphic to the group $G(L/K)=G(L/L^{G(L/K)})=G(L/L^{pi})$ via the isomorphism $\res_{L_{dif}}$. In particular, each automorphism of the field extension $L_{dif}/K$ can be uniquely lifted to an automorphism  of the field extension $L/K$.

\item $[L_{dif}:K]=[L:L^{G(L/K)}]=[L:L^{pi}]$.

\item  The field extensions $L^{pi}/K$ and $L/L_{dif}$ are purely inseparable field extensions,

$[L^{pi}:K]=[L^{G(L/K)}:K]=[L:L_{dif}]$, $G(L/L_{dif})=\{ e\}$ and $G(L^{pi}/K)=\{ e\}$.

\item $L=L^{G(L/K)}\t L_{dif}=L^{pi}\t L_{dif}$ and $[L:K]=[L^{pi}:K][L_{dif}:K]$.

\end{enumerate}
\end{proposition}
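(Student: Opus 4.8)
The plan is to obtain Proposition \ref{VB-C17Apr25} as a specialization of Proposition \ref{AC17Apr25} to the normal case. The entire content reduces to three identifications that hold precisely because $L/K$ is normal, after which each of the six assertions follows by direct substitution into the corresponding statement of Proposition \ref{AC17Apr25}. So the first thing I would do is record these three equalities:
(a) $L^{G(L/K)}=L^{pi}$, which is Proposition \ref{A17Apr25}.(3)(a) (equivalently, normality gives $L=L^{pi}\t L^{gal}$ by Corollary \ref{a25May25}, whence $L^{G(L/K)}=L^{pi}$);
(b) $L_{dif}=L^{sep}=L^{gal}$, where $L_{dif}=L^{sep}$ is Theorem \ref{23May25} and $L^{sep}=L^{gal}$ is the second assertion of Corollary \ref{a25May25} for normal extensions;
(c) $L^{G(L/K)}_{dif}=K$, which follows from (a) and (b): $L^{G(L/K)}_{dif}=L^{G(L/K)}\cap L_{dif}=L^{pi}\cap L^{sep}=K$, since $L^{pi}/K$ is purely inseparable and $L^{sep}/K$ is separable, so their intersection is $K$ (this is also recorded in Corollary \ref{VB-B17Apr25}).

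With (a)--(c) in hand, each statement is routine. For statement 1, Proposition \ref{AC17Apr25}.(1) already supplies $(L_{dif})^{G(L/K)}=L^{G(L/K)}\cap L_{dif}=L^{G(L/K)}_{dif}$; the remaining equalities follow by substituting (a)--(c) and noting that $(L_{dif})^{pi}=K$ (because $L_{dif}=L^{sep}$ is separable over $K$) and $L^{pi}_{dif}:=L^{pi}\cap L_{dif}=L^{pi}\cap L^{sep}=K$ by the definition of $L^{pi}_{dif}$. Statements 2 and 3 follow by replacing $L^{G(L/K)}_{dif}$ by $K$ and $L^{G(L/K)}$ by $L^{pi}$ in Proposition \ref{AC17Apr25}.(2,3), using $L_{dif}=L^{sep}$ to rewrite the target Galois group as $G(L^{sep}/K)$ and to read off the unique-lifting property.

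Statements 4 and 5 are the corresponding substitutions into Proposition \ref{AC17Apr25}.(4,5), together with the observation that $L/L_{dif}=L/L^{sep}$ is purely inseparable by the very definition of $L^{sep}$ as the separable closure of $K$ in $L$; thus $[L^{pi}:K]=[L:L_{dif}]$ and $G(L^{pi}/K)=\{e\}$. Statement 6 is Proposition \ref{AC17Apr25}.(6) with the base field $L^{G(L/K)}_{dif}$ collapsed to $K$ via (c), so the relative tensor product $L^{G(L/K)}\t_{L^{G(L/K)}_{dif}}L_{dif}$ becomes the absolute product $L^{pi}\t_K L_{dif}$, and the degree formula $[L:K]=[L^{pi}:K][L_{dif}:K]$ is immediate.

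There is no genuine obstacle here: the work is entirely organizational. The one step that carries all the weight is identification (c), $L^{G(L/K)}_{dif}=K$, since it is precisely the collapse of every extension ``relative to $L^{G(L/K)}_{dif}$'' appearing in Proposition \ref{AC17Apr25} to an extension over the ground field $K$. Everything else is bookkeeping built on top of the already-established structural facts, namely Corollary \ref{a25May25} and Theorem \ref{23May25}.
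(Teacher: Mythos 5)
Your proposal is correct and follows essentially the same route as the paper: the paper's proof likewise invokes $L^{G(L/K)}=L^{pi}$ (Proposition \ref{A17Apr25}.(3a)) and $L_{dif}=L^{sep}$ (Theorem \ref{23May25}) and then declares the proposition a particular case of Proposition \ref{AC17Apr25}. You merely spell out the bookkeeping (in particular the pivotal collapse $L^{G(L/K)}_{dif}=L^{pi}\cap L^{sep}=K$) that the paper leaves implicit.
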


\begin{proof} By the assumption,  $L/K$ be a  normal finite field extension of   characteristic $p$. Then
$L^{G(L/K)}=L^{pi}$ (Proposition  \ref{A17Apr25}.(3a)) and $L_{dif}=L^{sep}$ (Theorem \ref{23May25}).
 Now, statement 1 follows and the proposition is a particular case of Proposition \ref{AC17Apr25}.
 \end{proof}

  Let $L/K$ be a normal  finite field extension of   characteristic $p$. By Proposition \ref{VB-C17Apr25}, we have a diagram of field extensions where $L_{dif}=L^{sep}=L^{gal}$:

%\marginpar{diaLpi-VB}
\begin{eqnarray*}
\begin{tikzcd}\label{diaLpi-VB}
		&  L=L^{pi}\t  L_{dif} &  \\
		L^{pi}\arrow[ru,"gal"]&    & L_{dif}\arrow[lu,"pi"] \\
		&  \arrow[lu,"pi"]L^{pi}_{dif}=\Big( L_{dif} \Big)^{pi}=K\arrow[ru,"gal"]&\\
\end{tikzcd}
\end{eqnarray*}

 Let $n=[L:K]$, $m=[L:L^{pi}]=[L_{dif}:K]$ and  $L=[L:L_{dif}]=[L^{pi}:K]$. Then 
 
%\marginpar{n=mkd}
\begin{equation}\label{n=mkd}
n=ml.
\end{equation}

 {\bf Description of the algebra of    differential operators  $\CD (L/K)$ on a finite field extension $L/K$ in prime characteristic.} 
 By Proposition \ref{AC17Apr25}.(6), $L=L^{G(L/K)}\t_{L^{G(L/K)}_{dif}} L_{dif}$. Therefore,

%\marginpar{ELL-1}
\begin{equation}\label{ELL-1}
E\Big(L/L^{G(L/K)}_{dif}\Big)=E\Big(L^{G(L/K)}/ L^{G(L/K)}_{dif}\Big) \t_{L^{G(L/K)}_{dif}} E\Big( L_{dif}/ L^{G(L/K)}_{dif}\Big)
\end{equation}
is a tensor product of algebras.
\begin{proof}
The map 
\begin{eqnarray*}
 E\Big(L^{G(L/K)}/ L^{G(L/K)}_{dif}\Big) \t_{L^{G(L/K)}_{dif}} E\Big( L_{dif}/ L^{G(L/K)}_{dif}\Big)&\ra & E\Big(L/L^{G(L/K)}_{dif}\Big), \\
\phi\t\psi\mapsto  \phi\t\psi :L=L^{G(L/K)}\t_{L^{G(L/K)}_{dif}} L_{dif}&\ra &L=L^{G(L/K)}\t_{L^{G(L/K)}_{dif}} L_{dif}, \\
l\t m &\mapsto & \phi (l)\t \psi (m), 
\end{eqnarray*}
is a  $ L^{G(L/K)}_{dif}$-algebra monomorphism which is also an isomorphism since (where $\CL:= L^{G(L/K)}_{dif}$)
$$\dim_\CL \Big(E(L/\CL) \Big)=[L:\CL]^2=
[L^{G(L/K)}:\CL]^2\cdot[L_{dif}:\CL]^2=\dim_\CL \Big(E(L^{G(L/K)})/\CL) \Big)\cdot \dim_\CL \Big(E (L_{dif}/\CL) \Big).$$
\end{proof}
Theorem \ref{20Apr25} is an explicit  description of the algebra $\CD (L/K)$ of    differential operators on a finite field extension $L/K$ in prime characteristic.

 \begin{theorem}\label{20Apr25}%\marginpar{20Apr25}
Let $L/K$ be a finite field extension of characteristic $p$. Then:
\begin{enumerate}

\item $\CD (L/K)=\CD (L/L_{dif})=\CD\Big(L^{G(L/K)}\t_{L^{G(L/K)}_{dif}} L_{dif}/ L^{G(L/K)}_{dif}\Big)=
L_{dif}\t_{L^{G(L/K)}_{dif}}\CD\Big(L^{G(L/K)}/ L^{G(L/K)}_{dif}\Big)$.
In particular, each differential operator in $\CD\Big(L^{G(L/K)}/ L^{G(L/K)}_{dif}\Big)$ can be uniquely lifted to a differential operator in $\CD (L/K)$.

\item If, in addition, $ L^{G(L/K)}_{dif}=K$ $(\Leftrightarrow L=L^{G(L/K)}\t L_{dif}$, by Proposition \ref{AC17Apr25}.(6)),  i.e. $L/K$ is a B-extension  then $\CD (L/K)=\CD (L^{G(L/K)}\t L_{dif}/K)=L_{dif}\t \CD (L^{G(L/K)}/K)$. 

\end{enumerate}
\end{theorem}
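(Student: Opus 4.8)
The plan is to reduce everything to the decomposition $L=L^{G(L/K)}\otimes_{L^{G(L/K)}_{dif}}L_{dif}$ of Proposition \ref{AC17Apr25}.(6) and then to compute differential operators over the smaller base field $L^{G(L/K)}_{dif}$. Throughout I would write $\CL:=L^{G(L/K)}_{dif}=L^{G(L/K)}\cap L_{dif}$, $P:=L^{G(L/K)}$ and $D:=L_{dif}$, so that $L=P\otimes_\CL D$ with $D/\CL$ Galois (Proposition \ref{AC17Apr25}.(3)). The first equality $\CD(L/K)=\CD(L/L_{dif})$ and the identification $\CD(L/K)=E(L/L_{dif})=E(L/D)$ are already available from Corollary \ref{a16Apr25} and Theorem \ref{AC24Mar25}.(1); so the real content of part (1) is to identify $\CD(L/\CL)$ with $E(L/D)$ and with $D\otimes_\CL\CD(P/\CL)$.

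First I would pin down the type of the extension $P/\CL$. Since $D=L_{dif}=L^{sep}$ by Theorem \ref{23May25}, we have $\CL=P\cap L^{sep}$, and because $L^{sep}$ is the separable closure of $K$ in $L$, the field $\CL$ is exactly the separable closure of $K$ inside $P$. Hence $P/\CL$ is purely inseparable. Consequently, in $L=P\otimes_\CL D$ the extension $L/D$ is purely inseparable (it is generated over $D$ by $P$), while $D/\CL$ is separable, so $D$ is the separable closure of $\CL$ in $L$, i.e. $(L/\CL)^{sep}=D$. Applying Theorem \ref{23May25} and Theorem \ref{AC24Mar25}.(1) to the extension $L/\CL$ now yields $\CD(L/\CL)=E\big(L/(L/\CL)_{dif}\big)=E\big(L/(L/\CL)^{sep}\big)=E(L/D)=\CD(L/K)$, which gives the chain of equalities through $\CD(L/\CL)=\CD(P\otimes_\CL D/\CL)$.

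For the last equality I would invoke the tensor formula of Proposition \ref{7Jun25}, applied over the base field $\CL$: since $L=P\otimes_\CL D$ is a field, $\CD(L/\CL)=\CD(P/\CL)\otimes_\CL\CD(D/\CL)$. As $D/\CL$ is separable, $\CD(D/\CL)=D$ by Lemma \ref{aC24Mar25}, and therefore $\CD(L/K)=\CD(L/\CL)=D\otimes_\CL\CD(P/\CL)=L_{dif}\otimes_\CL\CD(L^{G(L/K)}/\CL)$, as claimed. The ``unique lifting'' assertion then reads off this tensor form: the map $\theta\mapsto 1\otimes\theta$ (with $1\in L_{dif}$) embeds $\CD(L^{G(L/K)}/\CL)$ into $\CD(L/K)$, each such operator preserves $L^{G(L/K)}$ and restricts there to $\theta$, and it is the unique $L_{dif}$-linear lift preserving $L^{G(L/K)}$ since $\{1\}$ extends to an $\CL$-basis of $L_{dif}$. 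Part (2) is then immediate: when $\CL=K$ (equivalently $L=L^{G(L/K)}\otimes L_{dif}$, i.e. $L/K$ is a B-extension, by Proposition \ref{AC17Apr25}.(6)), specializing part (1) at $\CL=K$ gives $\CD(L/K)=L_{dif}\otimes_K\CD(L^{G(L/K)}/K)$.

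I expect the main obstacle to be the structural step identifying $(L/\CL)^{sep}=L_{dif}$; everything else is a formal consequence of results already proved. The crux is the observation that $\CL=P\cap L^{sep}$ is precisely the separable closure of $K$ inside $P=L^{G(L/K)}$, which forces $P/\CL$ to be purely inseparable and hence $L/L_{dif}$ to be purely inseparable over the Galois piece $D/\CL$. Once this is in place, the passage from the base field $K$ to the base field $\CL$ and the tensor factorization via Proposition \ref{7Jun25} are routine.
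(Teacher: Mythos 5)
Your proof is correct, and it reaches the tensor factorization by a genuinely different mechanism than the paper's. Both arguments start from the same two ingredients --- the decomposition $L=L^{G(L/K)}\t_{L^{G(L/K)}_{dif}}L_{dif}$ of Proposition \ref{AC17Apr25}.(6) and the identification $\CD (L/K)=\CD (L/L_{dif})=E(L/L_{dif})$ (Corollary \ref{a16Apr25}, Theorem \ref{AC24Mar25}.(1)) --- but they diverge at the decisive step. The paper decomposes $E(L/L_{dif})=L_{dif}\t_{L^{G(L/K)}_{dif}}E\Big(L^{G(L/K)}/L^{G(L/K)}_{dif}\Big)$ via (\ref{ELL-1}) and then computes the adjoint action explicitly, $\ad_{l_dl^G}(l_d'\t \phi)=l_dl_d'\t \ad_{l^G}(\phi)$, reading off that the differential-operator condition on $E(L/L_{dif})$ relative to $L$ is exactly the differential-operator condition on the second tensor factor relative to $L^{G(L/K)}$; notably it needs no information about what kind of extension $L^{G(L/K)}/L^{G(L/K)}_{dif}$ is (the equality $\CD =E$ for that factor only emerges later, in Theorem \ref{bA20Apr25}). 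You instead make the structural identification first: $L^{G(L/K)}_{dif}=L^{G(L/K)}\cap L^{sep}$ is the separable closure of $K$ in $L^{G(L/K)}$ (via Theorem \ref{23May25}), so $L^{G(L/K)}/L^{G(L/K)}_{dif}$ is purely inseparable and $L_{dif}=\Big(L/L^{G(L/K)}_{dif}\Big)^{sep}=\Big(L/L^{G(L/K)}_{dif}\Big)_{dif}$; running Theorems \ref{AC24Mar25}.(1) and \ref{23May25} over both base fields then gives $\CD (L/K)=\CD \Big(L/L^{G(L/K)}_{dif}\Big)$ (in the paper this base-change equality is Lemma \ref{a26Apr25}.(1), proved separately), and the factorization drops out of Proposition \ref{7Jun25} applied over the base $L^{G(L/K)}_{dif}$ together with $\CD \Big(L_{dif}/L^{G(L/K)}_{dif}\Big)=L_{dif}$ (Lemma \ref{aC24Mar25}); all of these instantiations at the base $L^{G(L/K)}_{dif}$ are legitimate and non-circular, since those results precede Theorem \ref{20Apr25} and are stated for an arbitrary base field of characteristic $p$. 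What your route buys: it exposes the purely-inseparable/Galois structure of $L/L^{G(L/K)}_{dif}$ (foreshadowing Theorem \ref{9Jun25}) and yields $\CD \Big(L^{G(L/K)}/L^{G(L/K)}_{dif}\Big)=E\Big(L^{G(L/K)}/L^{G(L/K)}_{dif}\Big)$ for free from Theorem \ref{C24Mar25}; what the paper's direct computation buys is self-containedness at this point of the development, and it is precisely the step that later delivers that same equality as a consequence in Theorem \ref{bA20Apr25}. Your uniqueness-of-lifting argument is compressed but sound: expanding $\d =\sum_i d_i\t \th_i$ in an $L^{G(L/K)}_{dif}$-basis $\{ d_i\}$ of $L_{dif}$ with $d_1=1$ and comparing components in $L=\bigoplus_i L^{G(L/K)}d_i$ forces $\th_1=\th$ and $\th_i=0$ for $i\neq 1$, and part (2) is the correct specialization at $L^{G(L/K)}_{dif}=K$.
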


\begin{proof} 1. Since $L_{dif}=C_L(\CD (L/K))=L\cap C_{E(L/K)}(\CD (L/K))$, we have the equality $\CD (L/K)=\CD (L/L_{dif})$.
 By Proposition \ref{AC17Apr25}.(6), $L=L^{G(L/K)}\t_{L^{G(L/K)}_{dif}} L_{dif}$, and so, by (\ref{ELL-1}), 
 $$ E(L/L_{dif})=L_{dif}\t_{L^{G(L/K)}_{dif}}E\Big(L^{G(L/K)}/L^{G(L/K)}_{dif} \Big).$$
 For all elements $l_d\in L_{dif}$, $l^G\in L^{G(L/K)}$ and $l_d'\t \phi\in L_{dif}\t_{L^{G(L/K)}_{dif}}E\Big(L^{G(L/K)}/L^{G(L/K)}_{dif} \Big)=E(L/L_{dif})$ (where $l_d'\in L_{dif}$ and $\phi \in E\Big(L^{G(L/K)}/L^{G(L/K)}_{dif} \Big)$),
 $$
 \ad_{l_dl^G}(l_d'\t \phi)=l_dl_d'\t \ad_{e^G}(\phi).
$$
Therefore,
$$\CD (L/K)=\CD\Big(L^{G(L/K)}\t_{L^{G(L/K)}_{dif}} L_{dif}/ L^{G(L/K)}_{dif}\Big)=
L_{dif}\t_{L^{G(L/K)}_{dif}}\CD\Big(L^{G(L/K)}/ L^{G(L/K)}_{dif}\Big).$$

2. Statement 2 follows from statement 1. 
\end{proof}

 Corollary \ref{a20Apr25} is an explicit  description of the algebra $\CD (L/K)$ of    differential operators on a {\em normal} finite field extension $L/K$ in prime characteristic.

 \begin{corollary}\label{a20Apr25}%\marginpar{a20Apr25}
Let $L/K$ be a normal finite field extension of characteristic $p$. Then  $ L=L^{pi}\t L_{dif}$ (Proposition \ref{VB-C17Apr25}.(6)) and $\CD (L/K)=\CD (L/L_{dif})=\CD (L^{pi}\t L_{dif}/K)=
L_{dif}\t \CD (L^{pi}/K)$.
In particular, each differential operator in $\CD (L^{pi}/K)$ can be uniquely lifted to a differential operator in $\CD (L/K)$. 
\end{corollary}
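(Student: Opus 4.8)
The plan is to derive everything as a direct specialization of Theorem \ref{20Apr25}.(2) to the normal case, so that the only real work is to check that the hypotheses of that theorem are met and then to substitute the correct identifications of the relevant subfields.

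First I would record the structural facts available for a normal extension. By Proposition \ref{VB-C17Apr25}, since $L/K$ is normal of characteristic $p$, one has $L^{G(L/K)}=L^{pi}$, one has $L_{dif}=L^{sep}=L^{gal}$, and, crucially, $L^{G(L/K)}_{dif}=L^{pi}_{dif}=K$, together with the tensor decomposition $L=L^{pi}\t L_{dif}$ (equivalently $L=L^{pi}\t L^{gal}$, by Corollary \ref{a25May25}). This is exactly the data needed to feed into the general theorem.

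Second, I would observe that the equality $L^{G(L/K)}_{dif}=K$ is precisely the extra hypothesis of Theorem \ref{20Apr25}.(2); by Corollary \ref{c17Apr25} it says that $L/K$ is a B-extension, which holds automatically for normal extensions (Corollary \ref{VB-B17Apr25}). Hence Theorem \ref{20Apr25}.(2) applies verbatim and gives
$$\CD (L/K)=\CD (L/L_{dif})=\CD\big(L^{G(L/K)}\t L_{dif}/K\big)=L_{dif}\t \CD\big(L^{G(L/K)}/K\big).$$
Substituting $L^{G(L/K)}=L^{pi}$ then yields $\CD (L/K)=\CD (L^{pi}\t L_{dif}/K)=L_{dif}\t \CD (L^{pi}/K)$, which is the asserted chain of equalities.

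Finally, for the ``in particular'' clause I would read off the unique-lifting statement directly from the tensor factorization $\CD (L/K)=L_{dif}\t \CD (L^{pi}/K)$: each $\delta\in \CD (L^{pi}/K)$ is carried to $1\t \delta\in \CD (L/K)$, and since the factor $\CD (L^{pi}/K)$ sits as $1\t \CD (L^{pi}/K)$ inside the tensor product over $K$, its elements remain $L_{dif}$-linearly independent there, so the lift is unique. I do not expect any genuine obstacle: the substantive content lies entirely in Theorem \ref{20Apr25} and Proposition \ref{VB-C17Apr25}, and the corollary is obtained simply by inserting the normal-case identifications $L^{G(L/K)}=L^{pi}$ and $L^{G(L/K)}_{dif}=K$.
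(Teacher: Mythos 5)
Your proposal is correct and follows essentially the same route as the paper, which likewise obtains the corollary by substituting the normal-case identifications $L^{G(L/K)}=L^{pi}$ (Proposition \ref{A17Apr25}.(3a)), $L=L^{pi}\t L_{dif}$ and $L^{pi}_{dif}=K$ (Proposition \ref{VB-C17Apr25}.(1),(6)) into Theorem \ref{20Apr25}. The only cosmetic differences are that the paper cites part (1) of Theorem \ref{20Apr25} (whose ``in particular'' clause already carries the unique-lifting statement, so no separate argument is needed) where you cite part (2), and that you attribute $L^{G(L/K)}=L^{pi}$ to Proposition \ref{VB-C17Apr25} rather than to Proposition \ref{A17Apr25}.(3a).
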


\begin{proof}  The finite field extension $L/K$ is normal. Therefore, $L^{G(L/K)}=L^{pi}$ (Proposition \ref{A17Apr25}.(3a)), $ L=L^{pi}\t L_{dif}$ (Proposition \ref{VB-C17Apr25}.(6)) and $L^{pi}_{dif}=K$ (Proposition \ref{VB-C17Apr25}.(1)). Now, statement 1 follows from Theorem \ref{20Apr25}.(1).
\end{proof}

 \begin{lemma}\label{a26Apr25}%\marginpar{a26Apr25}
Let $L/K$ be a  finite field extension of characteristic $p$. Then:
\begin{enumerate}

\item $\Big(L/L^{G(L/K)}_{dif}\Big)_{dif}=L_{dif}$ and $\CD (L/K)=\CD \Big(L/L^{G(L/K)}_{dif}\Big)$.

\item $\Big(L/L_{dif}\Big)_{dif}=L_{dif}$.

\item $\Big(L^{G(L/K)}/L^{G(L/K)}_{dif}\Big)_{dif}=L^{G(L/K)}_{dif}$.

%\item If, in addition, $L/K$ is a normal field extension then $\Big(L^{pi}/L^{pi}_{dif}\Big)_{dif}=L^{pi}_{dif}$.

\end{enumerate}
\end{lemma}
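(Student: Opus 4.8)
The plan is to treat the three equalities in turn, with the second being immediate, the first a consequence of centrality, and the third resting on a transitivity-of-separability observation. Throughout I will write $\CL:=L^{G(L/K)}_{dif}=L^{G(L/K)}\cap L_{dif}$, so that $K\subseteq \CL\subseteq L_{dif}\subseteq L$.

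For statement (2) there is nothing to do: the equality $(L/L_{dif})_{dif}=L_{dif}$ is already recorded in Corollary \ref{a16Apr25}.

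For statement (1), I would first establish the second assertion $\CD (L/K)=\CD (L/\CL)$. By Lemma \ref{a20May25} applied to the tower $K\subseteq \CL\subseteq L$ one has $\CD(L/\CL)=C_{\CD(L/K)}(\CL)$. Since $\CL\subseteq L_{dif}=Z(\CD(L/K))$ by Theorem \ref{AC24Mar25}.(3), the subfield $\CL$ lies in the centre of $\CD(L/K)$, so its centralizer in $\CD(L/K)$ is the whole algebra; hence $\CD(L/\CL)=\CD(L/K)$. The first assertion then follows by centralizing in $L$: by definition $(L/\CL)_{dif}=C_L(\CD(L/\CL))=C_L(\CD(L/K))=L_{dif}$.

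For statement (3), put $M:=L^{G(L/K)}$ and keep $\CL=L^{G(L/K)}_{dif}$, so that $(M/\CL)_{dif}=C_M(\CD(M/\CL))$. The key point is that $\CL$ is precisely the separable closure of $K$ inside $M$. Indeed, by Theorem \ref{23May25} one has $L_{dif}=L^{sep}$, so $\CL=M\cap L_{dif}=M\cap L^{sep}$; and an element of $M\subseteq L$ is separable over $K$ exactly when it lies in $L^{sep}$, whence the separable closure $M^{sep}$ of $K$ in $M$ equals $M\cap L^{sep}=\CL$. Consequently $M/\CL=M/M^{sep}$ is purely inseparable, so its largest separable subextension over $\CL$ is trivial, i.e. $(M/\CL)^{sep}=\CL$. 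Applying Theorem \ref{23May25} now to the finite extension $M/\CL$ gives $(M/\CL)_{dif}=(M/\CL)^{sep}=\CL$, which is (3).

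The only genuinely delicate step is the identification $\CL=M^{sep}$ used in statement (3): it combines $L_{dif}=L^{sep}$ from Theorem \ref{23May25} with the transitivity of separable closures, namely that separability of an element of $M$ over $K$ is unaffected by viewing it inside $L$, so that the separable closure of $K$ in $M$ is obtained by intersecting $M$ with the separable closure of $K$ in $L$. Once this is in place, all three assertions reduce to the structural facts in Theorem \ref{AC24Mar25}, Theorem \ref{23May25}, Lemma \ref{a20May25} and Corollary \ref{a16Apr25}.
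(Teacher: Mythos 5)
Your proposal is correct, and parts (1) and (2) essentially coincide with the paper's argument: the paper also reduces (2) to Corollary \ref{a16Apr25}, and for (1) it gets $\CD (L/K)=\CD \Big(L/L^{G(L/K)}_{dif}\Big)$ by sandwiching, $\CD (L/L_{dif})\subseteq \CD \Big(L/L^{G(L/K)}_{dif}\Big)\subseteq \CD (L/K)=\CD (L/L_{dif})$, whereas you obtain the same equality from $\CD \Big(L/L^{G(L/K)}_{dif}\Big)=C_{\CD (L/K)}\Big(L^{G(L/K)}_{dif}\Big)$ (Lemma \ref{a20May25}) together with $L^{G(L/K)}_{dif}\subseteq L_{dif}=Z(\CD (L/K))$ (Theorem \ref{AC24Mar25}.(3)) --- a cosmetic variant of the same idea; both then conclude by applying $C_L(-)$. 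The genuine divergence is in part (3). The paper invokes the tensor decomposition $\CD (L/K)=L_{dif}\t_{L^{G(L/K)}_{dif}}\CD \Big(L^{G(L/K)}/L^{G(L/K)}_{dif}\Big)$ of Theorem \ref{20Apr25} and reads off that an element of $L^{G(L/K)}$ centralizes $\CD (L/K)$ iff it centralizes $\CD \Big(L^{G(L/K)}/L^{G(L/K)}_{dif}\Big)$, so that $\Big(L^{G(L/K)}/L^{G(L/K)}_{dif}\Big)_{dif}=L^{G(L/K)}\cap L_{dif}$. You instead identify $L^{G(L/K)}_{dif}=L^{G(L/K)}\cap L^{sep}$ as the separable closure of $K$ in $M:=L^{G(L/K)}$ (using $L_{dif}=L^{sep}$ from Theorem \ref{23May25} and the ambient-independence of separability of an element over $K$), note that $M/M^{sep}$ is purely inseparable so $(M/L^{G(L/K)}_{dif})^{sep}=L^{G(L/K)}_{dif}$, and apply Theorem \ref{23May25} a second time to the extension $M/L^{G(L/K)}_{dif}$. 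This is valid and creates no circularity, since Theorem \ref{23May25} is established in Section \ref{ALG-DIF-OPS}, before the lemma. What your route buys is independence from the heavier structural Theorem \ref{20Apr25} (the lifting/tensor description of $\CD (L/K)$), plus the extra information that $L^{G(L/K)}_{dif}$ is precisely the separable closure of $K$ in $L^{G(L/K)}$ and that $L^{G(L/K)}/L^{G(L/K)}_{dif}$ is purely inseparable, anticipating Corollary \ref{a9Jun25}; what the paper's route buys is that it stays entirely inside the operator-algebra formalism, falling out of a decomposition already proved in the same section without appealing to standard field-theoretic facts about separable closures not restated in the paper.
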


\begin{proof} 1. By Corollary \ref{a16Apr25}.(1), $\CD (L/K)=\CD (L/L_{dif})$. It follows from $K\subseteq L^{G(L/K)}_{dif}\subseteq L_{dif}$ that 
$$\CD (L/K)\supseteq \CD \Big(L/L^{G(L/K)}_{dif}\Big)\subseteq \CD (L/L_{dif})=\CD (L/K).$$
Therefore,  $\CD (L/K)=\CD \Big(L/L^{G(L/K)}_{dif}\Big)$. 
 Now, 
$$ \Big(L/L^{G(L/K)}_{dif}\Big)_{dif}:=C_L\Big(\CD \Big(L/L^{G(L/K)}_{dif}\Big)\Big)=C_L(\CD (L/K))=L_{dif}.$$

2. By Corollary \ref{a16Apr25}.(1), $\CD (L/K)=\CD (L/L_{dif})$. Now, 
$$ \Big(L/L_{dif}\Big)_{dif}:=C_L(\CD (L/L_{dif}))=C_L(\CD (L/K))=L_{dif}.$$

3. By Theorem \ref{20Apr25}, 
$\CD (L/K)=
L_{dif}\t_{L^{G(L/K)}_{dif}}\CD\Big(L^{G(L/K)}/ L^{G(L/K)}_{dif}\Big)$. Now,
\begin{eqnarray*}
L^{G(L/K)}_{dif}&=& L^{G(L/K)}\cap L_{dif}=\{ l\in L^{G(L/K)}\, | \, l\d = \d l\;\; {\rm for\; all}\;\; \d \in \CD (L/K)\}\\
&=& \Big\{ l\in L^{G(L/K)}\, | \, l\d = \d l\;\; {\rm for\; all}\;\; \d \in \CD \Big(L^{G(L/K)}/L^{G(L/K)}_{dif}\Big)\Big\}=\Big(L^{G(L/K)}/L^{G(L/K)}_{dif}\Big)_{dif}.
\end{eqnarray*}
%3. Since the finite field extension $L/K$ is a normal     field extension,  $L^G=L^{pi}$ (Proposition \ref{A17Apr25}.(3a)). Now, statement 3 follows from statement 2.  
\end{proof}

 {\bf The algebra $L\rtimes G(L/K)$.}

 \begin{proposition}\label{A20Apr25}%\marginpar{A20Apr25}
Let $L/K$ be a  finite field extension of characteristic $p$. Then 
$$L\rtimes G(L/K)=L^{G(L/K)}\t_{L^{G(L/K)}_{dif}}\Big( L_{dif}\rtimes G \Big(L_{dif}/L^{G(L/K)}_{dif}\Big)\Big).$$
\end{proposition}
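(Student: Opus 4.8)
Write $\CL:=L^{G(L/K)}_{dif}$ throughout. The strategy is to rewrite both sides as endomorphism algebras and then to exhibit an explicit base-change isomorphism, along exactly the lines of the map used to prove (\ref{ELL-1}).

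First I would translate the two skew group algebras into endomorphism algebras. By Theorem \ref{BC24Mar25}.(2) the left-hand side is $L\rtimes G(L/K)=E(L/L^{G(L/K)})$. For the right-hand factor, Proposition \ref{AC17Apr25}.(3) shows that $L_{dif}/\CL$ is a Galois field extension whose fixed field under $G(L_{dif}/\CL)$ is $\CL$; applying Theorem \ref{BC24Mar25}.(2) to this extension yields
$$
L_{dif}\rtimes G\Big(L_{dif}/\CL\Big)=E(L_{dif}/\CL)\in \fC (\CL).
$$
Hence it suffices to prove the algebra identity $E(L/L^{G(L/K)})=L^{G(L/K)}\t_\CL E(L_{dif}/\CL)$.

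Next I would build the comparison map. By Proposition \ref{AC17Apr25}.(6) we have $L=L^{G(L/K)}\t_\CL L_{dif}$, so I can define
$$
\Theta : L^{G(L/K)}\t_\CL E(L_{dif}/\CL)\ra E(L/L^{G(L/K)}),\qquad \Theta(a\t\psi)(x\t y)=ax\t\psi(y),
$$
for $a,x\in L^{G(L/K)}$, $y\in L_{dif}$ and $\psi\in E(L_{dif}/\CL)$. This is precisely the restriction, to multiplication operators in the first tensor factor, of the map appearing in the proof of (\ref{ELL-1}); it is well defined over $\CL$ because multiplication by $c\in\CL$ on the factor $L^{G(L/K)}$ agrees with the action of the scalar $c$ on $L_{dif}$ inside the tensor product, and each $\Theta(a\t\psi)$ is manifestly $L^{G(L/K)}$-linear. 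A direct check ($\Theta(a\t\psi)=\Theta(a\t 1)\Theta(1\t\psi)$, with the two families of operators commuting) shows that $\Theta$ is an $L^{G(L/K)}$-algebra homomorphism.

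Finally I would finish by simplicity plus a dimension count. Since $E(L_{dif}/\CL)\in\fC(\CL)$, the source $L^{G(L/K)}\t_\CL E(L_{dif}/\CL)$ is a central simple $L^{G(L/K)}$-algebra, in particular simple, so the nonzero homomorphism $\Theta$ is injective. It remains to compare $\CL$-dimensions:
$$
\dim_\CL\Big(L^{G(L/K)}\t_\CL E(L_{dif}/\CL)\Big)=[L^{G(L/K)}:\CL]\,[L_{dif}:\CL]^2,
$$
while $\dim_\CL E(L/L^{G(L/K)})=[L^{G(L/K)}:\CL]\,[L:L^{G(L/K)}]^2$. By Proposition \ref{AC17Apr25}.(4) we have $[L:L^{G(L/K)}]=[L_{dif}:\CL]$, so the two dimensions coincide and the injection $\Theta$ is an isomorphism, giving the claim. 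The only genuinely delicate point is the well-definedness of $\Theta$ over $\CL$ together with the correct simultaneous identification of $L^{G(L/K)}$ (as multiplications on the first factor) and of $E(L_{dif}/\CL)$ (as $\id\t\psi$) inside $E(L/L^{G(L/K)})$; the decisive numerical input is the equality of degrees $[L:L^{G(L/K)}]=[L_{dif}:\CL]$ from Proposition \ref{AC17Apr25}.(4).
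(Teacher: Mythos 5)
Your proof is correct, but it takes a genuinely different route from the paper's. The paper's proof is a one-line structural factorization entirely at the level of skew group algebras: from $L=L^{G(L/K)}\t_{L^{G(L/K)}_{dif}}L_{dif}$ (Proposition \ref{AC17Apr25}.(6)) and the isomorphism $\res_{L_{dif}}:G(L/K)\ra G\Big(L_{dif}/L^{G(L/K)}_{dif}\Big)$ (Proposition \ref{AC17Apr25}.(2)), it deduces directly that
$$
L\rtimes G(L/K)=\Big(L^{G(L/K)}\t_{L^{G(L/K)}_{dif}}L_{dif}\Big)\rtimes G\Big(L_{dif}/L^{G(L/K)}_{dif}\Big)=L^{G(L/K)}\t_{L^{G(L/K)}_{dif}}\Big(L_{dif}\rtimes G\Big(L_{dif}/L^{G(L/K)}_{dif}\Big)\Big),
$$
using that $G(L/K)$ acts trivially on the first tensor factor. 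You instead convert both sides into endomorphism algebras via Theorem \ref{BC24Mar25}.(2) (with Proposition \ref{AC17Apr25}.(1,3) guaranteeing $(L_{dif})^{G(L_{dif}/\CL)}=\CL$, so that $L_{dif}\rtimes G(L_{dif}/\CL)=E(L_{dif}/\CL)\in\fC(\CL)$), and then run the simplicity-plus-dimension-count device — the same mechanism the paper uses to prove (\ref{ELL-1}) and Theorem \ref{bA20Apr25}, but which it does not invoke here — with the decisive numerical input $[L:L^{G(L/K)}]=[L_{dif}:\CL]$ from Proposition \ref{AC17Apr25}.(4). Both arguments are sound. The paper's version is shorter and purely structural, needing no simplicity or dimension arguments; yours makes fully explicit the bijectivity that the paper's ``Therefore'' leaves implicit, and yields as a by-product the identification $L\rtimes G(L/K)=E(L/L^{G(L/K)})\simeq L^{G(L/K)}\t_\CL E(L_{dif}/\CL)$, consonant with (\ref{ELL-1}). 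One point you rightly flag but should actually verify to get the \emph{equality of subalgebras of} $E(L/K)$ rather than a mere abstract isomorphism: for $a\in L^{G(L/K)}$, $y\in L_{dif}$, $h\in G(L_{dif}/\CL)$ one has $\Theta(a\t yh)=ay\cdot\res_{L_{dif}}^{-1}(h)$ as operators on $L=L^{G(L/K)}\t_\CL L_{dif}$, since $\res_{L_{dif}}^{-1}(h)$ acts as $x\t y'\mapsto x\t h(y')$; this one-line check closes the only delicate step and is routine.
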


\begin{proof}  By Proposition \ref{AC17Apr25}.(6), 
$L=L^{G(L/K)}\t_{L^{G(L/K)}_{dif}}L_{dif}$. By Proposition \ref{AC17Apr25}.(2), the map $\res_{L_{dif}}: G(L/K)\ra G\Big(L_{dif}/ L^{G(L/K)}_{dif}\Big)$ is an isomorphism. Therefore, 
$L\rtimes G(L/K)=L^{G(L/K)}\t_{L^{G(L/K)}_{dif}}\Big(   L_{dif}\rtimes G \Big(L_{dif}/L^{G(L/K)}_{dif}\Big)\Big)$.
\end{proof}

 \begin{corollary}\label{aA20Apr25}%\marginpar{aA20Apr25}
Let $L/K$ be a normal finite field extension of characteristic $p$. Then 
$$L\rtimes G(L/K)=L^{pi}\t \Big(L_{dif}\rtimes G (L_{dif}/K)\Big).$$
\end{corollary}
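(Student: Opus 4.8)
The plan is to obtain this as an immediate specialization of Proposition \ref{A20Apr25}, which already gives the general decomposition
$$
L\rtimes G(L/K)=L^{G(L/K)}\t_{L^{G(L/K)}_{dif}}\Big( L_{dif}\rtimes G \Big(L_{dif}/L^{G(L/K)}_{dif}\Big)\Big)
$$
for an arbitrary finite field extension in characteristic $p$. All that remains is to evaluate the three ingredients appearing on the right-hand side under the extra hypothesis that $L/K$ is normal, and to check that the general tensor product collapses to the ordinary tensor product over $K$.

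First I would invoke the normality of $L/K$ to identify the invariant field: by Proposition \ref{A17Apr25}.(3a) (equivalently, by Corollary \ref{a25May25}.(1,2)), normality gives $L^{G(L/K)}=L^{pi}$. Next I would apply Proposition \ref{VB-C17Apr25}.(1), which for a normal extension asserts $L^{G(L/K)}_{dif}=L^{pi}_{dif}=K$; this is the crucial simplification, since it replaces the relative base field $L^{G(L/K)}_{dif}$ by the ground field $K$. In particular the subscript on the tensor product becomes $\t_{K}=\t$, and the Galois group $G\big(L_{dif}/L^{G(L/K)}_{dif}\big)$ becomes simply $G(L_{dif}/K)$. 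Substituting these three facts into the displayed formula of Proposition \ref{A20Apr25} yields exactly
$$
L\rtimes G(L/K)=L^{pi}\t \Big(L_{dif}\rtimes G (L_{dif}/K)\Big),
$$
as required.

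There is essentially no obstacle here: the content of the corollary is entirely carried by Proposition \ref{A20Apr25} together with the structural identities for normal extensions established in Corollary \ref{a25May25} and Proposition \ref{VB-C17Apr25}. The only point requiring a word of care is the reduction of the relative tensor product $\t_{L^{G(L/K)}_{dif}}$ to $\t$, and this is immediate once the equality $L^{G(L/K)}_{dif}=K$ is in hand. If one wished, one could additionally remark (using Theorem \ref{23May25} and Corollary \ref{a25May25}.(2)) that $L_{dif}=L^{sep}=L^{gal}$, so that the factor $L_{dif}\rtimes G(L_{dif}/K)$ may equivalently be written $L^{gal}\rtimes G(L^{gal}/K)$, matching the notation used elsewhere in the paper; but this is not needed for the stated formula.
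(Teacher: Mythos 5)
Your proof is correct and follows exactly the paper's own argument: the paper likewise specializes Proposition \ref{A20Apr25} using $L^{G(L/K)}=L^{pi}$ (Proposition \ref{A17Apr25}.(3a)) and $L^{G(L/K)}_{dif}=L^{pi}_{dif}=K$ (Proposition \ref{VB-C17Apr25}.(1)), so the relative tensor product collapses to $\t_K$. Your additional remark that $L_{dif}=L^{sep}=L^{gal}$ is consistent with the paper but, as you note, not needed.
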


\begin{proof} The finite field extension $L/K$ is normal. By Proposition \ref{A17Apr25}.(3a), $L^{G(L/K)}=L^{pi}$, $L^{pi}_{dif}=K$ (Proposition \ref{VB-C17Apr25}.(1)) and the corollary follows from Proposition \ref{A20Apr25}.
\end{proof}

{\bf The algebra $ \CD (L/K)\rtimes G(L/K)$ is a tensor product of two subalgebras.}  Theorem  \ref{bA20Apr25} shows that the algebra $ \CD (L/K)\rtimes G(L/K)$ is a tensor product of  the subalgebras $\CD \Big(L^{G(L/K)}/L^{G(L/K)}_{dif}\Big)$ and $ L_{dif}\rtimes  G\Big(L_{dif}/L^{G(L/K)}_{dif}\Big)$ over the field $L^{G(L/K)}_{dif}$.

 \begin{theorem}\label{bA20Apr25}%\marginpar{bA20Apr25}
Let $L/K$ be a  finite field extension of characteristic $p$. Recall that 
 $L=L^{G(L/K)}\t_{L^{G(L/K)}_{dif}} L_{dif}$  (Proposition \ref{AC17Apr25}.(6)) and  
 $$E\Big(L/L^{G(L/K)}_{dif}\Big)=E\Big(L^{G(L/K)}/ L^{G(L/K)}_{dif}\Big) \t_{L^{G(L/K)}_{dif}} E\Big( L_{dif}/ L^{G(L/K)}_{dif}\Big)
 $$
is a tensor product of algebras, see  (\ref{ELL-1}). Then $$ E(L/L^{G(L/K)}_{dif})=\CD (L/K)\rtimes G(L/K)=\CD \Big(L^{G(L/K)}/L^{G(L/K)}_{dif}\Big)\t_{L^{G(L/K)}_{dif}}L_{dif}\rtimes  G\Big(L_{dif}/L^{G(L/K)}_{dif}\Big)$$
is a tensor product of algebras such that 
$$E\Big(L^{G(L/K)}/ L^{G(L/K)}_{dif}\Big)=\CD \Big(L^{G(L/K)}/L^{G(L/K)}_{dif}\Big)\;\; {\rm and}\;\; 
  E\Big( L_{dif}/ L^{G(L/K)}_{dif}\Big)= L_{dif}\rtimes  G\Big(L_{dif}/L^{G(L/K)}_{dif}\Big).$$

\end{theorem}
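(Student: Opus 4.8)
The plan is to assemble the asserted chain of equalities from results already established, the linchpin being the abstract tensor factorisation (\ref{ELL-1}) together with a factorwise identification. Writing $\CL:=L^{G(L/K)}_{dif}$ as in the proof of (\ref{ELL-1}), recall that
$$
E(L/\CL)=E\Big(L^{G(L/K)}/\CL\Big)\t_{\CL}E\Big(L_{dif}/\CL\Big).
$$
So it suffices to recognise the left factor as the algebra of differential operators $\CD\big(L^{G(L/K)}/\CL\big)$ and the right factor as the skew group algebra $L_{dif}\rtimes G(L_{dif}/\CL)$, and then to use Theorem \ref{AB17Apr25}.(2) to identify $E(L/\CL)$ with $\CD(L/K)\rtimes G(L/K)$.

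For the left factor I would show that the extension $L^{G(L/K)}/\CL$ is purely inseparable, whence $\CD\big(L^{G(L/K)}/\CL\big)=E\big(L^{G(L/K)}/\CL\big)$ by Theorem \ref{C24Mar25}.(1). The pure inseparability follows by chaining two earlier results: Lemma \ref{a26Apr25}.(3) gives $\big(L^{G(L/K)}/\CL\big)_{dif}=\CL$, while Theorem \ref{23May25}, applied to the extension $L^{G(L/K)}/\CL$, identifies the dif-subfield with the maximal separable subfield. Hence the maximal separable subfield of $L^{G(L/K)}/\CL$ is $\CL$ itself, i.e. the extension is purely inseparable.

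For the right factor I would invoke Proposition \ref{AC17Apr25}.(3), which states that $L_{dif}/\CL$ is a Galois extension with Galois group $G(L_{dif}/\CL)$. Then Proposition \ref{A8May25}, applied to $L_{dif}/\CL$, yields $L_{dif}\rtimes G(L_{dif}/\CL)=E(L_{dif}/\CL)$ at once.

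Substituting the two identifications into (\ref{ELL-1}) gives
$$
E(L/\CL)=\CD\Big(L^{G(L/K)}/\CL\Big)\t_{\CL}\Big(L_{dif}\rtimes G(L_{dif}/\CL)\Big),
$$
while Theorem \ref{AB17Apr25}.(2) gives $\CD(L/K)\rtimes G(L/K)=E(L/\CL)$; chaining these two equalities produces precisely the stated string. Because each factor is literally the same subalgebra under both descriptions, the substitution requires no compatibility verification beyond the two factorwise identifications, so the only genuinely substantive point is the pure-inseparability argument of the second paragraph (the interplay of Lemma \ref{a26Apr25}.(3) and Theorem \ref{23May25}); the remainder is bookkeeping.
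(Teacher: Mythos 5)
Your proof is correct, but it runs in the opposite direction to the paper's own argument, so the two are worth contrasting. Writing $\CL:=L^{G(L/K)}_{dif}$, the paper starts from the global identity $\CD (L/K)\rtimes G(L/K)=E(L/\CL)$ (Theorem \ref{AB17Apr25}.(2)) together with the factorizations $\CD (L/K)=L_{dif}\t_{\CL}\CD\big(L^{G(L/K)}/\CL\big)$ (Theorem \ref{20Apr25}.(1)) and $G(L/K)\simeq G(L_{dif}/\CL)$ (Proposition \ref{AC17Apr25}.(2)); this exhibits the skew group algebra as a tensor product of two subalgebras, one contained in each factor of (\ref{ELL-1}), and since that product already fills all of $E(L/\CL)$, both containments are forced to be equalities. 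The factorwise identities $E\big(L^{G(L/K)}/\CL\big)=\CD\big(L^{G(L/K)}/\CL\big)$ and $E(L_{dif}/\CL)=L_{dif}\rtimes G(L_{dif}/\CL)$ thus come out as \emph{consequences}, with no need to know in advance that $L^{G(L/K)}/\CL$ is purely inseparable. You proceed bottom-up: you establish the two factorwise identities first --- on the left by combining Lemma \ref{a26Apr25}.(3) with Theorem \ref{23May25} to get $\big(L^{G(L/K)}/\CL\big)^{sep}=\CL$, hence pure inseparability and then Theorem \ref{C24Mar25}.(1); on the right via Proposition \ref{AC17Apr25}.(3) and Proposition \ref{A8May25} --- and only then assemble through (\ref{ELL-1}) and Theorem \ref{AB17Apr25}.(2). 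Both arguments are sound, free of circularity (all your ingredients precede the theorem), and of comparable length; your version has the merit of making the structural content explicit (the left factor is the purely inseparable part and the right factor the Galois part of the extension $L/\CL$), which anticipates Theorem \ref{9Jun25} and Corollary \ref{a9Jun25}, while the paper's version is leaner in its inputs and obtains the pure inseparability of $L^{G(L/K)}/\CL$ as a by-product rather than a prerequisite. Your closing remark is also justified: because your factorwise identifications are equalities of algebras rather than mere isomorphisms, substituting them into (\ref{ELL-1}) changes nothing but labels, so no further compatibility verification is needed.
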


\begin{proof} By  Theorem \ref{AB17Apr25}.(2), $E\Big(L^{G(L/K)}/ L^{G(L/K)}_{dif}\Big)=\CD (L/K)\rtimes G(L/K)$.  By Theorem \ref{20Apr25}.(1) and Proposition \ref{AC17Apr25}.(2), we have  the equalities 
$$\CD (L/K)= L_{dif}\t_{L^{G(L/K)}_{dif}}\CD \Big( L^{G(L/K)}/ L^{G(L/K)}_{dif}\Big)\;\; {\rm and}\;\; G(L/K)=G\Big(L_{dif}/L^{G(L/K)}/ L^{G(L/K)}_{dif} \Big),$$
respectively.  Therefore, 
$$\CD (L/K)\rtimes G(L/K)=\CD \Big(L^{G(L/K)}/L^{G(L/K)}_{dif}\Big)\t_{L^{G(L/K)}_{dif}}L_{dif}\rtimes  G\Big(L_{dif}/L^{G(L/K)}_{dif}\Big)$$
where
$$
\CD \Big(L^{G(L/K)}/L^{G(L/K)}_{dif}\Big)\subseteq E\Big(L^{G(L/K)}/ L^{G(L/K)}_{dif}\Big)\;\; {\rm and} \;\; 
L_{dif}\rtimes  G\Big(L_{dif}/L^{G(L/K)}_{dif}\Big)\subseteq  E\Big( L_{dif}/ L^{G(L/K)}_{dif}\Big).
$$
Hence, $$E\Big(L^{G(L/K)}/ L^{G(L/K)}_{dif}\Big)=\CD \Big(L^{G(L/K)}/L^{G(L/K)}_{dif}\Big)\;\; {\rm and}\;\; 
  E\Big( L_{dif}/ L^{G(L/K)}_{dif}\Big)= L_{dif}\rtimes  G\Big(L_{dif}/L^{G(L/K)}_{dif}\Big).$$
So, 
$$ E(L/L^{G(L/K)}_{dif})=\CD (L/K)\rtimes G(L/K)=\CD \Big(L^{G(L/K)}/L^{G(L/K)}_{dif}\Big)\t_{L^{G(L/K)}_{dif}}L_{dif}\rtimes  G\Big(L_{dif}/L^{G(L/K)}_{dif}\Big)$$
is a tensor product of algebras.
\end{proof}

If, in addition, the field extension $L/K$ is normal, Corollary \ref{cA20Apr25} shows that the algebra $ \CD (L/K)\rtimes G(L/K)$ is a tensor product of  the subalgebras $\CD (L^{pi}/K)$ and $ L^{sep}\rtimes  G (L^{sep}/K)$.

 \begin{corollary}\label{cA20Apr25}%\marginpar{cA20Apr25}
Let $L/K$ be a normal finite field extension of characteristic $p$. Recall that 
 $L=L^{pi}\t L^{sep}$  (Proposition  \ref{VB-C17Apr25}.(6)) and  
 $E(L/K)=E(L^{pi}/K) \t E ( L^{sep}/K)
 $
is a tensor product of algebras, see  (\ref{ELL-1}). Then 
$$ 
E(L/K)=\CD (L/K)\rtimes G(L/K)=\CD (L^{pi}/K)\t \Big(L^{sep}\rtimes  G(L^{sep}/K)\Big)
$$
is a tensor product of algebras such that 
$$E(L^{pi}/K)=\CD (L^{pi}/K)\;\; {\rm and}\;\; 
  E( L^{sep}/K)= L^{sep}\rtimes  G(L^{sep}/K).$$

\end{corollary}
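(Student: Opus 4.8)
The plan is to recognize the statement as the specialization of Theorem \ref{bA20Apr25} to the normal case, in which the three intermediate fields appearing there collapse to the familiar ones $L^{pi}$, $L^{sep}$ and $K$. Recall that Theorem \ref{bA20Apr25} asserts, for an arbitrary finite field extension of characteristic $p$, the decomposition
$$E\Big(L/L^{G(L/K)}_{dif}\Big)=\CD (L/K)\rtimes G(L/K)=\CD \Big(L^{G(L/K)}/L^{G(L/K)}_{dif}\Big)\t_{L^{G(L/K)}_{dif}}\Big(L_{dif}\rtimes G\Big(L_{dif}/L^{G(L/K)}_{dif}\Big)\Big),$$
together with the identifications of the two tensor factors as the corresponding full endomorphism algebras $E\big(L^{G(L/K)}/L^{G(L/K)}_{dif}\big)$ and $E\big(L_{dif}/L^{G(L/K)}_{dif}\big)$. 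Hence the whole task reduces to substituting the values of $L^{G(L/K)}$, $L_{dif}$ and $L^{G(L/K)}_{dif}$ that the normality hypothesis forces.

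First I would record the three identifications. Since $L/K$ is normal, Proposition \ref{A17Apr25}.(3a) gives $L^{G(L/K)}=L^{pi}$, Theorem \ref{23May25} gives $L_{dif}=L^{sep}$, and Proposition \ref{VB-C17Apr25}.(1) gives $L^{G(L/K)}_{dif}=L^{pi}_{dif}=K$. The last equality is the decisive one: it makes the base field of the tensor product equal to $K$, so that $\t_{L^{G(L/K)}_{dif}}$ becomes the ordinary tensor product $\t=\t_K$, and it makes $E\big(L/L^{G(L/K)}_{dif}\big)=E(L/K)$. These are exactly the substitutions already carried out in the proof of Corollary \ref{aA20Apr25} for the subalgebra $L\rtimes G(L/K)$, and the present corollary follows by the same mechanism applied to $\CD (L/K)\rtimes G(L/K)$.

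Substituting into Theorem \ref{bA20Apr25} then yields immediately
$$E(L/K)=\CD (L/K)\rtimes G(L/K)=\CD (L^{pi}/K)\t\Big(L^{sep}\rtimes G(L^{sep}/K)\Big),$$
with the factor identifications $E(L^{pi}/K)=\CD (L^{pi}/K)$ and $E(L^{sep}/K)=L^{sep}\rtimes G(L^{sep}/K)$ read off from the corresponding identifications in Theorem \ref{bA20Apr25}. There is no genuine obstacle here: all the substance resides in Theorem \ref{bA20Apr25} and in the normality-driven identifications of the three fields, all of which are already established. The only point requiring a moment's care is verifying that the collapse of the base field from $L^{G(L/K)}_{dif}$ to $K$ is legitimate, which is precisely Proposition \ref{VB-C17Apr25}.(1).
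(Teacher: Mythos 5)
Your proposal is correct and follows essentially the same route as the paper: the paper's proof likewise records the normality-driven identifications $L^{G(L/K)}=L^{pi}$ (Proposition \ref{A17Apr25}.(3a)), $L=L^{pi}\t L_{dif}$ with $L_{dif}=L^{sep}$ (Proposition \ref{VB-C17Apr25}.(6), Theorem \ref{23May25}) and $L^{pi}_{dif}=K$ (Proposition \ref{VB-C17Apr25}.(1)), and then reads the corollary off from Theorem \ref{bA20Apr25}. Your extra remark that the collapse of the base field $L^{G(L/K)}_{dif}$ to $K$ is the decisive substitution is exactly the point the paper's proof relies on.
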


\begin{proof} The finite field extension $L/K$ is normal. Therefore, $L^{G(L/K)}=L^{pi}$ (Proposition \ref{A17Apr25}.(3a)), $ L=L^{pi}\t L_{dif}$ (Proposition \ref{VB-C17Apr25}.(6)) and $L^{pi}_{dif}=K$ (Proposition \ref{VB-C17Apr25}.(1)). Now, statement 1 follows from  Theorem \ref{bA20Apr25}.
\end{proof}

{\bf $\CD (L/K)\rtimes G(L/K)\simeq \bigotimes_{i=1}^n\CD (L_i/K)\rtimes G(L_i/K)$ where  $L=\bigotimes_{i=1}^nL_i$.}

\begin{proposition}\label{A7Jun25}%\marginpar{A7Jun25}
Suppose that $L_i/K$ $(i=1, \ldots , n)$ are normal finite field extensions such that their tensor product $L=\bigotimes_{i=1}^nL_i$ is a field. Then: 
\begin{enumerate}

\item The finite field extension $L/K$ is normal, $L=L^{pi}\t L^{gal}$, $L^{pi}= \bigotimes_{i=1}^nL_i^{pi} $ and $  L^{gal}=\bigotimes_{i=1}^nL_i^{gal}$.

\item $\CD(L)=\bigotimes_{i=1}^n\CD (L_i/K)$ and $G(L/K)\simeq \prod_{i=1}^n G(L_i/K)$.

\item $\CD (L/K)\rtimes G(L/K)\simeq \bigotimes_{i=1}^n\CD (L_i/K)\rtimes G(L_i/K)$.

\end{enumerate}
\end{proposition}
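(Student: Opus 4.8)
The plan is to establish the three assertions in order, since the second and third build on the first and on the structure theory for normal extensions already developed. First I would prove that $L/K$ is normal: a composite of finitely many finite normal extensions is normal, and the hypothesis that $L=\bigotimes_{i=1}^nL_i$ is a field furnishes compatible $K$-embeddings $L_i\hookrightarrow L$ realizing $L$ as the composite of the $L_i$; if each $L_i$ is the splitting field of $f_i\in K[x]$, then $L$ is the splitting field of $\prod_i f_i$, hence normal. Given normality, Corollary~\ref{a25May25} yields $L=L^{pi}\t L^{gal}$.

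To identify the two tensor factors I would apply Corollary~\ref{a25May25} to each $L_i$, obtaining $L_i=L_i^{pi}\t L_i^{gal}$, and then regroup
$$
L=\bigotimes_{i=1}^n\big(L_i^{pi}\t L_i^{gal}\big)=\Big(\bigotimes_{i=1}^nL_i^{pi}\Big)\t\Big(\bigotimes_{i=1}^nL_i^{gal}\Big).
$$
Since a finite-dimensional $K$-subalgebra of the field $L$ is a domain, hence a field, both $P:=\bigotimes_iL_i^{pi}$ and $S:=\bigotimes_iL_i^{gal}$ are fields; moreover $P/K$ is purely inseparable and $S/K$ is Galois, being a composite of Galois extensions. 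Because $L/S$ is purely inseparable while $S/K$ is separable, $S$ is exactly the separable closure of $K$ in $L$, i.e. $S=L^{sep}=L^{gal}$ (using $L^{sep}=L^{gal}$ from Corollary~\ref{a25May25}.(2)); comparing $[P:K][S:K]=[L:K]=[L^{pi}:K][L^{gal}:K]$ with the inclusion $P\subseteq L^{pi}$ then forces $P=L^{pi}$, giving statement~(1).

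For statement~(2), the equality $\CD(L)=\bigotimes_{i=1}^n\CD(L_i/K)$ is immediate from Proposition~\ref{7Jun25}.(2), which requires only that the tensor product be a field. For the group isomorphism I would use the restriction map $G(L/K)\ra\prod_{i=1}^nG(L_i/K)$, $g\mapsto(g|_{L_i})_i$, which is well defined because each $L_i/K$ is normal (so $L_i$ is stable under every $g\in G(L/K)$) and injective because the $L_i$ generate $L$. It is then an isomorphism by the degree count $|G(L/K)|=[L^{gal}:K]=\prod_i[L_i^{gal}:K]=\prod_i|G(L_i/K)|$, where the outer equalities use Corollary~\ref{a25May25}.(2) for $L$ and for each $L_i$, and the middle one uses $L^{gal}=\bigotimes_iL_i^{gal}$ from part~(1).

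Finally, statement~(3) follows quickly from normality. Each $L_i/K$ is a normal finite extension, hence a B-extension, so $\CD(L_i/K)\rtimes G(L_i/K)=E(L_i/K)$ by Corollary~\ref{VB-B17Apr25}; likewise $\CD(L/K)\rtimes G(L/K)=E(L/K)$. Since the endomorphism algebra of a tensor product of vector spaces is the tensor product of the endomorphism algebras, $E(L/K)=E\big(\bigotimes_iL_i/K\big)=\bigotimes_iE(L_i/K)$, whence
$$
\CD(L/K)\rtimes G(L/K)=E(L/K)=\bigotimes_{i=1}^nE(L_i/K)=\bigotimes_{i=1}^n\CD(L_i/K)\rtimes G(L_i/K).
$$
I expect the only delicate point to be the bookkeeping in part~(1): checking that $P$ and $S$ are genuinely fields (equivalently, that the subfields $L_i^{pi}$, and the $L_i^{gal}$, remain mutually linearly disjoint inside $L$) and pinning down $S=L^{gal}$ rather than merely $S\subseteq L^{gal}$. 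Everything after that is assembly from the cited structure theorems.
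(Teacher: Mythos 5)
Your proof is correct. For statements (1) and (2) it is essentially the paper's route made explicit: the paper declares statement (1) ``obvious'' and compresses the group computation into the chain $G(L/K)=G(L^{pi}\t L^{gal}/K)\simeq G(L^{gal}/K)=G\big(\prod_iL_i^{gal}/K\big)\simeq\prod_iG(L_i/K)$ resting on Corollary~\ref{a25May25}.(2), while your restriction-map argument with the order count $|G(L/K)|=[L^{gal}:K]=\prod_i[L_i^{gal}:K]$ is precisely that chain verified by hand, and your identification of $P=\bigotimes_iL_i^{pi}$ with $L^{pi}$ and $S=\bigotimes_iL_i^{gal}$ with $L^{gal}$ (via ``$L/S$ purely inseparable, $S/K$ separable, then degree count'') supplies the details behind the paper's ``obvious''; likewise your $\CD$-computation just cites Proposition~\ref{7Jun25}, as the paper does. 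Where you genuinely diverge is statement (3). The paper deduces it formally from statement (2) inside the skew group algebra, via $\CD(L/K)\rtimes G(L/K)\simeq\big(\bigotimes_i\CD(L_i/K)\big)\rtimes\prod_iG(L_i/K)\simeq\bigotimes_i\big(\CD(L_i/K)\rtimes G(L_i/K)\big)$, which tacitly requires observing that under the identifications of statement (2) the product group acts componentwise on the tensor product of the $\CD(L_i/K)$. You instead invoke the B-extension property: by Corollary~\ref{VB-B17Apr25} (applicable since $L/K$ is normal by your part (1)), each $\CD(L_i/K)\rtimes G(L_i/K)$ equals $E(L_i/K)$ and $\CD(L/K)\rtimes G(L/K)$ equals $E(L/K)$, and then $E\big(\bigotimes_iL_i/K\big)=\bigotimes_iE(L_i/K)$ finishes the argument. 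Your route buys a cleaner proof --- it sidesteps the skew-product bookkeeping and in fact yields an equality under the canonical identification rather than a bare isomorphism --- at the price of leaning on the deeper input that normal extensions are B-extensions (Corollary~\ref{VB-B17Apr25}, ultimately Theorem~\ref{VVB-30Apr25}), whereas the paper's version of (3) is a purely formal consequence of (2). One minor point of scope: Corollaries~\ref{a25May25} and~\ref{VB-B17Apr25} are stated in characteristic $p$, consistent with the ambient assumptions of the section, so your citations are in order; and your flagged ``delicate point'' in part (1) is handled correctly, since the regrouping of tensor factors exhibits $P$ and $S$ as subalgebras of the field $L$, hence finite-dimensional domains, hence fields.
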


\begin{proof} 1. Statement 1 is obvious.

2. By Proposition \ref{7Jun25}, $\CD(L)=\bigotimes_{i=1}^n\CD (L_i/K)$. Now,
$$G(L/K)=G(L^{pi}\t L^{gal}/K)\simeq G( L^{gal}/K)=
G\bigg( \prod_{i=1}^n L_i^{gal}/K\bigg)
\simeq \prod_{i=1}^n G(L_i/K).
$$
3. By statement 2,
$$
\CD (L/K)\rtimes G(L/K)\simeq \bigg( \bigotimes_{i=1}^n\CD (L_i/K)\bigg)\rtimes \prod_{i=1}^n G(L_i/K)\simeq \bigotimes_{i=1}^n\CD (L_i/K)\rtimes G(L_i/K).$$
\end{proof}

 %%%%%%%%%%%%%%%%%% SECTION 6 %%%%%%%%%%%%%%%%%%%%%

\section{The classes of B-extensions and  normal finite field extensions coincide} \label{B-EXT=NORMAL} %\marginpar{B-EXT=NORMAL} 

 The aim of the section is to give a proof of Theorem \ref{VVB-30Apr25} which is a characterization of B-extensions. In particular, it shows that the classes of B-extensions and  normal finite field extensions coincide. 
Corollary \ref{VVB-a4May25} is a characterization of B-extensions $L/K$ with $\CD (L/K)=E(L/K)$.
  Corollary \ref{VVB-c4May25} is a characterization of B-extensions $L/K$ with $L\rtimes G(L/K)=E(L/K)$. \\

{\bf Normality criterion for field extensions.} 
\begin{definition}
Suppose that $K$ is a field of prime characteristic $p>0$. Then each non-scalar polynomial $f(x)\in K[x]$ admits a unique presentation 

%\marginpar{f=fsepxn}
\begin{equation}\label{f=fsepxn}
 f(x)=f^{sep}(x^{p^n})\;\; {\rm where}\;\; f^{sep}(x)\in K[x]\;\; {\rm is\; a\; separable\; polynomial\; and}\;\; n\geq 0.
\end{equation}
The equality (\ref{f=fsepxn}) is called a {\bf separable presentation} of the polynomial $f(x)$. The polynomial $f^{sep}(x)$ is called the {\bf separable part} of $f$ and the natural number $n$ is called the {\bf inseparability degree} of $f(x)$ and denoted by $\deg_{ins}(f)$. 
\end{definition}

For the polynomial $f(x)=\sum_{i\geq 0} \mu_ix^i$,  ${\rm coef}(f):= \{\mu_i\, | \, i\geq 0\}$ is the 
  set of its coefficients. Clearly, 
  %\marginpar{f=fsepxn-2}
\begin{equation}\label{f=fsepxn-2}
 {\rm coef}(f)={\rm coef}(f^{sep}).
\end{equation}
Notice that 
 %\marginpar{f=fsepxn-1}
\begin{equation}\label{f=fsepxn-1}
\deg(f)=p^n\deg (f^{sep})\;\; {\rm where}\;\; n=\deg_{ins}(f).
\end{equation}
For the polynomial $f(x)$ as in (\ref{f=fsepxn}),   $f(x)=\sum_{i\geq 0} \l_ix^{ip^n}=\bigg(\sum_{i\geq 0} \l_i^\frac{1}{p^n} x^i\bigg)^{p^n}$, where $n=\deg_{ins}(f)$, and so 

%\marginpar{f=fsepxn-3}
\begin{equation}\label{f=fsepxn-3}
f(x)=\bigg( {f^{sep}}^\frac{1}{p^n}\bigg)^{p^n}\;\; {\rm where}\;\;  {f^{sep}}^\frac{1}{p^n}:=\sum_{i\geq 0} \l_i^\frac{1}{p^n} x^i\in K\Big({\rm coef}(f)^\frac{1}{p^n}\Big)[x]
\end{equation}
is a {\em separable} polynomial over the field $K\Big({\rm coef}(f)^\frac{1}{p^n}\Big)$. Clearly,
%\marginpar{f=fsepxn-4}
\begin{equation}\label{f=fsepxn-4}
{\rm roots}(f)={\rm roots}( {f^{sep}}^\frac{1}{p^n}).
\end{equation}

\begin{proposition}\label{A12Apr25}%\marginpar{A12Apr25}
Suppose that $K$ is a field of prime characteristic $p>0$, $L/K$ is a normal field extension, i.e. it is a splitting field of  a set of polynomials $\{ f_i(x)=f_i^{sep}(x^{p^{n_i}})\, | \,i\in I\}$, and $L^{pi}/K$ is the largest purely inseparable field extension in the field $L/K$. Then: 
\begin{enumerate}

\item  $L^{pi}=K\bigg(\bigcup_{i\in I} {\rm coef}(f_i)^\frac{1}{p^{n_i}} \bigg)$ where ${\rm coef}(f_i):=\{ \l^\frac{1}{p^{n_i}}\, | \, \l \in 
{\rm coef}(f_i)\}$.

\item $L/L^{pi}$ is a Galois field extension which is generated by the roots of the polynomials  $\{ f_i\, | \, i\in I\}$ or, equivalently, by the roots of separable polynomials $\{ {f_i^{sep}}^\frac{1}{p^{n_i}})\, | \, i\in I\}\subseteq L^{pi}[x]$ (this follows from (\ref{f=fsepxn-4})).

\end{enumerate}
\end{proposition}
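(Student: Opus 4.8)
The plan is to introduce the field $E := K\big(\bigcup_{i\in I}{\rm coef}(f_i)^{1/p^{n_i}}\big)$ that appears on the right-hand side of statement 1, and to establish the two assertions simultaneously by sandwiching $L^{pi}$ between $E$ and $L$ and exploiting the separable/purely-inseparable dichotomy. First I would record that $E/K$ is purely inseparable: every generator $\lambda^{1/p^{n_i}}$ satisfies $\big(\lambda^{1/p^{n_i}}\big)^{p^{n_i}}=\lambda\in K$, hence is purely inseparable over $K$, and a field generated over $K$ by purely inseparable elements is purely inseparable over $K$. Since moreover $E\subseteq L$, maximality of $L^{pi}$ yields the inclusion $E\subseteq L^{pi}$.

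The second step is to show that $L/E$ is Galois. By hypothesis $L$ is the splitting field over $K$ of $\{f_i\}_{i\in I}$, so $L$ is generated over $K$, and a fortiori over $E$, by $\bigcup_{i\in I}{\rm roots}(f_i)$. By (\ref{f=fsepxn-4}) we have ${\rm roots}(f_i)={\rm roots}\big((f_i^{sep})^{1/p^{n_i}}\big)$, and by (\ref{f=fsepxn-3}) each $g_i:=(f_i^{sep})^{1/p^{n_i}}$ is a \emph{separable} polynomial; its coefficients are the $p^{n_i}$-th roots of the coefficients of $f_i$ (using (\ref{f=fsepxn-2})), hence lie in $E$, so $g_i\in E[x]$. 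Thus $L$ is the splitting field over $E$ of the family of separable polynomials $\{g_i\}_{i\in I}\subseteq E[x]$; being a splitting field it is normal, and being generated by separable elements it is separable, so $L/E$ is a Galois extension.

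The key step is to upgrade the inclusion $E\subseteq L^{pi}$ to an equality. Consider the intermediate extension $L^{pi}/E$, which is legitimate since $K\subseteq E\subseteq L^{pi}\subseteq L$. On one hand $L^{pi}/E$ is purely inseparable: each $\alpha\in L^{pi}$ satisfies $\alpha^{p^m}\in K\subseteq E$ for some $m$, so $\alpha$ is purely inseparable over $E$. On the other hand $L^{pi}/E$ is separable, being an intermediate extension of the separable extension $L/E$ just established. An extension that is simultaneously separable and purely inseparable is trivial, whence $L^{pi}=E$. This is statement 1, and statement 2 is precisely the description of $L/E=L/L^{pi}$ obtained in the second step.

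The main obstacle is the equality $L^{pi}=E$ rather than the easy containment $E\subseteq L^{pi}$: one must argue that $E$ already contains \emph{all} purely inseparable elements of $L$. The clean route, as above, is to avoid computing $L^{pi}$ directly and instead force $L^{pi}/E$ to be both separable (from normality and separability of $L/E$) and purely inseparable (from the defining property of $L^{pi}$). The only points needing care are that taking $p^{n_i}$-th roots of the coefficients preserves separability — this is exactly (\ref{f=fsepxn-3}), amounting to applying the inverse Frobenius automorphism of $\bK$ to the separable polynomial $f_i^{sep}$ — and the bookkeeping that the coefficients of $g_i$ indeed generate $E$.
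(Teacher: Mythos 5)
Your proposal is correct and follows essentially the same route as the paper: you introduce the same candidate field $E=K\big(\bigcup_{i\in I}{\rm coef}(f_i)^{1/p^{n_i}}\big)$ (the paper's $L^{pi}_c$), observe it is purely inseparable over $K$ and hence contained in $L^{pi}$, show $L/E$ is Galois as the splitting field of the separable polynomials $(f_i^{sep})^{1/p^{n_i}}\in E[x]$, and conclude $E=L^{pi}$ from the fact that $L^{pi}/E$ is simultaneously purely inseparable and separable. Your write-up merely spells out in more detail the dichotomy argument the paper compresses into its final sentence.
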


\begin{proof}  Clearly, $L^{pi}\supseteq L^{pi}_c:=K\bigg(\bigcup_{i\in I} {\rm coef}(f_i)^\frac{1}{p^{n_i}} \bigg)$ and the field extension 
$$
L/L^{pi}_c=L^{pi}_c\bigg(\bigcup_{i\in I} {\rm roots}\Big({f_i^{sep}}^\frac{1}{p^{n_i}} \Big) \bigg)
$$ 
is a Galois field extension as it is a splitting field of the set of separable polynomials $\Big\{ {f_i^{sep}}^\frac{1}{p^{n_i}})\, | \, i\in I\Big\}\subseteq L^{pi}_c[x]$. Since $L_c^{pi}\subseteq L^{pi}\subseteq L$, $L^{pi}/L^{pi}_c$ is a purely inseparable field extension and $L/L_c^{pi}$ is a Galois field extension, we must have $L_c^{pi}=L^{pi}$. 
\end{proof}

Theorem \ref{A11May25} is a normality criterion for     a  field extension.

\begin{theorem}\label{A11May25}%\marginpar{A11May25}
Suppose that $K$ is a field of prime characteristic $p>0$, $L/K$ is a  field extension. Then the field extension $L/K$ is a normal field extension iff $L^{G(L/K)}=L^{pi}$. 
\end{theorem}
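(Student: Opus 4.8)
The plan is to prove the two implications separately, after first recording a single observation that does all the real work: the inclusion $L^{pi}\subseteq L^{G(L/K)}$ always holds, and more is true. Indeed, if $a\in L^{pi}$ then its minimal polynomial over $K$ has the form $x^{p^m}-c=(x-a)^{p^m}$ with $c=a^{p^m}\in K$, so $a$ is its unique root in $\bK$. Consequently every $K$-embedding $\sigma\colon L\ra \bK$ satisfies $\sigma(a)^{p^m}=\sigma(a^{p^m})=a^{p^m}$, whence $(\sigma(a)-a)^{p^m}=0$ and $\sigma(a)=a$; in particular every $g\in G(L/K)$ fixes $L^{pi}$ pointwise. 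Thus $L^{pi}$ is rigid, and the whole content of the asserted equality is the reverse inclusion $L^{G(L/K)}\subseteq L^{pi}$.

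For the forward direction I would assume $L/K$ normal and invoke Proposition \ref{A12Apr25}.(2), which gives that $L/L^{pi}$ is a Galois extension. By the observation above every element of $G(L/K)$ fixes $L^{pi}$ pointwise, so $G(L/K)\subseteq G(L/L^{pi})$; the reverse containment $G(L/L^{pi})\subseteq G(L/K)$ is immediate from $K\subseteq L^{pi}$, so $G(L/K)=G(L/L^{pi})$. Since $L/L^{pi}$ is Galois, the fixed field of its full automorphism group is the base field, giving $L^{G(L/K)}=L^{G(L/L^{pi})}=L^{pi}$. (In the finite setting this forward direction is exactly Proposition \ref{A17Apr25}.(3a), which one may cite instead.)

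For the reverse direction I would assume $L^{G(L/K)}=L^{pi}$. By Theorem \ref{BC24Mar25}.(1) the extension $L/L^{G(L/K)}$ is Galois, so under the hypothesis $L/L^{pi}$ is Galois and in particular normal. Now take an arbitrary $K$-embedding $\sigma\colon L\ra \bK$. By the opening observation $\sigma$ restricts to the identity on $L^{pi}$, so $\sigma$ is an $L^{pi}$-embedding of $L$; by normality of $L/L^{pi}$ this forces $\sigma(L)=L$. As $\sigma$ was an arbitrary $K$-embedding, $L/K$ is normal.

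The delicate point, and the step I expect to be the main obstacle, is precisely this reverse direction: one must upgrade the \emph{automatic} normality of $L/L^{G(L/K)}$ (which is Galois by Theorem \ref{BC24Mar25}.(1)) to normality of $L/K$ over the \emph{smaller} base $K$. This cannot follow from transitivity, since normality is not transitive in towers, and indeed $L/L^{pi}$ and $L^{pi}/K$ are both normal for \emph{every} finite extension. What makes the argument go through is exactly the hypothesis $L^{G(L/K)}=L^{pi}$: it identifies the intermediate field as purely inseparable, and purely inseparable rigidity is what prevents any $K$-embedding of $L$ from escaping $L$. I would also double-check the finiteness assumptions attached to Theorem \ref{BC24Mar25}.(1); in the full generality of the statement one should instead quote the general fact that $L^{\Aut_F(L)}=F$ for any Galois extension $L/F$, so that the identification $G(L/K)=G(L/L^{pi})$ and the fixed-field computation remain valid.
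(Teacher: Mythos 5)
Your proposal is correct and takes essentially the same route as the paper's proof: the forward direction via Proposition \ref{A12Apr25}.(2) (so $L/L^{pi}$ is Galois) combined with the pointwise rigidity of $L^{pi}$, and the reverse direction by noting that every $K$-embedding of $L$ into $\bK$ fixes $L^{pi}$ pointwise and that $L/L^{G(L/K)}=L/L^{pi}$ is Galois, hence normal, which forces $\sigma(L)=L$. Your explicit minimal-polynomial argument for the rigidity of $L^{pi}$ and your caveat about the finiteness hypothesis in Theorem \ref{BC24Mar25}.(1) merely spell out steps the paper passes over with ``clearly''.
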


\begin{proof} Clearly, $L^{pi}\subseteq L^{G(L/K)}$.

$(\Rightarrow)$ Suppose that the field extension $L/K$ is a normal field extension. Then, by Proposition \ref{A12Apr25}.(2), the field extension $L/L^{pi}$ is a Galois field extension, and so $L^{G(L/K)}=L^{pi}$.

$(\Leftarrow)$ Suppose that  $L^{G(L/K)}=L^{pi}$. We have to show that  $\s (L)=L$ for all automorphisms $\s\in G(\bK /K)$. Clearly, $\s (L^{pi})=L^{pi}$ for all automorphisms $\s\in G(\bK /K)$. Therefore, $\s\in  G(\bK/L^{pi})$. Since the field extension $L/L^{G(L/K)}=L/L^{pi}$ is a Galois field extension, $\s (L)=L$ for all automorphisms $\s\in G(\bK /K)$. This means that the field extension $L/K$ is a normal field extension.
\end{proof}

{\bf Characterization of B-extensions.}
  Theorem \ref{VVB-30Apr25} is a characterization of  B-extensions. 
  
   \begin{theorem}\label{VVB-30Apr25}%\marginpar{VVB-30Apr25}
Let $L/K$ be a finite field extension  of prime characteristic. Then the  following statements are equivalent:
\begin{enumerate}

\item $L/K$ is a B-extension.

\item $L=L^{G(L/K)}\t L_{dif}$.

%\item $L=L^{pi}\t L_{dif}$.

\item $L^{G(L/K)}_{dif}=K$.

%\item $L^{pi}\cap  L_{dif}=\{ e\}$.

\item $L=L^{G(L/K)}\t L^{gal}$ and  $L^{G(L/K)}/K$ is a normal the field  extension.

\item $L^{G(L/K)}=L^{pi}$. 

%\item ***neverno $L=L^{pi}\t L_{dif}$.***

%\item ***neverno   $L^{pi}_{dif}=K$.  ***

\item $L=L^{pi}\t L^{gal}$.

\item  $L/K$ is normal.

\end{enumerate} 
If the equivalent conditions hold then:

\begin{enumerate}

%\item[(i)] $L^{G(L/K)}=L^{pi}$ and $L_{dif}= L^{gal}$.

\item[(a)] $L^{G(L/K)}=L^{pi}$,  $\CD (L^{G(L/K)}/K)=E(L^{G(L/K)}/K)$ and  $\Big(L^{G(L/K)}/K\Big)_{dif}=K$. 

\item[(b)]    $L_{dif}=L^{sep}= L^{gal}$,  $L_{dif}\rtimes G(L_{dif}/K)=E(L_{dif}/K)$, $(L_{dif})^{G(L_{dif}/K)}=K$ and  
\begin{eqnarray*}
E(L/K)&=&\CD (L/K)\rtimes G(L/K)=\CD (L^{G(L/K)}/K) \t \Big(L_{dif}\rtimes G(L_{dif}/K) \Big)\\
&=&\CD (L^{G(L/K)}/K) \t \Big(L^{gal}\rtimes G(L^{gal}/K) \Big)=\CD (L^{pi}/K) \t \Big(L_{dif}\rtimes G(L_{dif}/K) \Big)\\
&=&\CD (L^{pi}/K) \t \Big(L^{gal}\rtimes G(L^{gal}/K) \Big).
\end{eqnarray*}

\end{enumerate}
\end{theorem}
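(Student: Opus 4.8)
The plan is to prove the seven conditions equivalent by assembling the structural results already established, anchoring everything to condition (5), $L^{G(L/K)}=L^{pi}$, and to normality, condition (7). Several links are immediate citations: $(1\Leftrightarrow 3)$ is Corollary \ref{c17Apr25}, $(5\Leftrightarrow 7)$ is Theorem \ref{A11May25}, and $(6\Leftrightarrow 7)$ is Corollary \ref{a25May25}.(1). This reduces the task to inserting conditions (2) and (4) into the chain and closing the loop between the ``differential'' group $\{1,2,3\}$ and the ``normality'' group $\{5,6,7\}$ via $(3\Leftrightarrow 5)$.

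For $(2\Leftrightarrow 3)$ I would start from Proposition \ref{AC17Apr25}.(6), which gives $L=L^{G(L/K)}\t_{L^{G(L/K)}_{dif}}L_{dif}$ together with the degree identity $[L:L^{G(L/K)}_{dif}]=[L^{G(L/K)}:L^{G(L/K)}_{dif}][L_{dif}:L^{G(L/K)}_{dif}]$. If $L^{G(L/K)}_{dif}=K$ the base of the tensor product is $K$ and (2) follows; conversely, writing $F=L^{G(L/K)}_{dif}$ and comparing $[L:K]=[L^{G(L/K)}:F][L_{dif}:F][F:K]$ with the product $[L^{G(L/K)}:K][L_{dif}:K]=[L^{G(L/K)}:F][L_{dif}:F][F:K]^2$ forces $[F:K]^2=[F:K]$, hence $F=K$. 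For $(3\Leftrightarrow 5)$ I would use $L_{dif}=L^{sep}$ (Theorem \ref{23May25}) to rewrite (3) as $L^{G(L/K)}\cap L^{sep}=K$; since $L^{G(L/K)}\cap L^{sep}=(L^{G(L/K)})^{sep}$ is the separable closure of $K$ inside $L^{G(L/K)}$, condition (3) says precisely that $L^{G(L/K)}/K$ is purely inseparable, i.e. $L^{G(L/K)}\subseteq L^{pi}$, and combined with the always-valid inclusion $L^{pi}\subseteq L^{G(L/K)}$ this is exactly (5).

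Condition (4) is the only one whose placement takes a little thought, and its handling is where the main care is required. For $(5\Rightarrow 4)$: under (5) the field $L^{G(L/K)}=L^{pi}$ is purely inseparable over $K$, hence normal, and (5) is equivalent to normality, so $L=L^{pi}\t L^{gal}=L^{G(L/K)}\t L^{gal}$ by (6). For $(4\Rightarrow 5)$: linear disjointness $L=L^{G(L/K)}\t L^{gal}$ gives $L^{G(L/K)}\cap L^{gal}=K$, so the largest Galois subextension of $L^{G(L/K)}/K$ is trivial, $(L^{G(L/K)})^{gal}=K$; feeding this into the decomposition $L^{G(L/K)}=(L^{G(L/K)})^{pi}\t(L^{G(L/K)})^{gal}$ of the \emph{normal} extension $L^{G(L/K)}/K$ (Corollary \ref{a25May25}.(1) applied to the intermediate field rather than to $L$ itself) collapses $L^{G(L/K)}$ to its purely inseparable part, whence $L^{G(L/K)}\subseteq L^{pi}$ and (5) follows.

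Finally, once the equivalences are in place the supplementary claims (a) and (b) are read off the established structure. For (a): $L^{G(L/K)}=L^{pi}$ is (5), while $\CD (L^{pi}/K)=E(L^{pi}/K)$ and $(L^{pi}/K)_{dif}=(L^{pi})^{sep}=K$ come from Theorem \ref{C24Mar25}.(1) and Theorem \ref{23May25}. For (b): $L_{dif}=L^{sep}=L^{gal}$ follows from Theorem \ref{23May25} and Corollary \ref{a25May25}.(2); then $L_{dif}\rtimes G(L_{dif}/K)=E(L_{dif}/K)$ and $(L_{dif})^{G(L_{dif}/K)}=K$ follow from $L_{dif}=L^{gal}$ being Galois over $K$ (Proposition \ref{A8May25}); and the displayed tensor factorizations of $E(L/K)$ are exactly Corollary \ref{cA20Apr25}.
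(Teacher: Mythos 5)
Your proof is correct, and all of your citations point at genuinely prior results, so there is no circularity. The skeleton agrees with the paper's for most links --- $(1\Leftrightarrow 3)$ from Theorem \ref{AB17Apr25}.(2)/Corollary \ref{c17Apr25}, $(5\Leftrightarrow 7)$ from Theorem \ref{A11May25}, $(6\Leftrightarrow 7)$ from Corollary \ref{a25May25}.(1), and $(4\Rightarrow 5)$ verbatim the paper's argument --- but your bridge between the ``differential'' conditions $\{1,2,3\}$ and the ``normality'' conditions $\{5,6,7\}$ is genuinely different. The paper routes through (4): from $L^{G(L/K)}_{dif}=K$ it invokes Theorem \ref{bA20Apr25} to obtain $E(L^{G(L/K)}/K)=\CD (L^{G(L/K)}/K)$ and then the criterion of Theorem \ref{B11May25} to conclude that $L^{G(L/K)}/K$ is purely inseparable, hence normal; you instead prove $(3\Leftrightarrow 5)$ directly and elementarily, rewriting (3) via $L_{dif}=L^{sep}$ (Theorem \ref{23May25}) as $L^{G(L/K)}\cap L^{sep}=K$, identifying $L^{G(L/K)}\cap L^{sep}=\big(L^{G(L/K)}\big)^{sep}$, and combining pure inseparability of $L^{G(L/K)}/K$ with the always-valid inclusion $L^{pi}\subseteq L^{G(L/K)}$; condition (4) then hangs off (5) as a spur rather than sitting in the main chain, which covers the cycle just as well. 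Your $(2\Leftrightarrow 3)$ also adds detail the paper leaves implicit: the paper asserts the equivalence directly from $L=L^{G(L/K)}\t_{L^{G(L/K)}_{dif}}L_{dif}$ (Proposition \ref{AC17Apr25}.(6)), while you make the converse explicit with the degree count forcing $[F:K]^2=[F:K]$ for $F=L^{G(L/K)}_{dif}$ --- a correct and clean justification. What each approach buys: yours is shorter and removes the operator-theoretic machinery of Theorems \ref{bA20Apr25} and \ref{B11May25} from the bridge entirely (in (a) you likewise substitute the direct Theorem \ref{C24Mar25}.(1) for the paper's appeal to Theorem \ref{B11May25}); the paper's longer detour exhibits that the single condition $L^{G(L/K)}_{dif}=K$ already forces $\CD (L^{G(L/K)}/K)=E(L^{G(L/K)}/K)$, i.e.\ the differential-operator criterion itself detects pure inseparability, which is more in the spirit of its ring-theoretic programme. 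For (a) and (b) your treatment matches the paper's in substance, except that you cite Corollary \ref{cA20Apr25} for the tensor factorizations where the paper recomputes them from $E(L^{pi}\t L^{gal}/K)=E(L^{pi}/K)\t E(L^{gal}/K)$; both are valid.
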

  
  \begin{proof}   $(1\Leftrightarrow 2\Leftrightarrow 3)$ $L/K$ is a B-extension iff $E(L/K)=\CD (L/K)\rtimes G(L/K)=E\Big(L/L^{G(L/K)}_{dif}\Big)$ (Theorem \ref{AB17Apr25}.(2)) iff $L^{G(L/K)}_{dif}=K$ iff 
$$L=L^{G(L/K)}\t L_{dif}$$
since $L=L^{G(L/K)}\t_{L^{G(L/K)}_{dif}} L_{dif}$ (Proposition \ref{AC17Apr25}).(6)).

 $(3\Rightarrow 4)$ Recall that $L_{dif}=L^{sep}$ (Theorem \ref{23May25}). Suppose that $L^{G(L/K)}_{dif}=K$. Then, by the diagram after Proposition \ref{AC17Apr25},  %(\ref{diaGLK}), 
 the condition $K=L^{G(L/K)}_{dif}=\Big( L_{dif}\Big)^{G(L/K)}$  implies that the field extension  $L_{dif}/K=L^{sep}/K$ is a Galois field extension, $L=L^{G(L/K)}\t L_{dif}$ and $L_{dif}=L^{sep}=L^{gal}$.
 
By the equality $L^{G(L/K)}_{dif}=K$  and Theorem  \ref{bA20Apr25}, we have that 
$$
E(L^{G(L/K)}/K )=\CD (L^{G(L/K)}/K).
$$
 Then,  by Theorem \ref{B11May25},  the finite field extension $L^{G(L/K)}/K$ is purely inseparable, hence normal.

 $(4\Rightarrow 5)$ Clearly, $L^{pi}\subseteq L^{G(L/K)}$. Suppose that $L=L^{G(L/K)}\t L^{gal}$ and the field  extension $L^{G(L/K)}/K$ is normal. Since the field  extension $L^{G(L/K)}/K$ is normal and $L^{pi}\subseteq L^{G(L/K)}$, we have that $\Big(L^{G(L/K)}\Big)^{pi}=L^{pi}$ and, by Corollary \ref{a25May25},
 $$
 L^{G(L/K)}= \Big(L^{G(L/K)}\Big)^{pi}\t \Big(L^{G(L/K)} \Big)^{gal} =L^{pi}\t\Big(L^{G(L/K)} \Big)^{gal}. $$
 Now, the condition $L=L^{G(L/K)}\t L^{gal}$ implies that $\Big(L^{G(L/K)} \Big)^{gal}=K $, and so $
 L^{G(L/K)}= L^{pi}$.

 $(5\Leftrightarrow 7)$ The equivalence is  Theorem \ref{A11May25}.

  $(6\Leftrightarrow 7)$ The equivalence is  Corollary  \ref{a25May25}.(1). 
 
$(7\Rightarrow 2)$ Recall that statements 5--7 are equivalent. Suppose that statement 7 holds, i.e. the finite field extension $L/K$ is normal and  
$L^{G(L/K)}=L^{pi}$ and  $L^{sep}= L^{gal}$  and 
$$
L=L^{pi}\t L^{gal}=L^{G(L/K)}\t L^{sep}.$$
Then, by Theorem \ref{23May25}, $L^{sep}=L_{dif}$, and so 
$$L=L^{G(L/K)}\t L^{sep}=L^{G(L/K)}\t L_{dif},$$
as required.

(a) Suppose that the field extension $L/K$ is a B-extension. Then $L^{G(L/K)}=L^{pi}$ (statement 6). Now,
$$ E\Big(L^{G(L/K)}/\Big(L^{G(L/K)}/K \Big)_{dif} \Big)\stackrel{{\rm Thm.}\, \ref{AC24Mar25}.(1)}{=}\CD (L^{G(L/K)}/K)=\CD (L^{pi}/K)\stackrel{{\rm Thm.}\, \ref{B11May25}}{=}E(L^{pi}/K)=
E(L^{G(L/K)}/K), 
$$
and so   $\CD (L^{G(L/K)}/K)=E(L^{G(L/K)}/K)$ and  $\Big(L^{G(L/K)}/K\Big)_{dif}=K$.

(b) Suppose that the field extension $L/K$ is a B-extension. Then the equalities $L_{dif}=L^{sep}$ and $L^{G(L/K)}=L^{pi}$, the inclusion $L^{gal}\subseteq L^{sep}$ and the fact that 
$$
L=L^{G(L/K)}\t L_{dif}=L^{pi}\t L^{sep}\supseteq L^{pi}\t L^{gal}=L
$$
imply the equality $L^{pi}\t L^{sep}= L^{pi}\t L^{gal}$. Therefore,  $L_{dif}=L^{sep}= L^{gal}$  (since $L^{sep}\supseteq  L^{gal}$). Now, 
 $L^{gal}\rtimes G(L^{gal}/K)=E(L^{gal}/K)$ (Theorem \ref{BC24Mar25}.(1,2))  and  $\Big(L^{gal}\Big)^{G(L^{gal}/K)}=K$.

\begin{eqnarray*}
E(L/K)&=&E(L^{pi}\t L^{gal}/K)=E(L^{pi}/K)\t E(L^{gal}/K)=\CD (L^{pi}/K)\t L^{gal}\rtimes G(L^{gal}/K)\\
&=& \Big(L^{gal}\t \CD (L^{pi}/K)\Big) \rtimes G(L^{gal}/K)=\CD (L/K)\rtimes G(L/K).
\end{eqnarray*}
The rest follows from the equalities $L_{dif}=L^{sep}=L^{gal}$ and $L^{G(L/K)}=L^{pi}$. 
\end{proof}

Corollary \ref{VVB-a4May25} is a characterization of B-extensions $L/K$ with $\CD (L/K)=E(L/K)$.

\begin{corollary}\label{VVB-a4May25}%\marginpar{VVB-a4May25}
Let $L/K$ be a finite field extension of prime characteristic. Then the  following statements are equivalent:
\begin{enumerate}

\item $\CD (L/K)=E(L/K)$, i.e. $L/K$ is a B-extension with  $\CD (L/K)=E(L/K)$.

\item $L_{dif}=K$.

\item $L^{G(L/K)}=L$ and $L/K$ is a B-extension.
  
\item $G(L/K)=\{ e\}$ and $L/K$ is a B-extension.

\item $L^{pi}=L$.

\end{enumerate}
\end{corollary}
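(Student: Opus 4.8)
The plan is to route every condition through statement (2), $L_{dif}=K$, which is the most directly computable invariant, and to handle the three ``intrinsic'' conditions (1), (2), (5) as a tight cycle before folding in the two B-extension conditions (3), (4).

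First I would dispatch $(1)\Leftrightarrow(2)\Leftrightarrow(5)$. For $(1)\Rightarrow(5)$ I simply invoke Theorem \ref{B11May25}, which already asserts $\CD (L/K)=E(L/K)$ iff $L=L^{pi}$. For $(5)\Rightarrow(2)$ I use Theorem \ref{23May25}: if $L=L^{pi}$ then $L/K$ is purely inseparable, so $L^{sep}=K$ and hence $L_{dif}=L^{sep}=K$. For $(2)\Rightarrow(1)$ I apply Theorem \ref{AC24Mar25}.(1), by which $\CD (L/K)=E(L/L_{dif})$; when $L_{dif}=K$ this is exactly $E(L/K)$. (Equivalently, $(2)\Leftrightarrow(5)$ is nothing but the equality $L_{dif}=L^{sep}$ of Theorem \ref{23May25} combined with the tautology $L^{sep}=K\Leftrightarrow L=L^{pi}$.)

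Next I would attach the two B-extension conditions to this cycle. For $(5)\Rightarrow(4)$: a purely inseparable finite field extension is a B-extension by Theorem \ref{C24Mar25}.(1), and every $K$-automorphism of such an extension fixes $L$ pointwise (if $l^{p^n}\in K$ then $(\s (l)-l)^{p^n}=\s (l^{p^n})-l^{p^n}=0$, forcing $\s (l)=l$), so $G(L/K)=\{ e\}$. The implication $(4)\Rightarrow(3)$ is immediate, since $G(L/K)=\{ e\}$ forces $L^{G(L/K)}=L$ and the clause ``$L/K$ is a B-extension'' is shared by both statements. Finally, for $(3)\Rightarrow(2)$ I use Corollary \ref{c17Apr25}: a B-extension satisfies $L^{G(L/K)}_{dif}=K$, and since $L^{G(L/K)}_{dif}=L^{G(L/K)}\cap L_{dif}$, the hypothesis $L^{G(L/K)}=L$ collapses this to $L_{dif}=K$, which is (2). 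This closes the argument: $(1)\Leftrightarrow(2)\Leftrightarrow(5)$ and $(5)\Rightarrow(4)\Rightarrow(3)\Rightarrow(2)$ together give the full equivalence.

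I expect no serious obstacle, as all the weight is carried by Theorems \ref{AC24Mar25}.(1), \ref{B11May25}, \ref{23May25} and Corollary \ref{c17Apr25}. The only point demanding care is keeping the shared ``$L/K$ is a B-extension'' clause explicit in (3) and (4): without it these statements would not be equivalent to pure inseparability, and it is precisely this clause that lets $L^{G(L/K)}_{dif}=K$ be invoked. It is worth noting that the full equivalence $G(L/K)=\{ e\}\Leftrightarrow L^{G(L/K)}=L$ holds in general (the nontrivial direction following from Theorem \ref{BC24Mar25}.(1), which identifies $G(L/K)$ with $G(L/L^{G(L/K)})$), but only the easy half $G(L/K)=\{ e\}\Rightarrow L^{G(L/K)}=L$ is actually used in the cycle above.
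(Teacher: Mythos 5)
Your proof is correct, and it takes a noticeably different route from the paper's. The paper deduces almost everything from the master characterization Theorem \ref{VVB-30Apr25}: it gets $(1\Leftrightarrow 4)$ in one line from $E(L/K)=\CD (L/K)\rtimes G(L/K)$, and it handles $(2\Rightarrow 3)$, $(4\Rightarrow 2)$ and $(4\Leftrightarrow 3)$ by exploiting the tensor decompositions $L=L^{G(L/K)}\t L_{dif}$ and $L=L^{G(L/K)}\t L^{gal}$ (e.g.\ $(4\Rightarrow 2)$ is the dimension count $[L:K]=[L:K][L_{dif}:K]$ forcing $L_{dif}=K$); the only ingredient you share is Theorem \ref{B11May25} for $(1\Leftrightarrow 5)$. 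You instead bypass Theorem \ref{VVB-30Apr25} entirely, running the tight cycle $(1\Leftrightarrow 2\Leftrightarrow 5)$ on the earlier structural facts $\CD (L/K)=E(L/L_{dif})$ (Theorem \ref{AC24Mar25}.(1)) and $L_{dif}=L^{sep}$ (Theorem \ref{23May25}), and closing $(5\Rightarrow 4\Rightarrow 3\Rightarrow 2)$ via Theorem \ref{C24Mar25}.(1), an elementary Frobenius computation $(\s (l)-l)^{p^n}=\s (l^{p^n})-l^{p^n}=0$ showing $G(L/K)=\{ e\}$ for purely inseparable extensions, and the criterion $L^{G(L/K)}_{dif}=K$ of Corollary \ref{c17Apr25}. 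Since Theorems \ref{AC24Mar25}, \ref{23May25}, \ref{C24Mar25}, \ref{B11May25} and Corollary \ref{c17Apr25} all precede this corollary, there is no circularity. What your route buys is a lighter, more self-contained argument that makes condition (2) the hub and exposes exactly where each hypothesis enters (your closing caveat that the B-extension clause in (3) and (4) is indispensable is well placed: a separable non-normal extension with trivial automorphism group satisfies $L^{G(L/K)}=L$ yet has $L_{dif}=L\neq K$); what the paper's route buys is the conceptual point that the corollary is a degenerate specialization of the normality characterization, with the equivalences falling out of the tensor decompositions almost without computation.
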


\begin{proof} $(1\Leftrightarrow 5)$  Theorem \ref{B11May25}.

$(1\Leftrightarrow 4)$ For the $B$-extension $L/K$, $E(L/K)=\CD (L/K)\rtimes G(L/K)$.
Therefore $E(L/K)=\CD (L/K)$ iff $G(L/K)=\{ e\}$.

$(2\Rightarrow 3)$ Suppose that $L_{dif}=K$. Then $L^{G(L/K)}_{dif}=K$, and so the field extension $L/K$ is a B-extension, by Theorem \ref{VVB-30Apr25}. Now, by Theorem \ref{VVB-30Apr25}, $L=L^{G(L/K)}\t L_{dif}$. Therefore,  $L=L^{G(L/K)}$ (since $L^{G(L/K)}_{dif}=K$) and $L/K$ is a B-extension. 

$(4\Rightarrow 2)$ Suppose that $G(L/K)=\{ e\}$ and $L/K$ is a B-extension. Then, by Theorem \ref{VVB-30Apr25}, $L=L^{G(L/K)}\t L_{dif}$ and $G(L/K)=\{ e\}$. Therefore, $L=L\t L_{dif}$, and so the equality 
$$
[L:K]=[L:K][L_{dif}:K]
$$ 
 implies the equality $[L_{dif}:K]=1$. This means that  $L_{dif}=K$. 

$(4\Leftrightarrow 3)$ By Theorem \ref{VVB-30Apr25},  
for the $B$-extension $L/K$, $L=L^{G(L/K)}\t L^{gal}$. Therefore, $G(L/K)=\{ e\}$ iff  $L=L^{G(L/K)}$ iff
$L^{gal}=K$.
\end{proof}

%******************
%$(4\Leftrightarrow 2)$ By Theorem \ref{VVB-30Apr25},  
%for the $B$-extension $L/K$, $L=L^{G(L/K)}\t L_{dif}$. Therefore, $G(L/K)=\{ e\}$ iff  $L=L^{G(L/K)}$ iff
%$L_{dif}=K$.
%************************

%$(4\Leftrightarrow 5)$  By Theorem \ref{VVB-30Apr25},  
%for the $B$-extension $L/K$, $L^{G(L/K)}=L^{pi}$. Therefore, $G(L/K)=\{ e\}$ iff  $L=L^{G(L/K)}=L^{pi}$.

  Corollary \ref{VVB-c4May25} is a characterization of B-extensions $L/K$ with $L\rtimes G(L/K)=E(L/K)$, i.e. it is a  characterization of Galois field  extensions $L/K$. In Corollary \ref{VVB-c4May25}, the equivalences $(3\Leftrightarrow 4\Leftrightarrow 6)$ are known.

\begin{corollary}\label{VVB-c4May25}%\marginpar{VVB-c4May25}
Let $L/K$ be a finite field extension of prime characteristic. Then the  following statements are equivalent:
\begin{enumerate}

\item  $L\rtimes G(L/K)=E(L/K)$, i.e. $L/K$ is a B-extension with  $L\rtimes G(L/K)=E(L/K)$.

\item $L_{dif}=L$  and $L/K$ is a B-extension.

\item $L^{G(L/K)}=K$.

\item The field extension $L/K$ is a Galois extension.

\item $\CD (L/K)=L$ and $L/K$ is a B-extension.

\item $L/K$ is normal and $L^{pi}=K$, i.e. $L/K$ is a normal and separable.

\item $L/K$ is normal and $L_{dif}=L$.

\end{enumerate}
\end{corollary}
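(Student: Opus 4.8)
The plan is to take statement (4), "\,$L/K$ is Galois," as the central node and show that every other condition is equivalent to it, so that the whole corollary reduces to four short deductions built on results already in hand. The very first equivalence $(1\Leftrightarrow 4)$ is immediate: Proposition \ref{A8May25} states precisely that $L/K$ is Galois iff $L\rtimes G(L/K)=E(L/K)$ (and since $L\subseteq \CD (L/K)$ forces $L\rtimes G(L/K)\subseteq \CD (L/K)\rtimes G(L/K)\subseteq E(L/K)$, the equality $L\rtimes G(L/K)=E(L/K)$ automatically makes $L/K$ a B-extension, which justifies the ``i.e.'' in (1)). The equivalences $(3\Leftrightarrow 4\Leftrightarrow 6)$ are classical: $L^{G(L/K)}=K$ iff $L/K$ is Galois by Artin's theorem, and $L/K$ is Galois iff it is normal and separable; the identification of ``$L^{pi}=K$'' with separability in the presence of normality is exactly Corollary \ref{a25May25}, which gives $L=L^{pi}\t L^{gal}$ for normal $L/K$, so that $L^{pi}=K$ forces $L=L^{gal}=L^{sep}$.

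The remaining conditions (2), (5), (7) all turn on the dictionary supplied by Proposition \ref{VB-aC24Mar25}, namely that the three statements $\CD (L/K)=L$, $L_{dif}=L$, and $L/K$ separable are mutually equivalent (and Theorem \ref{23May25} records $L_{dif}=L^{sep}$ in the same spirit). Using this, conditions (2) and (5) both read ``\,$L/K$ is separable and a B-extension,'' so $(2\Leftrightarrow 5)$ is a literal restatement. To close $(5\Leftrightarrow 4)$ I would argue: a B-extension is normal by Theorem \ref{VVB-30Apr25}, and $\CD (L/K)=L$ gives separability by Proposition \ref{VB-aC24Mar25}, so normal $+$ separable $=$ Galois; conversely a Galois extension is normal, hence a B-extension (Theorem \ref{VVB-30Apr25}), and separable, hence $\CD (L/K)=L=L_{dif}$.

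For $(7\Leftrightarrow 4)$ the argument is the same two-line move: ``\,normal and $L_{dif}=L$\,'' means ``\,normal and separable,'' which is Galois, and Galois returns normal together with $L_{dif}=L$. This completes the star, since all of (1), (2), (3), (5), (6), (7) have then been shown equivalent to (4).

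I do not expect a genuine technical obstacle here, as the substantive work is already carried by Theorem \ref{VVB-30Apr25} (B-extension $\Leftrightarrow$ normal), Proposition \ref{VB-aC24Mar25} (separability criteria), and Proposition \ref{A8May25}. The only point requiring care is bookkeeping about which hypotheses already force normality and which must be assumed: in (2) and (5) normality is \emph{derived} from the B-extension hypothesis via Theorem \ref{VVB-30Apr25}, whereas in (7) it is assumed outright, and one must ensure the chain to ``Galois'' never presupposes the conclusion. Once the central equivalence with (4) is fixed and each side deduction is funneled through the separability dictionary, no circularity arises and the proof is short.
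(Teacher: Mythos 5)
Your proof is correct, but it is organized quite differently from the paper's. You make statement (4) the hub and funnel everything through three ingredients: Proposition \ref{A8May25} for $(1\Leftrightarrow 4)$, the separability dictionary of Proposition \ref{VB-aC24Mar25} (together with Theorem \ref{23May25}) identifying $\CD (L/K)=L$, $L_{dif}=L$ and separability, and Theorem \ref{VVB-30Apr25} only in the weak form ``B-extension $\Leftrightarrow$ normal,'' so that (2), (5), (7) all collapse to ``normal $+$ separable $=$ Galois,'' with (3) handled by Artin (equivalently, Theorem \ref{BC24Mar25}.(1), which says $L/L^{G(L/K)}$ is always Galois with group $G(L/K)$) and (6) by Corollary \ref{a25May25}. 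The paper instead never invokes Proposition \ref{A8May25} here: it takes (2)/(5) as the effective hub and argues pairwise through the finer structure data of Theorem \ref{VVB-30Apr25} — the tensor decompositions $L=L^{G(L/K)}\t L_{dif}$ and $L=L^{G(L/K)}\t L^{gal}$ — plus a Double Centralizer argument for $(2\Leftrightarrow 5)$ (reading $\CD (L/K)=C_{E(L/K)}(L_{dif})$) and the identity $L^{G(L/K)}=L^{pi}$ for $(1\Leftrightarrow 6)$. Your route is shorter and more transparent, since it exposes that the whole corollary is the single classical fact ``Galois $=$ normal $+$ separable'' dressed in the paper's vocabulary; the paper's route buys uniformity with its own machinery, showing each condition being read off directly from the decomposition $L=L^{G(L/K)}\t L_{dif}$ rather than passing through the classical characterization. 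Your bookkeeping is also sound on the two delicate points: the ``i.e.''\ in (1) is justified by $L\rtimes G(L/K)\subseteq \CD (L/K)\rtimes G(L/K)\subseteq E(L/K)$, and in (6) you correctly note that $L^{pi}=K$ yields separability only \emph{in the presence of normality}, via $L=L^{pi}\t L^{gal}$. No circularity arises, since Proposition \ref{A8May25}, Proposition \ref{VB-aC24Mar25} and Theorem \ref{VVB-30Apr25} are all established independently of and prior to this corollary.
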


\begin{proof} $(1\Leftrightarrow 5)$ For the $B$-extension $L/K$, 
$$
E(L/K)=\CD (L/K)\rtimes G(L/K)\supseteq L\rtimes G(L/K)\;\; {\rm  and}\;\; \CD(L/K)\supseteq L.
$$
Therefore $E(L/K)=L\rtimes G(L/K)$ iff  $\CD(L/K)=L$.

$(2\Leftrightarrow 5)$ By the definition, $L_{dif}=C_{E(L/K)}(\CD (L/K))$. Then,  by the Double Centralizer Theorem, 
$$
\CD (L/K)   =C_{E(L/K)}(L_{dif}).
$$
By Theorem \ref{VVB-30Apr25},  
for the $B$-extension $L/K$, $L=L^{G(L/K)}\t L_{dif}$.  
Therefore, $\CD (L/K)=L$ iff 
$$
L_{dif}=C_{E(L/K)}(\CD (L/K))=C_{E(L/K)}(L)=L
$$ iff $L^{G(L/K)}=K$.

$(2\Leftrightarrow 3)$ Notice that the equality  $L^{G(L/K)}=K$ implies the equality $L^{G(L/K)}_{dif}=K$, and so the field extension $L/K$ is a $B$-extension (Theorem \ref{VVB-30Apr25}).
 By Theorem \ref{VVB-30Apr25},  
for the $B$-extension $L/K$, $L=L^{G(L/K)}\t L_{dif}$.  
Therefore, $L_{dif}=L$ iff $L^{G(L/K)}=K$. 

$(2\Leftrightarrow 4)$   Recall that statements 2 and 3 are equivalent. In particular, the field extension with   $L^{G(L/K)}=K$ is a $B$-extension. Now, by Theorem \ref{VVB-30Apr25},  
for the $B$-extension $L/K$, $L=L^{G(L/K)}\t L^{gal}$. Therefore, $L^{G(L/K)}=K$ iff 
$L=L^{gal}$.

$(1\Leftrightarrow 6)$ Recall that statements 1--5 are equivalent.  By Theorem \ref{VVB-30Apr25},  
 $L/K$ is a $B$-extension iff $L/K$ is normal and $L^{pi}\cap L_{dif}=K$, and in this case 
 $$
 L^{G(L/K)}=L^{pi}.
 $$ 
 Therefore, $L\rtimes G(L/K)=E(L/K)$ iff $L\rtimes G(L/K)=E(L/K)$  and $L/K$ is a $B$-extension iff $L^{pi}=L^{G(L/K)}=K$,  $L/K$ is normal and $L^{pi}\cap L_{dif}=K$ iff 
$L/K$ is normal and $L^{pi}=K$.

$(4\Leftrightarrow 7)$ By Theorem \ref{23May25}, $L_{dif}=L^{sep}$. So,  statement 7 is equivalent to statement 4. 
\end{proof}
%*** old proof ***
%$(6\Leftrightarrow 7)$ Notice that statements 1--6 are equivalent. By Theorem \ref{VVB-30Apr25},  
% $L/K$ is a $B$-extension iff $L/K$ is normal and $L=L^{pi}\t L_{dif}$.
%Therefore,  
%$L/K$ is normal, $L=L^{pi}\t L_{dif}$ and $L^{pi}=K$ iff $L/K$ is normal and $L=L_{dif}$ and  $L^{pi}=K$.
% Therefore,  
%$L/K$ is normal and $L^{pi}=K$ iff $L/K$ is normal and $L=L^{pi}\t L_{dif}=L_{dif}$.
%***

\begin{corollary}\label{VVB-d4May25}%\marginpar{VVB-d4May25}
The class of B-extensions that are neither G- nor D-extensions is  precisely the class of normal field extensions that are neither Galois nor purely inseparable field extensions.
\end{corollary}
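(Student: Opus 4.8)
The plan is to deduce the statement directly from the three classification results already proved, treating it as a purely set-theoretic identity among four classes of finite field extensions. First I would record the three identifications. By Theorem \ref{VVB-30Apr25} the class of B-extensions coincides with the class of normal finite field extensions; by Proposition \ref{A8May25} a finite field extension is a G-extension iff it is Galois; and by Corollary \ref{VVB-a4May25} a finite field extension is a D-extension iff it is purely inseparable. Next I would note that every G-extension and every D-extension is automatically a B-extension: since $L\subseteq \CD (L/K)$, one has the inclusions $L\rtimes G(L/K)\subseteq \CD (L/K)\rtimes G(L/K)\subseteq E(L/K)$ and $\CD (L/K)\subseteq \CD (L/K)\rtimes G(L/K)\subseteq E(L/K)$, so the equality $E(L/K)=L\rtimes G(L/K)$ (resp.\ $E(L/K)=\CD (L/K)$) forces $E(L/K)=\CD (L/K)\rtimes G(L/K)$. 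In particular the classes of Galois and of purely inseparable extensions are both contained in the class of normal extensions.

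The second step is to intersect. Writing $\mathcal B$, $\mathcal G$, $\mathcal D$ for the classes of B-, G-, and D-extensions, the class sought in the statement is exactly $\mathcal B\setminus(\mathcal G\cup\mathcal D)$. Substituting the three identifications transforms this into $(\text{normal})\setminus(\text{Galois}\cup\text{purely inseparable})$, which is precisely the class of normal field extensions that are neither Galois nor purely inseparable. Because $\mathcal G\cup\mathcal D\subseteq\mathcal B$, the subtraction is clean and no extension is lost or spuriously introduced, so the two descriptions agree on the nose.

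The argument has essentially no obstacle; it is a formal consequence of the three prior theorems, and the hardest part is merely bookkeeping with the inclusions. The one point deserving a moment's care is the verification that Galois and purely inseparable extensions are genuinely normal, so that removing them from the normal extensions yields a correctly described complement. This is immediate: Galois extensions are normal by definition, and a purely inseparable extension $L/K$ is a splitting field of its elements' minimal polynomials $x^{p^{n}}-a$ (each having a single root of multiplicity $p^{n}$ in $\bK$), hence normal. I would close by recalling, as noted after the definition of B-, G-, and D-extensions, that such intermediate B-extensions do occur, so the described class is genuinely nonempty.
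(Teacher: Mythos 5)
Your proposal is correct and takes essentially the same approach as the paper: the paper likewise deduces the corollary purely formally from the identification of B-extensions with normal extensions (Theorem \ref{VVB-30Apr25}) together with the identifications of D-extensions with purely inseparable extensions (Corollary \ref{VVB-a4May25}) and of G-extensions with Galois extensions (the paper cites Corollary \ref{VVB-c4May25}, which contains the same equivalence as your Proposition \ref{A8May25}). Your additional checks---that $\mathcal{G}\cup\mathcal{D}\subseteq\mathcal{B}$ via the inclusions $L\rtimes G(L/K)\subseteq \CD (L/K)\rtimes G(L/K)$ and $\CD (L/K)\subseteq \CD (L/K)\rtimes G(L/K)$, and that Galois and purely inseparable extensions are normal---are harmless bookkeeping that the paper treats as immediate.
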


\begin{proof} Recall that every B-extension is a normal finite field extension and vice versa. So,  the corollary follows from Corollary \ref{VVB-a4May25} and Corollary \ref{VVB-c4May25}.
\end{proof}

 %%%%%%%%%%%%%%%%%% SECTION 7 %%%%%%%%%%%%%%%%%%%%%
 
 \section{Analogue of the Galois Theory for normal  field extensions} \label{AN-GT-NORMAL} %\marginpar{AN-GT-NORMAL}

 The aim of the section is to prove  Theorem \ref{1Jun25} and  Theorem \ref{11Jun25}.  
 For normal field extensions, Theorem \ref{1Jun25} gives an order reversing  Galois-type correspondence for their subfields. Theorem \ref{11Jun25} describes normal subfields of normal field extensions and  establishes an analogue of the Galois correspondence  for normal subfields.\\

 {\bf Analogue of the Galois Theory for normal field extensions.}  Theorem  \ref{1Jun25} is one of the main results of the paper. For normal  field extensions, it establishes an analogue of the Galois correspondence.

\begin{definition} 
For a finite field extension $L/K$ and its intermediate subfield $K\subseteq M\subseteq L$, let
\begin{eqnarray*}
\CD (L/K)^M &:=& \{ \d \in \CD (L/K)\, | \, \d m=m\d\;\; {\rm for\; all}\;\; m\in M\}=C_{\CD (L/K)}(M),\\
 G(L/K)^M&:=& \{ g\in G(L/K)\, | \, g(m)=m\;\; {\rm for\; all}\;\; m\in M \}=C_{G(L/K)}(M).
\end{eqnarray*}
\end{definition} 
 By the definitions, $\CD (L/K)^M$ is a subalgebra of the algebra $\CD (L/K)$ that contains $L$ and $G(L/K)^M$ is a subgroup of the group $G(L/K)$.

\begin{definition}
For a finite field extension $L/K$ and an algebra $A\in  \CA (E(L/K),L)$,  let
\begin{eqnarray*}
C_{L}( A\cap \CD (L/K)_+) &:=& \{ l \in L\, | \, \d l=l\d\;\; {\rm for\; all}\;\; \d\in A\cap \CD (L/K)_+\}\\
&\stackrel{{\rm Cor.}\, \ref{a1Jun25} }{=}& L^{A\cap \CD (L/K)_+} :=
\{ l \in L\, | \, \d (l)=0\;\; {\rm for\; all}\;\; \d\in A\cap \CD (L/K)_+\},\\
 L^{A\cap G(L/K)}&:=& \{  l \in L\, | \, g(l)=l\;\; {\rm for\; all}\;\; g\in A\cap G(L/K)\}=C_{L}( A\cap G (L/K)).
\end{eqnarray*}
\end{definition} 
 By the definition, the sets $L^{A\cap \CD (L/K)_+}$ and $L^{A\cap G(L/K)}$ are subfields of $L$ that contain the field $K$ and the intersection $A\cap G(L/K)$ is a subgroup of $G(L/K)$.

\begin{lemma}\label{c1Jun25}%\marginpar{c1Jun25}
Let $L/K$ be a finite field extension and $M\in \CF (L/K)$. Then:
\begin{enumerate}

\item  $L\subseteq E(L/M)\subseteq E(L/K)$ and 
$\CD (L/K)^M=\CD (L/M) =E(L/M)\cap \CD (L/K)\subseteq  E(L/K)$.

\item If, in addition, the field extension $L/K$ is normal then  $G(L/M)=G(L/K)^M$.

\end{enumerate}

\end{lemma}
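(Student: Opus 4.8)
The plan is to handle the two parts separately: part (1) reduces immediately to the earlier description of $\CD (L/M)$, while part (2) is a direct unwinding of the two defining conditions for fixing $M$.

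For part (1), I would first record the two elementary inclusions. The field $L$ sits inside $E(L/K)$ as the algebra of multiplication operators $l\mapsto (l\cdot)$; since $M\subseteq L$, each such operator is $M$-linear, so $L\subseteq E(L/M)$, and since $K\subseteq M$ every $M$-linear map is $K$-linear, giving $E(L/M)\subseteq E(L/K)$. The substance of the claim is the chain $\CD (L/K)^M=C_{\CD (L/K)}(M)=\CD (L/M)=E(L/M)\cap \CD (L/K)$. The first equality is just the definition of $\CD (L/K)^M$, and the remaining equalities are precisely the content of Lemma~\ref{a20May25}. So part (1) is essentially a repackaging of Lemma~\ref{a20May25} together with the definition, and requires no new computation.

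For part (2), I would view both $G(L/M)$ and $G(L/K)^M$ as subsets of $E(L/K)$ and prove equality by two inclusions. Every $g\in G(L/M)$ is by definition a ring automorphism of $L$ fixing $M$ pointwise; since $K\subseteq M$ it fixes $K$, hence $g\in G(L/K)$, and fixing $M$ pointwise puts $g$ in $G(L/K)^M$. Conversely, if $g\in G(L/K)^M$ then $g$ is a $K$-algebra automorphism of $L$ with $g(m)=m$ for all $m\in M$; the relation $g(mx)=g(m)g(x)=m\,g(x)$ shows $g$ is $M$-linear, so this bijective ring homomorphism is an $M$-algebra automorphism, i.e.\ $g\in G(L/M)$. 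I would also verify, for consistency with the centralizer description $G(L/K)^M=C_{G(L/K)}(M)$, that for $g\in G(L/K)$ and $m\in M$ (viewed as a multiplication operator) one has $(gm)(x)=g(m)g(x)$ against $(mg)(x)=m\,g(x)$; as $g$ is surjective this forces $gm=mg$ to be equivalent to $g(m)=m$, so the two descriptions of $G(L/K)^M$ coincide.

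The honest assessment is that there is no genuine obstacle here: once Lemma~\ref{a20May25} is available, both parts are definition-chases, and the only point deserving a moment's care is the equivalence of the centralizer form and the pointwise-fixing form of $G(L/K)^M$ just noted. I would remark that the set equality $G(L/M)=G(L/K)^M$ in fact holds without the normality hypothesis; normality of $L/K$ is recorded here only because the surrounding results are stated for normal extensions (and because normality of $L/K$ does pass to $L/M$, which is what is used in the later Galois-type correspondences rather than in this identity itself).
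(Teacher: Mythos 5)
Your proof is correct. For part (1) you take essentially the paper's route: the paper's own argument is just a re-derivation of Lemma~\ref{a20May25} (that $\CD (L/M)=E(L/M)\cap \CD (L/K)=C_{\CD (L/K)}(M)$, which holds because $E(L/M)$ is an $\ad_l$-stable subalgebra of $E(L/K)$ for all $l\in L$), combined with the observation that $E(L/M)\cap \CD (L/K)=\CD (L/K)^M$; citing that lemma directly, as you do, is the same proof. For part (2) you diverge, and to your advantage: the paper disposes of the statement in one line, asserting that it ``follows from the normality of the field extensions $L/K$ and $L/M$ and the inclusion $G(L/M)\subseteq G(L/K)$,'' whereas you give the direct two-inclusion definition-chase --- an automorphism of $L$ fixing $M$ pointwise automatically fixes $K\subseteq M$, and conversely a $K$-algebra automorphism of $L$ fixing $M$ pointwise is $M$-linear and multiplicative, hence lies in $G(L/M)$. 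Your side remark is accurate: the set equality $G(L/M)=G(L/K)^M$ holds for every $M\in \CF (L/K)$ with no normality hypothesis at all, so your argument is strictly more elementary and more general than what the paper records; normality only matters for the surrounding applications (e.g.\ in Theorem~\ref{1Jun25}, where one needs $L/M$ to again be normal so that $E(L/M)=\CD (L/M)\rtimes G(L/M)$). Your consistency check that the centralizer form $C_{G(L/K)}(M)$ agrees with the pointwise-fixing form --- via $gm=g(m)g$ in $E(L/K)$ and invertibility of $g$ --- is left implicit in the paper, and including it harmlessly tightens the argument, since $G(L/K)^M$ is defined both ways in the notation.
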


\begin{proof} 1.  It follows from the definition of the algebra of differential operators and the inclusions
$L\subseteq E(L/M)\subseteq E(L/K)$ that 
$\CD (L/M) =E(L/M)\cap \CD (L/K)\subseteq  E(L/K)$. Clearly, $E(L/M)\cap \CD (L/K)=\CD (L/K)^M$.

2. Statement 2 follows from the normality of the field extensions $L/K$ and $L/M$ and the inclusion $G(L/M)\subseteq G(L/K)$. 
\end{proof}

 \begin{theorem}\label{1Jun25}%\marginpar{1Jun25}
Let $L/K$ be a normal finite field extension of characteristic $p$. Then:

\begin{enumerate}

\item $ \CA (E(L/K),L)=\{ C_{E(L/K)}(M)=\CD (L/M)\rtimes G(L/M)=\CD (L/K)^M\rtimes G(L/K)^M\, | \, M\in \CF (L/K)\}$. 

\item  {\sc (Analogue of the  Galois correspondence for subfields of a normal  field extension)}

The map
$$
\CF (L/K)\ra \CA (E(L/K),L), \;\; M\mapsto  C_{E(L/K)}(M)=\CD (L/M)\rtimes G(L/M)=\CD (L/K)^M\rtimes G(L/K)^M
$$ is a bijection with inverse
\begin{eqnarray*}
A\mapsto C_{E(L/K)}(A)&=&C_{L}( A\cap \CD (L/K)_+)\cap L^{A\cap G(L/K)}=\Big(C_{L}( A\cap \CD (L/K)_+)) \Big)^{A\cap G(L/K)}\\
&=& L^{A\cap \CD (L/K)_+}\cap L^{A\cap G(L/K)}=\Big(L^{A\cap \CD (L/K)_+} \Big)^{A\cap G(L/K)}.\\
 \bigg( A=\CD (L/M)&\rtimes &G(L/M)\mapsto 
 L^{\CD (L/M)_+}\cap L^{G(L/M)}=\Big(L^{\CD (L/M)_+} \Big)^{G(L/M}.\bigg)
\end{eqnarray*}
%$$ A\mapsto C_{E(L/K)}(A)=L^{A\cap \CD (L/K)_+}\cap L^{A\cap G(L/K)}=\Big(L^{A\cap \CD (L/K)_+} \Big)^{A\cap G(L/K)}
%$$

\item $\mL G(L/K)=\Big\{ \Big(\CD (L/M)_+, G(L/M)\Big) \, | \, M\in \CF (L/K)\Big\}$ and for all fields $M\in \CF (L/K)$, 
$$
\Big(\CD (L/M)_+, G(L/M) \Big)=\Big(\CD (L/ML^{gal})_+, G(L/ML^{pi}) \Big)\in \mL (L/L^{gal})\times G(L/L^{pi}).
$$
 In particular, 
$$
\mL G(L/K)=\Big\{ (\CG , H)\in \mL (L/L^{gal})\times \CG (G(L/L^{pi}))\, | \, h\CG h^{-1}=\CG\;\; {\rm  for\; all\; elements}\;\; h\in H\}.
$$
%The restriction map $G(L/L^{pi})\ra G(L^{gal}/K)$, $g\mapsto g|_{L^{gal}}: L^{gal}\ra L^{gal}$ is a group  isomorphism.

\item  {\sc (Analogue of the  Galois correspondence for subfields of a normal  field extension)}

 The map
$$
\CF (L/K)\ra \mL G(L/K),  \;\; M\mapsto  \Big(\CD (L/M)_+, G(L/M) \Big)=\Big(\CD (L/ML^{gal})_+, G(L/ML^{pi}) \Big)
$$ 
 is a bijection with inverse
$ (\CG , H)\mapsto L^\CG\cap L^H$ and $h(L^\CG)=L^\CG$ for all $h\in H$.
%The restriction map $G(L/L^{pi})\ra G(L^{gal}/K)$, $g\mapsto g|_{L^{gal}}: L^{gal}\ra L^{gal}$ is a group  isomorphism.

\item For all fields $M\in \CF (L/K)$, $[M:K]|G(L/M)|\dim_K(\CD (L/M))=[L:K]^2$  or, equivalently, $|G(L/M)|\dim_M(\CD (L/M))=[L:M]^2$.

\item The field extension $L/L^{gal}$ is a finite purely inseparable field extension and 
$\mL (L/L^{gal})=\{ \CD (L/N)_+\, | \, N\in \CF (L/L^{gal})\}$.

\end{enumerate}

\end{theorem}

\begin{proof} 1. By  Theorem \ref{B24Mar25}.(3),
 $ \CA (E(L/K),L)=\{ C_{E(L/K)}(M)=E(L/M)\, | \, L\in \CF (L/K)\}$.  The finite field extension $L/K$ is  normal, hence so is the field extension  $L/M$ for each field $M\in \CF (L/K)$ and the field extension  $L/M$ is a B-extension (Theorem \ref{VVB-30Apr25}). Therefore, 
$$
E(L/M)=\CD (L/M)\rtimes G(L/M).
$$
By Lemma \ref{c1Jun25}, $\CD (L/M)\rtimes G(L/M)=\CD (L/K)^M\rtimes G(L/K)^M$, and statement 1 follows.

2.  In view of Theorem \ref{B24Mar25}.(3) and statement 1, it suffices that show that all the equalities for the inverse map of the theorem hold.
 Let $A\in \CA (E(L/K),L)$. By statement 1,  $A= \CD (L/M)\rtimes G(L/M)$ for a unique field  $M\in \CF (L/K)$. The finite field extension $L/K$ is  normal, hence $L/K$ is a B-extension (Theorem \ref{VVB-30Apr25}), and so $E(L/K)=\CD (L/K)\rtimes G(L/K)$. It follows from the inclusions $G(L/M)\subseteq G(L/K)$, $\CD (L/M)\subseteq \CD (L/K)$ (Lemma \ref{c1Jun25}) and 
$$
A= \CD (L/M)\rtimes G(L/M)\subseteq E(L/K)=\CD (L/K)\rtimes G(L/K)=\bigoplus_{g\in G(L/K)}\CD (L/K)g
$$
that $\CD (L/M)=A\cap  \CD (L/K)$ and $G(L/M)=A\cap   G(L/K)$. Notice that 
$$ 
L\oplus \CD (L/M)_+=\CD (L/M)=A\cap  \CD (L/K)=A\cap  (L\oplus \CD (L/K)_+)=L\oplus A\cap  \CD (L/K)_+
$$
Therefore, $\CD (L/M)_+=A\cap  \CD (L/K)_+$. Now,
\begin{eqnarray*}
C_L(\CD (L/M)_+) &=& C_L(L\oplus \CD (L/M)_+)=C_L(\CD (L/M))=\End_{\CD (L/M)}(L)\\
&\stackrel{{\rm Thm.}\, \ref{AC24Mar25}.(5)}{=}&(L/M)_{dif}\stackrel{{\rm Thm.}\, \ref{23May25}}{=}L^{\CD (L/M)_+}=L^{A\cap \CD (L/K)_+}
\end{eqnarray*}
and 
\begin{eqnarray*}
 C_{E(L/K)}(A)&=& C_{E(L/K)}(L)\cap C_{E(L/K)}(\CD (L/M)_+)\cap C_{E(L/K)}( G(L/M))\\
 &=& L\cap C_{E(L/K)}(\CD (L/M)_+)\cap C_{E(L/K)}( G(L/M))\\
 &=&  C_{L}(\CD (L/M)_+)\cap L^{G(L/M)}= \Big(C_{L}( \CD (L/M)_+)) \Big)^{ G(L/M)}\\
  &=& L^{\CD (L/M)_+}\cap L^{G(L/M)}
 =\Big(L^{\CD (L/M)_+} \Big)^{G(L/M)}\\ 
 &=& C_{L}( A\cap \CD (L/K)_+)\cap L^{A\cap G(L/K)}=\Big(C_{L}( A\cap \CD (L/K)_+)) \Big)^{A\cap G(L/K)}\\
 &=& L^{A\cap \CD (L/K)_+}\cap L^{A\cap G(L/K)}
 =\Big(L^{A\cap \CD (L/K)_+} \Big)^{A\cap G(L/K)}.
\end{eqnarray*}

The equality $  C_{L}(\CD (L/M)_+)\cap C_{L}(G(L/M))= \Big(C_{L}( \CD (L/M)_+)) \Big)^{ G(L/M)}$
follows from the fact that the set
 $\CD (L/M)_+$ is $G(L/M)$-stable. The last two equalities are the two equalities above the where the sets $\CD (L/M)_+$ and $G(L/M)$ are replaced by the sets $A\cap \CD (L/L)_+$ and $A\cap G(L/K)$, respectively.

6.  Since $L=L^{pi}\t L^{gal}$, the field extension $L/L^{gal}$ is a finite purely inseparable field extension. By Theorem \ref{D24Mar25}.(3), 
$\mL (L/L^{gal})=\{ \CD (L/N)_+\, | \, N\in \CF (L/L^{gal})\}$.

3. (i) {\em For all fields} $M\in \CF (L/K)$,  $
\Big(\CD (L/M)_+, G(L/M) \Big)=\Big(\CD (L/ML^{gal})_+, G(L/ML^{pi}) \Big)$: Recall that 
$G(L/M) = G(L/ML^{pi})$. The equality $\CD (L/M)_+=\CD (L/ML^{gal})_+$ follows from the equalities: 
$$ 
L\oplus \CD (L/M)_+=\CD (L/M)=\CD (L/ML^{gal})=L\oplus \CD (L/ML^{gal})_+,
$$

(ii)  $\CR :=\Big\{ \Big(\CD (L/M)_+, G(L/M)\Big) \, | \, M\in \CF (L/K)\Big\}\subseteq \mL G(L/K)$: We have to show that $\Big(\CD (L/M)_+, G(L/M)\Big)\in \mL G (L/K)$ for all fields $M\in \CF (L/K)$. By Lemma \ref{cD24Mar25}.(2,3), 
$\CD (L/M)_+\in \mL (L/K)$. Clearly, for all automorphisms $g\in G(L/M)$, 
$$
g\CD (L/M)_+g^{-1}=\CD (L/M)_+
$$  since for all $\d\in \CD (L/M)_+$, $g\d g^{-1}(1)=g\d(1)=g(0)=0$, and the statement (ii) follows.

(iii) {\em For every element} $(\CG , H)\in \mL  G(L/K)$, $(\CG , H)=(\CD (L/M_{\CG, H})_+, G(L/M_{\CG, H})$ {\rm where} $M_{\CG, H}:=L^\CG \cap L^H\in \CF (L/K)$: For the pair   $(\CG , H)\in \mL  G(L/K)$, let  $A=A(\CG , H)$ be the  subalgebra of $E(L/K)$ that is generated by the field $L$, $\CG$ and $H$. Therefore, $A\in \CA (E(L/K),L)$ and (where $E:= E(L/K)$)
\begin{eqnarray*}
C_E(A(\CG , H)) &=& C_E(L)\cap C_E(\CG)\cap C_E(H)=L\cap C_E(\CG)\cap C_E(H)\\
 &=&  C_L(\CG)\cap C_L(H) =L^\CG\cap L^H=M_{\CG, H}
\end{eqnarray*}
where we have used the facts that $C_E(L)=L$ and $C_L(\CG)=L^\CG$ (since $\CG \in \mL (L/K)$). By statement 2, 
$$
A(\CG , H)=\CD (L/M_{\CG, H})\rtimes G(L/M_{\CG, H}). 
$$
Recall that $\CD (L/K)=\CD (L/L^{gal})$. Therefore, $\CD (L/K)_+=\CD (L/L^{gal})_+$.

Recall that  $\CG \subseteq  \CD (L/K)_+=\CD (L/L^{gal})_+$ and the finite field extension $L/L^{gal}$ is purely inseparable. 
 By Lemma \ref{cD24Mar25}.(2,3), $\CG = \CD (L/\CM)_+$ for some field $\CM\in \CF (L/L^{gal})$. Since $(\CG, H)\in \mL G(L/K)$, $h\CD (L/\CM)_+h^{-1}=\CD (L/\CM)_+$ for all elements $h\in H$. Therefore,
$$ 
A(\CG , H)=\CD (L/\CM)\rtimes H.
$$
Comparing  the  above two presentations of the algebra  $A(\CG , H)$ and using the inclusion $A(\CG , H)\subseteq \CD (L/K)\rtimes G(L/K)$, we have the equalities 
$$
\CD (L/M_{\CG, H})= \CD (L/\CM)\; \; {\rm  and}\;\; G(L/M_{\CG, H})=H.
$$
The first equality yields 
 the equality $\CD (L/M_{\CG, H})_+= \CD (L/\CM)_+=\CG$, and the statement (iii) follows.

(v)  $\CR =\mL G(L/K)$: The equality  follows from  the statements (ii) and (iii).

(vi) {\em For all fields} $M\in \CF (L/K)$, 
$$
\Big(\CD (L/M)_+, G(L/M) \Big)=\Big(\CD (L/ML^{gal})_+, G(L/ML^{pi}) \Big)\in \mL (L/L^{gal})\times G(L/L^{pi}):
$$
The statement (vi) follows from the statements (i) and (v).

(vii)  $ \mL G(L/K)=\Big\{ (\CG , H)\in \mL (L/L^{gal})\times G(L/L^{pi})\, | \, h\CG h^{-1}=\CG\;\; {\rm  for\; all\; elements}\;\; h\in H\}$: The statement (vii) follows from the statements (v) and (vi). 

4. Statement 4 follows from statements 2 and 3.

5. For $M\in \CF (L/K)$,  the equality $[M:K]|G(L/M)|\dim_K(\CD (L/M))=[L:K]^2$ follows from   Theorem \ref{A24Mar25}.(3) and the equality $C_{E(L/K)}(M)=\CD (L/M)\rtimes G(L/M)$ (statement 1). Now, the equality 
$$|G(L/M)|\dim_M(\CD (L/M))=[L:M]^2$$ follows from the equalities $\dim_K (\CD (L/K))=[M:K]\dim_M (\CD (L/M))$ and $[L:K]=[L:M][M:K]$:
\begin{eqnarray*}
|G(L/M)|\dim_M(\CD (L/M))&=& [M:K]^{-2}\bigg([M:K]|G(L/M)|\dim_K(\CD (L/M))\bigg)= [M:K]^{-2}[L:K]^2\\
&=&[L:M]^2.
\end{eqnarray*}
\end{proof}

\begin{corollary}\label{a1Jun25}%\marginpar{a1Jun25}
Let $L/K$ be a normal finite field extension of characteristic $p$. Then for an algebra $A\in  \CA (E(L/K),L)$,
$$
C_{L}( A\cap \CD (L/K)) = C_{L}( A\cap \CD (L/K)_+) = L^{A\cap \CD (L/K)_+}.
$$
\end{corollary}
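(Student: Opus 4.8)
The plan is to reduce both equalities to the intermediate field $M$ attached to $A$ by the correspondence of Theorem \ref{1Jun25}, and then to invoke Theorem \ref{23May25} for the extension $L/M$. First I would fix $A\in \CA (E(L/K),L)$ and, using Theorem \ref{B24Mar25}.(3), write $A=E(L/M)$ for the unique field $M=C_{E(L/K)}(A)\in \CF (L/K)$. By Lemma \ref{a20May25}, intersecting with $\CD (L/K)$ gives $A\cap \CD (L/K)=E(L/M)\cap \CD (L/K)=\CD (L/M)$. Since $L\subseteq \CD (L/M)$ and $\CD (L/K)=L\oplus \CD (L/K)_+$ by (\ref{CDLK+}), every $\d\in \CD (L/M)$ splits as $\d=\d(1)+\d_+$ with $\d(1)\in L\subseteq \CD (L/M)$, whence $\d_+\in \CD (L/M)\cap \CD (L/K)_+=A\cap \CD (L/K)_+$. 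This yields the two key identifications $A\cap \CD (L/K)=\CD (L/M)$ and $A\cap \CD (L/K)_+=\CD (L/M)_+$, and in particular the splitting $A\cap \CD (L/K)=L\oplus (A\cap \CD (L/K)_+)$.

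For the first equality $C_{L}(A\cap \CD (L/K))=C_{L}(A\cap \CD (L/K)_+)$ I would argue purely formally: the containment $\subseteq$ is immediate since $A\cap \CD (L/K)_+\subseteq A\cap \CD (L/K)$, and for the reverse, any $l\in L$ automatically commutes with every element of the field $L$, so by linearity of the commutator together with the splitting $A\cap \CD (L/K)=L\oplus (A\cap \CD (L/K)_+)$, commuting with $A\cap \CD (L/K)_+$ already forces commuting with all of $A\cap \CD (L/K)$. This step uses no normality and only the direct-sum decomposition just established.

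For the second equality $C_{L}(A\cap \CD (L/K)_+)=L^{A\cap \CD (L/K)_+}$, I would transport everything through $M$: by the identification $A\cap \CD (L/K)_+=\CD (L/M)_+$ the claim is exactly $C_{L}(\CD (L/M)_+)=L^{\CD (L/M)_+}$, which I would settle by the chain
\begin{equation*}
C_{L}(\CD (L/M)_+)=C_{L}(\CD (L/M))=(L/M)_{dif}=L^{\CD (L/M)_+},
\end{equation*}
where the first equality is the decomposition argument of the previous paragraph applied to $\CD (L/M)=L\oplus \CD (L/M)_+$, the second is the very definition (\ref{LDL+}) of the dif-subfield $(L/M)_{dif}=C_{L}(\CD (L/M))$, and the last is Theorem \ref{23May25} applied to the finite field extension $L/M$ (giving $(L/M)_{dif}=(L/M)^{sep}=L^{\CD (L/M)_+}$). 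The main obstacle is precisely this last step: a priori $C_{L}(\CD (L/M)_+)$, the elements \emph{commuting} with the differential operators of zero constant term, could be strictly larger than $L^{\CD (L/M)_+}$, the elements merely \emph{annihilated} by them, and their coincidence is the genuine content of Theorem \ref{23May25}, which pins both down as the separable subfield. Once the identifications $A\cap \CD (L/K)=\CD (L/M)$ and $A\cap \CD (L/K)_+=\CD (L/M)_+$ are in hand the rest is bookkeeping; normality of $L/K$ enters only through the ambient correspondence of Theorem \ref{1Jun25} used to realize $A$ as $E(L/M)$, whereas the two displayed equalities themselves are valid for every finite field extension.
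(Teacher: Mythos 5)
Your proof is correct, and its core — the chain $C_L(\CD (L/M)_+)=C_L(L\oplus \CD (L/M)_+)=C_L(\CD (L/M))=(L/M)_{dif}=L^{\CD (L/M)_+}$ obtained from the splitting $\CD (L/M)=L\oplus \CD (L/M)_+$ together with Theorem \ref{23May25} applied to $L/M$ — is exactly the computation the paper performs inside the proof of Theorem \ref{1Jun25} (the paper routes the middle step through $\End_{\CD (L/M)}(L)$ and Theorem \ref{AC24Mar25}.(5) rather than the definition (\ref{LDL+}); that difference is cosmetic). Where you genuinely diverge is in how you obtain the key identification $A\cap \CD (L/K)=\CD (L/M)$: the paper first writes $A=\CD (L/M)\rtimes G(L/M)$ — which rests on normality, since it is Theorem \ref{VVB-30Apr25} that makes $L/M$ a B-extension and hence $E(L/M)$ a skew group algebra — and then reads the identification off the direct-sum decomposition $E(L/K)=\bigoplus_{g\in G(L/K)}\CD (L/K)g$; you instead take $A=E(L/M)$ directly from Theorem \ref{B24Mar25}.(3) and apply Lemma \ref{a20May25}, both of which hold for every finite extension and every intermediate field. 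Your route is therefore more elementary and strictly more general: it establishes the two displayed equalities for an arbitrary finite field extension $L/K$, normality playing no role in the corollary itself, whereas the paper's argument inherits the normality hypothesis from the machinery of Theorem \ref{1Jun25} in which it is embedded. One small slip in your closing remark: you say normality enters through the correspondence of Theorem \ref{1Jun25} ``used to realize $A$ as $E(L/M)$,'' but in your own argument that realization comes from Theorem \ref{B24Mar25}.(3), which is normality-free — so your proof in fact uses normality nowhere, which is consistent with (and slightly stronger than) your own final claim.
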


\begin{proof} The equalities were proven in the 
the proof of Theorem \ref{1Jun25}.
\end{proof}

Lemma \ref{d1Jun25} provides additional information on  the algebra $\CD (L/M)^K$ in Theorem \ref{1Jun25}.

\begin{lemma}\label{d1Jun25}%\marginpar{d1Jun25}
Let $L/K$ be a normal finite field extension of characteristic $p$. Then $\CD (L/M)=\CD (L/K)^M=C_{E(L/ML^{gal})}(M)=E(L/L^{gal}M)=\CD (L/L^{gal}M)$.
\end{lemma}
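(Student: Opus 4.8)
The plan is to read the chain of five equalities as two identifications glued at the common middle term $E(L/L^{gal}M)$, each obtained by specialising the machinery of Sections~\ref{ALG-DIF-OPS}--\ref{3-ALGS} to the \emph{subextension} $L/M$. The leftmost equality $\CD (L/M)=\CD (L/K)^M$ is already recorded in Lemma~\ref{c1Jun25}.(1), so I would cite it directly and concentrate on the remaining three equalities, all of which hinge on computing the dif-subfield of $L/M$.

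The crux is the identification $(L/M)_{dif}=ML^{gal}$. Since $L/K$ is normal, so is $L/M$, and by Theorem~\ref{23May25} applied over the base field $M$ we have $(L/M)_{dif}=(L/M)^{sep}$, the maximal separable subfield of $L$ over $M$. I would then pin this down as follows. Normality of $L/K$ yields the decomposition $L=L^{pi}\t L^{gal}$ with $L^{gal}=L^{sep}$ (Corollary~\ref{a25May25}), and $L/L^{gal}$ is purely inseparable; hence its intermediate extension $L/ML^{gal}$ is purely inseparable as well. On the other hand $ML^{gal}/M$ is separable, being the composite of $M$ with the separable (indeed Galois) extension $L^{gal}/K$. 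In a tower $M\subseteq ML^{gal}\subseteq L$ whose lower step is separable and whose upper step is purely inseparable, the middle field is forced to be the separable closure of $M$ in $L$; thus $(L/M)^{sep}=ML^{gal}$ and so $(L/M)_{dif}=ML^{gal}=L^{gal}M$.

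With this in hand the remaining equalities fall out quickly. Because $L/M$ is normal it is a B-extension (Theorem~\ref{VVB-30Apr25}), so Theorem~\ref{AC24Mar25}.(1) gives $\CD (L/M)=E\big(L/(L/M)_{dif}\big)=E(L/L^{gal}M)$, which is the fourth term. Next, since $M\subseteq ML^{gal}$, every $ML^{gal}$-linear endomorphism of $L$ is automatically $M$-linear, i.e.\ commutes with multiplication by elements of $M$, so $C_{E(L/ML^{gal})}(M)=E(L/ML^{gal})$, giving the third term. Finally $L/ML^{gal}$ is purely inseparable, whence $\CD (L/L^{gal}M)=E(L/L^{gal}M)$ by Theorem~\ref{C24Mar25}.(1), giving the fifth term. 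Stringing these together produces the asserted chain. I expect the only genuine obstacle to be the middle step, the identification $(L/M)^{sep}=ML^{gal}$: once the separable-below, purely-inseparable-above tower structure is confirmed, the rest is a direct substitution into Theorems~\ref{AC24Mar25} and~\ref{C24Mar25}.
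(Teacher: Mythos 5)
Your proposal is correct and follows essentially the same route as the paper: both rest on $(L/M)_{dif}=(L/M)^{sep}$ (Theorem \ref{23May25} applied over the base $M$), on $ML^{gal}/M$ being separable while $L/ML^{gal}$ is purely inseparable, and on Theorem \ref{C24Mar25}.(1) for the final equality $\CD (L/L^{gal}M)=E(L/L^{gal}M)$, with $C_{E(L/ML^{gal})}(M)=E(L/ML^{gal})$ noted as obvious in both. The only divergence is organizational: you make the field identity $(L/M)^{sep}=ML^{gal}$ explicit via the separable-below/purely-inseparable-above tower, whereas the paper never names this field and instead sandwiches $\CD (L/ML^{sep})$ between two copies of $\CD \big(L/(L/M)^{sep}\big)$ using $M\subseteq ML^{sep}\subseteq (L/M)^{sep}$ --- and your appeals to normality of $L/M$ and to B-extensions (Theorem \ref{VVB-30Apr25}) are harmless but unnecessary, since Theorems \ref{23May25} and \ref{AC24Mar25}.(1) hold for arbitrary finite extensions in characteristic $p$.
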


\begin{proof} In this proof the following facts are freely used: $L_{dif}=L^{sep}$ (Theorem \ref{23May25}), $\CD (L/K)=\CD (L/L^{sep})$ (Theorem \ref{AC24Mar25}.(1)) and  $L^{gal}=L^{sep}$ since the field extension $L/K$ is normal  (Corollary \ref{a25May25}.(1)). 

(i) $\CD (L/M)=\CD (L/K)^M$: The equality follows from Theorem \ref{1Jun25}.(2). 

(ii)  $\CD (L/M)=\CD (L/ML^{gal})$: The inclusions $M\subseteq ML^{sep}\subseteq L$ yield the  inclusions:
$$
\CD \Big(L/(L/M)^{sep}\Big)=\CD \Big(L/(L/M)_{dif}\Big)=\CD (L/M)\supseteq \CD (L/ML^{sep})\supseteq \CD (L/(L/M)^{sep}),
$$
and the statement (ii) follows

(iii) $E(L/L^{sep}M)=\CD (L/L^{sep}M)$: Since tht field extension $L/L^{gal}M$ is purely inseparable, the statement (iii) follows from (Theorem \ref{C24Mar25}.(1)).

(iv)  $C_{E(L/ML^{gal})}(M)=E(L/L^{gal}M)$: The equality is obvious. 
\end{proof}

%%%%%%%%%%%%%  direct proof of Cor. \ref{a1Jun25}   %%%
%\begin{proof} Let $M\in \CF (L/K)$ and $A=C_{E(L/K)}(M)=\CD (L/K)^M\rtimes G(L/K)^M
%$. Then 
%\begin{eqnarray*}
%\CD (L/K)^M&=& A\cap \CD (L/K),\\
%C_L(A\cap \CD(L/K))&= & C_L\Big(A\cap ( L\oplus \CD(L/K)_+)\Big)= C_L(L)\cap C_L( L\oplus A\cap \CD(L/K)_+)\\
%&=&L\cap C_L( A\cap  \CD(L/K)_+) =C_L(A\cap \CD(L/K)_+).
%\end{eqnarray*}
%So, $C_L(A\cap \CD(L/K)) =C_L(A\cap \CD(L/K)_+)$.
%By Theorem \ref{1Jun25},  
%$$
%\CD (L/K)^M=C_{E(L/L^{gal})}(M)=E(L/L^{gal}M)=\CD (L/L^{gal}M).
%$$
% These equalities are used in the arguments below:
%\begin{eqnarray*}
%A\cap \CD(L/K)_+&=& \Big( A\cap \CD(L/K)\Big) \cap  \CD(L/K)_+=\CD (L/K)^M\cap \CD(L/K)_+\\
%&=&\CD (L/L^{gal}M)\cap \CD(L/K)_+=\CD (L/L^{gal}M)_+,\\
%C_L( A\cap \CD (L/K))&=&C_L\Big(\CD (L/K)^M\Big)=C_L\Big(\CD (L/L^{gal}M)\Big)=\End_{\CD (L/L^{gal}M)}(L)\\
%&=&L^{\CD (L/L^{gal}M)_+}=L^{A\cap \CD (L/K)_+}.
%\end{eqnarray*}
%\end{proof}

{\bf Normal subfields of   normal finite field extensions.}  Theorem  \ref{11Jun25} describes normal sunfields of normal finite field extensions. Also it establishes an order reversing bijection between 
 normal subfields of a  normal finite field extension $L/K$ and the set $\CA (E(L/K),L, G(L/K))$ of $G(L/K)$-stable subalgebras of the algebra $E(L/K)$ that contain the field $L$. Recall that (Corollary \ref{a24Mar25}.(2)), 
 $$
 \CA (E(L/K),L, G(L/K))=\{L\rtimes  N \, | \, N\;\;{\rm is\; a\; normal\; subgroup\; of}\;\; G(L/K) \}.
 $$
For a finite field extension $L/K$, let $\CN (L/K)$ be the set of normal subfields $N/K$ of $L/K$ and 
 $\CN(G(L/K))$ be the set of normal subgroups of the group $ G(L/K)$.

\begin{theorem}\label{11Jun25}%\marginpar{11Jun25}
Let $L/K$ be a normal finite field extension of characteristic $p$. Then:

\begin{enumerate}

\item For each field $M\in  \CN (L/K)$, $M=M^{pi}\t M^{gal}$, where $M^{pi}=M\cap L^{pi}
$ and $M^{gal}:=M\cap L^{gal}$, and 
$$\CN (L/K)=\CF (L^{pi}/K)\t \CN (L^{gal}/K)
:=\{ N\t \G\, | \, N\in \CF (L^{pi}/K),\,  \G\in \CN (L^{gal}/K)\}.
$$
\item $\CA (E(L/K),L, G(L/K))=\Big\{\CD (L^{pi}/N)\t \Big(L^{gal}\rtimes G(L^{gal}/\G) \Big)= E(L^{pi}/N)\t  E(L^{gal}/M^{gal})\, | \,$ $ N\in \CF (L^{pi}/K),\,  \G\in \CN  (L^{gal}/K)  \Big\}$.

\item {\sc (Analogue of the  Galois correspondence for normal  subfields of a normal  field extension)} 

The map
\begin{eqnarray*}
\CN (L/K)&\ra & \CA (E(L/K),L, G(L/K)), \;\; M\mapsto  C_{E(L/K)}(M)=C_{E(L^{pi}/K)}(M^{pi})\t C_{E(L^{gal}/K)}(M^{gal})\\
&=&E(L^{pi}/M^{pi})\t E(L^{gal}/M^{gal})=\CD (L^{pi}/M^{pi})\t \Big(L^{gal}\rtimes G(L^{gal}/M^{gal}) \Big)
\end{eqnarray*}
is a bijection with inverse
\begin{eqnarray*}
 A&\mapsto& C_{E(L/K)}(A)=\Big( L^{pi}\Big)^{A\cap \CD (L^{pi}/K)_+}\t \Big( L^{gal}\Big)^{A\cap G(L^{gal}/K)}.\\
 \bigg(A&=&\CD (L^{pi}/N)\t \Big(L^{gal}\rtimes G(L^{gal}/\G) \Big) \mapsto   \Big( L^{pi}\Big)^{ \CD (L^{pi}/N)_+}\t \Big( L^{gal}\Big)^{ G(L^{gal}/\G)}.\bigg)
\end{eqnarray*}

%$$ A\mapsto C_{E(L/K)}(A)=\Big( L^{pi}\Big)^{A\cap \CD (L^{pi}/K)_+}\t \Big( L^{gal}\Big)^{A\cap G(L^{gal}/K)}.
%$$

\item {\sc (Analogue of the  Galois correspondence for normal  subfields of a normal  field extension)} 

The map
$$
\CN (L/K)\ra \mL (L^{pi}/K)\times \CN (G(L^{gal}/K)), \;\; M=M^{pi}\t M^{gal}\mapsto \Big(\CD (L^{pi}/M^{pi})_+, G(L^{gal}/M^{gal})\Big)
$$
 is a bijection with inverse $(\CG, H)\mapsto \Big(L^{pi} \Big)^\CG\t \Big( L^{gal}\Big)^H$.

\end{enumerate}
\end{theorem}

\begin{proof} 1. Let $M\in  \CN (L/K)$. By  Corollary \ref{a25May25}.(1),  $M=M^{pi}\t M^{gal}$. It follows from the equality $L=L^{pi}\t L^{gal}$
 and the inclusions $K\subseteq M\subseteq L$, that 
 $M^{pi}=M\cap L^{pi}
\in \CF (L^{pi}/K)$ and $M^{gal}:=M\cap L^{gal}\in \CN (L^{gal}/K)$. Therefore, 
$\CN (L/K)\subseteq \CF (L^{pi}/K)\t \CN (L^{gal}/K)$.
The reverse inclusion is obvious. 

2 and 3. By Theorem \ref{B24Mar25}.(3), the map 
$$\CN (L/K)\ra \CA ( E(L/K), L, G(L/K)), \;\; M\mapsto C_{E(L/K)}(M)=E(L/M)$$  is a bijection with inverse $A\mapsto C_{E(L/K)}(A)$. 
 The equality 
 $$
 E(L/K)=E(L^{pi}/K)\t E(L^{gal}/K)
 $$ 
   follows from the equality $L=L^{pi}\t L^{gal}$. For each normal field extension $M/K\in \CN (L/K)$, 
$M=M^{pi}\t M^{gal}$ where $M^{pi}\subseteq  E(M^{pi}/K)$ and $ M^{gal}\subseteq E(M^{gal}/K)$. Now,
\begin{eqnarray*}
 C_{E(L/K)}(M)&=&C_{E(L^{pi}/K)\t E(L^{gal}/K)}(M^{pi}\t M^{gal})\\
 &= &  C_{E(L^{pi}/K)}(M^{pi})\t C_{E(L^{gal}/K)}(M^{gal})\\
&=&E(L^{pi}/M^{pi})\t E(L^{gal}/M^{gal})\\
&\stackrel{{\rm Thm.} \,\ref{C24Mar25}.(1) ,\,{\rm Thm.} \,\ref{B24Mar25}.(2)}{=} &\CD (L^{pi}/M^{pi})\t \Big(L^{gal}\rtimes G(L^{gal}/M^{gal}) \Big).
\end{eqnarray*}
Therefore,
 $\CA (E(L/K),L, G(L/K))=\Big\{\CD (L^{pi}/N)\t \Big(L^{gal}\rtimes G(L^{gal}/\G) \Big)= E(L^{pi}/N)\t \Big(L^{gal}\rtimes G(L^{gal}/M) \Big)\, | \,$ $ N\in \CF (L^{pi}/K),\,  \G\in \CN  (L^{gal}/K)  \Big\}$, and statement 2 and the first part of statement 3  follow, i.e. the map 
\begin{eqnarray*}
\CN (L/K)&\ra & \CA (E(L/K),L, G(L/K)), \;\; M\mapsto  C_{E(L/K)}(M)=C_{E(L^{pi}/K)}(M^{pi})\t C_{E(L^{gal}/K)}(M^{gal})\\
&=&E(L^{pi}/M^{pi})\t E(L^{gal}/M^{gal})=\CD (L^{pi}/M^{pi})\t \Big(L^{gal}\rtimes G(L^{gal}/M^{gal}) \Big)
\end{eqnarray*}
 is a bijection. Since for each normal  field extension $M/K\in \CN (L/K)$,  $M=M^{pi}\t M^{gal}$, where $M^{pi}=M\cap L^{pi} \in E(M^{pi}/K)$ and 
 $ M^{gal}=M\cap L^{gal}\in E(M^{gal}/K)$,  and
  \begin{eqnarray*}
 M^{pi}&=&C_{E(L^{pi}/K)}(E(L^{pi}/M^{pi}))=C_{E(L^{pi}/K)}(\CD (L^{pi}/M^{pi})),\\
 M^{gal}&=&C_{E(L^{gal}/K)}(E(L^{gal}/M^{gal}))=C_{E(L^{gal}/K)}\Big(L^{gal}\rtimes G(L^{gal}/M^{gal})\Big),\\
 \CD (L^{pi}/M^{pi})&=& A\cap \CD (L^{pi}/K)  ,\\
 L^{gal}\rtimes G(L^{gal}/M^{gal})&=& A\cap \Big( L^{gal}\rtimes G(L^{gal}/K)\Big)= L^{gal}\rtimes A\cap G(L^{gal}/K),
 \end{eqnarray*}
 where $A=\CD (L^{pi}/M^{pi})\t \Big(L^{gal}\rtimes G(L^{gal}/M^{gal}) \Big)=C_{E(L/K)}(M)$, 
  the inverse map (Theorem \ref{1Jun25})
$$ \CA (E(L/K),L, G(L/K))\ra \CN (L/K), \;\; A\mapsto C_{E(L/K)}(A)$$
is given by the rule 
\begin{eqnarray*}
A&\ra& C_{E(L/K)}(A)= M=M^{pi}\t M^{gal}=C_{E(L^{pi}/K)}(\CD (L^{pi}/M^{pi}))\t C_{E(L^{gal}/K)}\Big(L^{gal}\rtimes G(L^{gal}/M^{gal})\Big)\\
&=&C_{E(L^{pi}/K)}(A\cap \CD (L^{pi}/K))\t C_{E(L^{gal}/K)}\Big(  L^{gal}\rtimes A\cap G(L^{gal}/K)\Big)\\
%&=&C_{E(L^{pi}/K)\t E(L^{gal}/K) }(\CD (L^{pi}/M^{pi})) \t \Big(L^{gal}\rtimes G(L^{gal}/M^{gal})\Big)=C_{E(L/K)}(A)\\
&=& C_{E(L^{pi}/K)}\Big(L^{pi}\oplus A\cap \CD (L^{pi}/K)_+\Big)\t \Big( L^{gal}\Big)^{A\cap G(L^{gal}/K)}\\
&=& \Big( L^{pi}\Big)^{A\cap \CD (L^{pi}/K)_+}\t \Big( L^{gal}\Big)^{A\cap G(L^{gal}/K)}.
\end{eqnarray*}

4. Statement 4 follows from statement 3 (see the last line of statement 3) and Theorem \ref{D24Mar25}.(3.4). 
\end{proof}

 %%%%%%%%%%%%%%%%%% SECTION 8   %%%%%%%%%%%%%%%%%%%%%
 
 \section{Field extensions $L/L^{G(L/K)}_{dif}$ and their properties} \label{LLGKL} %\marginpar{LLGKL}
 
In this section, we apply the results of the previous sections to  field extensions  $L/L^{G(L/K)}_{dif}$ where $L/K$ are finite field extension of characteristic $p$. We show that the  field extensions  $L/L^{G(L/K)}_{dif}$ are B-extensions, hence normal (Theorem \ref{9Jun25}). Corollary \ref{a9Jun25} is about  the structure of the field extension $L/L^{G(L/K)}_{dif}$. In particular, it shows that   
$$\Big( L/L^{G(L/K)}_{dif}\Big)^{pi}= L^{G(L/K)}]
\;\; {\rm  and }\;\;\Big( L/L^{G(L/K)}_{dif}\Big)^{gal}=\Big( L/L^{G(L/K)}_{dif}\Big)^{sep}=\Big( L/L^{G(L/K)}_{dif}\Big)_{dif}=L_{dif}.
$$
In prime characteristic, we prove that  $(L/K)_{nor}=L^{G(L/K)}_{dif}$ (Theorem \ref{A9Jun25}) and in characteristic zero, $(L/K)_{nor}=L^{G(L/K)}$  (Proposition \ref{B9Jun25}). \\

{\bf The field extension $L/L^{G(L/K)}_{dif}$  is a B-extension/a normal field extension.}  Theorem  \ref{9Jun25} shows that the  field extension $L/L^{G(L/K)}_{dif}$  is a B-extension.

 \begin{theorem}\label{9Jun25}%\marginpar{9Jun25}
Let $L/K$ be a  finite field extension of characteristic $p$. Then:

\begin{enumerate}

\item The field extension $L/L^{G(L/K)}_{dif}$   is a B-extension, hence normal (by Theorem \ref{VVB-30Apr25}). 

\item $\CD (L/K)\rtimes G(L/K)=\CD \Big(L/L^{G(L/K)}_{dif}\Big)\rtimes G\Big(L/L^{G(L/K)}_{dif}\Big)=E\Big(L/L^{G(L/K)}_{dif}\Big)$. 

\item $\CD (L/K)=\CD \Big(L/L^{G(L/K)}_{dif}\Big)$.

\item $ G(L/K) =G\Big(L/L^{G(L/K)}_{dif}\Big)$.

%\item $\Big( L/L^{G(L/K)}_{dif}\Big)_{dif}=L^{G(L/K)}_{dif}$.
\end{enumerate}

\end{theorem}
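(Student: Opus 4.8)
The plan is to set $\CL := L^{G(L/K)}_{dif}=L^{G(L/K)}\cap L_{dif}$ and to assemble the four statements from results already established, the only genuine task being to track carefully over which base field each invariant field or field of constants is taken. I would prove the statements in the order (4), (3), (1), (2), since the later ones rely on the earlier.

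First, for statement (4) I would argue purely group-theoretically. Since $K\subseteq \CL$, every $\CL$-automorphism of $L$ is in particular a $K$-automorphism, so $G(L/\CL)\subseteq G(L/K)$. Conversely, because $\CL\subseteq L^{G(L/K)}$, every $g\in G(L/K)$ fixes $L^{G(L/K)}$ pointwise and hence fixes $\CL$ pointwise, giving $G(L/K)\subseteq G(L/\CL)$; thus $G(L/K)=G(L/\CL)$. Statement (3), the equality $\CD(L/K)=\CD\big(L/\CL\big)$, together with the companion fact $\big(L/\CL\big)_{dif}=L_{dif}$ that I will need below, is exactly Lemma \ref{a26Apr25}.(1), so no fresh computation is required there.

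The heart of the argument is statement (1). I would apply the B-extension criterion of Corollary \ref{c17Apr25} to the extension $L/\CL$: it is a B-extension if and only if $L^{G(L/\CL)}\cap \big(L/\CL\big)_{dif}=\CL$. Substituting $G(L/\CL)=G(L/K)$ from statement (4) and $\big(L/\CL\big)_{dif}=L_{dif}$ from Lemma \ref{a26Apr25}.(1), the left-hand side becomes $L^{G(L/K)}\cap L_{dif}=L^{G(L/K)}_{dif}=\CL$, so the criterion is satisfied and $L/\CL$ is a B-extension. Normality then follows at once from the equivalence of B-extensions and normal finite field extensions in Theorem \ref{VVB-30Apr25}.

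Finally, statement (2) is immediate once (3) and (4) are in hand: since $\CD(L/K)=\CD\big(L/\CL\big)$ and $G(L/K)=G\big(L/\CL\big)$, the two skew group algebras $\CD(L/K)\rtimes G(L/K)$ and $\CD\big(L/\CL\big)\rtimes G\big(L/\CL\big)$ coincide term by term, and their identification with $E\big(L/\CL\big)$ is precisely Theorem \ref{AB17Apr25}.(2). I do not anticipate a real obstacle here: the one point demanding care is the bookkeeping in statement (1), namely verifying that when the criterion of Corollary \ref{c17Apr25} is read over the new base field $\CL$, both the invariant field and the dif-subfield are unchanged, which is exactly what statement (4) and Lemma \ref{a26Apr25}.(1) supply.
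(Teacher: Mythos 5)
Your proposal is correct and follows essentially the same route as the paper: both rest on Lemma \ref{a26Apr25}.(1) for statement (3), the immediate identification $G(L/K)=G\Big(L/L^{G(L/K)}_{dif}\Big)$ for statement (4), and Theorem \ref{AB17Apr25}.(2) for the identification with $E\Big(L/L^{G(L/K)}_{dif}\Big)$. The only (cosmetic) difference is ordering: the paper proves statement (2) first and reads off (1) from the definition of a B-extension, whereas you obtain (1) via Corollary \ref{c17Apr25} applied over the base field $L^{G(L/K)}_{dif}$ --- but that corollary is itself an immediate restatement of Theorem \ref{AB17Apr25}.(2), so the mathematical content is identical.
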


\begin{proof} 2--4.  Statement 3 follows from  Lemma \ref{a26Apr25}.(1) and statement 4 is obvious. 
 Now, by Theorem \ref{AB17Apr25}.(2),
 $$  
 \CD \Big(L/L^{G(L/K)}_{dif}\Big)\rtimes G\Big(L/L^{G(L/K)}_{dif}\Big)=\CD (L/K)\rtimes G(L/K)=E\Big(L/L^{G(L/K)}_{dif}\Big).
 $$ 

1. By  statement 2, the field extension $L/L^{G(L/K)}_{dif}$   is a B-extension, hence normal, by Theorem \ref{VVB-30Apr25}. 
\end{proof}

%5. Since $K\subseteq L^{G(L/K)}_{dif}$, we have the inclusion $E\Big(L/L^{G(L/K)}_{dif}\Big)\subseteq E(L/K)$. Now, 
%\begin{eqnarray*} 
% L^{G(L/K)}_{dif} &:=& C_{E(L/K)} ( \CD (L/K)\rtimes G(L/K))\stackrel{ {\rm st.}\, 2,3}{=}C_{E(L/K)} \Big( \CD \Big(L/L^{G(L/K)}_{dif}\Big)\rtimes G\Big(L/L^{G(L/K)}_{dif}\Big)\Big)\\
% &\supseteq & C_{E\Big(L/L^{G(L/K)}_{dif}\Big)} \Big( \CD \Big(L/L^{G(L/K)}_{dif}\Big)\rtimes G\Big(L/L^{G(L/K)}_{dif}\Big)\Big)\\
% &=:& \Big( L/L^{G(L/K)}_{dif}\Big)_{dif}\supseteq L^{G(L/K)}_{dif}.
%\end{eqnarray*}

%Therefore, $\Big( L/L^{G(L/K)}_{dif}\Big)_{dif}=L^{G(L/K)}_{dif}$.

Corollary \ref{a9Jun25} clarifies the structure of the field extension $L/L^{G(L/K)}_{dif}$.

 \begin{corollary}\label{a9Jun25}%\marginpar{a9Jun25}
Let $L/K$ be a  finite field extension of characteristic $p$. Then:

\begin{enumerate}

\item $L=\Big( L/L^{G(L/K)}_{dif}\Big)^{pi}\t_{L^{G(L/K)}_{dif}}\Big( L/L^{G(L/K)}_{dif}\Big)^{gal}=L^{G(L/K)}\t_{L^{G(L/K)}_{dif}}   L_{dif}$.

\item $\Big( L/L^{G(L/K)}_{dif}\Big)^{pi}=L^{G\Big(L/L^{G(L/K)}_{dif}\Big)}= L^{G(L/K)}$.

\item  $\Big( L/L^{G(L/K)}_{dif}\Big)^{gal}=\Big( L/L^{G(L/K)}_{dif}\Big)^{sep}=\Big( L/L^{G(L/K)}_{dif}\Big)_{dif}=L_{dif}$.

\end{enumerate}

\end{corollary}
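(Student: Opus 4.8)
The plan is to read off all three statements from Theorem \ref{9Jun25} by transporting the structure theory of normal extensions through the field $\CL := L^{G(L/K)}_{dif}$. By Theorem \ref{9Jun25}.(1) the extension $L/\CL$ is a B-extension, hence normal, and by Theorem \ref{9Jun25}.(3,4) it carries \emph{the same} algebra of differential operators and \emph{the same} automorphism group as $L/K$, namely $\CD(L/\CL)=\CD(L/K)$ and $G(L/\CL)=G(L/K)$. The whole argument consists of applying the normal-extension results of the previous sections to $L/\CL$ and then using these two identifications to rewrite the intrinsic $\CL$-data ($(L/\CL)^{pi}$, $(L/\CL)_{dif}$, etc.) in terms of the $K$-data appearing in the statement.

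For statement (2), I would argue as follows. Since $L/\CL$ is normal, the normality criterion Theorem \ref{A11May25} (applied with ground field $\CL$ in place of $K$) gives $\big(L/\CL\big)^{pi}=L^{G(L/\CL)}$. By Theorem \ref{9Jun25}.(4), $G(L/\CL)=G(L/K)$, whence $L^{G(L/\CL)}=L^{G(L/K)}$, and statement (2) follows.

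For statement (3), I would first apply Theorem \ref{23May25} to the extension $L/\CL$, obtaining $\big(L/\CL\big)_{dif}=\big(L/\CL\big)^{sep}$. Normality of $L/\CL$ together with Corollary \ref{a25May25}.(2) yields $\big(L/\CL\big)^{sep}=\big(L/\CL\big)^{gal}$. It remains to identify $\big(L/\CL\big)_{dif}=C_L(\CD(L/\CL))$ with $L_{dif}$; this is immediate from Theorem \ref{9Jun25}.(3), since $\CD(L/\CL)=\CD(L/K)$ forces $C_L(\CD(L/\CL))=C_L(\CD(L/K))=L_{dif}$. Chaining these equalities gives statement (3).

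Finally, statement (1) is obtained by feeding statements (2) and (3) into the tensor decomposition of the normal extension $L/\CL$. By Corollary \ref{a25May25}.(1) applied to $L/\CL$ one has $L=\big(L/\CL\big)^{pi}\t_\CL\big(L/\CL\big)^{gal}$, which is the first displayed equality; substituting $\big(L/\CL\big)^{pi}=L^{G(L/K)}$ and $\big(L/\CL\big)^{gal}=L_{dif}$ (equivalently, invoking Proposition \ref{AC17Apr25}.(6)) gives the second. There is no real obstacle here beyond bookkeeping: the only point requiring care is to keep track of which base field each invariant is formed over, and to use the coincidences $\CD(L/\CL)=\CD(L/K)$ and $G(L/\CL)=G(L/K)$ at exactly the right moments to pass from $\CL$-relative to $K$-relative data.
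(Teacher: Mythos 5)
Your proposal is correct and follows essentially the same route as the paper: both arguments hinge on Theorem \ref{9Jun25} (normality of $L/L^{G(L/K)}_{dif}$ together with $\CD(L/L^{G(L/K)}_{dif})=\CD(L/K)$ and $G(L/L^{G(L/K)}_{dif})=G(L/K)$) and then transport the structure theory of normal extensions to the base field $L^{G(L/K)}_{dif}$. The only difference is cosmetic: where you cite the primitive results (Theorem \ref{A11May25}, Theorem \ref{23May25}, Corollary \ref{a25May25}, and Theorem \ref{9Jun25}.(3) for $(L/L^{G(L/K)}_{dif})_{dif}=L_{dif}$), the paper cites their packaged consequences (Theorem \ref{VVB-30Apr25}.(a),(b), Lemma \ref{a26Apr25}.(1), and Proposition \ref{AC17Apr25}.(6)), which amounts to the same argument.
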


\begin{proof}  By Proposition \ref{AC17Apr25}.(6),  
 $L=L^{G(L/K)}\t_{L^{G(L/K)}_{dif}}   L_{dif}$. 
 By Theorem \ref{9Jun25}.(1), the field extension $L/L^{G(L/K)}_{dif}$   is a B-extension, hence normal (by Theorem \ref{VVB-30Apr25}). Therefore, $L=\Big( L/L^{G(L/K)}_{dif}\Big)^{pi}\t_{L^{G(L/K)}_{dif}}\Big( L/L^{G(L/K)}_{dif}\Big)^{gal}$, by Theorem \ref{VVB-30Apr25}.(6). Now, by statements 2 and 3,  
$$\Big( L/L^{G(L/K)}_{dif}\Big)^{pi}\t_{L^{G(L/K)}_{dif}}\Big( L/L^{G(L/K)}_{dif}\Big)^{gal}=L^{pi}\t_{L^{G(L/K)}_{dif}}   L^{gal}.$$
So, it remains to show that statements 2 and 3 hold: 
\begin{eqnarray*}
\Big( L/L^{G(L/K)}_{dif}\Big)^{pi}& \stackrel{ {\rm Thm.}\, \ref{VVB-30Apr25}.(a)}{=} &L^{G\Big(L/L^{G(L/K)}_{dif}\Big)}\stackrel{ {\rm Thm.}\, \ref{9Jun25}.(4)}{=}L^{G(L/K)},\\
 \Big( L/L^{G(L/K)}_{dif}\Big)^{gal}
 &\stackrel{ {\rm Thm.}\, \ref{VVB-30Apr25}.(b)}{=}&
 \Big( L/L^{G(L/K)}_{dif}\Big)^{sep}\stackrel{ {\rm Thm.}\, \ref{VVB-30Apr25}.(b)}{=}\Big( L/L^{G(L/K)}_{dif}\Big)_{dif}\stackrel{ {\rm Lem.}\, \ref{a26Apr25}.(1)}{=} L_{dif}.
\end{eqnarray*}
\end{proof}

{\bf The least co-normal field extension $(L/K)_{nor}$.}
\begin{definition}
For a finite field extension $L/K$ of arbitrary characteristic, a field extension $M/K$ in $L/K$is called a {\bf co-normal subfield} if the field extension $L/M$ is normal. The {\bf least co-normal subfield} of $L/K$ (w.r.t. $\subseteq$) is denoted by $(L/K)_{nor}=L_{nor}$. 
\end{definition}

 Theorem  \ref{A9Jun25} shows that the  field extension $L^{G(L/K)}_{dif}/K$  is the least co-normal field extension of $L/K$ when $\char (K)=p$.

 \begin{theorem}\label{A9Jun25}%\marginpar{A9Jun25}
Let $L/K$ be a  finite field extension of characteristic $p$. Then  $(L/K)_{nor}=L^{G(L/K)}_{dif}/K$. 

\end{theorem}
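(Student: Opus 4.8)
The statement asserts that $L^{G(L/K)}_{dif}$ is the least co-normal subfield of $L/K$, so the plan is to verify the two defining properties separately: that $L/L^{G(L/K)}_{dif}$ is normal (co-normality of $L^{G(L/K)}_{dif}$), and that $L^{G(L/K)}_{dif}$ is contained in every co-normal subfield (minimality). The first property is already available: by Theorem \ref{9Jun25}.(1) the extension $L/L^{G(L/K)}_{dif}$ is a B-extension, hence normal by Theorem \ref{VVB-30Apr25}. So $L^{G(L/K)}_{dif}$ is indeed a co-normal subfield, and it only remains to prove that it is the smallest one.

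For minimality, I would take an arbitrary co-normal subfield $M$, i.e. $K\subseteq M\subseteq L$ with $L/M$ normal, and show $L^{G(L/K)}_{dif}\subseteq M$. The crucial input is that a normal extension is a B-extension (Theorem \ref{VVB-30Apr25}), so $L/M$ is a B-extension; applying the B-extension criterion (Corollary \ref{c17Apr25}) with base field $M$ gives the identity
\[
L^{G(L/M)}\cap (L/M)_{dif}=M.
\]
The plan is then to compare the two factors on the left-hand side with their $K$-analogues by a monotonicity argument as the base field shrinks from $M$ to $K$. Since $K\subseteq M$, every automorphism of $L$ fixing $M$ also fixes $K$, so $G(L/M)\subseteq G(L/K)$ and therefore $L^{G(L/K)}\subseteq L^{G(L/M)}$. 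Dually, $\CD (L/M)\subseteq \CD (L/K)$ by Lemma \ref{a20May25}, whence centralizing the smaller algebra yields the larger fixed field: $L_{dif}=C_L(\CD (L/K))\subseteq C_L(\CD (L/M))=(L/M)_{dif}$. Intersecting these two inclusions gives
\[
L^{G(L/K)}_{dif}=L^{G(L/K)}\cap L_{dif}\subseteq L^{G(L/M)}\cap (L/M)_{dif}=M,
\]
as required.

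Combining the two parts, $L^{G(L/K)}_{dif}$ is co-normal and is contained in every co-normal subfield, so it is the least co-normal subfield and $(L/K)_{nor}=L^{G(L/K)}_{dif}$. The only genuinely delicate point is the monotonicity step, and there the essential observation is simply that the two ingredients of the dif-invariant field move in a controlled way when the base field is enlarged or shrunk; once the equality $L^{G(L/M)}\cap (L/M)_{dif}=M$ is secured from the normality (equivalently, the B-extension property) of $L/M$, everything reduces to these two elementary containments. I do not expect a serious obstacle beyond keeping the base-field dependence of the symbols $G(\cdot)$, $\CD (\cdot)$ and $(\cdot)_{dif}$ straight throughout.
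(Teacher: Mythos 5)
Your proof is correct and follows essentially the same route as the paper: normality of $L/L^{G(L/K)}_{dif}$ is quoted from Theorem \ref{9Jun25}, and minimality rests on the same two inclusions $G(L/M)\subseteq G(L/K)$ and $\CD (L/M)\subseteq \CD (L/K)$ together with the fact that the normal extension $L/M$ is a B-extension. The only (cosmetic) difference is that you extract the containment $L^{G(L/K)}_{dif}\subseteq M$ at the level of invariant fields via Corollary \ref{c17Apr25} (the identity $L^{G(L/M)}\cap (L/M)_{dif}=M$) and elementary monotonicity, whereas the paper argues at the algebra level, deducing $E(L/M)=\CD (L/M)\rtimes G(L/M)\subseteq \CD (L/K)\rtimes G(L/K)=E\big(L/L^{G(L/K)}_{dif}\big)$ from Theorem \ref{VVB-30Apr25}.(b) and Theorem \ref{AB17Apr25}.(2) and then reversing the inclusion by centralizers --- two dual formulations of the same argument.
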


\begin{proof} By Theorem \ref{9Jun25}, the field extension $L/L^{G(L/K)}_{dif}$ is  normal. Suppose that $N/K$ is a  field extension such that $N\subseteq L$ and  the field extension $L/N$ is normal. We have to show that $L^{G(L/K)}_{dif}\subseteq N$. Notice that $\CD (L/N)\subseteq  \CD (L/K)$ and  $G(L/N)\subseteq G(L/K)$. Now, by Theorem \ref{VVB-30Apr25}.(b),
$$
E(L/N)=\CD (L/N)\rtimes G(L/N)\subseteq \CD (L/K)\rtimes G(L/K)=E\Big( L/L^{G(L/K)}_{dif}\Big).
$$
Therefore, $L^{G(L/K)}_{dif}\subseteq N$, as required. 
\end{proof}

In characteristic zero, Proposition  \ref{B9Jun25} shows that the  field extension $L^{G(L/K)}/K$  is the least co-normal  field extension of $L/K$.

 \begin{proposition}\label{B9Jun25}%\marginpar{B9Jun25}
Let $L/K$ be a  finite field extension of characteristic zero. Then $(L/K)_{nor}=L^{G(L/K)}/K$. 

\end{proposition}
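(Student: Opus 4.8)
The plan is to exploit the fact that in characteristic zero every finite field extension is separable, so that \emph{normal} coincides with \emph{Galois}. Write $G:=G(L/K)=\Aut_{K\text{-alg}}(L)$; this is a finite group since $|G|\leq [L:K]<\infty$. I would first check that $L^{G}$ is a co-normal subfield, i.e. that $L/L^{G}$ is normal. This is immediate from Theorem \ref{BC24Mar25}.(1), which is stated for arbitrary characteristic: the extension $L/L^{G}$ is Galois with $G(L/L^{G})=G$, and Galois extensions are in particular normal. Hence $L^{G}$ lies in the set of co-normal subfields, and it only remains to prove that it is the \emph{least} one with respect to inclusion.

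For minimality I would take an arbitrary co-normal subfield $M$, so $K\subseteq M\subseteq L$ and $L/M$ is normal. Since $\char(K)=0$, the extension $L/M$ is automatically separable, hence $L/M$ is a Galois extension. By the fundamental correspondence for the Galois extension $L/M$ we have $M=L^{G(L/M)}$, the fixed field of its full automorphism group. Now any automorphism of $L$ that fixes $M$ pointwise fixes $K\subseteq M$ pointwise, so $G(L/M)\subseteq G(L/K)=G$. The order-reversing behaviour of fixed fields under inclusion of subgroups then gives
$$
L^{G}=L^{G(L/K)}\subseteq L^{G(L/M)}=M.
$$
Thus $L^{G}\subseteq M$ for every co-normal subfield $M$, which together with the previous paragraph shows that $L^{G}$ is the smallest co-normal subfield, i.e. $(L/K)_{nor}=L^{G(L/K)}/K$.

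I do not expect a serious obstacle here; the argument is a direct transcription of the classical Galois correspondence once separability is invoked to upgrade ``normal'' to ``Galois''. The only point that requires a little care is the verification that $L/L^{G}$ really is normal (not merely that $L^G$ is fixed by $G$), and for this I would lean on Theorem \ref{BC24Mar25}.(1) rather than re-proving Artin's theorem. This is precisely the feature that distinguishes the characteristic-zero case from the prime-characteristic case (Theorem \ref{A9Jun25}, where the answer is the strictly larger field $L^{G(L/K)}_{dif}$): in characteristic zero the algebra of differential operators satisfies $\CD(L/K)=L$ by Proposition \ref{VB-aC24Mar25}, so $L_{dif}=L$ and hence $L^{G(L/K)}_{dif}=L^{G(L/K)}\cap L_{dif}=L^{G(L/K)}$, making the two formulas consistent.
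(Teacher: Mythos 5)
Your proposal is correct and follows essentially the same route as the paper's own proof: both use separability in characteristic zero to identify normal with Galois, observe via Artin's theorem (Theorem \ref{BC24Mar25}.(1)) that $L/L^{G(L/K)}$ is Galois hence normal, and then derive minimality from the inclusion $G(L/M)\subseteq G(L/K)$ together with $M=L^{G(L/M)}$ and the order-reversing Galois correspondence. Your closing remark reconciling the formula with the prime-characteristic case via $\CD(L/K)=L$ and $L_{dif}=L$ is accurate extra context, not needed for the proof itself.
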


\begin{proof} Notice that all fields of characteristic zero are separable. So, the concepts of `normal' and `Galois' coincide for field extensions in characteristic zero.  By the Galois Theory, the field extension $L/L^{G(L/K)}$ is Galois, hence normal. Suppose that $N/K$ is a  field extension such that $N\subseteq L$ and  the field extension $L/N$ is normal, hence Galois. We have to show that $L^{G(L/K)}\subseteq N$. 
 It follows from the inclusion $G(L/N)\subseteq G(L/K)=G\Big( L/L^{G(L/K)}\Big)$ the inclusion
$$
N=L^{G(L/N)}\supseteq L^{G(L/K)},
$$ 
by the Galois correspondence, as required. 
\end{proof}

 {\bf Licence.} For the purpose of open access, the author has applied a Creative Commons Attribution (CC BY) licence to any Author Accepted Manuscript version arising from this submission.

{\bf Declaration of interests.} The authors declare that they have no known competing financial interests or personal relationships that could have appeared to influence the work reported in this paper.

%{\bf Disclosure statement.} No potential conflict of interest was reported by the author.

{\bf Data availability statement.} Data sharing not applicable – no new data generated.

{\bf Funding.} This research received no specific grant from any funding agency in the public, commercial, or not-for-profit sectors.

\small{

School of Mathematical  and Physical Sciences

Division of Mathematics

University of Sheffield

Hicks Building

Sheffield S3 7RH

UK

email: v.bavula@sheffield.ac.uk}

\end{document}